\def \cov{\mathop{\rm Cov}}
\newcommand{\smallO}[1]{\ensuremath{\mathop{}\mathopen{}{\scriptstyle\mathcal{O}}\mathopen{}\left(#1\right)}}
\newtheorem{theorem}{Theorem}[section]
\newtheorem{lemma}[theorem]{Lemma}
\newtheorem{proposition}[theorem]{Proposition}
\newtheorem{corollary}[theorem]{Corollary}
\newtheorem{remark}[theorem]{Remark}
\newcommand{\Z}{\mathbbm{Z}} 
\newcommand{\N}{\mathbbm{N}} 
\newcommand{\R}{\mathbbm{R}} 
\newcommand{\T}{\mathcal{T}} 
\newcommand{\J}{\mathcal{J}}
\newcommand{\mP}{\mathcal{P}}
\newcommand{\PS}{\mP_S} 
\newcommand{\emp}{\hat{\mu}_{n}} 
\newcommand{\empp}[1]{\hat{\mu}_{#1}}
\newcommand{\norm}[1]{\left\| #1 \right\|}
 \newcommand{\lsup}[1]{\underset{#1\to\infty}{\overline{\lim}}}
\newcommand{\linf}[1]{\underset{#1\to\infty}{\underline{\lim}}}
\newcommand{\bms}{\begin{multline*}}
\newcommand{\fms}{\end{multline*}}
\newcommand{\limprob}[3]{\mathrel{\mathop{\kern 0pt#1}\limits_{#2}^{#3}}}
\newcommand{\Exp}{\mathbb{E}} 
\begin{document}

\title{The mean-field limit of a  network of Hopfield neurons with correlated synaptic weights}

\author{Olivier Faugeras$^{1}$ and James Maclaurin $^2$ and Etienne Tanr\'e$^3$}

\footnotetext[1]{Universit\'e C\^ote d'Azur, Inria, CNRS, LJAD, France.
	 \texttt{Olivier.Faugeras@inria.fr}, }
\footnotetext[2]{New-Jersey Institute of Technology, USA.  \texttt{james.n.maclaurin@njit.edu}, }

\footnotetext[3]{Universit\'e C\^ote d'Azur, Inria, France.  \texttt{Etienne.Tanre@inria.fr}}

\maketitle

\begin{abstract}
We study the asymptotic behaviour for asymmetric neuronal dynamics in a network of Hopfield neurons. The randomness in the network is modelled by random couplings which are centered Gaussian correlated random variables. We prove that the annealed law of the empirical measure satisfies a large deviation principle without any condition on time. We prove that the good rate function of this large deviation principle achieves its minimum value at a unique Gaussian measure which is not Markovian. This implies almost sure convergence of the empirical measure under the quenched law. We prove that the limit equations are expressed as an infinite countable set of linear non Markovian SDEs.
\end{abstract}

\noindent
{\em AMS Subject of Classification (2010)}:\\
 60F10, 60H10, 60K35, 82C44, 82C31, 82C22, 92B20
 
 \vspace{0.5cm}
 \noindent
 {\bf Keywords:} Mean-field model; random correlated interactions; thermodynamic limit; large deviations; nonlinear dynamics; exponential equivalence of measures

\section{Introduction}
We revisit the problem of characterizing the large-size limit of a network of Hopfield neurons. Hopfield \cite{hopfield:82} defined a broad class of neuronal networks and characterized some of their computational properties \cite{hopfield:84,hopfield-tank:86}, i.e. their ability to perform computations. Inspired by his work Sompolinsky and co-workers studied the thermodynamic limit of these networks when the interaction term is linear \cite{crisanti-sompolinsky:87} using the dynamic mean-field theory developed in \cite{sompolinsky-zippelius:82} for symmetric spin glasses. The method they use is a functional integral formalism used in particle physics and produces the self-consistent mean-field equations of the network. This was later extended to the case of a nonlinear interaction term, the nonlinearity being an odd sigmoidal function \cite{sompolinsky-crisanti-etal:88}. A recent revisit of this work can be found in \cite{crisanti2018path}. Using the same formalism the authors established the self-consistent mean-field equations of the network and the dynamics of its solutions which featured a chaotic behaviour for some values of the network parameters. A little later the problem was picked up again by mathematicians. Ben Arous and Guionnet applied large deviation techniques to study the thermodynamic limit of a network of spins interacting linearly with i.i.d. centered Gaussian weights. The intrinsic spin dynamics (without interactions) is a stochastic differential equation where the drift is the gradient of a potential. They prove that the annealed (averaged) law of the empirical measure satisfies a large deviation principle and that the good rate function of this large deviation principle achieves its minimum value at a unique measure which is not Markovian \cite{guionnet:95,ben-arous-guionnet:95,guionnet:97}. They also prove averaged propagation of chaos results. Moynot and Samuelides \cite{moynot-samuelides:02} adapt their work to the case of a network of Hopfield neurons with a nonlinear interaction term, the nonlinearity being a sigmoidal function, and prove similar results in the case of discrete time. The intrinsic neural dynamics is the gradient of a quadratic potential.

We extend this paradigm by including correlations in the random distribution of network connections. There is an excellent motivation for this, because it is commonly thought that neural networks have a small-world architecture, such that the connections are not completely random, but display a degree of clustering \cite{sporns:11}. It is thought that this clustering could be a reason behind the correlations that have been observed in neural spike trains \cite{buzsaki:06}.

We propose a different method to obtain the annealed LDP to previous work by Ben Arous and Guionnet \cite{ben-arous-guionnet:95,guionnet:97}, Faugeras and MacLaurin \cite{faugeras-maclaurin:14}. The analysis of these papers centres on the Radon-Nikodym derivative between the coupled state and the uncoupled state, demonstrating that this converges as the network size asymptotes to infinity. By contrast, our analysis centres on the SDE governing the finite-dimensional annealed system. It bears some similarities to the coupling method developed by Sznitman \cite{sztnitman:91} for interacting particle systems, insofar as we demonstrate that the finite-dimensional SDE converges to the limiting system superexponentially quickly. 

Our method is more along the lines of recent work that uses methods from stochastic control theory to determine the Large Deviations of interacting particle systems \cite{budhiraja-dupuis-etal:12}. It is centered on the idea of constructing an exponentially good approximation of the annealed  law of the empirical measure under the averaged law of the finite size system.
\section{Outline of model and main result}
Let $I_n=[-n \cdots n]$, $n \geq 0$ be the set of $2n+1$ integers between $-n$ and $n$, $N:=2n+1$.

For any positive integer $n$, let $J_n=(J^{ij}_n)_{i,\,j \in I_n} \in \R^{N \times N}$,  and consider the system $\mathcal{S}^{N}(J_n)$ of  $N$ stochastic differential equations
\begin{equation}\label{eq:model}
\mathcal{S}^{N}(J_n):= \left\{
\begin{array}{lcl}
dV^i_t & = & \sum_{j \in I_n} J^{ij}_n f(V^j_t) dt+\sigma dB^i_t \quad  i \in I_n\\
V_0^i& = & 0
\end{array}
\right.
\end{equation}
where $(B^i)_{i \in I_n}$ is an $N$-dimensional vector of independent Brownian motions. We assume for simplicity that $V_0^i = 0$, $i \in I_n$. 
 $\sigma$ is a positive number.
The function $f: \R \to \R^+$ is bounded and Lipschitz continuous. We may assume without loss of generality that  $f(\R) \subset [0,1]$ and that its Lipschitz constant is equal to 1. A typical example is
\begin{equation}\label{eq:f}
f(x)=\frac{1}{1+e^{-4x}}.
\end{equation}

The weights $J_n := \big( J_n^{jk}\big)_{j,k \in I_n}$ are, under the probability $\gamma$ on $(\Omega, \mathcal{A})$, centered correlated Gaussian random variables  with a shift invariant covariance function given by
\begin{equation}\label{eq:cov}
\Exp^\gamma\left[J^{ij}_{n}J^{kl}_{n}\right] =
\frac{1}{N} R_\J((k-i) \bmod I_n, (l-j) \bmod I_n) 
\end{equation}
\begin{remark}
Expectations w.r.t. $\gamma$ are noted $\Exp^\gamma$ throughout the paper.
\end{remark}
\begin{remark}
Model \eqref{eq:model} is a slightly simplified version of the full Hopfield model which includes a linear term and a general initial condition:
\begin{equation}\label{eq:fmodel}
\mathcal{S}^{N}_{full}(J_n):= \left\{
\begin{array}{lcl}
dV^i_t & = & -\alpha V^i_tdt + \sum_{j \in I_n} J^{ij}_n f(V^j_t) dt+\sigma dB^i_t \quad  i \in I_n.\\
\text{Law}(V_0) & = & \mu_0^{\otimes N}
\end{array}
\right.
\end{equation}
$\alpha$ is a positive constant and $\mu_0$ is a probability measure on $\R$ with finite variance.

Adding the extra linear term and a more general initial condition does not change the nature of the mathematical problems
we address but complicates the notations.
\end{remark}
Here $R_{\J}$ is independent of $n$ and such that 
\begin{enumerate}
\item
\begin{equation}\label{eq:RJdef}
| R_\J(k,l) | \leq a_k b_l
\end{equation} 
where the two positive sequences $(a_k)$ and $(b_l)$ are such that
\begin{equation}\label{eq:akbl}
a_k = \smallO{1/|k|^3}, \quad \text{and}  \quad \sum_{l \in \Z} b_l < \infty
\end{equation}
We note $a$ and $b$ the sums of the two series $(a_k)_{k \in \Z}$ and $(b_k)_{k \in \Z}$,
\begin{equation}\label{eq:ab}
a:= \sum_{k \in \Z} a_k \quad b:= \sum_{k \in \Z} b_k
\end{equation}
\item There exists a centered Gaussian stationary process $(J^{ij})_{i,j \in \Z}$ with autocorrelation $R_\J$. Because of \eqref{eq:RJdef} this process has a spectral density noted $\tilde{R}_\J$ given by
\begin{equation}\label{eq:RJtilde}
\tilde{R}_\J(\varphi_1,\varphi_2)= \sum_{k,l \in \Z} R_\J(k,l) e^{-i k\varphi_1}e^{-i l\varphi_2},
\end{equation}
with $i=\sqrt{-1}$. We assume that this spectral density is strictly positive:
\begin{equation}\label{eq:bound on spectrum}
\tilde{R}_\J(\varphi_1,\varphi_2) > 0
\end{equation}
for all $\varphi_1,\,\varphi_2 \in [-\pi, \pi[$.
\end{enumerate}
\begin{remark}\label{rem:twicediff}
The hypotheses \eqref{eq:akbl} guarantee that the Fourier transform
\[
\tilde{R}(\varphi,0)=\sum_{k,l \in \Z} R_\J(k,l) e^{-i k \varphi} 
\]
is three times continuously differentiable on $[-\pi, \pi]$. We provide a short proof.
\begin{proof}
Define $Q_\J(k):=\sum_{l \in \Z} R_\J(k,l)$. This is well defined since the series in the right hand side is absolutely convergent. Because $|Q_\J(k)| \leq b a_k$, $Q_\J(k)$ is $\smallO{1/|k|^3}$ and hence its Fourier transform $\tilde{R}_\J(\varphi,0)$ (see \eqref{eq:RJtilde}) is three times continuously differentiable.
\end{proof}
\end{remark}
We have the following Proposition.
\begin{proposition}\label{prop:SNJchi}
For each $J_n \in \R^{N \times N}$ , $\mathcal{S}^{N}(J_n)$ has a unique weak solution.
\end{proposition}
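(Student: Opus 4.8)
The system $\mathcal{S}^{N}(J_n)$ is an It\^o SDE on $\R^N$ of the form $dV_t = b(V_t)\,dt + \sigma\,dB_t$ with constant, non-degenerate diffusion matrix $\sigma\,\mathrm{Id}_N$ and drift $b=(b^i)_{i\in I_n}$ given by $b^i(v)=\sum_{j\in I_n}J^{ij}_n f(v^j)$. The plan is to check that $b$ is globally Lipschitz and of at most linear growth, so that classical strong well-posedness applies, and then to pass to weak well-posedness via Yamada--Watanabe. Note that $n$ and $J_n$ are fixed throughout the statement, so all constants below are allowed to depend on them.

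First I would record two elementary estimates on the drift. Since $f$ takes values in $[0,1]$, we have $|b^i(v)|\leq \sum_{j\in I_n}|J^{ij}_n|$, so $b$ is in fact bounded and a fortiori of linear growth. Since $f$ is $1$-Lipschitz, for $v,w\in\R^N$ one has $|b^i(v)-b^i(w)|\leq \sum_{j\in I_n}|J^{ij}_n|\,|v^j-w^j|\leq \big(\sum_{j\in I_n}|J^{ij}_n|^2\big)^{1/2}|v-w|$ by Cauchy--Schwarz, hence $|b(v)-b(w)|\leq \|J_n\|_{HS}\,|v-w|$ with $\|J_n\|_{HS}$ the Hilbert--Schmidt norm of $J_n$; thus $b$ is globally Lipschitz. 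The diffusion coefficient, being the constant $\sigma\,\mathrm{Id}_N$, is trivially Lipschitz and of linear growth.

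With these properties in hand, the standard existence and uniqueness theorem for SDEs with globally Lipschitz, linearly growing coefficients (e.g. Karatzas--Shreve, Theorem~5.2.9, or Ikeda--Watanabe) yields, on any filtered probability space carrying the independent Brownian motions $(B^i)_{i\in I_n}$, a unique strong solution together with pathwise uniqueness. A strong solution is in particular a weak solution, and pathwise uniqueness combined with the existence of a weak solution implies uniqueness in law by the Yamada--Watanabe theorem. Hence $\mathcal{S}^{N}(J_n)$ admits a weak solution that is unique in law, which is precisely the assertion.

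There is essentially no serious obstacle here; the only points requiring minor care are that $f$ is merely Lipschitz rather than smooth (harmless) and that the Lipschitz and growth constants depend on $J_n$ (fine, since $J_n$ is fixed). An alternative, perhaps more in keeping with the measure-theoretic viewpoint of the rest of the paper, is to exploit $\sigma>0$: start from the law of $(\sigma B^i)_{i\in I_n}$ and apply Girsanov's theorem. Since $b$ is bounded, Novikov's criterion holds, so the exponential local martingale $\exp\big(\sigma^{-1}\sum_{i\in I_n}\int_0^t b^i(V_s)\,dB^i_s-(2\sigma^2)^{-1}\sum_{i\in I_n}\int_0^t |b^i(V_s)|^2\,ds\big)$ is a genuine martingale, and the tilted measure is a weak solution of $\mathcal{S}^{N}(J_n)$; uniqueness in law then follows from the well-posedness of the associated (uniformly elliptic, bounded-drift) martingale problem in the sense of Stroock--Varadhan. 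Either route gives the proposition.
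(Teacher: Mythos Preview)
Your proof is correct and follows the same approach as the paper, which simply notes that the coefficients are Lipschitz continuous and invokes standard existence and uniqueness for SDEs. Your version is more detailed (explicit Lipschitz bound via $\|J_n\|_{HS}$, Yamada--Watanabe, and an optional Girsanov alternative), but the underlying idea is identical.
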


\begin{proof}
For each \(J_n\), we have a standard system of stochastic differential equations with smooth coefficient (Lipschitz continuous). Existence and uniqueness of the solution is well known.
\end{proof}

The solution $V_n := (V^j)_{j\in I_n}$ to the above system defines a $\T^N$-valued random variable, where $\T = \mathcal{C}([0,T],\R)$. 

Given a metric space $\mathfrak{X}$, in what follows $\mathfrak{X}=\T,\,\T^N,$ or $\T^\Z$, and the corresponding distance $d$ we consider the measurable space $(\mathfrak{X}, \mathcal{B}_d)$, where $\mathcal{B}_d$ is the Borelian $\sigma$-algebra induced by the topology defined by $d$, and note $\mP(\mathfrak{X})$ the set of probability measures on $(\mathfrak{X}, \mathcal{B}_d)$.

We note $P \in \mP(\T)$, the law of each scaled Brownian motion $\sigma B^i$, $P^{\otimes N} \in \mP(\T^N)$ the law of $N$ independent scaled Brownian motions $\sigma B^j$, $j \in I_n$, and $P^{\otimes \Z} \in \mP(\T^\Z)$ the law of $(\sigma B^j_t)_{j \in \Z}$.
We also note $P^N(J_n) \in \mP(\T^N)$ the law of the solution to $\mathcal{S}^{N}(J_n)$.

We note $u=(u^i)_{i \in \Z}$ an element of $\T^\Z$ and $u_n=(u^i)_{i \in I_n}$ its projection on $\T^N$.

Given $\mu \in \mP(\T^\Z)$ we note $\mu^{I_n} \in \mP(\T^N)$ its marginal over the set of coordinates of $u_n$.

Because of the shift invariance of the covariance $R_\J$ we are naturally led to consider stationary probability measures on $\T^\Z$.
For this, let $S^i$ be the shift operator acting on $\T^{\Z}$ by
\[
(S^iu)^j=u^{i+j},\,u\in \T^{\Z},\,i,j \in \Z,
\]
and let $\mathcal{P}_S\big(\T^{\Z}\big)$ be the space of all probability 
measures that are invariant under $S$. 
This property obviously implies the invariance under $S^i$, for all integers $i$.
The periodic empirical measure $\emp:\T^{N} \to \PS(\T^\Z)$ is defined to be\begin{equation}\label{eq:empirical}
\empp{n}(u_n)=\frac{1}{N}\sum_{i \in I_{n}} \delta_{S^iu_{n,p}},
\end{equation}
where $u_{n,p} \in \T^{\Z}$ is the periodic interpolant of $u_n$, i.e. such that $u_{n,p}^j := u_n^{j\mod I_n}$. Let $\Pi^n(J_n)=P^N(J_n) \circ \emp^{-1} \in \mathcal{P}\big(\mathcal{P}_S(\T^{\Z})\big)$ be the (quenched) law of $\hat{\mu}_n(V_n)$ under $P^N(J_n)$, and $\Pi^n := \Exp^{\gamma}[\Pi^n(J_n)] =$ $ \Exp^\gamma \left[  P^N(J_n) \right]  \circ \emp^{-1}  \in  \mathcal{P}\big(\mathcal{P}_S(\T^{\Z})\big)$ be the annealed (averaged) law of $\hat{\mu}_n(V_n)$ under the averaged law $Q^n:=\Exp^\gamma [ P^N(J_n) ]$.  Finally let $\Pi_0^n = P^{\otimes N}  \circ \emp^{-1} $ be the law of $\emp(\sigma B_n)$, i.e. the law of the empirical measure under $P^{\otimes N}$. 

We metrize the weak topology on $\T^\Z$ with the following distance
\begin{equation}\label{eq:metric2}
d_T(u,v)=\sum_{i \in \Z} b_i \norm{f(u^i)-f(v^i)}_T 
\end{equation}
where $\norm{f(u^i)-f(v^i)}_T=\sup_{t \in [0,T]} |f(u^i_t)-f(v^i_t)|$ and the positive sequence $b_i$ is defined by \eqref{eq:RJdef}. 

We use the Wasserstein-1 distance to metrize the weak topology on $\mathcal{P}(\T^{\Z})$:\\
 given $\mu$, $\nu \in \mP(\T^\Z)$ we define
\begin{equation}\label{eq:dPT}
D_T(\mu,\nu)=\inf_{\xi \in C(\mu,\nu)} \int d_T(u,v)\,d\xi(u,v),
\end{equation}
where $C(\mu,\nu)$ denotes the set of probability measures on $\T^\Z \times \T^\Z$ with marginals $\mu$ and $\nu$ on the first and second factors  (couplings).
\label{sec:firstlook}

The following is our main result.
\begin{theorem}\label{Theorem:Main}
\hspace{2em}
	\begin{enumerate}[(i)]
		\item The sequence of laws $\big(\Pi^n\big)_{n\in\Z^+}$ satisfies a Large Deviation Principle with respect to the weak topology on $\mathcal{P}_S(\T^{\Z})$, with good rate function $H(\mu) :\PS\big(\T^{\Z}\big) \to \R$. 
		\item The rate function $H$ has the following structure. If it is not the case that $\mu^{I_n} \ll P^{\otimes N}$ for all $n$, then $H(\mu) = \infty$, otherwise
		\begin{equation}
		\label{eq:Hmu2}
		H(\mu) = \inf_{\zeta \in \PS(\T^\Z): \Psi(\zeta) = \mu}\left\lbrace I^{(3)}(\zeta) \right\rbrace,
		\end{equation}
		where the measurable function $\Psi: \mP_S(\T^\Z) \to \mP_S(\T^\Z)$ is defined in Section \ref{Section Definition Tilde Psi qm rm}.
		and \(I^{(3)}\) in Theorem~\ref{Theorem: Pi 0 LDP}.
		\item $H$ has a unique zero $\mu_*=\Psi(P^{\otimes \Z})$.
		\item  $\mu_*$  is the law of the unique weak solution $Z$ of the following system of McKean-Vlasov-type equations,
		\begin{align}\label{eq: limit equations} 
		Z^j_t &= \sigma W^j_t + \sigma \int_0^t \theta^j_s ds  \\
		\theta^j_t &= \sigma^{-2}\sum_{i\in \Z} \int_0^t L_{\mu_*}^{ i-j}(t,s)dZ^{i}_s. \nonumber 
		\end{align}
		The sequence of processes $\big(\sigma W^j\big)_{j\in\Z}$ is distributed as $P^{\otimes \Z}$, and $L_{\mu_*}$ is defined in Remark~\ref{rem:Lmukt} and Appendix \ref{app:covcont}.
		Furthermore $\mu_*$ is Gaussian.
	\end{enumerate}
\end{theorem}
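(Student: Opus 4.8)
The plan is to bypass the Radon--Nikodym approach of earlier work and instead show that the finite-size annealed system is, in the large-deviations sense, an exponentially good approximation of the infinite limiting system \eqref{eq: limit equations}, whose empirical measure is itself a continuous deterministic image, through the map $\Psi$, of the empirical measure of the driving Brownian field. Concretely I would establish the chain: $\Pi^n$ is exponentially equivalent to $\Pi_0^n\circ\Psi^{-1}$, while $\Pi_0^n$ satisfies the LDP with rate $I^{(3)}$; then exponential equivalence together with the contraction principle forces $\Pi^n$ to satisfy an LDP with good rate function $H(\mu)=\inf\{I^{(3)}(\zeta):\Psi(\zeta)=\mu\}$, which is (i)--(ii).

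First I would prove Theorem~\ref{Theorem: Pi 0 LDP}: the law $\Pi_0^n=P^{\otimes N}\circ\emp^{-1}$ of the periodic empirical measure \eqref{eq:empirical} of $N$ i.i.d.\ scaled Brownian motions satisfies an LDP on $\PS(\T^\Z)$ with good rate function $I^{(3)}$, the process-level specific relative entropy rate with respect to $P^{\otimes\Z}$ --- a classical empirical-field result, the only new work being the periodisation over $I_n$ and exponential tightness for the distance $D_T$ of \eqref{eq:dPT}. Next, applying Girsanov to \eqref{eq:model} and using that for a fixed path the interaction $\sum_j J^{ij}_n f(V^j_\cdot)$ is Gaussian in the disorder with covariance given by \eqref{eq:cov}, the average over $\gamma$ becomes an explicit Gaussian integral, so $\mathrm{d}Q^n/\mathrm{d}P^{\otimes N}$ is the exponential of a quadratic path functional and, equivalently, under $Q^n$ the configuration $V_n$ solves a closed, disorder-free, non-Markovian SDE: a finite-$n$, random-kernel version of \eqref{eq: limit equations} whose kernel, by a law of large numbers in the neuron index, collapses as $n\to\infty$ onto the deterministic $L_{\mu_*}$ of Remark~\ref{rem:Lmukt}. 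Finally I would make rigorous $L_\mu$ and the solution map $\Psi$ of Section~\ref{Section Definition Tilde Psi qm rm}: the decay \eqref{eq:akbl} makes the neuron-index sums converge with constants uniform in $n$ (and yields the $C^3$ regularity of Remark~\ref{rem:twicediff}), the strict positivity \eqref{eq:bound on spectrum} makes the operator attached to $L_\mu$ invertible so that $\Psi$ --- the law of the solution of the linear non-Markovian SDE \eqref{eq: limit equations} driven by a noise field of law $\zeta$ --- is well defined, and existence/uniqueness of that solution follows by Picard iteration in a space of adapted processes, with $\sum_l b_l<\infty$ controlling the coupling across neurons; continuity of $\Psi$ for $D_T$ is obtained along the way.

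The heart of the proof is the superexponential approximation. Coupling the annealed SDE of the previous step and the limiting system \eqref{eq: limit equations} on a common probability space through the same Brownian field, I would show
\[
\lim_{n\to\infty}\frac1N\log Q^n\Big(D_T\big(\emp(V_n),\,\Psi(\emp(\sigma B_n))\big)>\delta\Big)=-\infty\qquad\text{for all }\delta>0,
\]
splitting it into (a) a deterministic Gronwall/continuity bound making the two configurations $d_T$-close whenever the empirical disorder statistics are close to their $\gamma$-means, and (b) a Gaussian-concentration estimate (Borell--TIS and Wick-chaos bounds) showing that the probability of such a deviation is superexponentially small, with $a_k=\smallO{1/|k|^3}$ used throughout to keep the constants $n$-independent and to control the periodisation error of $R_\J$. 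Exponential equivalence follows; combined with the free LDP and the contraction principle this yields (i)--(ii), goodness of $H$ and the fact that $H(\mu)=\infty$ unless $\mu^{I_n}\ll P^{\otimes N}$ for all $n$ being inherited from $I^{(3)}$.

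For (iii): $I^{(3)}$, being a specific relative entropy rate with respect to the product measure $P^{\otimes\Z}$, vanishes only at $P^{\otimes\Z}$, so $H(\mu)=0$ iff $\mu=\Psi(P^{\otimes\Z})=:\mu_*$. For (iv): by construction $\mu_*$ is the law of the weak solution of \eqref{eq: limit equations} with kernel $L_{\mu_*}$ and i.i.d.\ Brownian driving noise, unique by the preceding step; and for a fixed kernel \eqref{eq: limit equations} is a \emph{linear} equation driven by a Gaussian field, whence its solution --- and hence $\mu_*$ --- is Gaussian, consistently with the self-consistency defining $L_{\mu_*}$. The main obstacle is the uniform-in-$n$ superexponential estimate above: it must absorb the feedback of the disorder into the dynamics, so a naive Gronwall argument does not close, while the interaction is infinite-range with only polynomial decay --- and it is precisely \eqref{eq:akbl} and \eqref{eq:bound on spectrum} that render the path estimates and the Gaussian bounds $n$-independent. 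A secondary difficulty is the rigorous construction of $L_\mu$ and $\Psi$ together with their continuity, where the spectral positivity \eqref{eq:bound on spectrum} is essential.
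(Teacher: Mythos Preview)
Your high-level architecture is right --- transfer the LDP from $\Pi_0^n$ to $\Pi^n$ through a map $\Psi$ and an exponential approximation --- and this is what the paper does. But there is a genuine gap at the step you treat as incidental: you assert that ``continuity of $\Psi$ for $D_T$ is obtained along the way'' and then invoke the contraction principle. The paper does \emph{not} prove $\Psi$ continuous, and this is precisely the obstacle the whole machinery is built to avoid. The map $\Psi$ sends a noise law to the law of the solution of a non-Markovian SDE whose kernel $L_{\Psi(\zeta)}$ depends self-consistently on the output; there is no evident reason why weak convergence of the input law should survive the stochastic integrals and the operator inverse $(\mathrm{Id}+\sigma^{-2}\bar K_\mu)^{-1}$ defining $L_\mu$. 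Instead, the paper constructs a family of \emph{genuinely} continuous maps $\Psi^m$ by (a) discretising time into $m$ steps and (b) truncating the spatial correlation to range $q_m$; continuity of $\Psi^m$ is then elementary because the stochastic integrals become finite sums of increments (Lemma~\ref{Lem: Psim continuous}). The limit $\Psi$ is defined only as $\lim_j \Psi^{m_j}$ on the set $\mathfrak{A}$ where this Cauchy sequence converges, extended measurably off $\mathfrak{A}$, and the LDP is obtained not by the contraction principle but by the \emph{extended} contraction principle \cite[Th.~4.2.23]{dembo-zeitouni:97}, which requires only that the $\Psi^m$ be continuous, converge to $\Psi$ uniformly on level sets of $I^{(3)}$, and give exponentially good approximations of $\Pi^n$. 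Without this approximation layer your argument does not close.

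A second gap is in your superexponential estimate. You propose Borell--TIS Gaussian concentration for the disorder, but under the annealed law $Q^n$ the weights have already been integrated out: there is no residual Gaussian field to which such inequalities apply. What survives is the SDE representation of Corollary~\ref{Corollary Measure Representation}, in which the drift $\theta^j$ is a nonlinear functional of the empirical measure $\emp(V_n)$ through the conditional covariance $L_{\emp(V_n)}$. The paper's proof of the key Lemma~\ref{Lemma Exp Equivalent 1} therefore proceeds by passing to discrete Fourier transforms in the neuron index (which diagonalise the shift-invariant kernels), decomposing $\tilde\theta^p-\tilde\theta^{m,p}$ into five explicit error terms, introducing a stopping time $\tau(\epsilon,\mathfrak c)$ to tame the self-referential feedback you correctly flag (``a naive Gronwall does not close''), and controlling each piece with Doob's submartingale inequality, Cram\'er's theorem, and the martingale moment bound of Lemma~\ref{lem:BG} --- none of which are Gaussian tail bounds on the disorder. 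Your sketch does not indicate how to replace these ingredients.
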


The proof of this theorem uses the following, classical, theorem \cite{bryc-dembo:96} and \cite[Section 6]{dembo-zeitouni:97}. Recall that $\Pi^n_0$ is the law of the empirical measure under $P^{\otimes N}$.

\begin{theorem}\label{Theorem: Pi 0 LDP}
The sequence of laws $\big(\Pi^n_0\big)_{n\in\Z^+}$ satisfies a large deviation principle with good rate function $I^{(3)}$ on $\PS(\T^{\Z})$.  The specific relative entropy is
\begin{equation}\label{eq: I3 definition}
I^{(3)}(\mu) = \lim_{n\to\infty}\frac{1}{N}I^{(2)}\big(\mu^{I_n} | P^{\otimes N} \big),
\end{equation}
where, for measures \(\nu\) and \(\rho\) on \(\R^N\), the relative entropy \(I^{(2)}\) is defined by
	\[
	I^{(2)}(\rho|\nu) = 
	\begin{cases}\displaystyle\int_{\R^N}\log\dfrac{d\rho}{d\nu}(x)\nu(dx)\quad\text{ if } \rho \ll \nu\\
	+\infty \quad\quad \text{ otherwise},
	\end{cases}
	\]
see e.g. \cite{ellis:85}.

The unique zero of $I^{(3)}$ is $P^{\otimes \Z}$.
\end{theorem}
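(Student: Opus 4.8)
Theorem~\ref{Theorem: Pi 0 LDP} is classical; the plan is to obtain it from the process-level (``level-3'') large deviation principle for independent identically distributed random fields on a Polish state space. Under $P^{\otimes N}$ the paths $\sigma B^i$, $i\in I_n$, are i.i.d.\ with common law $P$ on the Polish space $\T=\mathcal{C}([0,T],\R)$, and, by construction, $\empp{n}$ is precisely the associated \emph{periodic empirical field}, a $\PS(\T^\Z)$-valued random variable; the periodisation in \eqref{eq:empirical} is what makes it automatically $S$-invariant. The first step is therefore to invoke the Donsker--Varadhan level-3 LDP for i.i.d.\ sequences, in the form of \cite{bryc-dembo:96} and \cite[Section~6]{dembo-zeitouni:97} (obtained by a projective-limit argument over the finite-dimensional block marginals for the upper bound and a change-of-measure plus ergodic-theorem argument for the lower bound): $(\Pi^n_0)_n$ satisfies an LDP on $\PS(\T^\Z)$ for the weak topology, with good rate function the specific (mean) relative entropy $h(\,\cdot\,|\,P^{\otimes\Z})$.

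The second step is to identify this specific relative entropy with the limit in \eqref{eq: I3 definition}. Fix a stationary $\mu$, put $N=2n+1$ and $a_N:=I^{(2)}(\mu^{I_n}|P^{\otimes N})$ (by stationarity of $\mu$ this depends only on $N$). Since $P^{\otimes\Z}$ is a \emph{product} measure, the chain rule for relative entropy together with convexity of relative entropy in its first argument (Jensen, via ``a marginal of a conditional law is the unconditional marginal'') yields superadditivity, $a_{N_1+N_2}\ge a_{N_1}+a_{N_2}$ over disjoint blocks of coordinates; hence by Fekete's lemma $N^{-1}a_N$ converges, to $\sup_N N^{-1}a_N\in[0,\infty]$, and this common value is $h(\mu\,|\,P^{\otimes\Z})$, i.e.\ the right-hand side of \eqref{eq: I3 definition}. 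Lower semicontinuity of $I^{(3)}$ then follows since each $\mu\mapsto a_N$ is lower semicontinuous (lower semicontinuity of relative entropy, see \cite{ellis:85}) and $I^{(3)}$ is their supremum.

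Because $\T$ is not locally compact, goodness of $I^{(3)}$ (compactness of its sublevel sets) must come from \emph{exponential tightness} of $(\Pi^n_0)_n$. Here one uses that Wiener measure $P$ is exponentially tight on $\T$ --- via Schilder's theorem, or directly from a Gaussian tail bound and a modulus-of-continuity estimate --- to produce, for each $\ell$, a compact $K_\ell\subset\T$ with $P(K_\ell^c)$ arbitrarily small; by stationarity the sets $\{\mu\in\PS(\T^\Z):\mu^{\{0\}}(K_\ell^c)\le\varepsilon_\ell\ \text{for all }\ell\}$ are compact in $\PS(\T^\Z)$, and for an appropriate choice of the $K_\ell$ and $\varepsilon_\ell$ they carry all but an $e^{-LN}$ fraction of the mass of $\Pi^n_0$ for arbitrarily large $L$. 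This is exactly the input needed for the cited level-3 theorem to return a \emph{good} rate function and to extend the upper bound to all closed sets.

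It remains to see that $P^{\otimes\Z}$ is the unique zero of $I^{(3)}$. That $I^{(3)}(P^{\otimes\Z})=0$ is immediate from \eqref{eq: I3 definition}. For the converse, marginalisation can only decrease relative entropy, so $a_N\ge a_1=I^{(2)}(\mu^{\{0\}}|P)$ for every $N$, hence $I^{(3)}(\mu)\ge I^{(2)}(\mu^{\{0\}}|P)$ and $I^{(3)}(\mu)=0$ forces $\mu^{\{0\}}=P$; moreover, expanding $a_N$ by the chain rule (and using stationarity) as $a_N=\sum_{j=0}^{N-1}c_j$ with $c_j:=\Exp_\mu[I^{(2)}(\mu(\xi_0\,|\,\xi_{-1},\dots,\xi_{-j})\,|\,P)]$ nondecreasing in $j$ gives $I^{(3)}(\mu)=\lim_j c_j$, so $I^{(3)}(\mu)=0$ makes every $c_j$ vanish, which (letting $j\to\infty$ by martingale convergence) says that under $\mu$ the coordinate $\xi_0$ is independent of its past with conditional law $P$, whence $\mu=P^{\otimes\Z}$ by stationarity. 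The only genuine work is bookkeeping: verifying the hypotheses of the abstract level-3 theorem on the non-locally-compact path space $\T$ --- the exponential tightness step --- and unwinding the identity between $h(\,\cdot\,|\,P^{\otimes\Z})$, the limit in \eqref{eq: I3 definition}, and its unique zero; that is where I would expect the care to be needed.
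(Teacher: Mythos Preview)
Your proposal is a correct and well-organised sketch of the standard Donsker--Varadhan level-3 argument, and it invokes exactly the references the paper cites (\cite{bryc-dembo:96} and \cite[Section~6]{dembo-zeitouni:97}). However, the paper itself does \emph{not} prove this theorem: it is stated as a classical result imported from the literature, with no proof given beyond the two citations. So there is nothing to compare on the paper's side; you have supplied the argument that the authors chose to omit, and your outline---the periodic empirical field, superadditivity of block entropies via Fekete, exponential tightness on $\T$ from Gaussian tails, and the chain-rule identification of the unique zero---is the expected content of those references specialised to an i.i.d.\ source on a Polish path space.
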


A standard argument yields that the averaged LDP of the previous theorem implies almost sure convergence of the empirical measure under the quenched law \cite{ben-arous-guionnet:95}. This is stated in the following corollary. 
\begin{corollary}\label{Corr: Correlated Chaos}
For almost every realization of the weights and Brownian motions, 
\[
\emp(V_n) \to \mu_*\quad \mbox{as }N \to \infty.
\]
\end{corollary}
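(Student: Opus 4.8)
The plan is to derive the quenched almost-sure convergence from the annealed large deviation principle of Theorem~\ref{Theorem:Main}(i) together with the uniqueness of the zero of $H$ stated in Theorem~\ref{Theorem:Main}(iii), by a Borel--Cantelli argument; this is the ``standard argument'' alluded to in the statement.

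First I would prove exponential concentration of $\emp(V_n)$ around $\mu_*$ under the annealed law. Fix $\epsilon>0$ and set $\mathcal{O}_\epsilon=\{\mu\in\PS(\T^\Z):D_T(\mu,\mu_*)<\epsilon\}$. Since $D_T$ metrizes the weak topology, $\mathcal{O}_\epsilon$ is open and its complement $\mathcal{O}_\epsilon^c$ is closed, so the large deviation upper bound of Theorem~\ref{Theorem:Main}(i) yields
\[
\limsup_{n\to\infty}\frac{1}{N}\log\Pi^n\big(\mathcal{O}_\epsilon^c\big)\le-\inf_{\mu\in\mathcal{O}_\epsilon^c}H(\mu)=:-c_\epsilon .
\]
The crucial point is that $c_\epsilon>0$: if $c_\epsilon=0$ there would exist $\mu_k\in\mathcal{O}_\epsilon^c$ with $H(\mu_k)\to0$, hence eventually $\mu_k\in\{H\le1\}$, a set that is compact because $H$ is a good rate function; passing to a convergent subsequence and using the lower semicontinuity of $H$ and the closedness of $\mathcal{O}_\epsilon^c$ produces a zero of $H$ in $\mathcal{O}_\epsilon^c$, contradicting the fact that $\mu_*\in\mathcal{O}_\epsilon$ is the unique zero of $H$. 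Therefore there is $n_0(\epsilon)$ with $\Pi^n(\mathcal{O}_\epsilon^c)\le e^{-Nc_\epsilon/2}$ for all $n\ge n_0(\epsilon)$, and since $\Pi^n=\Exp^\gamma[P^N(J_n)]\circ\emp^{-1}$ this reads
\[
\Exp^\gamma\Big[P^N(J_n)\big(D_T(\emp(V_n),\mu_*)\ge\epsilon\big)\Big]\le e^{-Nc_\epsilon/2},\qquad n\ge n_0(\epsilon).
\]

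Next I would invoke Borel--Cantelli. As the coefficients of $\mathcal{S}^{N}(J_n)$ are Lipschitz, its solution is a measurable functional of $\big(J_n,(B^i)_{i\in I_n}\big)$; work on a probability space carrying, for every $n$, a weight matrix $J_n$ with the covariance \eqref{eq:cov} together with the Brownian motions driving $\mathcal{S}^{N}(J_n)$, and let $V_n$ be the corresponding solution. The last display states that the probability of $\{D_T(\emp(V_n),\mu_*)\ge\epsilon\}$ is at most $e^{-Nc_\epsilon/2}$ for $n$ large, and since $N=2n+1$ these bounds are summable in $n$; hence, almost surely, $D_T(\emp(V_n),\mu_*)<\epsilon$ for all sufficiently large $n$. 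Taking $\epsilon=1/m$ and intersecting over $m\in\N$ gives $D_T(\emp(V_n),\mu_*)\to0$, i.e.\ $\emp(V_n)\to\mu_*$ weakly, almost surely. Since the Brownian motions may be taken independent of the weights, Fubini's theorem turns this into the quenched statement: for almost every realization of the weights, $\emp(V_n)\to\mu_*$ for $P$-almost every realization of the Brownian motions.

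The argument involves no substantial difficulty. The only two points requiring care are the verification that $\inf_{\mathcal{O}_\epsilon^c}H=c_\epsilon>0$, which genuinely relies on the goodness of $H$ (compactness of its sublevel sets) and on the uniqueness of its zero, not merely on $H$ being positive off $\mu_*$; and the (routine) construction of a single probability space on which all the finite-size systems are realized simultaneously, so that the phrase ``almost every realization'' in the statement is meaningful.
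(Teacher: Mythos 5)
Your proof is correct. It does, however, take a somewhat different route from the paper's. The paper first establishes a quenched LDP upper bound (Proposition~\ref{prop:halfLDP}: for almost all weights $J_n$, $\limsup_N \frac{1}{N}\log P^N(J_n)(\emp\in F)\le -\inf_F H$ for closed $F$), which is itself obtained from the annealed LDP by a Borel--Cantelli argument; the Corollary is then deduced from Proposition~\ref{prop:halfLDP} by a \emph{second} Borel--Cantelli application, this time over the Brownian randomness. You instead skip the intermediate proposition and apply Borel--Cantelli \emph{once}, directly to the summable annealed tail estimate $\Pi^n(\mathcal{O}_\epsilon^c)\le e^{-Nc_\epsilon/2}$ on the joint probability space of weights and Brownian motions. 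Both routes are valid. Yours is shorter and gives precisely the Corollary's assertion (which is a statement about the joint law, so your closing Fubini remark is in fact superfluous: ``for almost every realization of the weights and Brownian motions'' is exactly the conclusion of the single joint Borel--Cantelli). The paper's route yields the strictly more informative Proposition~\ref{prop:halfLDP} (a quenched LDP upper bound valid for \emph{all} closed sets simultaneously, not just the countable family $\mathcal{O}_{1/m}^c$), which the authors state as a result of independent interest. One point you handled well and that is worth emphasizing: showing $c_\epsilon=\inf_{\mathcal{O}_\epsilon^c}H>0$ genuinely requires both the goodness of $H$ (compact sublevel sets, Theorem~\ref{Theorem:Main}(i)) and the uniqueness of its zero (Theorem~\ref{Theorem:Main}(iii)); a rate function with $H>0$ off $\mu_*$ but non-compact level sets would not give a positive infimum on the closed complement of a neighbourhood of $\mu_*$.
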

\begin{proof}
The proof is standard. It follows from an application of Borel-Cantelli's Lemma to Proposition \ref{prop:halfLDP}.
\end{proof}
\begin{remark}
Note that this implies that for all $f \in C_b(\T^\Z)$ and for almost all $\omega \in \Omega$.
\begin{equation}
\label{eq:questionolivier3}
\lim_{N \to \infty} \frac{1}{N} \int_{\T^N} \sum_{i \in I_n} \mathbb{E}^{P^N(J_n)(\omega)}f(S^i V_{n,p})  = \int_{\T^\Z} f(v) \, d\mu_*(v)\
\end{equation}
\end{remark}

\begin{proposition}\label{prop:halfLDP}
For any closed set $F$ of $\mP_S(\T^\Z)$ and for almost all $J_n$,
\[
\limsup_{N \to \infty} \frac{1}{N} \log \left[ P^N(J_n)(\emp \in F) \right] \leq - \inf_{\mu \in F} H(\mu).
\]
\end{proposition}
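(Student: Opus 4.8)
The plan is to obtain this quenched upper bound as a routine consequence of the annealed large deviation principle of Theorem~\ref{Theorem:Main}(i), via Chebyshev's inequality and the Borel--Cantelli lemma, following the scheme of \cite{ben-arous-guionnet:95}. The key elementary observation is that, by definition of the annealed law, for every measurable set $F$,
\[
\Exp^\gamma\big[P^N(J_n)(\emp\in F)\big] \;=\; \Exp^\gamma\big[\Pi^n(J_n)(F)\big] \;=\; \Pi^n(F),
\]
so that $J_n\mapsto P^N(J_n)(\emp\in F)$ is a nonnegative, $\gamma$-integrable random variable with expectation $\Pi^n(F)$; its measurability follows from the weak well-posedness of $\mathcal{S}^N(J_n)$ (Proposition~\ref{prop:SNJchi}) together with the continuity of $\emp$, which makes $\emp^{-1}(F)$ a closed subset of $\T^N$. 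Since $F$ is closed, the annealed LDP gives
\[
\limsup_{N\to\infty}\frac1N\log\Pi^n(F)\;\le\;-\ell_F,\qquad \ell_F:=\inf_{\mu\in F}H(\mu)\in[0,+\infty].
\]

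First I would treat the case $\ell_F<\infty$. Fix $\delta>0$; there is $n_0$ with $\Pi^n(F)\le e^{-N(\ell_F-\delta/2)}$ for all $n\ge n_0$. Markov's inequality then yields, for $n\ge n_0$,
\[
\gamma\Big(P^N(J_n)(\emp\in F)\ge e^{-N(\ell_F-\delta)}\Big)\;\le\; e^{N(\ell_F-\delta)}\,\Pi^n(F)\;\le\; e^{-N\delta/2},
\]
which is summable in $n$ because $N=2n+1$. By Borel--Cantelli, for $\gamma$-almost every realization of the weights there is a (random) index $n_1(\delta)$ beyond which $P^N(J_n)(\emp\in F)<e^{-N(\ell_F-\delta)}$, so that
\[
\limsup_{N\to\infty}\frac1N\log P^N(J_n)(\emp\in F)\;\le\;-\ell_F+\delta .
\]
Intersecting the corresponding full-measure events over $\delta=1/k$, $k\in\Z^+$, and letting $k\to\infty$ gives the claim on a set of full $\gamma$-measure. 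The case $\ell_F=\infty$ is handled identically: running the same estimate with an arbitrary constant $M>0$ in place of $\ell_F-\delta$ shows that $\limsup_N\frac1N\log P^N(J_n)(\emp\in F)\le -M$ almost surely for every $M$, hence equals $-\infty$.

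I do not expect a genuine obstacle here: the mathematical content is entirely upstream, in the proof of the annealed LDP (Theorem~\ref{Theorem:Main}(i)), and the present statement is only the standard ``annealed $\Rightarrow$ quenched'' transfer. The two minor points worth a word are (a) that all the matrices $(J_n)_{n\ge1}$ should be realized on one probability space $(\Omega,\A,\gamma)$ so that Borel--Cantelli applies across $n$ --- harmless, since only their marginal laws ever enter --- and (b) the measurability (indeed upper semicontinuity) of $J_n\mapsto P^N(J_n)(\emp\in F)$, which follows from the weak continuity of $J_n\mapsto P^N(J_n)$ and the portmanteau theorem applied to the closed set $\emp^{-1}(F)$.
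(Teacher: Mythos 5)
Your proof is correct and is essentially the same argument the paper invokes — the paper simply defers to \cite[Th.~2.7]{ben-arous-guionnet:95}, whose content is exactly this standard Markov--Chebyshev plus Borel--Cantelli transfer from the annealed LDP upper bound. Your two side remarks (realizing all $J_n$ on a common $(\Omega,\mathcal A,\gamma)$, and measurability of $J_n\mapsto P^N(J_n)(\emp\in F)$) are the right things to flag; both are indeed harmless here.
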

\begin{proof}
The proof, found in \cite[Th. 2.7]{ben-arous-guionnet:95}, follows from an application of Borel-Cantelli's Lemma.
\end{proof}
\begin{remark}
Note that in the case we assume the synaptic weights to be uncorrelated, equations \eqref{eq: limit equations} reduce to
\begin{equation}\label{eq: limit equations guionnet} 
		Z_t = \sigma W_t + \sigma^{-1} \int_0^t  \int_0^s L_{\mu_*}(s,u)dZ_u ds  \\
\end{equation}
which is exactly the one found in \cite[Th. 5.14]{ben-arous-guionnet:95}.
\end{remark}
\section{Proof of Theorem~\ref{Theorem:Main}}
Our strategy is partially inspired from the one in \cite{ben-arous-guionnet:95,guionnet:97}. We apply Girsanov's Theorem to $\mathcal{S}^{N}(J_n)$ to obtain the Radon-Nikodym derivative of the measure $P^N(J_n)$ with respect to the measure $P^{\otimes N}$ of the system of $N$ uncoupled neurons. We then show that the average $Q^n$ of  $P^N(J_n)$  w.r.t. to the weights is absolutely continuous w.r.t. $P^{\otimes N}$ and compute the corresponding Radon-Nikodym derivative which characterizes the averaged (annealed) process. As in the work of Ben Arous and Guionnet 
\cite{ben-arous-guionnet:95}, the idea is to deduce our LDP from the one satisfied by the sequence $(\Pi_0^n)_{n \in \N}$. We differ from the work of Ben Arous and Guionnet in that in order to obtain the Large Deviation Principle that governs this process we approximate the averaged system of SDEs with a system with piecewise constant in time coefficients by discretizing the time interval $[0,T]$ into $m$ subintervals of size $T/m$, for $m$ an integer. This system allows us to construct a sequence of continuous maps $\Psi^m: \mP_S(\T^\Z) \to \mP_S(\T^\Z)$ and a measurable map $\Psi: \mP_S(\T^\Z) \to \mP_S(\T^\Z)$ such that the sequence $\Psi^m$ converges uniformly toward $\Psi$ on the level sets of the good rate function of the LDP satisfied by $\Pi_0^n$. We then show that for a specific choice $m(n)$ of $m$ as a function of $n$ the sequence $\Pi_0^n \circ (\Psi^{m(n)})^{-1}$ is an exponentially good approximation of the sequence $\Pi^n$. The LDP for $\Pi^n$ and the corresponding good rate function then follow from a Theorem by Dembo and Zeitouni, \cite[Th.~4.2.23]{dembo-zeitouni:97}.

In more details, we use Girsanov's Theorem to establish in Section \ref{subsection:annealed equations} the SDEs whose solution's law is the averaged law $Q^n$. In Section \ref{Section Definition Tilde Psi qm rm} we construct an approximation of these equations by a) discretizing the time interval $[0,T]$ with $m$ subintervals and b) cutting off the spatial correlation of the weights so that it extends over $[-q_m,q_m]$ rather than over $[-n,n]$, $q_m \leq n$. We then use this approximation to construct the family $(\Psi^m)_{m \in \N}$ of continuous maps. Section \ref{subsection:proofT2.4} contains the proof of our main 
Theorem~\ref{Theorem:Main}. 
This proof contains two main ingredients, the exponential tightness of $(\Pi^n)_{n \in \Z^+}$ proved in Section \ref{sect:exponential tightness}, and the existence of an exponential approximation of the family of measures $(\Pi^n)_{n \in \Z^+}$ by the family of measures $(\Pi^{m,n})_{m, n \in \Z^+}=\Pi_0^n \circ (\Psi^m)^{-1}$ constructed from the law of the solutions to the approximate equations. The existence of this exponential approximation and the possible choices for $m$ and $q_m$ as functions of $n$ are  proved in Section \ref{subsec:main lemma}. The unique minimum of the rate function is characterized in Section \ref{Section Limiting Process}.
\subsection{The SDEs governing the Finite-Size Annealed Process}\label{subsection:annealed equations}

For every $J_n  \in \R^{N \times N}$, $P^N(J_n)$ is a probability measure on $\T^N$ and as a consequence of Girsanov's theorem

\begin{multline*}
\left.\frac{d P^{N}(J_n)}{d P^{\otimes N}}
\right|_{\mathcal{F}_T}=\\\exp  \left\{ \frac{1}{\sigma}\sum_{i \in I_{n}}\int_0^T \left( \sum_{j \in I_{n}} J^{ij}_n f(X^j_{t}) \right) \,dB^i_t\right. \left.-\frac{1}{2\sigma^2} \sum_{i \in I_{n}}\int_0^T \left( \sum_{j \in I_{n}} J^{ij}_n f(X^j_{t}) \right)^2\,dt \right\},
\end{multline*}
 where
 \begin{equation}\label{eq:dBt}
 X^j_t = \sigma B^j_t
 \end{equation}
In Proposition \ref{prop:RNderiv1}  below, we demonstrate that the
Radon-Nikodym derivative of $Q^n$ w.r.t. \(P^{\otimes N}\) exists and 
is a function of the empirical measure. 
To facilitate this, we must introduce intermediate 
centered Gaussian Processes $(G^i_t)_{i\in I_n, t\in [0,T]}$, for which 
it turns 
out that their probability law is entirely determined by the empirical measure, 
i.e.
 \begin{equation}\label{eq:Gnit}
 G^i_t=\sum_{j \in I_{n}} J_{n}^{ij} f(X^j_t),\,i \in I_{n}.
 \end{equation}

  It can be verified that the covariance is entirely determined by the empirical measure, i.e., according to equation \eqref{eq:cov}
 \begin{multline}\label{eq:covGiGk}
 \Exp^\gamma \left[G_{t}^{i}G_s^{k}\right]= \int_\Omega G^i_t(\omega)G^k_s(\omega)\,d\gamma(\omega)= \\
 \frac{1}{N}\sum_{l,j \in I_{n}} R_\J((k-i) \bmod I_n, (l-j) \bmod I_n) f(X^j_t)f(X^l_s)=\\
\quad   \sum\limits_{m \in I_{n}} R_\J((k-i) \bmod I_n,m) \frac{1}{N}\sum\limits_{j \in I_n}f(X^j_t)f(X^{(j+m) \bmod I_n}_s)=\\
\sum\limits_{m \in I_{n}} R_\J((k-i) \bmod I_n,m)\int_{\T^{\Z}} f(v^{0}_t)f(v^{m}_s)\,d\emp(X_n)(v):=K_{\empp{n}(X_n)}^{k-i}(t,s).
 \end{multline}
\begin{remark}\label{rem:Gnit}
Note that we have shown that under $\gamma$, the sequence $G^i$, $i \in I_n$, is centered, stationary with  covariance 
$K_{\empp{n}(X_n)}$. To make this dependency explicit we write $\gamma^{\hat{\mu}_n(X_n)}$ the law under which the Gaussian process $(G^i_t)_{i \in I_n, t \in [0,T]}$ has mean 0 and covariance $K_{\empp{n}(X_n)}$. 
\end{remark}
Before we prove the following proposition which is key to the whole approach we need to introduce a few more notations. We note
\begin{equation}\label{eq:defLambdat}
\Lambda_t(G) :=\frac{\exp\left\{ -\frac{1}{2\sigma^2} \sum_{i \in I_n}\int_0^t \left(G^i_s\right)^2\,ds\right\}}{\Exp^{\gamma^{\emp(X_n)}} \left[ \exp\left\{ -\frac{1}{2\sigma^2} \sum_{i \in I_n}\int_0^t \left(G^i_s\right)^2\,ds\right\} \right]},
\end{equation}
and define the new probability law
\begin{equation}\label{eq:gammatilde}
\bar{\gamma}^{\hat{\mu}_n(X_n)}_t:=\Lambda_t(G) \cdot \gamma^{\hat{\mu}_n(X_n)}.
\end{equation}
\begin{remark}\label{rem:Kmuk}
More generally given a measure $\mu$ in $\mP_S(\T^\Z)$ we note $\gamma^\mu$ the law under which the Gaussian process $(G^i_t)_{i \in I_n, t \in [0,T]}$ has mean 0 and covariance $K_\mu$ such that
\[
K_\mu^k(t,s)= \sum\limits_{m \in I_{n}} R_\J(k,m)\int_{\T^{\Z}} f(v^{0}_t)f(v^{m}_s)\,d\mu(v)
\]
and
\[
\bar{\gamma}^\mu_t := \Lambda_t^\mu(G) \cdot \gamma^\mu,
\]
where
\[
\Lambda_t(G) :=\frac{\exp\left\{ -\frac{1}{2\sigma^2} \sum_{i \in I_n}\int_0^t \left(G^i_s\right)^2\,ds\right\}}{\Exp^{\gamma^{\mu}} \left[ \exp\left\{ -\frac{1}{2\sigma^2} \sum_{i \in I_n}\int_0^t \left(G^i_s\right)^2\,ds\right\} \right]}.
\]
The properties of $K_\mu$ are proved in Appendix \ref{app:covariances}.
Note that we do not make explicit the dependency of $\Lambda$ on $\mu$ since it is always clear from the context, see next remark.
\end{remark}
\begin{remark}\label{rem:Lmukt}
To each covariance $K_\mu$ defined in Remark~\ref{rem:Kmuk} we associate a new covariance $L_\mu^t$ such that
\[
L_\mu^{t,k}(s,u) = \Exp^{\gamma^\mu} \left[ \Lambda_t(G) G_s^0 G_u^k  \right] = \Exp^{\bar{\gamma}^\mu_t} \left[  G_s^0 G_u^k  \right]
\]
for all $0 \leq s,\,u \ \leq t$. The properties of $L_\mu^t$, in particular the fact that it is a covariance, are stated and proved in Appendix \ref{app:covariances}. For the sake of simplicity and because it is always clear from the context, we drop the upper index $t$ and write $L_\mu^k$ instead of $L_\mu^{t,k}$.
\end{remark}
\begin{proposition}\label{prop:RNderiv1} 
The measures $Q^n$ and $P^{\otimes N}$ are equivalent, with Radon-Nikodym derivative over the time interval $[0,t]$ equal to
\begin{align}\label{eq:Girsanov derivative 1}
\left.\frac{dQ^n}{dP^{\otimes N}}\right|_{\mathcal{F}_t} &= \exp\bigg(\sum_{j\in I_n}\int_0^t \theta_s^j dB^j_s - \frac{1}{2}\sum_{j\in I_n}\int_0^t \big(\theta_s^j\big)^2 ds \bigg) \text{, where}\\
\theta^j_t &= \sigma^{-2} \Exp^{\bar{\gamma}^{\emp(X_n)}_t}\left[ \sum_{i\in I_n}G_t^{j}\int_0^t  G_s^{i} dB^{i}_s\right].\label{eq:Girsanov derivative 2}
\end{align}
\end{proposition}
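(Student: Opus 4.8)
The plan is to compute the Radon–Nikodym derivative of $Q^n$ with respect to $P^{\otimes N}$ by starting from the Girsanov formula for $dP^N(J_n)/dP^{\otimes N}$ displayed above, and then integrating over the weights $J_n$ under $\gamma$. The first step is to observe that, under $P^{\otimes N}$, the process $X^j = \sigma B^j$ is just scaled Brownian motion, and the weights $J_n$ are independent of the $B^i$ under the product measure $\gamma \otimes P^{\otimes N}$. Writing $G^i_t = \sum_{j\in I_n} J_n^{ij} f(X^j_t)$ as in \eqref{eq:Gnit}, the Girsanov density becomes
\[
\left.\frac{dP^N(J_n)}{dP^{\otimes N}}\right|_{\mathcal{F}_T} = \exp\Bigl\{ \tfrac{1}{\sigma}\sum_{i\in I_n}\int_0^T G^i_t\, dB^i_t - \tfrac{1}{2\sigma^2}\sum_{i\in I_n}\int_0^T (G^i_t)^2\, dt \Bigr\}.
\]
Since $Q^n = \Exp^\gamma[P^N(J_n)]$, we have $\left.\frac{dQ^n}{dP^{\otimes N}}\right|_{\mathcal{F}_T} = \Exp^\gamma\bigl[\left.\frac{dP^N(J_n)}{dP^{\otimes N}}\right|_{\mathcal{F}_T}\bigr]$, and by Remark \ref{rem:Gnit} the $\gamma$-law of the Gaussian field $(G^i_t)$ depends on $X_n$ only through the empirical measure $\emp(X_n)$, namely it is $\gamma^{\emp(X_n)}$. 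Hence the derivative at time $t$ equals $\Exp^{\gamma^{\emp(X_n)}}\bigl[\exp\{\tfrac{1}{\sigma}\sum_i \int_0^t G^i_s dB^i_s - \tfrac{1}{2\sigma^2}\sum_i \int_0^t (G^i_s)^2 ds\}\bigr]$, which is already a function of the empirical measure. Equivalence of $Q^n$ and $P^{\otimes N}$ follows because this density is strictly positive and has expectation one (the latter from the martingale property of the Girsanov exponential for each fixed $J_n$, or equivalently from Fubini and positivity of the spectral density \eqref{eq:bound on spectrum}, which guarantees the Gaussian integrals are finite).

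Next I would bring this into the stated exponential-martingale form. The key manipulation is to split the $\gamma^{\emp(X_n)}$-expectation of the exponential using the tilted law $\bar\gamma^{\emp(X_n)}_t$ from \eqref{eq:gammatilde}: the factor $\exp\{-\tfrac{1}{2\sigma^2}\sum_i\int_0^t (G^i_s)^2 ds\}$ together with its normalization is exactly $\Lambda_t(G)$ times that normalizing constant, so
\[
\left.\frac{dQ^n}{dP^{\otimes N}}\right|_{\mathcal{F}_t} = C_t(\emp(X_n))\cdot \Exp^{\bar\gamma^{\emp(X_n)}_t}\Bigl[\exp\Bigl\{\tfrac{1}{\sigma}\sum_{i\in I_n}\int_0^t G^i_s\, dB^i_s\Bigr\}\Bigr],
\]
where $C_t$ is the normalizing constant in \eqref{eq:defLambdat}. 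To identify this with $\exp(\sum_j\int_0^t \theta^j_s dB^j_s - \tfrac12\sum_j\int_0^t(\theta^j_s)^2 ds)$ with $\theta^j_t$ as in \eqref{eq:Girsanov derivative 2}, I would differentiate the logarithm of the density in $t$ (equivalently, use that the left-hand side is an $(\mathcal{F}_t)$-martingale under $P^{\otimes N}$ of the form $\mathcal{E}(\int \theta\cdot dB)$ for some adapted $\theta$, so $\theta^j_t$ is read off from the $dB^j_t$-coefficient of its stochastic logarithm). Concretely, applying Itô's formula to $\log \Exp^{\gamma^{\emp(X_n)}}[\exp\{\tfrac1\sigma\sum_i\int_0^t G^i_s dB^i_s - \tfrac1{2\sigma^2}\sum_i\int_0^t(G^i_s)^2 ds\}]$ and using the tower property, the $dB^j_t$-term has coefficient $\sigma^{-1}\Exp^{\bar\gamma^{\emp(X_n)}_t}[G^j_t]$; but one must be careful, because the exponential weight inside the $\gamma$-expectation itself depends on the Brownian path, so the tilted measure $\bar\gamma_t$ that appears must incorporate the full Girsanov weight — and this is where the extra factor $G^i_t\int_0^t G^i_s dB^i_s$ in \eqref{eq:Girsanov derivative 2} comes from.

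The main obstacle, and the step requiring the most care, is precisely this last computation: exchanging the $\gamma$-average with the stochastic calculus in $B$, and correctly tracking how the martingale weight $\exp\{\tfrac1\sigma\int G\,dB\}$ reweights the Gaussian law of $G$ when one differentiates in time. The clean way is to argue by a conditional Girsanov / change-of-measure argument on the product space $(\Omega\times \T^N, \gamma\otimes P^{\otimes N})$: define the joint density, disintegrate, and recognize that the $P^{\otimes N}$-martingale $\left.\frac{dQ^n}{dP^{\otimes N}}\right|_{\mathcal{F}_t}$ equals $\Exp[\mathcal{E}_t \mid \mathcal{F}_t^B]$ where $\mathcal{E}_t$ is the per-$J_n$ Girsanov exponential, and then apply the standard formula for the drift of a conditional expectation of an exponential martingale (a Clark–Ocone-type or "innovations" computation), which yields $\theta^j_t = \sigma^{-2}\Exp^{\bar\gamma^{\emp(X_n)}_t}[\sum_i G^j_t\int_0^t G^i_s dB^i_s]$ after writing $G^j_t = \sigma \cdot (\text{stochastic-log increment})$. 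I would also need to check integrability of all the stochastic integrals and expectations, which follows from boundedness of $f$ (so $G$ has covariance bounded uniformly in terms of $\sum_m|R_\J(0,m)| \le b\, a_0$, using \eqref{eq:RJdef}–\eqref{eq:akbl}) together with the strict positivity \eqref{eq:bound on spectrum} of the spectral density ensuring the normalizing constant $C_t$ is finite and positive; the detailed estimates I would relegate to the appendix on covariances already referenced in Remarks \ref{rem:Kmuk}–\ref{rem:Lmukt}.
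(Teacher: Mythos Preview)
Your proposal is correct in outline and reaches the right formula, but it is organized differently from the paper's proof and leaves the decisive step somewhat implicit. The paper proceeds in two stages: first, using that $(G^i_s)$ remains centered Gaussian under the quadratically tilted law $\bar\gamma^{\emp(X_n)}_t$ (this is Lemma~\ref{lem:Lmun} in Appendix~\ref{app:covariances}), it evaluates $\Exp^{\bar\gamma_t}\bigl[\exp\{\sigma^{-1}\sum_i\int_0^t G^i_s\,dB^i_s\}\bigr]$ explicitly as $\exp\{\tfrac{1}{2\sigma^2}\Exp^{\bar\gamma_t}[(\sum_i\int_0^t G^i_s\,dB^i_s)^2]\}$, obtaining a closed form for $Z_t=\left.\tfrac{dQ^n}{dP^{\otimes N}}\right|_{\mathcal F_t}$ (equation~\eqref{eq:girsanov1}); second, it reads off $\theta^j_t=\tfrac{d}{dt}\langle\log Z_\cdot,B^j_\cdot\rangle_t$ from this formula by It\^o calculus, noting that the finite-variation factor $\Exp^{\gamma^{\emp(X_n)}}[\exp\{-\tfrac{1}{2\sigma^2}\sum_i\int_0^t(G^i_s)^2\,ds\}]$ drops out of the bracket. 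Your innovations route --- differentiate $Z_t=\Exp^\gamma[\mathcal E_t]$ directly, exchange $\Exp^\gamma$ with the stochastic differential to get $dZ_t=\sigma^{-1}\sum_j\Exp^\gamma[\mathcal E_t G^j_t]\,dB^j_t$, and set $\theta^j_t=\sigma^{-1}Z_t^{-1}\Exp^\gamma[\mathcal E_t G^j_t]$ --- is a legitimate and arguably slicker alternative. The step you leave vaguest is the last identification: writing $\hat\gamma_t$ for the full tilt with density $\mathcal E_t/Z_t$, you need $\Exp^{\hat\gamma_t}[G^j_t]=\sigma^{-1}\sum_i\int_0^t\Exp^{\bar\gamma_t}[G^j_tG^i_s]\,dB^i_s$, which is the standard fact that a linear exponential tilt of a centered Gaussian shifts the mean by (covariance)$\times$(linear functional); this again rests on the Gaussianity of $G$ under $\bar\gamma_t$, so both arguments hinge on the same lemma. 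The paper's approach has the side benefit that the closed form~\eqref{eq:girsanov1} makes strict positivity of $Z_t$, and hence equivalence of $Q^n$ and $P^{\otimes N}$, immediate without a separate argument.
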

\begin{proof}
 As stated above, by the Girsanov's Theorem we have
\begin{multline*}
\left.\frac{d P^{N}(J_n)}{d P^{\otimes N}}\right|_{\mathcal{F}_t}=\\
\exp  \left\{ \frac{1}{\sigma}\sum_{i \in I_n}\int_0^t \left(\sum_{j \in I_n} J^{ij}_n f(X^j_s) \right)dB_s^i\right. \left. -\frac{1}{2\sigma^2} \sum_{i \in I_n}\int_0^t \left(  \sum_{j \in I_n} J^{ij}_n f(X^j_s) \right)^2\,ds \right\}.
\end{multline*}
Applying the Fubini-Tonelli theorem to the positive measurable function $\frac{d P^{N}(J_n)}{d P^{\otimes N}}$ we find that $Q^n <\hspace{-2mm}< P^{\otimes N}$ and
\begin{multline*}
\left.\frac{d Q^n}{d P^{\otimes N}}\right|_{\mathcal{F}_t}=\Exp^\gamma \Bigg[ \exp  \Bigg\{ \frac{1}{\sigma}\sum_{i \in I_n}\int_0^t  \left( \sum_{j \in I_n} J^{ij}_n f(X^j_s) \right) \,dB_s^i-\\
\frac{1}{2\sigma^2}\sum_{i \in I_n} \int_0^t \left(  \sum_{j \in I_n} J^{ij}_n f(X^j_s) \right)^2\,ds \Bigg\} \Bigg]\end{multline*}
\newcommand{\mtp}{{t^{(q)}}}
Moreover, under $\gamma$, 
$\left\{\sum_{j \in I_n} J^{ij}_n f(X^j_{t}),\,i \in I_n,\, t \leq T  \right\}$ is a centered Gaussian process with
 covariance $K_{\emp(X_n)}$, thanks to \eqref{eq:Gnit} and \eqref{eq:covGiGk}.
Therefore we have:
\[
\left.\frac{d Q^n}{d P^{\otimes N}}\right|_{\mathcal{F}_t}=\Exp^{\gamma^{\hat{\mu}_n(X_n)}}\left[ \exp\left\{ \frac{1}{\sigma} \sum_{i \in I_n} \int_0^t G_s^i\,dB_s^i\right\} \times \exp\left\{ -
\frac{1}{2\sigma^2}  \sum_{i \in I_n} \int_0^t \left( G_s^i \right)^2\,ds \right\}\right].
\]
Divide and multiply the right hand side by $\Exp^{\gamma^{\hat{\mu}_n(X_n)}} \left[ \exp\left\{ -
\frac{1}{2\sigma^2}  \sum_{i \in I_n} \int_0^t \left( G_s^i \right)^2\,ds \right\} \right]$ to obtain, thanks to \eqref{eq:defLambdat} and \eqref{eq:gammatilde}:
\begin{multline}\label{eq:formula}
\Exp^{\gamma^{\hat{\mu}_n(X_n)}}\left[ \exp\Bigg\{ \frac{1}{\sigma} \sum_{i \in I_n} \int_0^t G_s^i\,dB_s^i-
\frac{1}{2\sigma^2}  \sum_{i \in I_n} \int_0^t \left( G_s^i \right)^2\,ds \Bigg\}\right]=\\
\Exp^{\gamma^{\hat{\mu}_n(X_n)}} \left[ \exp\left\{ -\frac{1}{2\sigma^2} \sum_{i \in I_n} \int_0^t \left( G_s^i \right)^2\,ds \right\} \right] \times
\Exp^{\bar{\gamma}^{\hat{\mu}_n(X_n)}_t}\left[ \exp \left\{ \frac{1}{\sigma} \sum_{i \in I_n} \int_0^t G_s^i\,dB_s^i\right\} \right]
\end{multline}
By Gaussian calculus and \eqref{eq:gammatilde}
\begin{multline*}
\Exp^{\bar{\gamma}^{\hat{\mu}_n(X_n)}_t}\left[ \exp \left\{ \frac{1}{\sigma} \sum_{i \in I_n} \int_0^t G_s^i\,dB_s^i\right\} \right]=
\exp \left\{ \frac{1}{2\sigma^2} \Exp^{\bar{\gamma}^{\hat{\mu}_n(X_n)}_t}\left[\left(\sum_{i \in I_n} \int_0^t G_s^i\,dB_s^i\right)^2\right]\right\}=\\
\exp \left\{ \frac{1}{2\sigma^2} \Exp^{\gamma^{\hat{\mu}_n(X_n)}}\left[\left(\sum_{i \in I_n} \int_0^t G_s^i\,dB_s^i\right)^2\Lambda_t(G) \right]\right\}
\end{multline*}
This shows that
\begin{multline}\label{eq:girsanov1}
\left.\frac{d Q^n}{d P^{\otimes N}}\right|_{\mathcal{F}_t}=\\
\Exp^{\gamma^{\hat{\mu}_n(X_n)}} \left[ \exp\left\{ -\frac{1}{2\sigma^2} \sum_{i \in I_n} \int_0^t \left( G_s^i \right)^2\,ds \right\} \right] \times
\exp \left\{ \frac{1}{2\sigma^2} \Exp^{\gamma^{\hat{\mu}_n(X_n)}}\left[\left(\sum_{i \in I_n} \int_0^t G_s^i\,dB_s^i\right)^2\Lambda_t(G) \right]\right\}
\end{multline}
The above expression demonstrates that $Q^n_{|\mathcal{F}_t}$ is equivalent to $P^{\otimes N}_{|\mathcal{F}_t}$ for all $t\in [0,T]$, since the above exponential cannot be zero on any set $A \in \mathcal{B}(\T^N)$ such that $P^{\otimes N}(A) \neq 0$. 
Thus by Girsanov's Theorem \cite{revuz-yor:91},
\[
Z_t = \exp\bigg(\sum_{j\in I_n}\int_0^t \theta_s^j dB^j_s - \frac{1}{2}\sum_{j\in I_n}\int_0^t \big(\theta_s^j\big)^2 ds \bigg),
\]
where $Z_t = \left.\frac{dQ^n}{dP^{\otimes N}}\right|_{\mathcal{F}_t}$, and $\theta^j_t = \frac{d}{dt}\langle \log Z_\cdot,B^j_\cdot\rangle_t$. \begin{multline}\label{eq:Girsanov Temporary}
\theta^j_t  = \frac{d}{dt}\bigg\langle B^j_\cdot,\frac{1}{2\sigma^2} \Exp^{\gamma^{\hat{\mu}_n(X_n)}}\left[\left(\sum_{i \in I_n} \int_0^\cdot G_s^i\,dB_s^i\right)^2\Lambda_\cdot\right]\bigg\rangle_t \\
+ \frac{d}{dt}\bigg\langle B^j_\cdot,\log\left(\Exp^{\gamma^{\hat{\mu}_n(X_n)}} \left[ \exp\left\{ -\frac{1}{2\sigma^2} \sum_{i \in I_n} \int_0^\cdot \left( G_s^i \right)^2\,ds\right\} \right] \right)\bigg\rangle_t.
\end{multline}
the second bracket only contains a finite variation process, so its bracket with \(B^j\) is \(0\).
Furthermore the probability measure $\gamma^{\hat{\mu}_n(X_n)} \in \PS(\T^{\Z})$ does not change with time, hence we may commute the bracket and expectation as follows,
\begin{align}
\theta^j_t  
=&  \Exp^{\gamma^{\hat{\mu}_n(X_n)}}\left[\frac{d}{dt} \bigg\langle B^j_\cdot,\frac{1}{2\sigma^2} \Lambda_\cdot(G) \left(\sum_{i \in I_n} \int_0^\cdot G_s^i\,dB_s^i\right)^2\bigg\rangle_t\right]\nonumber \\
=&\frac{1}{2\sigma^2} \Exp^{\gamma^{\hat{\mu}_n(X_n)}}\left[ 2\sum_{i\in I_n}\Lambda_t(G) G_t^{j}\int_0^t  G_s^{i} dB^{i}_s\right] \label{eq:thetaj},
\end{align}
since $\Lambda_t$ is time-differentiable, and we have used Ito's Lemma. To be sure, we have carefully double checked (using multiple applications of Ito's Formula) that the time-differentiable terms in \eqref{eq:Girsanov Temporary} are of the correct form. We thus have proved the Proposition, using \eqref{eq:gammatilde} again.
\end{proof}
\begin{remark}
By writing $G^j$, $G^i$ and $\Lambda_t(G)$ as functions of the synaptic weights in \eqref{eq:thetaj} and using their stationarity, $\theta^j_t$ can be rewritten as
\begin{multline*}
\theta^j_t= \sigma^{-2}\sum_{i\in I_n} \Exp^{\bar{\gamma}^{\hat{\mu}_n(X_n)}_t} \left[ G^0_t\int_0^t  G^i_s dB^{i+j}_s \right] =
\sigma^{-2}  \sum_{i\in I_n} \Exp^{\gamma^{\emp(X_n)}} \left[ \Lambda_t(G)G^0_t\int_0^t  G^i_s dB^{i+j}_s \right] =\\
\left\{
\begin{array}{ll}
\sigma^{-2}  \sum_{i\in I_n} \Exp^\gamma \left[\Lambda_t(G) G^0_t\int_0^t  G^i_s dB^{i+j}_s \right]\\
\text{and } G^i_t = \sum_{k \in I_n} J^{ik}_n f(X^k_t)
\end{array}
\right.,
\end{multline*}
with indexes taken modulo $I_n$.
\end{remark}
Since $Q^n$ and $P^{\otimes N}$ are equivalent, by Girsanov's Theorem we obtain the following immediate corollary of Proposition \ref{prop:RNderiv1}. Part (ii) of the corollary is immediate from the definitions. 
\begin{corollary}\label{Corollary Measure Representation}\ \\
(i) Let $V_n \in \T^N$ have law $Q^n$. There exist processes $W^j_t$ that are independent Brownian motion under $Q^n$ and such that $V_n$ is the unique weak solution to the following equations
\begin{align}
V^j_t &= \sigma W^j_t + \sigma \int_0^t \theta^j_s ds \label{eq: theta SDE} \\
\theta^j_t &= \sigma^{-2}\sum_{i\in I_n} \Exp^{\bar{\gamma}^{\emp(V_n)}_t}\bigg[G^0_t\int_0^t  G^i_s dV^{i+j}_s \bigg]
 .\label{eq: theta SDE1} 
\end{align}
\\
(ii) The law of $\emp(\sigma W_n)$ under $Q^n$ is $\Pi^n_0$.
\end{corollary}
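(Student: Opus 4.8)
\emph{Proof proposal.} The plan is to read both assertions off the Girsanov representation of $dQ^n/dP^{\otimes N}$ established in Proposition~\ref{prop:RNderiv1}. Write $V=(V^j)_{j\in I_n}$ for the coordinate process on $\T^N$; under $P^{\otimes N}$ one has $V^j=\sigma B^j$ with $(B^j)_{j\in I_n}$ independent standard Brownian motions, and by \eqref{eq:Girsanov derivative 1}--\eqref{eq:Girsanov derivative 2} the density $\left.\tfrac{dQ^n}{dP^{\otimes N}}\right|_{\F_t}$ equals $\exp\big(\sum_{j\in I_n}\int_0^t\theta^j_s\,dB^j_s-\tfrac12\sum_{j\in I_n}\int_0^t(\theta^j_s)^2\,ds\big)$. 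The argument will use two properties of $\theta$, both already available from the proof of Proposition~\ref{prop:RNderiv1} and Appendix~\ref{app:covariances}: $\theta^j$ is bounded (since $f(\R)\subset[0,1]$ and $\sum_l b_l<\infty$ bound $K_{\emp(V_n)}$, hence via the Gaussian calculus of Remark~\ref{rem:Lmukt} the right-hand side of \eqref{eq:Girsanov derivative 2}), and $\theta^j_t$ is $\F_t$-measurable (since $K_{\emp(V_n)}$ restricted to $[0,t]^2$ depends on $\emp(V_n)$ only through the $\F_t$-measurable averages $\tfrac1N\sum_{k\in I_n}f(V^k_u)f(V^{k+m}_v)$, $u,v\le t$). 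Because the displayed exponential is a genuine Radon-Nikodym density it is automatically a true $P^{\otimes N}$-martingale, so Girsanov's theorem \cite{revuz-yor:91} applies with no extra integrability check.

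For the existence part of (i), I would apply Girsanov to conclude that under $Q^n$ the processes $W^j_t:=B^j_t-\int_0^t\theta^j_s\,ds=\sigma^{-1}V^j_t-\int_0^t\theta^j_s\,ds$ are continuous local martingales with $\langle W^i,W^j\rangle_t=\delta_{ij}t$, hence independent standard Brownian motions by L\'evy's characterisation; rearranging gives \eqref{eq: theta SDE}. Then I would recognise \eqref{eq:Girsanov derivative 2} as \eqref{eq: theta SDE1} by the rewriting already performed in the Remark following Proposition~\ref{prop:RNderiv1} --- using the stationarity of $(G^i)_{i\in I_n}$ to shift the Brownian index, then $\emp(X_n)=\emp(V_n)$ and $dB^{i+j}_s=\sigma^{-1}\,dV^{i+j}_s$ (valid pathwise since $V=\sigma B$). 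Thus $\big(V,(W^j)_{j\in I_n}\big)$ on $(\T^N,\mathcal B_d,Q^n)$ solves \eqref{eq: theta SDE}--\eqref{eq: theta SDE1} weakly.

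For uniqueness in (i), let $\big(\tilde V,(\tilde W^j)_{j}\big)$ be any weak solution on some $(\tilde\Omega,(\tilde\F_t),\tilde Q)$. Since $\theta$ is bounded, $M_t:=\exp\big(-\sum_j\int_0^t\theta^j_s\,d\tilde W^j_s-\tfrac12\sum_j\int_0^t(\theta^j_s)^2\,ds\big)$ is a true martingale (Novikov), and I would define $\tilde P$ by $d\tilde P/d\tilde Q|_{\tilde\F_t}=M_t$; under $\tilde P$, Girsanov makes $\beta^j_t:=\tilde W^j_t+\int_0^t\theta^j_s\,ds=\sigma^{-1}\tilde V^j_t$ independent Brownian motions, so $\mathrm{Law}_{\tilde P}(\tilde V)=P^{\otimes N}$. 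Rewriting $M_T^{-1}=d\tilde Q/d\tilde P|_{\tilde\F_T}$ in terms of $\beta=\sigma^{-1}\tilde V$ identifies it, as a measurable functional of the coordinate path, with the density $\Phi_T$ of Proposition~\ref{prop:RNderiv1} (the functional $\theta^j_s$ being the same however it is expressed), so that $\Exp^{\tilde Q}[F(\tilde V)]=\Exp^{\tilde P}[F(\tilde V)M_T^{-1}]=\Exp^{P^{\otimes N}}[F\,\Phi_T]=\Exp^{Q^n}[F]$ for every bounded measurable $F$, i.e.\ $\mathrm{Law}_{\tilde Q}(\tilde V)=Q^n$. Part (ii) is then immediate: under $Q^n$ the $(\sigma W^j)_{j\in I_n}$ are $N$ independent scaled Brownian motions, whence $\mathrm{Law}_{Q^n}(\sigma W_n)=P^{\otimes N}$, and the measurable pushforward by $\emp$ gives $\mathrm{Law}_{Q^n}(\emp(\sigma W_n))=P^{\otimes N}\circ\emp^{-1}=\Pi^n_0$.

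The main obstacle will be the uniqueness argument, precisely because $\theta^j_t$ is not a function of the current state but a non-anticipative and law-dependent functional of the whole past trajectory (through $\emp(V_n)$ and $\Exp^{\bar\gamma^{\emp(V_n)}_t}$): I will need to check carefully that it is $\F_t$-measurable and that it is literally the same object under the several equivalent measures entering the reverse-Girsanov step. Once the boundedness and adaptedness of $\theta$ are granted --- as they are in the proof of Proposition~\ref{prop:RNderiv1} and Remark~\ref{rem:Lmukt} --- the remainder is routine Girsanov bookkeeping.
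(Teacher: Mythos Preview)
Your existence argument for (i) and your argument for (ii) are essentially what the paper does: once Proposition~\ref{prop:RNderiv1} gives the equivalence $Q^n\sim P^{\otimes N}$ with the explicit exponential density, Girsanov's theorem produces the $Q^n$-Brownian motions $W^j$, the stationarity Remark after Proposition~\ref{prop:RNderiv1} yields the form \eqref{eq: theta SDE1}, and (ii) is just the pushforward of $P^{\otimes N}$ by $\emp$. The paper says no more than this.

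Where you go further is in spelling out weak uniqueness via reverse Girsanov, and there you make a genuine error: $\theta^j$ is \emph{not} bounded. Unwinding the $\bar\gamma^{\emp(V_n)}_t$-expectation (which integrates out only the $J$'s, not the path) gives
\[
\theta^j_t=\sigma^{-2}\sum_{i\in I_n}\int_0^t L^i_{\emp(V_n)}(t,s)\,dV^{i+j}_s,
\]
and while the kernel $L^i$ is uniformly bounded (Proposition~\ref{prop:Lkmuregular}), the stochastic integral against $dV^{i+j}_s=\sigma\,dB^{i+j}_s$ is an unbounded random variable. This is exactly why the paper later needs Lemma~\ref{Lemma: bound theta} to control $N^{-1}\sum_j(\theta^j_t)^2$ by a large-deviation estimate rather than a pointwise bound. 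Your invocation of Novikov for $M_t$ therefore fails as written. A standard repair is to localize by $\tau_k=\inf\{t:\sum_j\int_0^t(\theta^j_s)^2\,ds\ge k\}$, on which Novikov is trivial; one changes measure up to $\tau_k$, identifies the stopped law with $Q^n$ restricted to $\F_{\tau_k}$ via the same functional $\Phi$ as in Proposition~\ref{prop:RNderiv1}, and then lets $k\to\infty$ using $\tau_k\uparrow T$ a.s. The paper, for its part, simply declares the whole corollary ``immediate'' and does not isolate the uniqueness step.
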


\subsection{Approximation of the Finite-Size Annealed Process and construction of the sequence of maps $\Psi^m$}\label{Section Definition Tilde Psi qm rm}
It is well known that Large Deviations Principles are preserved under 
continuous transformations. However we cannot in general find a continuous 
mapping $\Gamma^n$ on $\PS(\T^{\Z})$ such that $\Gamma^n\big(\hat{\mu}_n(\sigma W_n)\big) 
= \hat{\mu}_n(V_n)$, 
where $V_n$ is defined in 
Corollary~\ref{Corollary Measure Representation}. Therefore to prove the LDP, we will use 
`exponentially equivalent approximations'. This technique approximates the 
mapping $\hat{\mu}_n(\sigma W_n) \to \hat{\mu}_n(V_n)$ by a sequence of continuous 
approximations. Our next step therefore is to define the continuous map 
$\Psi^m: \PS(\T^{\Z}) \to \PS(\T^{\Z})$ (for positive integers 
$m$), which will be such that for any $\delta > 0$, the probability that 
$D_T\big(\Psi^m(\hat{\mu}_n(\sigma W_n)),\hat{\mu}_n(V_n)\big) > 
\delta$ is superexponentially small. These approximations will converge to the map $\Psi$ that is defined in the proof of Theorem \ref{Theorem:Main}. This is done in two steps: First approximate the system \eqref{eq: theta SDE}-\eqref{eq: theta SDE1} by discretizing the time and cutting off the correlation between the synaptic weights and, second, by using this approximation to construct the map $\Psi^m$ from $\mP_S(\T^\Z)$ to itself.
\subsubsection{Approximation of the system of equations \eqref{eq: theta SDE}-\eqref{eq: theta SDE1}}\label{subsub:approx}
	To this aim, we use an Euler scheme type approximation:  the integrand of 
	$V^j_t$ is replaced by a piecewise constant in time version. Let $\Delta_m$, $m$ a strictly positive integer,
	be a partition of $[0,T]$ with steps \(\eta_m := \frac{T}{m}\) into the 
	$(m+1)$ points $p\eta_m$, for $p=0$ to $m$, and for any 
	$t\in [0,T]$, write $t^{(m)} := p\eta_m$ such that 
	$t\in [p\eta_m,(p+1)\eta_m)$.

To obtain the Large Deviation Principle, we need to approximate the 
expression for $V_n$ in Corollary~\ref{Corollary Measure Representation}  
by a continuous map. 
The approximate system has finite-range 
spatial interactions.
The spatial interactions have range 
$Q_m=2q_m+1$ (with $0 < q_m < n$).
The parameters $m$ and $q_m$ are specified as functions of $n$ in 
Remark~\ref{rem:choice of m} in the proof of Lemma~\ref{lem:alpha2}.

More precisely, following \eqref{eq: theta SDE}, the approximate system is of the form, for $j \in I_n$
\begin{equation}\label{eq:SD}
V^{m,j}_t  =  \sigma^{-1} \sum_{i \in I_{q_m}} \int_0^t \Exp^{\gamma^{\emp(V_n^m)}}\bigg[ \Lambda_{s^{(m)}}(G^m)G^{m,0}_{s^{(m)}}\int_0^{s^{(m)}}  G^{m,i}_{u^{(m)}} dV^{m,i+j}_u \bigg]\,ds+ \sigma W^j_t
\end{equation}
indexes \(i+j\) are taken modulo $I_n$. 
The $I_{q_m}$-periodic centered stationary Gaussian process $(G^{m, i}_t)_{i \in I_{q_m}, t\in [0,T]}$ is defined by
 \begin{equation}\label{eq:Hdef}
 G^{m,i}_t = \sum_{k \in I_n} J^{i k}_{n,m} f(V^{m,k}_t),\,i \in I_{q_m},
\end{equation}
where  the  $\lbrace J^{ik}_{n,m} \rbrace_{i \in I_{q_m},k\in I_n}$ are centered Gaussian Random variables with covariance (remember \eqref{eq:cov})
\begin{equation}\label{lccl}
\Exp^\gamma \left[J^{ij}_{n,m} J^{kl}_{n,m}\right] =  \frac{1}{N} R_\J \left( k - i \mod I_{q_m} , l - j\mod I_n \right) \mathbbm{1}_{I_{q_m}} ( l - j \mod I_n ),
\end{equation}
where $\mathbbm{1}_{I_{q_m}}$ is the indicator function of the set $I_{q_m}$.
Note that the sum in \eqref{eq:Hdef} is for \(k\in I_n\).

The $W^j_t$s are Brownian motions and (remember \eqref{eq:defLambdat})
\begin{equation}\label{eq:defLambdatm}
\Lambda_{t^{(m)}}(G^m):= \displaystyle \frac{\exp\left\{ -\frac{1}{2\sigma^2} \int_0^{t^{(m)}} \sum_{i \in I_{q_m}} (G^{m,i}_s)^2 ds\right\} }{\Exp^{\gamma^{\emp(V_n^m)}} \left[ \exp\left\{ -\frac{1}{2\sigma^2} \int_0^{t^{(m)}}  \sum_{i \in I_{q_m}} (G^{m,i}_s)^2 \,ds\right\} \right]},
\end{equation}

It is important for the upcoming definition of the map $\Psi^m$ that the covariance between the Gaussian variables $(G^{m, i}_t)$ can be written as a function of the empirical measure $\emp(V^m_n)$ which we now demonstrate. One verifies easily that
\begin{multline}\label{eq:covHjsHkt}
\cov(G^{m,i}_t, G^{m,k}_s )=\sum_{j, l \in I_n} \cov(J^{ij}_{n,m},J^{kl}_{n,m})f(V^{m,j}_t)f(V^{m,l}_s)=\\
\frac{1}{N}\sum_{j, l \in I_n} R_\J \left( k - i \mod I_{q_m} , l - j\mod I_n \right) \mathbbm{1}_{I_{q_m}} ( l - j \mod I_n ) f(V^{m,j}_t)f(V^{m,l}_s)=\\
\sum_{K \in I_{q_m}} R_\J(k-i \mod I_{q_m},K)\frac{1}{N}\sum_{j \in I_n} f(V^{m,j}_t)f(V^{m,j+K}_s)=\\
\sum_{K \in I_{q_m}} R_\J(k - i \mod I_{q_m},K) \int f(w^0_t)f(w^{K}_s)\,d\emp(V^m_n)[w]=\\
\sum_{K \in I_{q_m}} R_\J(k - i \mod I_{q_m},K) \Exp^{\emp(V^m_n)} \left[ f(w^0_t) f(w^K_s)   \right].
\end{multline}
This implies that \eqref{eq:SD} can be rewritten
\begin{equation}\label{eq:SD1}
\begin{array}{lcl}
V^{m,j}_t  & = & \sigma^{-1} \sum_{k \in I_{q_m}} \int_0^t \Exp^{\bar{\gamma}_{s^{(m)}}^{\emp(V_n^m)}}\bigg[ G^{m,0}_{s^{(m)}}\int_0^{s^{(m)}}  G^{m,k}_{u^{(m)}} dV^{m,k+j}_u \bigg]\,ds+ \sigma W^j_t,\, j \in I_n\\
\end{array}
\end{equation}
or
\begin{equation}
\label{eq:vmtthetamt}
\begin{cases}
V^{m,j}_t &= \sigma W^j_t + \sigma \displaystyle\int_0^t \theta^{m,j}_s ds \\
\theta^{m,j}_t &= \sigma^{-2}\displaystyle\sum_{k\in I_{q_m}} \Exp^{\bar{\gamma}^{\emp(V^m_n)}_{t^{(m)}}}\bigg[G^{m,0}_{t^{(m)}}
\displaystyle\int_0^{t^{(m)}}  G^{m,k}_{s^{(m)}} dV^{m, k+j}_s \bigg],\,j \in I_n
\end{cases}
\end{equation}
\subsubsection{Construction of the sequence of maps $\Psi^m$ }
In order to construct the map $\Psi^m$ we rewrite \eqref{eq:SD1} in terms of the increment of $V^m_t - V^m_{t^{(m)}}$ of the process $V^m$:
\begin{equation}\label{eq:SD1inc}
V^{m,j}_t  = V^{m,j}_{t^{(m)}} + \sigma^{-1} \sum_{k \in I_{q_m}} \int_{t^{(m)}}^t \Exp^{\bar{\gamma}^{\emp(V_n^m)}_{s^{(m)}}}\bigg[ G^{m,0}_{s^{(m)}}\int_0^{s^{(m)}}  G^{m,k}_{u^{(m)}} dV^{m,k+j}_u \bigg]\,ds+ \sigma ( W^j_t - W^j_{t^{(m)}}), \, j \in I_n.
\end{equation}
We can now generalize \eqref{eq:SD1inc} by considering a general measure $\nu$ in $\mP_S(\T^\Z)$ and simply replacing $\bar{\gamma}^{\emp(V_n^m)}_s$ by $\bar{\gamma}^\nu_s$ in this equation. This is the basic idea but we have to be slightly more careful. 

In detail, following Remark \ref{rem:Gnit}, given $\nu=(\nu_1,\,\nu_2) \in 
\mP_S((\T^\Z)^2)$ we define the $I_{q_m}$-periodic centered stationary Gaussian process $(G^{m,i}_t)_{i \in I_{q_m}, t\in [0,T]}$, i.e. its covariance  function, by (patterning after \eqref{eq:covHjsHkt})
\begin{align}
\cov(G^{m,i}_t, G^{m,k}_s ) & = \Exp^{\gamma^{\nu_1}}\big[ G^{m,i}_t G^{m,k}_s \big]\nonumber\\
& =\sum_{K \in I_{q_m}} R_\J(k - i \mod I_{q_m},K) \Exp^{\nu_1} \left[ f(w^0_t) f(w^K_s)   \right].
\label{eq:covHjsHktnu}
\end{align}
 Given two elements $X$ and $Y$ of $\T^\Z$ we define the $m$ elements $Z^u$ of $\T^\Z$
 for $u = 0, \cdots, m-1$ by
\begin{multline}\label{eq:Ymu}
\forall t \in [u\eta_m,\,(u+1) \eta_m],\, j \in \Z,\\
Z^{u, j}_t = Y^j_{u\eta_m}+
\sigma^{-1} \sum_{i \in I_{q_m}} \int_{u\eta_m}^t \Exp^{\bar{\gamma}^{\nu_1}_{u\eta_m}}\bigg[  G^{m,0}_{u\eta_m}
\sum_{v = 0}^{u-1}\int_{v \eta_m}^{(v+1)\eta_m} G^{m,i}_{ v \eta_m}  dY^{u, i+j}_v \bigg]\,ds \\ + \sigma (X^j_t - X^j_{u\eta_m})
\end{multline}
\[
Z^{u, j}_t = Y^j_t,\,t \leq u\eta_m,\quad u > 0
\]
and
\[
Z^{u, j}_t = Z^{u, j}_{(u+1) \eta_m},\,t \geq (u+1) \eta_m.
\]
\begin{remark}\label{rem:Z periodic}
Note that 
\begin{enumerate}[(a)]
\item 
if $X_t^j$ and $Y_t^j$ are $N$-periodic, so is $Z^j_t$.
\item \label{rem:range of expectation} the expected value $\Exp^{\bar{\gamma}^{\nu_1}_{u\eta_m}}$ in \eqref{eq:Ymu} acts only on the Gaussian random variables $G^m$ and not on the $Y$s.
\end{enumerate}
\end{remark}
This defines the sequence of mappings $\psi^m_u: \mP_S((\T^\Z)^2) \times (\T^\Z)^2 \to (\T^\Z)^2$,  $u = 0, \cdots, m-1$, by
\begin{equation}\label{eq:psimu}
\psi^m_u(\nu, Y, X) = (Z^u, X),
\end{equation}
the sequence of mappings $\Psi^m_u: \mP_S((\T^\Z)^2) \to \mP_S((\T^\Z)^2)$, $u = 0,\cdots,m-1$ by
\begin{equation}\label{eq:Psimu}
\Psi^m_u(\nu) = \nu \circ \psi^m_u(\nu, \cdot, \cdot)^{-1},
\end{equation}
and finally the mapping $\Psi^m: \mP_S(\T^\Z) \to \mP_S(\T^\Z)$ by
\begin{equation}\label{eq:Psim}
\Psi^m(\mu) = (\Psi^m_{m-1} \circ \cdots \circ \Psi^m_0 \circ \Psi^0(\mu))^1,
\end{equation}
where $\Psi^0 : \mP(\T^\Z) \to \mP((\T^\Z)^2)$ is defined by
\begin{equation}\label{eq:Psi0}
\Psi^0(\mu) = \mu  \circ \iota, 
\end{equation}
and $\iota: \T^\Z \to (\T^\Z)^2$ is defined as
\begin{equation}\label{eq:iota}
\iota(x)^j = (0, x^j)
\end{equation}
We then have the following Lemma.
\begin{lemma}\label{Lem: Psim continuous}
The function $\Psi^m$ defined by \eqref{eq:Psim} is continuous in $(\mP_S(\T^\Z),\,D_T)$ and satisfies
\[
\Psi^m(\emp(\sigma W_n)) = \emp(V^m_n),
\]
where $V^m_n$ is the solution to \eqref{eq:SD1}.
\end{lemma}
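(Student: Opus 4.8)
The plan is to exploit the factorisation \eqref{eq:Psim}: $\Psi^m = (\cdot)^1 \circ \Psi^m_{m-1} \circ \cdots \circ \Psi^m_0 \circ \Psi^0$, where $(\cdot)^1$ is the first-marginal map $\mP_S((\T^\Z)^2) \to \mP_S(\T^\Z)$. A finite composition of continuous maps is continuous, and taking a marginal is continuous (it is pushforward by the coordinate projection $(\T^\Z)^2 \to \T^\Z$, which is $1$-Lipschitz for the relevant $d_T$-metrics), so it suffices to show that $\Psi^0$ and each $\Psi^m_u$ ($u = 0,\dots,m-1$) is continuous for the $D_T$-topology. For $\Psi^0$ this is immediate from \eqref{eq:Psi0}-\eqref{eq:iota}: $\iota$ is an isometric embedding of $(\T^\Z,d_T)$ into $(\T^\Z)^2$ with the sum metric, so $\mu\mapsto\mu\circ\iota$ is $D_T$-isometric onto its image. (Note also that $d_T \le \sum_i b_i < \infty$, so $D_T$ is a Wasserstein distance for a \emph{bounded} metric and genuinely metrizes the weak topology, with weak convergence tested by bounded $d_T$-Lipschitz functions.)

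The substance is the continuity of $\Psi^m_u(\nu) = \nu \circ \psi^m_u(\nu,\cdot,\cdot)^{-1}$, for which I would establish two properties of $\psi^m_u$ (see \eqref{eq:psimu}-\eqref{eq:Ymu}). First, for each fixed $\nu$ the map $(Y,X)\mapsto\psi^m_u(\nu,Y,X)$ is $d_T$-continuous: the $X$-component is unchanged, and $Z^u$ is given by the \emph{explicit} formula \eqref{eq:Ymu}, in which the inner integrals over subintervals reduce to $G^{m,i}_{v\eta_m}$ times the increments $Y^{i+j}_{(v+1)\eta_m}-Y^{i+j}_{v\eta_m}$, the outer integral is over a bounded interval, and $f$ is bounded and $1$-Lipschitz, so $d_T$-continuity (in fact local Lipschitz continuity) in $(Y,X)$ follows by direct estimation. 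Second, $\psi^m_u$ depends on $\nu$ continuously \emph{uniformly in $(Y,X)$}: if $\nu_k\to\nu$ in $D_T$ then $\sup_{(Y,X)} d_T\big(\psi^m_u(\nu_k,Y,X),\psi^m_u(\nu,Y,X)\big)\to 0$. Indeed the only $\nu$-dependence in \eqref{eq:Ymu} is through the tilted Gaussian expectation $\Exp^{\bar\gamma^{\nu_1}_{u\eta_m}}[\,\cdot\,]$, whose law is a functional of the spatially finite-dimensional Gaussian process $(G^{m,i})_{i\in I_{q_m}}$ restricted to $[0,u\eta_m]$, with covariance \eqref{eq:covHjsHktnu} the \emph{finite} sum $\sum_{K\in I_{q_m}} R_\J(k-i\bmod I_{q_m},K)\,\Exp^{\nu_1}[f(w^0_t)f(w^K_s)]$; since $f\in[0,1]$ is $1$-Lipschitz, the functions $w\mapsto f(w^0_t)f(w^K_s)$ are bounded, $d_T$-continuous, and equicontinuous in $(t,s)$ (up to the modulus of continuity of $w$), so $D_T$-convergence $\nu_{k,1}\to\nu_1$ forces convergence of $\Exp^{\nu_{k,1}}[f(w^0_\cdot)f(w^K_\cdot)]$ uniformly on $[0,u\eta_m]^2$, hence uniform convergence of the covariance of $(G^{m,i})$ on $[0,u\eta_m]^2\times I_{q_m}^2$. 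By the continuity of the Gaussian law — and of the tilting density $\Lambda_{u\eta_m}$, which has Gaussian exponential moments and the correct normalisation — with respect to this covariance (the content of Appendices \ref{app:covariances}, \ref{app:covcont}), the relevant expectation converges uniformly in the bounded linear dependence on $Y$. Feeding the two properties into the standard decomposition, for bounded $d_T$-Lipschitz $g$,
\[
\int g\,d\Psi^m_u(\nu_k)-\int g\,d\Psi^m_u(\nu) = \int\big[g\circ\psi^m_u(\nu_k,\cdot)-g\circ\psi^m_u(\nu,\cdot)\big]\,d\nu_k + \int g\circ\psi^m_u(\nu,\cdot)\,d(\nu_k-\nu),
\]
the first term controlled by the uniform convergence and the second by weak convergence, gives continuity of $\Psi^m_u$. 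That $\Psi^m_u$ maps $\mP_S$ into $\mP_S$ and sends $N$-periodic empirical measures to $N$-periodic empirical measures follows from the shift-equivariance of \eqref{eq:Ymu} (its covariance \eqref{eq:covHjsHktnu} depends only on $k-i$) and Remark \ref{rem:Z periodic}(a).

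For the identity, I would induct over the Euler steps. From $\mu = \emp(\sigma W_n)$ (an empirical measure over shifts of an $N$-periodic path), $\Psi^0(\mu) = \emp\big((0_n,\sigma W_n)\big)$. Suppose that after $\Psi^m_{u-1}\circ\cdots\circ\Psi^m_0\circ\Psi^0$ we have $\emp\big((V^{m,[u]}_n,\sigma W_n)\big)$, where $V^{m,[u]}$ is the $u$-step iterate of \eqref{eq:Ymu}: it agrees with the solution $V^m_n$ of \eqref{eq:SD1} on $[0,u\eta_m]$ and is frozen afterwards. The key point is that with $\nu_1 = \emp(V^{m,[u]}_n)$ the covariance \eqref{eq:covHjsHktnu} coincides on $[0,u\eta_m]^2$ with the covariance \eqref{eq:covHjsHkt} governing \eqref{eq:SD1inc} — since $\Exp^{\emp(V^m_n)}[f(w^0_t)f(w^K_s)] = \tfrac1N\sum_j f(V^{m,j}_t)f(V^{m,j+K}_s)$ and only the restriction of $V^m$ to $[0,u\eta_m]$ enters the step — so applying $\psi^m_u(\nu,\cdot,\cdot)$ pathwise to each shifted sample $S^i\big(V^{m,[u]}_{n,p},\sigma W_{n,p}\big)$ performs exactly one further Euler step of \eqref{eq:SD1inc}. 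By the shift-equivariance above, the pushforward is again an empirical measure, namely $\emp\big((V^{m,[u+1]}_n,\sigma W_n)\big)$. After $m$ steps $V^{m,[m]}_n = V^m_n$, and extracting the first marginal gives $\Psi^m(\emp(\sigma W_n)) = \emp(V^m_n)$.

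The step I expect to be the main obstacle is the second property above — the uniform-in-$(Y,X)$ continuity of $\psi^m_u$ in $\nu$, i.e.\ the continuous dependence of $\Exp^{\bar\gamma^{\nu_1}_{u\eta_m}}[\,\cdot\,]$ on $\nu_1$ with constants independent of $(Y,X)$ — which relies on the continuity of $\nu_1\mapsto K_{\nu_1}$ and $\nu_1\mapsto L^{u\eta_m}_{\nu_1}$ proved in the appendices. Once that is available, the composition/marginal reductions and the empirical-measure bookkeeping in the identity are essentially formal.
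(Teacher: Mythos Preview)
Your approach is essentially the paper's: both factor $\Psi^m$ via \eqref{eq:Psim}, rewrite \eqref{eq:Ymu} so that the inner integrals collapse to increments and the expectation becomes $L^i_{\nu_1}(u\eta_m,v\eta_m)$ (the paper displays exactly the formula $Z^{u,j}_t = Y^j_{u\eta_m}+\sigma^{-1}\sum_i\int_{u\eta_m}^t\sum_v L^i_{\nu_1}(u\eta_m,v\eta_m)\bigl(Y^{i+j}_{(v+1)\eta_m}-Y^{i+j}_{v\eta_m}\bigr)\,ds+\sigma(X^j_t-X^j_{u\eta_m})$), invoke Proposition~\ref{prop:Amununif} for $\nu_1\mapsto L_{\nu_1}$, and then induct over the Euler steps using the pushforward identity for empirical measures (which the paper isolates as Lemma~\ref{lem:empirical}).

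One correction: your claim that $\sup_{(Y,X)} d_T\bigl(\psi^m_u(\nu_k,\cdot),\psi^m_u(\nu,\cdot)\bigr)\to 0$ is false as stated, because $Z^{u,j}-Z'^{u,j}$ is the coefficient difference times the \emph{unbounded} increments of $Y$; what you actually get is uniform convergence on compact sets of $(Y,X)$, which together with tightness of $(\nu_k)$ (or a Skorokhod coupling) is enough to push your decomposition through. The paper is equally terse on this passage (it just says ``the continuity of $\Psi^m_u$ follows from \eqref{eq:Psimu}''), so this is a detail neither proof spells out rather than a divergence in strategy.
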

\begin{proof} 
{\bf $\Psi^m$ is continuous:}\\
Recall the formula  \eqref{eq:Ymu} for $Z^{u, j}_t$:
\begin{multline*}
Z^{u, j}_t = Y^j_{u\eta_m}+
\sigma^{-1} \sum_{i \in I_{q_m}} \int_{u\eta_m}^t \Exp^{\bar{\gamma}^{\nu_1}_{u\eta_m}}\bigg[  G^{m,0}_{u\eta_m}
\sum_{v = 0}^{u-1}\int_{v \eta_m}^{(v+1)\eta_m} G^{m,i}_{ v \eta_m}  dY^{u, i+j}_v \bigg]\,ds  + \sigma (X^j_t - X^j_{u\eta_m})
\end{multline*}
Note that
\[
\int_{v \eta_m}^{(v+1)\eta_m} G^{m,i}_{ v \eta_m}  dY^{u, i+j}_v= G^{m,i}_{ v \eta_m} \left( Y^{u, i+j}_{(v+1)\eta_m}-  Y^{u, i+j}_{v \eta_m} \right),
\]
and hence
\begin{multline*}
 \Exp^{\bar{\gamma}^{\nu_1}_{u\eta_m}}\left[  G^{m,0}_{u\eta_m}
\sum_{v = 0}^{u-1}\int_{v \eta_m}^{(v+1)\eta_m} G^{m,i}_{ v \eta_m}  dY^{u, i+j}_v \right]\\
= \Exp^{\gamma^{\nu_1}}\left[ \Lambda_{u \eta_m}(G^m)G^{m,0}_{u\eta_m} \sum_{v=0}^{u-1}G^{m,i}_{ v \eta_m} \left( Y^{u, i+j}_{(v+1)\eta_m}-  Y^{u, i+j}_{v \eta_m} \right) \right]\\
=\sum_{v=0}^{u-1}\Exp^{\gamma^{\nu_1}}\left[ \Lambda_{u \eta_m}(G^m)G^{m,0}_{u\eta_m} G^{m,i}_{ v \eta_m} \right] \left( Y^{u, i+j}_{(v+1)\eta_m}-  Y^{u, i+j}_{v \eta_m} \right),
\end{multline*}
since $\Exp^{\gamma^{\nu_1}}$ does not operate on $Y^{u, i+j}$, see Remark~\ref{rem:Z periodic}(\ref{rem:range of expectation}).
Using Remark~\ref{rem:Lmukt} we can conclude that
\begin{multline*}
Z^{u, j}_t = Y^j_{u \eta_m} + \sigma^{-1} \sum_{i \in I_{q_m}} \int_{u \eta_m}^t \sum_{v=0}^{u-1} L_{\nu_1}^{i}(u \eta_m, v \eta_m) (Y^{u,\,i+j}_{(v+1) \eta_m} - Y^{u,\,i+j}_{v \eta_m}) \, ds \\+ \sigma (X^j_t - X^j_{u \eta_m}),\, t \in [u \eta_m, (u+1) \eta_m]
\end{multline*}

The quantities $L_{\nu_1}^{i}(u \eta_m, v \eta_m)$ are defined in 
Remark~\ref{rem:Lmukt} and in Appendix~\ref{app:covariances}.
The continuity of $\psi^m_u$ follows from the facts that this equation is linear in $X$, $Y$ and $Z$, and the mapping $\nu \to L_{\nu_1}$ is continuous, see Proposition \ref{prop:Amununif}. The continuity of $\Psi^m_u$ follows from \eqref{eq:Psimu} and that of $\Psi^m$ from \eqref{eq:Psim} and the continuity of $\Psi^0$ defined by \eqref{eq:Psi0} and \eqref{eq:iota}.
\\
{$\mathbf{ \Psi^m(\emp(\sigma W_n)) = \emp(V^m_n)}$, {\bf where} $\mathbf{V_n^m}$ {\bf is the solution to \eqref{eq:SD1}}:}\\
We use the following Lemma.
\begin{lemma}\label{lem:empirical}
\hspace{2em}
\begin{enumerate}[(i)]
	\item \label{lem:empirical:item1} We have $\emp(X_n)  \circ \iota = \emp(0_n, X_n) \in \mP_S((\T^\Z)^2)$ for all $X_n \in \T^N$, where $0_n = (0,\cdots,0) \in \T^N$.
	\item \label{lem:empirical:item2} Let $X_{n,2}=(X^1_n,X^2_n)$ be an element of $(\T^N)^2$,  and $\emp(X_{n, 2})=\frac{1}{N} \sum_{i \in I_n} \delta_{(S^i\, X^1_{n,p}, S^i\,X^2_{n,p})}$ (remember \eqref{eq:empirical}) the corresponding empirical measure in $\mP_S((\T^\Z)^2)$. Let $\varphi: (\T^\Z)^2 \to (\T^\Z)^2$, be a measurable function. Then it is true that
	\[
	\emp(X_{n,2}) \circ \varphi^{-1} = \emp(\varphi(X_{n,2})),
	\]
	where, with a slight abuse of notation, if $X_{n,2,p} \in (\T^\Z)^2$ is the periodic extension of $X_{n,2} \in (\T^N)^2$,  i.e. $(X^1_{n,p}, X^2_{n,p})$, and $\varphi(X_{n,2,p})=(Y^1,Y^2) \in (\T^\Z)^2$ we define
	\[
	\varphi(X_{n,2}) = \varphi(X_{n,2,p})=((Y^1_{-n},\cdots,Y^1_n), (Y^2_{-n},\cdots,Y^2_n)).
	\]
\end{enumerate}
\end{lemma}
We first prove that Lemma~\ref{lem:empirical} is enough to conclude 
	the proof of Lemma~\ref{Lem: Psim continuous}.
First, statement \eqref{lem:empirical:item1} of Lemma~\ref{lem:empirical} implies
\[
\Psi^0(\emp(\sigma W_n)) = \emp(\sigma W_n) \circ \iota =   \emp(0_n,\,\sigma W_n).
\]
Going one step further, and using the definition \eqref{eq:Psimu} and 
statement \eqref{lem:empirical:item2} of Lemma~\ref{lem:empirical}
\begin{multline*}
\Psi^m_0(\Psi^0(\emp(\sigma W_n))) = \Psi^m_0(\emp(0_n,\,\sigma W_n)) = 
\emp(0_n,\,\sigma W_n) \circ \psi^m_0(\emp(0_n,\,\sigma W_n), \cdot, \cdot)^{-1} = \\
\emp( \psi^m_0(\emp(0_n,\,\sigma W_n), 0_n, W_n)) = \emp(\,^0 V^m, \sigma W_n),
\end{multline*}
where $\,^0 V^m$ is equal to the solution of \eqref{eq:SD1} on the time interval $[0, \eta_m]$. According to Remark~\ref{rem:Z periodic}, $\,^0 V^{m,j}_t$ is $N$-periodic 
in the variable \(j\) for $t \in [0, \eta_m]$.

Next we have
\begin{align*}
\Psi^m_1(\Psi^m_0(\Psi^0(\emp(\sigma W_n)))) &= \Psi^m_1(\emp(\,^0 V^m, \sigma W_n)) = \emp(\,^0 V^m, \sigma W_n) \circ \psi^m_1(\emp(\,^0 V^m, \sigma W_n), \cdot, \cdot)^{-1} \\
&=\emp(\psi^m_1(\emp(\,^0 V^m, \sigma W_n), \,^0 V^m, \sigma W_n)) = \emp(\,^1 V^m, \sigma W_n),
\end{align*}
where $\,^1 V^m$ is equal to the solution of \eqref{eq:SD1} on the time interval $[0, 2\eta_m]$, and again $N$-periodic.

One concludes that
\[
\Psi^m_{m-1} \circ \cdots \circ \Psi^m_0 \circ \Psi^0(\emp(\sigma W_n)) = \emp(\,^{m-1} V^m, \sigma W_n),
\]
where $\,^{m-1} V^m$ is equal to the $N$-periodic solution of \eqref{eq:SD1} on the time interval $[0, m\eta_m] = [0, T]$ i.e. $V^m_n$.

Therefore,
\begin{align*}
\Psi^m(\emp(\sigma W_n)) &= (\Psi^m_{m-1} \circ \cdots \circ \Psi^m_0 \circ \Psi^0(\emp(\sigma W_n)))^1 = (\emp(\,^{m-1} V^m, \sigma W_n))^1 \\
&= \emp(\,^{m-1} V^m) = \emp(V^m_n)
\end{align*}
\end{proof}
We now prove Lemma \ref{lem:empirical}.
\begin{proof}[Proof of Lemma \ref{lem:empirical}]\ \\
	\eqref{lem:empirical:item1} For any Borelian of $(\T^\Z)^2$ we have $\emp(X_n) \circ \iota (A)=\emp(X_n)(\iota^{-1}(A))=\emp(X_n)((A \cap \{ 0 \times \T^\Z \})_2)$, where $(A \cap \{ 0 \times \T^\Z \})_2$ is the second coordinate $y$ of the elements of $A$ of the form $(0,y)$. This means that $\emp(X_n) \circ \iota =\emp(0_n,X_n)$.\\
\eqref{lem:empirical:item2} Let $A$ be a Borelian of $(\T^\Z)^2$. We have
\[
(\emp(X_{n,2}) \circ \varphi^{-1}) (A) = \emp(X_{n,2})(\varphi^{-1} (A)) = \emp(\varphi(X_{n,2}))(A),
\]
and the conclusion of the Lemma  follows.
\end{proof}
\subsection{Proof of Theorem \ref{Theorem:Main}.(i)-(iii)}\label{subsection:proofT2.4}
It turns out to be convenient, in order to prove the Theorem, to use the $L^2$ distance on $\T^\Z$ given by
\begin{equation}\label{eq: L squared distance}
d_{L^2}(u,v) = \sum_{i \in \Z} b_i \norm{f(u^i)-f(v^i)}_{L_2}
\end{equation}
where 
\[
\norm{f(u^i)-f(v^i)}_{L^2}^2 = \int_0^T \big(f(u^i_t) - f(v^i_t)\big)^2 dt.
\]
The reason for this is that we are then able to use the tools of Fourier analysis since the measures we consider are shift invariant, i.e. invariant to spatial translations.

Let $D_{T,L^2}$ be the corresponding Wasserstein-1 metric on $\mP(\T^\Z)$ induced by $d_{L^2}(u,v) $.

\begin{remark}\label{rem:topologies}
The topology induced by $D_{T,L^2}$ on $\mP(\T^\Z)$ is coarser than the one induced by $D_T$.  Hence
it will suffice for us to prove the LDP with respect to the  topology on $\mP(\T^\Z)$ induced by the metric $D_{T,L^2}$. 
This is because we prove in Lemma \ref{Lem: exponentially tight} that the sequence $\Pi^n$ is exponentially tight for the topology induced by $D_T$ on $\mP(\T^\Z)$.
We can then use \cite[Corollary 4.2.6]{dembo-zeitouni:97} which states that if $\Pi^n$ satisfies an LDP for a coarser topology, then it does satisfy the same LDP for a finer topology.
Lemma \ref{Lem: exponentially tight} is proved in Section \ref{sect:exponential tightness}.
\end{remark}

We use  \cite[Th. 4.2.23]{dembo-zeitouni:97} to prove the LDP for $\emp(V_n)$ on $\mP_S(\T^\Z)$ induced by the metric $D_{T,L^2}$. The common probability space in which we perform the exponentially equivalent approximations is $(\T^N,Q^n)$ which contains the  random variable $(V^j_t)$, as well as (as explained in Corollary~\ref{Corollary Measure Representation}) the random variables $(\sigma W^j_t)$ which are distributed as $P^{\otimes N}$. We approximate $\emp(V_n)$ by $\Psi^m\big(\emp(\sigma W_n) \big)$. It is noted in Lemma \ref{Lem: Psim continuous} that the approximations $\Psi^m$ are continuous with respect to the topology induced by $D_T$, so that they must also be continuous with respect to the topology induced by $D_{T,L^2}$. 

The proof is based on Lemma \ref{Lemma Exp Equivalent 1}.
According to this Lemma for any $j \in \N^*$, we have 
\[
\lim_{m\to\infty}\lsup{n}\frac{1}{N}\log Q^n\bigg( D_{T,L^2}\big(\Psi^m\big(\emp(\sigma W_n)\big),\emp(V_n) \big)>2^{-j-1}   \bigg)=-\infty.
\]
We define $m_j$ to be the smallest integer strictly bigger than \(m_{j-1}\) such that
\begin{equation}\label{eq: exponentially equivalent 11}
\sup_{m\geq m_j}\lsup{n}\frac{1}{N}\log Q^n\bigg( D_{T,L^2}\big(\Psi^{m}\big(\emp(\sigma W_n)\big),\emp(V_n) \big) > 2^{-j-1} \bigg) \leq -j.
\end{equation}
By construction, the sequence $(m_j)_{j\geq 1}$ is strictly increasing and hence $\lim_{j \to \infty} m_j = \infty$.

Next define the sets
\begin{equation}\label{eq:frAj}
\mathfrak{A}_j = \bigg\lbrace \mu : D_{T,L^2}\big(\Psi^{m_j}(\mu),\Psi^{m_{j+1}}(\mu) \big) \leq 2^{-j}\bigg\rbrace, \, j \in \N^*,
\end{equation}
and  the set 
\begin{equation}\label{eq:frA}
\mathfrak{A} = \liminf_k \mathfrak{A}_k = \bigcup_{j\in \N^+} \bigcap_{k\geq j} \mathfrak{A}_k.
\end{equation}
The following Lemma shows that $\mathfrak{A}$ is not empty.
\begin{lemma}\label{lem:level sets}
If $I^{(3)}(\mu) < \infty$, then $\mu \in \mathfrak{A}$.
\end{lemma}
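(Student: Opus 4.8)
The plan is to convert the purely deterministic assertion ``$D_{T,L^2}(\Psi^{m_k}(\mu),\Psi^{m_{k+1}}(\mu))\le 2^{-k}$ for all large $k$'' into a statement inside the probability space $(\T^N,Q^n)$ of Corollary~\ref{Corollary Measure Representation}, where the superexponential estimates built into the definition \eqref{eq: exponentially equivalent 11} of the sequence $(m_j)$ are available, and then to produce one configuration that realizes the desired inequality. The hypothesis $I^{(3)}(\mu)<\infty$ enters exactly once, through the large deviation lower bound for $\Pi^n_0$: since $\emp(\sigma W_n)$ has law $\Pi^n_0$ under $Q^n$ (Corollary~\ref{Corollary Measure Representation}(ii)) and $\Pi^n_0$ satisfies the LDP with good rate function $I^{(3)}$ (Theorem~\ref{Theorem: Pi 0 LDP}), for every $\delta>0$ and all $n$ large enough
\[
Q^n\big(D_{T,L^2}(\emp(\sigma W_n),\mu)<\delta\big)\ \ge\ e^{-N(I^{(3)}(\mu)+1)}\ >\ 0 ,
\]
and it is precisely this positivity that would fail if $I^{(3)}(\mu)=\infty$.

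Fix an integer $k>I^{(3)}(\mu)+2$ and a real $\varepsilon>0$. By Lemma~\ref{Lem: Psim continuous} together with Remark~\ref{rem:topologies} the maps $\Psi^{m_k},\Psi^{m_{k+1}}$ are continuous for the metric $D_{T,L^2}$, so I would choose $\delta>0$ such that $D_{T,L^2}(\nu,\mu)<\delta$ forces both $D_{T,L^2}(\Psi^{m_k}(\nu),\Psi^{m_k}(\mu))<\varepsilon$ and $D_{T,L^2}(\Psi^{m_{k+1}}(\nu),\Psi^{m_{k+1}}(\mu))<\varepsilon$. Recall from Lemma~\ref{Lem: Psim continuous} that, inside $(\T^N,Q^n)$, $\Psi^{m}(\emp(\sigma W_n))=\emp(V^m_n)$, so the quantity controlled by \eqref{eq: exponentially equivalent 11} is $D_{T,L^2}(\emp(V^m_n),\emp(V_n))$. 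I then consider the event
\begin{multline*}
\mathcal{E}_n=\big\{D_{T,L^2}(\emp(\sigma W_n),\mu)<\delta\big\}\cap\big\{D_{T,L^2}(\emp(V^{m_k}_n),\emp(V_n))\le 2^{-k-1}\big\}\\
\cap\big\{D_{T,L^2}(\emp(V^{m_{k+1}}_n),\emp(V_n))\le 2^{-k-2}\big\}.
\end{multline*}
By the lower bound above, the first event has $Q^n$-probability at least $e^{-N(I^{(3)}(\mu)+1)}$ for $n$ large; by \eqref{eq: exponentially equivalent 11}, used with index $k$ (threshold $2^{-k-1}$, $m=m_k$) and with index $k+1$ (threshold $2^{-(k+1)-1}=2^{-k-2}$, $m=m_{k+1}$), the complements of the other two events have $Q^n$-probability at most $e^{-N(k-1)}$ and $e^{-Nk}$ for $n$ large. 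Since $k-1>I^{(3)}(\mu)+1$, these are $o\big(e^{-N(I^{(3)}(\mu)+1)}\big)$, so $Q^n(\mathcal{E}_n)>0$ for $n$ large; I pick any $\omega\in\mathcal{E}_n$ and set $\nu=\emp(\sigma W_n)(\omega)$.

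On $\mathcal{E}_n$ one has $\Psi^{m_k}(\nu)=\emp(V^{m_k}_n)(\omega)$ and $\Psi^{m_{k+1}}(\nu)=\emp(V^{m_{k+1}}_n)(\omega)$, so by the triangle inequality $D_{T,L^2}(\Psi^{m_k}(\mu),\Psi^{m_{k+1}}(\mu))$ is at most the sum of $D_{T,L^2}(\Psi^{m_k}(\mu),\Psi^{m_k}(\nu))$, $D_{T,L^2}(\emp(V^{m_k}_n),\emp(V_n))$, $D_{T,L^2}(\emp(V_n),\emp(V^{m_{k+1}}_n))$ and $D_{T,L^2}(\Psi^{m_{k+1}}(\nu),\Psi^{m_{k+1}}(\mu))$ (all evaluated at $\omega$), hence at most $\varepsilon+2^{-k-1}+2^{-k-2}+\varepsilon=\tfrac34\,2^{-k}+2\varepsilon$. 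Choosing $\varepsilon\le\tfrac18\,2^{-k}$ yields $D_{T,L^2}(\Psi^{m_k}(\mu),\Psi^{m_{k+1}}(\mu))\le 2^{-k}$, i.e. $\mu\in\mathfrak{A}_k$. Since this holds for every integer $k>I^{(3)}(\mu)+2$, we obtain $\mu\in\bigcap_{k>I^{(3)}(\mu)+2}\mathfrak{A}_k\subset\mathfrak{A}$.

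The main obstacle is the probabilistic bookkeeping in the second paragraph: the argument goes through only once the gain $e^{-NI^{(3)}(\mu)}$ from the $I^{(3)}$-lower bound dominates the superexponentially small probabilities $e^{-Nk}$ coming from \eqref{eq: exponentially equivalent 11}, which forces $k$ to exceed a threshold depending on $I^{(3)}(\mu)$; this is exactly why one can only claim $\mu\in\mathfrak{A}_k$ for large $k$, and why $\mathfrak{A}$ must be taken as a $\liminf$. A minor point is the order of quantifiers ($\delta$, and hence the lower threshold on $n$, depends on $\varepsilon$ and $k$), but since the left-hand side $D_{T,L^2}(\Psi^{m_k}(\mu),\Psi^{m_{k+1}}(\mu))$ involves neither $\varepsilon$ nor $n$, fixing $k$, running the above for a sufficiently small $\varepsilon$ and reading off the inequality creates no difficulty.
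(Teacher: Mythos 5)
Your proof is correct, and it uses the same core ingredients as the paper's — the LDP lower bound for $\Pi^n_0$ from Theorem~\ref{Theorem: Pi 0 LDP}, the superexponential estimate \eqref{eq: exponentially equivalent 11}, the coupling $\Psi^m(\emp(\sigma W_n))=\emp(V^m_n)$ of Lemma~\ref{Lem: Psim continuous}/Corollary~\ref{Corollary Measure Representation}, and the continuity of the $\Psi^{m_j}$ — but organizes them quite differently. The paper never fixes a particular $\mu$: it applies the LDP \emph{lower bound} directly to the open set $\mathfrak{A}^c_j$, uses the triangle inequality (through $\emp(V_n)$, inside the probability space) and \eqref{eq: exponentially equivalent 11} to conclude $\lsup{n}\frac{1}{N}\log\Pi^n_0(\mathfrak{A}^c_j)\leq -j$, and from the chain
\[
-\inf_{\mathfrak{A}^c_j} I^{(3)}\ \leq\ \linf{n}\tfrac{1}{N}\log\Pi^n_0(\mathfrak{A}^c_j)\ \leq\ -j
\]
reads off $\inf_{\mathfrak{A}^c_j}I^{(3)}\geq j$, i.e.\ $I^{(3)}(\mu)<j\Rightarrow\mu\in\mathfrak{A}_j$. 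The openness of $\mathfrak{A}^c_j$ — hence the continuity of $\Psi^{m_j},\Psi^{m_{j+1}}$ — is used only implicitly there, to make the lower bound applicable. You, by contrast, localize at $\mu$: you apply the lower bound to a small ball $B(\mu,\delta)$, tune $\delta$ via the \emph{pointwise} continuity of $\Psi^{m_k},\Psi^{m_{k+1}}$ at $\mu$, show the good event $\mathcal{E}_n$ is nonempty when $k>I^{(3)}(\mu)+2$, and run the triangle inequality on a single witness $\omega$. Both routes are sound; the paper's is slicker and gives the sharper threshold $j>I^{(3)}(\mu)$ versus your $k>I^{(3)}(\mu)+2$, but since the conclusion is only membership in $\mathfrak{A}=\liminf_k\mathfrak{A}_k$, that extra margin is immaterial. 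One small imprecision in your write-up: when $I^{(3)}(\mu)=\infty$ the quantity $Q^n(B(\mu,\delta))$ need not vanish for finite $n$; what fails is the \emph{uniform} lower bound $e^{-cN}$ with $c<\infty$ that you need to dominate the superexponential error terms — that is the precise place where the hypothesis enters.
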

\begin{proof}
	We prove that if \(I^{(3)}(\mu) <  j\), then \(\mu\in \mathfrak{A}_j\) and so we also have $\mu \in \bigcap_{k\geq j}\mathfrak{A}_k$.
	By Theorem~\ref{Theorem: Pi 0 LDP}, we know that
\begin{equation}\label{eq:i3fracac}
- \inf_{\mu \in \mathfrak{A}^c_j}I^{(3)}(\mu) \leq \linf{n}\frac{1}{N} \log \Pi^n_0 \big( \mathfrak{A}^c_j \big) \leq 
\lsup{n}\frac{1}{N} \log \Pi^n_0 \left( \mathfrak{A}^c_j \right).
\end{equation}
But 
\[
\mathfrak{A}^c_j \subset \{\mu, D_{T,L^2}\big(\Psi^{m_j}(\mu,\emp(V_n) \big)>2^{-(j+1)}\}
\cup\{\mu, D_{T,L^2}\big(\Psi^{m_{j+1}}(\mu),\emp(V_n) \big)>2^{-(j+1)}\}.
\]
We deduce by Corollary~\ref{Corollary Measure Representation} that
\begin{align*}
\Pi^n_0 \left( \mathfrak{A}^c_j \right) \leq&\\
&\Pi^n_0\left(D_{T,L^2}\big(\Psi^{m_{j}}(\mu),\emp(V_n) \big)>2^{-(j+1)}\right)+
\Pi^n_0\left(D_{T,L^2}\big(\Psi^{m_{j+1}}(\mu),\emp(V_n) \big)>2^{-(j+1)}\right)
\\
\leq& 
Q^n \left( D_{T,L^2}\big(\Psi^{m_j}\big(\emp(\sigma W_n)\big),\emp(V_n)\big) > 2^{-(j+1)} \right) \\
&+ Q^n \left( D_{T,L^2}\big(\Psi^{m_{j+1}}\big(\emp(\sigma W_n)\big),\emp(V_n)\big) > 2^{-(j+1)} \right)\\
 \leq &                       
Q^n \left( D_{T,L^2}\big(\Psi^{m_j}\big(\emp(\sigma W_n)\big),\emp(V_n)\big) > 2^{-(j+1)} \right) \\
&+ Q^n \left( D_{T,L^2}\big(\Psi^{m_{j+1}}\big(\emp(\sigma W_n)\big),\emp(V_n)\big) > 2^{-(j+2)} \right)
\end{align*}
In addition, using \(\log(a+b)\leq \log(2\max(a,b)) = \log(2) + \max(\log(a),\log(b))\) and \eqref{eq: exponentially equivalent 11}, we obtain
\begin{multline*}
\lsup{n}\frac{1}{N} \log \Pi^n_0 \left( \mathfrak{A}^c_j \right) \leq \\
\max \bigg\lbrace \lsup{n}\frac{1}{N}  \log Q^n \left( D_{T,L^2}\big(\Psi^{m_j}\big(\emp(\sigma W_n)\big),\emp(V_n)\big) > 2^{-(j+1)} \right),\\
\lsup{n}\frac{1}{N}  \log Q^n \left( D_{T,L^2}\big(\Psi^{m_{j+1}}\big(\emp(\sigma W_n)\big),\emp(V_n)\big) > 2^{-(j+2)} \right)   \bigg\rbrace \leq \\
\max \bigg\lbrace -j, -(j+1) \bigg\rbrace =-j .
\end{multline*}
Then, by \eqref{eq:i3fracac} we conclude  that \(\forall \mu\in\mathfrak{A}^c_j\) we have \(I^{(3)}(\mu) \geq j\). It ends the proof.
\end{proof}
We define $\Psi : \mathfrak{A} \to \mathcal{P}_S\big(\T^{\Z}\big)$ as follows
\begin{equation}\label{eq:Psiinfty defin}
\Psi(\mu ) = \lim_{j\to\infty}\Psi^{m_j}(\mu ),
\end{equation}
It follows from the  definitions \eqref{eq:frAj} and \eqref{eq:frA}
	that $\big( \Psi^{m_j}(\mu)\big)_{j\in \N^*}$ is Cauchy so that the limit in \eqref{eq:Psiinfty defin} exists. In effect given $j \geq 0$ it is true that $\mu \in \bigcap_{k\geq j} \mathfrak{A}_k$. since, by the triangle inequality:
\[
D_{T, L^2}(\Psi^{m_j}(\mu),\Psi^{m_{j+k}}(\mu)) \leq \sum_{l=0}^{k-1} D_{T, L^2}(\Psi^{m_{j+l}}(\mu),\Psi^{m_{j+l+1}}(\mu)) \leq 
\sum_{l=0}^{k-1} 2^{-(j+l)} \leq  2^{-(j-1)},
\]
it is true that $\lim_{j, k \to \infty} D_{T, L^2}(\Psi^{m_j}(\mu),\Psi^{m_{j+k}}(\mu)) =0$.

In the notation of \cite[Th. 4.2.23]{dembo-zeitouni:97}, $\epsilon = N^{-1}$, $\tilde{\mu}_\epsilon = \Pi^n$, $f := \Psi$, $\mu_\epsilon = \Pi^n_0$ and $f^j := \Psi^{m_j}$. 

\textit{Step 1: Exponential equivalence}

The `exponentially equivalent' property requires that for any $\delta > 0$, and 
recalling the definition of $V_n$ in 
Corollary~\ref{Corollary Measure Representation} and the fact that the 
law of $\emp(V_n)$ is $\Pi^n$ (also in 
Corollary~\ref{Corollary Measure Representation}),
\begin{equation}\label{eq: exponentially equivalent 11 2}
\lim_{j\to\infty}\lsup{n}\frac{1}{N}\log Q^n\bigg( D_{T,L^2}\big(\Psi^{m_j}\big(\emp(\sigma W_n)\big),\emp(V_n) \big)>\delta \bigg)=-\infty.
\end{equation}
This is an immediate consequence of \eqref{eq: exponentially equivalent 11} which in turn follows from Lemma \ref{Lemma Exp Equivalent 1}.

\textit{Step 2: Uniform Convergence on Level Sets of $I^{(3)}$}

The second property required for \cite[Th. 4.2.23]{dembo-zeitouni:97} is the uniform convergence on level sets,  $\mathcal{L}_{I^{(3)}}(\alpha):=\left\{\mu : I^{(3)}(\mu) \leq \alpha \right\}$,  of $I^{(3)}$, that is we must prove that for any $\alpha > 0$,
\begin{equation}\label{eq:cont on level sets}
\lim_{j\to \infty}\sup_{\mu\in\mathcal{L}_{I^{(3)}}(\alpha)}\left\lbrace D_{T,L^2}\big(\Psi^{m_j}(\mu),\Psi(\mu) \big)\right\rbrace = 0.
\end{equation}
Note that the fact that for all $j \geq \lfloor \alpha \rfloor+1$,
\begin{equation}\label{eq: to show j}
\sup_{\mu\in\mathcal{L}_{I^{(3)}}(\alpha)}\left\lbrace D_{T,L^2}\big(\Psi^{m_j}(\mu),\Psi^{m_{j+1}}(\mu) \big)\right\rbrace \leq 2^{-j}.
\end{equation}
follows from Lemma \ref{lem:level sets} and this suffices because
\begin{equation}
 D_{T,L^2}\big(\Psi^{m_j}(\mu),\Psi(\mu) \big) \leq \sum_{k=j}^\infty D_{T,L^2}\big(\Psi^{m_k}(\mu),\Psi^{m_{k+1}}(\mu) \big) \leq \sum_{k=j}^\infty 2^{-k} 
 \limprob{\longrightarrow}{j \to \infty}{}0.
%
%
\end{equation}
for all $\mu \in \mathcal{L}_{I^{(3)}}(\alpha)$.

\textit{Step 3: Rate Function}
We have thus established the LDP. It remains for us to prove that the rate function is of the form noted in the theorem, and its unique minimum is given by $\mu_*$. According to \cite[Th. 4.2.23]{dembo-zeitouni:97} ,
\begin{equation}
\label{eq:Hmu1}
H(\mu) = \inf_{\zeta \in \PS(\T^\Z): \Psi(\zeta) = \mu}\left\lbrace I^{(3)}(\zeta) \right\rbrace,
\end{equation}
where $H(\mu) := \infty$ if there does not exist $\zeta \in \mP_S(\T^\Z)$ such that $\Psi(\zeta) = \mu$. Since the unique zero of $I^{(3)}$ is $P^{\otimes\Z}$, we can immediately infer that the unique zero of $H$ is $\Psi \big(P^{\otimes\Z}\big)$, which is $\mu_*$. In Section \ref{Section Limiting Process} we prove that this satisfies the McKean-Vlasov stochastic differential equation stated in the Theorem.
\begin{remark}
Theorem 4.2.23 of \cite{dembo-zeitouni:97} requires $\Psi$ to be defined and measurable in $\mP(\T^\Z)$, not only in $\mathfrak{A}$. Since $\mathfrak{A}$ is non empty thanks to Lemma \ref{lem:level sets}, measurable as a countable union of closed sets, we can extend $\Psi$ to a measurable function in $\mP(\T^\Z)$ by simply setting it to an arbitrary measure, say $P^{\otimes \Z}$, in $\mathfrak{A}^c$.
\end{remark}

\subsection{Exponential Tightness of $(\Pi^n)_{n\in\Z^+}$ on $\big( \mathcal{P}_S(\T^{\Z}),D_{T}\big)$}\label{sect:exponential tightness}
In this section we prove in Lemma \ref{Lem: exponentially tight} the exponential tightness of $(\Pi^n)_{n\in\Z^+}$ for the topology induced by $D_T$ on $ \mathcal{P}_S(\T^{\Z})$. As pointed out in Remark \ref{rem:topologies} it is necessary to prove Theorem~\ref{Theorem:Main}.

Lemma~\ref{Lemma: bound theta} is crucial for comparing the system with correlations with the uncorrelated system via Girsanov's Theorem. It is used in the proof of the exponential tightness of $\big(\Pi^n\big)_{n\in\Z^+}$ in  Lemma \ref{Lem: exponentially tight} and is used, as well as Lemma  \ref{Lemma: bound eta}, several times in the sequel. 

Just as for several of the Lemmas below it makes good use of the Discrete Fourier Transform (DFT) of the relevant variables. The corresponding material and notations are presented in Appendix \ref{app:DFTs}. As a general notation, given an $I_n$-periodic sequence $(\beta^j)_{j \in I_n}$, we note $(\tilde{\beta}^p)_{p \in I_n}$ its length $N$ DFT defined by
\[
\tilde{\beta}^p = \sum_{j \in I_n} \beta^j F_N^{-jp}\quad F_N= e^{\frac{2 i \pi }{N}}\ \mbox{ with }\ i^2 = -1.
\]
\begin{lemma}\label{Lemma: bound theta}
For any $M >0$, there exists $C_M > 0$ such that 
\begin{equation}
\lsup{n}\frac{1}{N}\log Q^n\bigg(\frac{1}{N}\sup_{t\in [0,T]}\sum_{j\in I_n}\big(\theta^j_t\big)^2 \geq C_M \bigg) \leq -M.
\end{equation}
\end{lemma}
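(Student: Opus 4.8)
The plan is to prove the stronger, pathwise statement that there is a constant $C$, depending only on $\sigma$, $T$ and the correlation data $a,b$, such that
\[
\sup_{t\in[0,T]}\frac1N\sum_{j\in I_n}\big(\theta^j_t\big)^2\ \le\ C\,\frac1N\sum_{l=1}^{N}Y^{(n)}_l ,
\]
where, under $Q^n$, the $Y^{(n)}_1,\dots,Y^{(n)}_N$ are i.i.d.\ copies of $\sup_{t\le T}\beta_t^2$ for a standard Brownian motion $\beta$. Granting this, the lemma follows from Cramér's theorem: by the reflection principle $\sup_{t\le T}\beta_t^2$ has a finite exponential moment of every order $\lambda<1/(2T)$, so $\big(\frac1N\sum_{l=1}^N Y^{(n)}_l\big)_n$ satisfies, under $Q^n$, a large deviation principle with a good rate function $\Lambda^*$ that tends to $+\infty$ at infinity; given $M>0$ one picks $R_M$ with $\Lambda^*(R_M)\ge M$ and sets $C_M:=C\,R_M$, so that the event in the statement is contained in $\{\frac1N\sum_l Y^{(n)}_l\ge R_M\}$, whose $Q^n$-probability decays at least like $e^{-N\Lambda^*(R_M)}\le e^{-NM}$.

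To obtain the pathwise bound I would first turn the representation of $\theta$ in Corollary~\ref{Corollary Measure Representation} into a linear Volterra equation. Substituting $dV^{i+j}_s=\sigma\,dW^{i+j}_s+\sigma\theta^{i+j}_s\,ds$ and commuting the Gaussian expectation $\Exp^{\bar{\gamma}^{\emp(V_n)}_t}$ with the integrals — using that $\Exp^{\bar{\gamma}^{\emp(V_n)}_t}[G^0_tG^i_s]=L^{t,i}_{\emp(V_n)}(t,s)$ is precisely the covariance of Remark~\ref{rem:Lmukt} — one gets, with indices mod $I_n$,
\[
\theta^j_t=\sigma^{-1}\sum_{i\in I_n}\int_0^t L^{t,i}_{\emp(V_n)}(t,s)\,dW^{i+j}_s+\sigma^{-1}\sum_{i\in I_n}\int_0^t L^{t,i}_{\emp(V_n)}(t,s)\,\theta^{i+j}_s\,ds .
\]
Because $\emp(V_n)\in\PS(\T^\Z)$ the kernel depends on $i$ only, so taking the length-$N$ DFT (Appendix~\ref{app:DFTs}) in $j$ decouples this over the frequencies $p\in I_n$:
\[
\tilde\theta^p_t=\mathcal{M}^p_t+\sigma^{-1}\int_0^t\widehat{L}^{p}_{\emp(V_n)}(t,s)\,\tilde\theta^p_s\,ds,\qquad \mathcal{M}^p_t:=\sigma^{-1}\int_0^t\widehat{L}^{p}_{\emp(V_n)}(t,s)\,d\widetilde{W}^p_s ,
\]
where $\widehat{L}^{p}_\mu=\sum_i L^{t,i}_\mu\, F_N^{\mp ip}$. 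The crucial input is that $\widehat{L}^{p}_\mu$ and $\partial_s\widehat{L}^{p}_\mu$ are bounded by a constant uniformly in $p$, $n$, $\mu\in\PS(\T^\Z)$ and $0\le s\le t\le T$; this follows from $0\le f\le1$, $\sum_i a_i<\infty$, $\sum_l b_l<\infty$, the strict positivity of the spectral density, and the operator inequality $\mathrm{Cov}_{\bar{\gamma}_t}\preceq\mathrm{Cov}_{\gamma}$ (tilting a centred Gaussian by the exponential of a nonnegative quadratic form shrinks the covariance), and is exactly what Appendix~\ref{app:covariances} and Proposition~\ref{prop:Amununif} provide. Since the Volterra coefficient is now $O(1)$ and \emph{not} $O(N)$, Grönwall's inequality gives $\sup_{t\le T}|\tilde\theta^p_t|\le e^{CT}\sup_{t\le T}|\mathcal{M}^p_t|$, and integration by parts in $s$ (with $\widetilde{W}^p_0=0$ and the bound on $\partial_s\widehat{L}^{p}_\mu$) yields $\sup_{t\le T}|\mathcal{M}^p_t|\le C_4\sup_{s\le T}|\widetilde{W}^p_s|$, all constants uniform in $p,n,\mu$. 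By Parseval $\frac1N\sum_{j}(\theta^j_t)^2=\frac1{N^2}\sum_p|\tilde\theta^p_t|^2$, hence $\sup_{t\le T}\frac1N\sum_j(\theta^j_t)^2\le \frac{C_4^2}{N^2}\sum_{p\in I_n}\sup_{s\le T}|\widetilde{W}^p_s|^2$.

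It then remains to bound $\sum_{p\in I_n}\sup_{s\le T}|\widetilde{W}^p_s|^2$. Under $Q^n$ the $(W^i)_{i\in I_n}$ are $N$ independent standard Brownian motions and the DFT is a unitary change of basis, so $\widetilde{W}^0$ and the real and imaginary parts of $\widetilde{W}^1,\dots,\widetilde{W}^n$ are $N$ independent, suitably rescaled, standard Brownian motions; using $\widetilde{W}^{-p}=\overline{\widetilde{W}^{p}}$ and $\sup_s|\widetilde{W}^p_s|^2\le\sup_s(\mathrm{Re}\,\widetilde{W}^p_s)^2+\sup_s(\mathrm{Im}\,\widetilde{W}^p_s)^2$ one arrives at $\sum_{p\in I_n}\sup_{s\le T}|\widetilde{W}^p_s|^2\le N\sum_{l=1}^N Y^{(n)}_l$, which is the pathwise bound above with $C=C_4^2$, and Cramér finishes the argument as in the first paragraph.

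The step I expect to be the real work, and the main obstacle, is the chain of uniform estimates on $L_\mu$ (boundedness and time-regularity of $L_\mu$ and $\partial_s L_\mu$, uniformly in $\mu\in\PS(\T^\Z)$), which is delegated to Appendix~\ref{app:covariances}: these are exactly what keep the Grönwall constant $O(1)$ rather than $O(N)$, and the distinction is essential, since a naive Cauchy–Schwarz bound on $\theta^j_t$ would lose a factor $N$ which Grönwall would then exponentiate into a useless $e^{O(N)}$ — hence the whole point of routing $\theta$ through the covariance $L_\mu$ (which already encodes the spatial cancellations) and of the DFT. Secondary technical points are the legitimacy of the expectation/stochastic-integral interchange producing the $L_\mu$-Volterra equation, handled by Gaussian calculus and the defining identity for $L_\mu$, and the control of $\sup_t$ of the non-convolution, $t$-dependent kernel $\widehat{L}^{p}_\mu$ against the Brownian noise, for which the uniform bound on $\partial_s\widehat{L}^{p}_\mu$ is used.
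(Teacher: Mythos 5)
Your overall architecture — pass to the DFT so that the spatial stationarity decouples the frequencies, recognize the Volterra structure in $\tilde\theta^p$, use the uniform operator/spectral bound on the covariance (what the paper packages as $C_\J$ in Lemma~\ref{Lemma Fourier Bound on R Jn} and Remark~\ref{rem:upeigen}, or as $\sqrt D$ in Proposition~\ref{prop:Lkmuregular}) to keep the Gr\"onwall constant $O(1)$ rather than $O(N)$, and then reduce to a concentration estimate — is exactly the paper's architecture through Step~2 of its proof. The divergence is in the endgame, and there your argument has a genuine gap.

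You want to integrate $\mathcal M^p_t=\sigma^{-1}\int_0^t\widehat L^p_{\emp(V_n)}(t,s)\,d\widetilde W^p_s$ by parts in $s$ in order to get the pathwise bound $\sup_t|\mathcal M^p_t|\le C\sup_{s\le T}|\widetilde W^p_s|$, and for this you assert that $\partial_s\widehat L^p_\mu(t,s)$ is uniformly bounded and that this is ``exactly what Appendix~\ref{app:covariances} and Proposition~\ref{prop:Amununif} provide.'' It is not: Proposition~\ref{prop:Lkmuregular} gives three-times differentiability in the \emph{frequency} $\varphi$ and only \emph{continuity} in $(s,u)$; Proposition~\ref{prop:Amununif} gives Lipschitz dependence on $\mu$. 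No time-regularity of $L_\mu$ is claimed, and for the relevant $\mu=\emp(V_n)$ it is actually false. Indeed
\[
L^{t,i}_{\emp(V_n)}(t,s)=\sum_{k,l}\Exp^{\bar\gamma^{\emp(V_n)}_t}\!\big[J^{0k}_nJ^{il}_n\big]\,f(V^k_t)\,f(V^l_s),
\]
so the $s$-dependence is through $f(V^l_s)$, and under $Q^n$ the paths $V^l$ are semimartingales with a Brownian component; $s\mapsto f(V^l_s)$ is therefore not of bounded variation (let alone differentiable), and the Lebesgue--Stieltjes integration by parts you need is not available. This is not a cosmetic issue: without it there is no pathwise bound $\sup_t\frac1N\sum_j(\theta^j_t)^2\lesssim\frac1N\sum_lY^{(n)}_l$ with the $Y^{(n)}_l$ i.i.d., and the Cram\'er step of your first paragraph has nothing to act on.

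The paper circumvents precisely this obstacle. It does not convert the forcing term into a Wiener integral against a nice kernel; instead it bounds the Gaussian-expectation forcing term by Cauchy--Schwarz in the $\bar\gamma_t$ inner product (Lemma~\ref{Lemma Fourier Bound on R Jn}), which leaves the quantity $\sum_{j,k}\big(\int_0^t f(V^k_s)\,dW^j_s\big)^2$ intact as an honest sum of squared It\^o integrals — not reduced to $\sup_s|W^j_s|$ — and then controls $\sup_{t\le T}$ of it via Doob's submartingale inequality together with Lemma~\ref{lem:BG}, the Ben~Arous--Guionnet exponential-moment estimate for $\frac1N\sum_{i,j}\big(\int_0^T h^i\,dB^j\big)^2$. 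That lemma is doing the work that plain Cram\'er on i.i.d.\ summands cannot do here, because the integrands $f(V^k_s)$ are themselves functionals of the Brownian motions, so the squared stochastic integrals are neither independent nor simple functionals of $\sup_s|W^j_s|$. If you want to keep your ``Volterra $+$ concentration'' scheme, the fix is to accept the forcing as $\sup_{r\le T}\sum_{j,k}\big(\int_0^r f(V^k_s)\,dW^j_s\big)^2$ after Gr\"onwall and then use Doob plus Lemma~\ref{lem:BG} in place of Cram\'er — which is the paper's Step~3.
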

\begin{proof}
	The proof is rather typical of many of the proofs in this paper. It uses some definitions and results that are given in Appendix \ref{app:DFTs}. It follows three steps.\\
		{\bf Step 1: Go to the Fourier domain}\\
		By Parseval's Theorem,
		\begin{equation}
		\frac{1}{N}\sum_{j\in I_n}\big(\theta^j_t\big)^2 = \frac{1}{N^2}\sum_{p\in I_n}\big|\tilde{\theta}^p_t\big|^2. 
		\end{equation}
		Taking Fourier transforms in \eqref{eq: theta SDE} and using Lemma \ref{lem:translation}, we find that
		\begin{equation}\label{eq: tilde V 0}
		\tilde{V}^p_t = \sigma \tilde{W}^p_t +  \sigma \int_0^t \tilde{\theta}^p_s ds,
		\end{equation}
		where 
		\begin{equation}\label{eq:tildethetap1}
		\tilde{\theta}^p_s =
		\sigma^{-2}  \,\Exp^{\gamma^{\emp(V_n)}}\bigg[\Lambda_s ( G ) G^0_s \int_0^s\tilde{G}^{-p}_r d\tilde{V}^{p}_r \bigg].
		\end{equation}
		Next we write $G^0_s =\frac{1}{N}\sum_{q \in I_n}\tilde{G}^q_s$.
				\begin{align*}
		\tilde{\theta}^p_s &= \sigma^{-2}\Exp^{\gamma^{\emp(V_n)}}\bigg[ 
		\Lambda_s ( G )
		(\frac{1}{N}\sum_{q \in I_n}\tilde{G}^q_s) \int_0^s\tilde{G}^{-p}_r d\tilde{V}^{p}_r \bigg]  \\
&=		\frac{1}{N} \sigma^{-2} \sum_{q \in I_n} \Exp^{\gamma^{\emp(V_n)}}\bigg[ \Lambda_s ( G ) \tilde{G}^q_s 
		\int_0^s\tilde{G}^{-p}_r d\tilde{V}^{p}_r \bigg] .
		\end{align*}
			According to Corollary \ref{cor:tilde Gp independent tilde Gq} and its proof
		\begin{align*}
		\sum_{q \in I_n} \Exp^{\gamma^{\emp(V_n)}}\bigg[ \Lambda_s ( G ) \tilde{G}^q_s 
		\int_0^s\tilde{G}^{-p}_r d\tilde{V}^{p}_r \bigg] &=  
		\Exp^{\gamma^{\emp(V_n)}}\bigg[ \Lambda_s ( G ) \tilde{G}^p_s 
		\int_0^s\tilde{G}^{-p}_r d\tilde{V}^{p}_r \bigg]  \\
&=		\Exp^{\gamma^{\emp(V_n)}} \bigg[  \tilde{\Lambda}^{|p|}_s(\tilde{G}) \tilde{G}^p_s
		\int_0^s\tilde{G}^{-p}_r d\tilde{V}^{p}_r  \bigg].
		\end{align*}
		This allows us
		to rewrite \eqref{eq:tildethetap1} as
		\begin{equation}\label{eq:tildethetap}
		\tilde{\theta}^p_s =
		N^{-1} \sigma^{-2}  \,\Exp^{\gamma^{\emp(V_n)}}\bigg[\tilde{\Lambda}_s^{| p |}( \tilde{G} ) \tilde{G}^p_s \int_0^s\tilde{G}^{-p}_r d\tilde{V}^{p}_r \bigg].
		\end{equation}
		We substitute \eqref{eq: tilde V 0} into the right hand side of \eqref{eq:tildethetap} and obtain
		\begin{align}
		\tilde{\theta}^p_t =& \frac{1}{\sigma N} \int_0^t \Exp^{\gamma^{\emp(V_n)}}\big[\tilde{\Lambda}^{|p|}_t\big(G\big)\tilde{G}^p_t \tilde{G}^{-p}_s\big]   \tilde{\theta}^p_s ds + \frac{1}{\sigma N}\Exp^{\gamma^{\emp(V_n)}}\bigg[\tilde{\Lambda}^{|p|}_t\big(G\big)\tilde{G}^p_t \int_0^t \tilde{G}^{-p}_s d\tilde{W}^p_s\bigg]\label{eq: theta Fourier 1}
		\end{align}
		{\bf Step 2: Find an upper bound for the Fourier transformed quantities:}\\
		Applying twice the Cauchy-Schwarz inequality to \eqref{eq: theta Fourier 1},
		\begin{multline*}
		\big|\tilde{\theta}^p_t\big|^2 \leq  \frac{2 t}{ \sigma^2  N^2} \int_0^t \bigg|\Exp^{\gamma^{\emp(V_n)}}\big[\tilde{\Lambda}^{|p|}_t (\tilde{G})\tilde{G}^p_t \tilde{G}^{-p}_s\big]\bigg|^2   \big|\tilde{\theta}^p_s\big|^2 ds + \frac{2}{\sigma^2  N^2}\bigg| \Exp^{\gamma^{\emp(V_n)}}\bigg[\tilde{\Lambda}^{|p|}_t(\tilde{G})\tilde{G}^p_t \int_0^t \tilde{G}^{-p}_s d\tilde{W}^p_s\bigg]\bigg|^2.
		\end{multline*}
		By Lemma~\ref{Lemma Fourier Bound on R Jn},
		\[
		\bigg|\Exp^{\gamma^{\emp(V_n)}}\big[\tilde{\Lambda}^{|p|}_t (\tilde{G}) \tilde{G}^p_t \tilde{G}^{-p}_s\big]\bigg|^2  \leq (C_\J)^2\sum_{j \in I_n}f(V^j_s)^2 \sum_{k \in I_n}  f(V^k_t)^2  \leq N^2 (C_\J)^2
		\]
		and 
		\begin{align*}
		\bigg| \Exp^{\gamma^{\emp(V_n)}}\bigg[\tilde{\Lambda}^{|p|}_t(\tilde{G})\tilde{G}^p_t \int_0^t \tilde{G}^{-p}_s d\tilde{W}^p_s\bigg]\bigg|^2
		&\leq (C_\J)^2 \sum_{j \in I_n} f(V^j_t)^2 \sum_{k \in I_n}\bigg| \int_0^t f(V^k_s) d\tilde{W}^p_s\bigg|^2 \\
		&\leq N(C_\J)^2\sum_{k\in I_n}  \bigg| \int_0^t f(V^k_s) d\tilde{W}^p_s\bigg|^2.
		\end{align*}
		Applying Parseval's Theorem to the right hand side of the previous inequality,
		\[
		\sum_{p\in I_n} \bigg| \Exp^{\gamma^{\emp(V_n)}}\bigg[\tilde{\Lambda}^{|p|}_t(\tilde{G})\tilde{G}^p_t \int_0^t \tilde{G}^{-p}_s d\tilde{W}^p_s\bigg]\bigg|^2
		\leq N^2(C_\J)^2\sum_{j,k\in I_n}  \left( \int_0^t f(V^k_s) dW^j_s\right)^2.
		\]
		This means that
		\begin{equation*}
		\frac{1}{N^2}\sum_{p\in I_n}\big|\tilde{\theta}^p_t\big|^2 \leq 2 \sigma^{-2} (C_\J)^2t\int_0^t \frac{1}{N^2}\sum_{p\in I_n}\big|\tilde{\theta}^p_s\big|^2 ds +  \frac{2}{N^2} \sigma^{-2} (C_\J)^2\sum_{j,k\in I_n}  \left( \int_0^t f(V^k_s) dW^j_s\right)^2.
		\end{equation*}
		We thus find through Gronwall's Inequality that 
		\begin{align*}
		\frac{1}{N^2}\sum_{p\in I_n} \big|\tilde{\theta}^p_t\big|^2 \leq &  \frac{2}{\sigma^2N^2}(C_\J)^2\exp\big(2 \sigma^{-2} (C_\J)^2 T^2\big)\sup_{r \in [0,t]}\sum_{j,k\in I_n}  \left( \int_0^r f(V^k_s) dW^j_s\right)^2.
		\end{align*}
		{\bf Step 3: Apply Doob's submartingale inequality}:\\
		Now $\sum_{j,k\in I_n}  \left( \int_0^t f(V^k_s) dW^j_s\right)^2$ is a submartingale, hence, for any $\kappa > 0$,
		\[
		\zeta_t := \exp\bigg(  \kappa\,2 \sigma^{-2} N^{-1}(C_\J)^2\exp\big(2 \sigma^{-2} (C_\J)^2 T^2\big)\sum_{j,k\in I_n}  \left( \int_0^t f(V^k_s) dW^j_s\right)^2\bigg)
		\]
		is also a submartingale. By Doob's submartingale inequality,  for an $K > 0$,
		\begin{align*}
		Q^n \bigg(\sup_{t \in [0,T]} \frac{1}{N^2}\sum_{p\in I_n} \big|\tilde{\theta}^p_t\big|^2 \geq K \bigg) &= 
		Q^n\bigg(\sup_{t \in [0,T]}  \exp\bigg(\frac{\kappa}{N}\sum_{p\in I_n} \big|\tilde{\theta}^p_t\big|^2\bigg) \geq \exp\big( \kappa NK\big) \bigg) \\
		&\leq Q^n\bigg(\sup_{t \in [0,T]} \zeta_t \geq \exp\big( \kappa NK\big) \bigg) \\
		&\leq \exp\big( -\kappa NK\big)\Exp\big[ \zeta_T\big].
		\end{align*}
		Now for $\kappa$ small enough, by Lemma \ref{lem:BG} and the boundedness of $f$ there exists a constant $C$ such that $\Exp\big[ \zeta_T\big] \leq \exp\big( NC\big)$ for all $N \in \Z^+$. We thus find that
		\begin{align*}
		Q^n\bigg(\frac{1}{N}\sup_{t\in [0,T]}\sum_{j\in I_n}\big(\theta^j_t\big)^2 \geq K \bigg) =& Q^n\bigg(\frac{1}{N^2}\sup_{t \in [0,T]} \sum_{p\in I_n} \big|\tilde{\theta}^p_t\big|^2 \geq K \bigg)\\ \leq &\exp\bigg(N(C - \kappa K)\bigg),
		\end{align*}
		from which we can conclude the Lemma by taking $K$ to be sufficiently large.

\end{proof}
We have a similar result for $\theta^{m,j}$ defined in \eqref{eq:vmtthetamt}.
\begin{lemma}\label{Lemma: bound eta}
For any $M >0$, there exists $C_M > 0$ such that 
\begin{equation}
\lsup{n}\frac{1}{N}\log Q^n\bigg(\frac{1}{N}\sup_{t\in [0,T]}\sum_{j\in I_n}\big(\theta^{m,j}_t\big)^2 \geq C_M \bigg) \leq -M.
\end{equation}
\end{lemma}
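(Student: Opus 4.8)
The plan is to reproduce, step for step, the three-step proof of Lemma~\ref{Lemma: bound theta}, keeping track of the two ways the approximate system \eqref{eq:vmtthetamt} differs from \eqref{eq: theta SDE}--\eqref{eq: theta SDE1}: the time integrals are frozen on the grid $\{p\eta_m:0\le p\le m\}$, and the driving Gaussian field $G^m$ has finite spatial range $Q_m=2q_m+1$, so its covariance \eqref{eq:covHjsHkt} is $I_{q_m}$-stationary rather than $I_n$-stationary. As in Lemma~\ref{Lemma: bound theta} everything takes place on $(\T^N,Q^n)$, on which $\sigma W_n\sim P^{\otimes N}$ and, by Lemma~\ref{Lem: Psim continuous}, $V^m_n$ is the functional of $W_n$ solving \eqref{eq:SD1}. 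Step~1 (go to Fourier): substitute $dV^{m,k+j}_s=\sigma\,dW^{k+j}_s+\sigma\theta^{m,k+j}_s\,ds$ into \eqref{eq:vmtthetamt}; since $G^{m,k}_{s^{(m)}}$ is piecewise constant the inner integral is a finite Riemann--Stieltjes sum and $\Exp^{\bar\gamma^{\emp(V^m_n)}_{t^{(m)}}}\big[G^{m,0}_{t^{(m)}}G^{m,k}_{s^{(m)}}\big]$ is the kernel $L^{m}_{\emp(V^m_n)}$ of Remark~\ref{rem:Lmukt}. This produces a closed linear system for $\theta^m$ of the shape met in Step~1 of Lemma~\ref{Lemma: bound theta}: a Volterra ``memory'' term with kernel $L^m$, plus a stochastic ``noise'' term $\sigma^{-1}\sum_{k\in I_{q_m}}\Exp^{\gamma^{\emp(V^m_n)}}\big[\Lambda_{t^{(m)}}(G^m)G^{m,0}_{t^{(m)}}\int_0^{t^{(m)}}G^{m,k}_{s^{(m)}}\,dW^{k+j}_s\big]$. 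Passing to the $N$-point DFT in $j$ then goes as before; the only new feature is that the sum over $I_{q_m}$ produces a partial Fourier sum $\sum_{k\in I_{q_m}}F_N^{kp}G^{m,k}_{s^{(m)}}$ in place of $\tilde G^{-p}$, and that the $N$-point modes of $G^m$ are no longer mutually independent. Neither matters: one can either diagonalise $\cov(G^m)$ with the $Q_m$-point transform and lift, or keep the partial sums and use the estimate of Step~2, which needs no independence.

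Step~2 (upper bound and Gronwall): the one genuinely new point is the analogue of Lemma~\ref{Lemma Fourier Bound on R Jn} for the truncated field,
\[
\Big|\Exp^{\gamma^{\emp(V^m_n)}}\big[\Lambda_{t^{(m)}}(G^m)\,G^{m,0}_{t^{(m)}}\,G^{m,k}_{s^{(m)}}\big]\Big|^2\le(C_\J)^2\Big(\sum_{j\in I_n}f(V^{m,j}_{s^{(m)}})^2\Big)\Big(\sum_{l\in I_n}f(V^{m,l}_{t^{(m)}})^2\Big)\le N^2(C_\J)^2,
\]
with a constant $C_\J$ \emph{independent of $m$ and $n$}: by \eqref{lccl} the covariance of the $\{J^{ik}_{n,m}\}$ is a periodised truncation of $R_\J$, tilting by $\Lambda$ only contracts the covariance (Appendix~\ref{app:covariances}), and $|R_\J(k,l)|\le a_k b_l$ with $\sum_k a_k<\infty$, $\sum_l b_l<\infty$ bounds the $\ell^1$-norm of every row of the interaction matrix uniformly in $q_m$, so a Schur-type estimate yields a uniform operator-norm bound (and $f\le1$ gives the crude second inequality). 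Inserting this into two applications of Cauchy--Schwarz and Parseval exactly as in Step~2 of Lemma~\ref{Lemma: bound theta}, then Gronwall's inequality in $t$, gives
\[
\frac1N\sum_{j\in I_n}\big(\theta^{m,j}_t\big)^2\le\frac{C_3}{N^2}\,\sup_{r\le t}\sum_{j,k\in I_n}\Big(\int_0^{r^{(m)}}f(V^{m,k}_{s^{(m)}})\,dW^{j}_s\Big)^2
\]
with $C_3$ depending only on $\sigma,T,C_\J$, hence on neither $m$ nor $n$.

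Step~3 (Doob): since the integrand $f(V^{m,k}_{s^{(m)}})$ is adapted ($s^{(m)}\le s$), $t\mapsto\int_0^t f(V^{m,k}_{s^{(m)}})\,dW^{j}_s$ is a martingale and $\sum_{j,k\in I_n}\big(\int_0^{\cdot}f(V^{m,k}_{s^{(m)}})\,dW^j_s\big)^2$ a submartingale; bounding the grid supremum in the display above by the full supremum, exponentiating, and applying Doob's submartingale inequality with the exponential-moment bound of Lemma~\ref{lem:BG} and $f\le1$, one gets, for $\kappa>0$ small enough,
\[
Q^n\Big(\frac1N\sup_{t\in[0,T]}\sum_{j\in I_n}(\theta^{m,j}_t)^2\ge K\Big)\le\exp\big(N(C-\kappa K)\big),
\]
with $C,\kappa$ independent of $m,n$; taking $C_M=K$ large enough that $C-\kappa C_M\le-M$ finishes the proof. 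The main obstacle is Step~2 --- arranging the spectral/operator-norm estimate to be uniform in the truncation range $q_m$, so that $C_M$ depends on $M$ alone; the rest is either a verbatim repeat of Lemma~\ref{Lemma: bound theta} or a routine check that freezing the integrands on the grid keeps them adapted and preserves the submartingale property used in Doob's inequality.
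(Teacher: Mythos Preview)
Your proposal is correct and follows precisely the approach the paper intends: the paper's own proof reads, in full, ``The proof is similar to that of Lemma~\ref{Lemma: bound theta} and is left to the reader,'' and you have supplied exactly that adaptation --- the same three-step structure (Fourier, operator bound plus Gronwall, Doob with Lemma~\ref{lem:BG}) with the two modifications you flag (grid-frozen integrands, $I_{q_m}$-periodic field $G^m$). Your identification of the uniform-in-$(m,n)$ covariance bound for the tilted truncated field as the one nontrivial point, and your Schur/row-sum justification for it, are on target; the paper's own auxiliary results (Corollary~\ref{cor:boundgamma}, Lemma~\ref{Lemma Fourier Bound on R Jn}, Proposition~\ref{prop:Lkmuregular}) provide the same uniform constants and are used in exactly this way elsewhere (e.g.\ in the proofs of Lemmas~\ref{lem:alpha1}--\ref{lem:alpha4}).
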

\begin{proof}
The proof is similar to that of Lemma~\ref{Lemma: bound theta} and is left to the reader. 
\end{proof}
Note that the DFT $\tilde{V}^{m,p}$ of the approximation \(V^{m,j}\) satisfies the following system of SDEs, analog to \eqref{eq: tilde V 0}:
\begin{equation}\label{eq: tilde Vm 0}
\tilde{V}^{m, p}_t = \sigma \tilde{W}^p_t +  \sigma \int_0^t \tilde{\theta}^{m, p}_s ds
\end{equation}

As pointed out in the introduction to Section \ref{subsection:proofT2.4} the exponential tightness is a key step in proving the LDP for $\Pi^n$. 
\begin{lemma}\label{Lem: exponentially tight}
The family of measures $\big(\Pi^n \big)_{n\in \Z^+}$ is exponentially tight 
with respect to the topology on $\mathcal{P}_S(\T^{\Z})$ induced by $D_T$. That 
is, for any $M > 0$,
 there exists a compact set 
$K_M \subset \mathcal{P}_S(\T^{\Z})$ such that
\[
\lsup{n}\frac{1}{N}\log \Pi^n\big(K_M^c \big) \leq - M.
\]
\end{lemma}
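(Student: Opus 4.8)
The plan is to build the compact sets $K_M$ out of constraints on the common one--coordinate marginal of the shift--invariant empirical measure, using the representation $V^j_t = \sigma W^j_t + \sigma\int_0^t \theta^j_s\,ds$ of Corollary~\ref{Corollary Measure Representation}, in which the $\sigma W^j$, $j\in I_n$, are i.i.d.\ scaled Brownian motions under $Q^n$. For $R>0$ put
\[
E_R := \Bigl\{ V\in\T\ :\ |f(V_t)-f(V_s)| \leq R\,|t-s|^{1/3}\ \text{ for all } s,t\in[0,T]\Bigr\}.
\]
Because $f$ is $1$--Lipschitz with values in $[0,1]$, $f(E_R)$ is bounded and equicontinuous in $\mathcal{C}([0,T],[0,1])$, and since $d_T$ only sees paths through $f$ and $\sum_i b_i<\infty$, Tychonoff's theorem shows that the set
\[
K_M := \Bigl\{ \mu\in\mathcal{P}_S(\T^\Z)\ :\ \mu^{\{0\}}\bigl(E_{R_k}^c\bigr)\leq 2^{-k}\ \text{ for all } k\geq 1\Bigr\}
\]
is tight (replace every coordinate marginal by the one at $0$ by shift--invariance) and closed ($\mu\mapsto\mu^{\{0\}}(E_{R_k}^c)$ is $D_T$--lower semicontinuous), hence compact, for any radii $(R_k)_{k\ge1}$. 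It therefore suffices to produce radii $R_k=R_k(M)$, independent of $n$, such that
\[
\lsup{n}\frac1N\log Q^n\Bigl( \emp(V_n)^{\{0\}}\bigl(E_{R_k}^c\bigr) > 2^{-k}\Bigr) \leq -(M+k),
\]
and then to sum the resulting geometric series over $k$.

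Fix $k$ and write $R=R_k$. One has $\emp(V_n)^{\{0\}}(E_R^c) = \frac1N\#\{i\in I_n : V^i\notin E_R\}$. Writing $V^i_t-V^i_s = \sigma(W^i_t-W^i_s) + \sigma\int_s^t\theta^i_u\,du$, the Lipschitz property of $f$, the triangle inequality, Cauchy--Schwarz and $|t-s|^{1/2}\le T^{1/6}|t-s|^{1/3}$ give
\[
|f(V^i_t)-f(V^i_s)| \leq |t-s|^{1/3}\Bigl(\sigma\,[W^i]_{1/3} + \sigma\,T^{1/6}\,\norm{\theta^i}_{L^2([0,T])}\Bigr),\qquad 0\le s<t\le T,
\]
with $[W^i]_{1/3} := \sup_{0\le s<t\le T}|W^i_t-W^i_s|/|t-s|^{1/3}$. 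Hence $V^i\notin E_R$ forces $\sigma[W^i]_{1/3}>R/2$ or $\sigma T^{1/6}\norm{\theta^i}_{L^2}>R/2$, so
\[
\emp(V_n)^{\{0\}}(E_R^c) \leq \frac1N\#\Bigl\{ i : [W^i]_{1/3}>\tfrac{R}{2\sigma}\Bigr\} + \frac{4\sigma^2T^{4/3}}{R^2}\cdot\frac1N\sup_{t\in[0,T]}\sum_{i\in I_n}\bigl(\theta^i_t\bigr)^2,
\]
where the last term comes from Markov's inequality together with $\sum_i\norm{\theta^i}_{L^2}^2 = \int_0^T\sum_i(\theta^i_s)^2\,ds \leq T\sup_t\sum_i(\theta^i_t)^2$. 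By Lemma~\ref{Lemma: bound theta}, applied with exponent $M+k+1$, there is a constant $C$ (depending on $M,k$) so that this last term is $\le 2^{-k-1}$ outside an event of $Q^n$--probability at most $e^{-(M+k)N}$ for all large $n$, as soon as $R^2\geq 2^{k+3}\sigma^2 T^{4/3}C$.

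It remains to bound the Brownian contribution. Under $Q^n$ the indicators $\one\{[W^i]_{1/3}>r\}$, $i\in I_n$, are i.i.d.\ Bernoulli with parameter $p_r:=Q^n\bigl([W^1]_{1/3}>r\bigr)$, and $p_r\downarrow0$ as $r\to\infty$: $[W^1]_{1/3}$ is the supremum of a centred, a.s.\ bounded Gaussian family, hence has Gaussian tails by Fernique's theorem. By the exponential Chebyshev inequality, for every $\lambda>0$,
\[
Q^n\Bigl(\frac1N\sum_{i\in I_n}\one\{[W^i]_{1/3}>r\} > 2^{-k-1}\Bigr) \leq \exp\Bigl(-N\bigl[\,2^{-k-1}\lambda - (e^\lambda-1)p_r\,\bigr]\Bigr);
\]
taking $\lambda=2^{k+1}(M+k+1)$ and then $r=r_k$ large enough that $(e^\lambda-1)p_{r_k}\leq1$ makes the bracket $\geq M+k$, so this probability is $\leq e^{-(M+k)N}$. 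Setting $R_k:=\max\{\,2\sigma r_k,\ (2^{k+3}\sigma^2 T^{4/3}C)^{1/2}\,\}$ (both estimates are monotone in $R$) and adding the two bounds yields the target display above up to an irrelevant $\tfrac{\log 2}{N}$; summing over $k\geq1$ then gives $\lsup{n}\frac1N\log\Pi^n(K_M^c)\leq -M$.

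The only genuinely analytic ingredient is Lemma~\ref{Lemma: bound theta}, which confines the drift contribution to the empirical measure; granting it, the Brownian part is classical (Fernique plus a Cram\'er/Chernoff bound on the fraction of "bad" particles) and the rest is bookkeeping. The step that needs the most care is the first one: verifying that the $K_M$ really are compact for the $D_T$--topology even though $d_T$ distinguishes paths only through $f$, and choosing $\lambda$, $r_k$ and the exponent in Lemma~\ref{Lemma: bound theta} so that all the exponential rates line up and the union bound over $k$ converges.
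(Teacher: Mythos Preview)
There is a gap: your sets $K_M$ are not compact in $(\mathcal{P}_S(\T^\Z), D_T)$. Since $E_R$ is defined by a H\"older constraint on $f(V)$ rather than on $V$, it contains all constant paths. For the sigmoid $f$ (or any $f$ with $f(\R)$ not closed in $[0,1]$), take $\mu_n = \delta_{(c_n,c_n,\ldots)}$ with $c_n\equiv n$; then $\mu_n^{\{0\}}(E_{R_k}^c)=0$ for every $k$, so $\mu_n\in K_M$, yet $(\mu_n)$ is $D_T$-Cauchy (since $f(n)\to 1$) with no limit in $\mathcal{P}_S(\T^\Z)$, because any limit would have to be supported on paths with $f(V)\equiv 1$, which do not exist in $\T$. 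The underlying issue is that $(\T^\Z,d_T)$ is not complete when $f(\R)\subsetneq[0,1]$, so Arzel\`a--Ascoli on the $f$-images does not pull back to compactness in $\T^\Z$; your Tychonoff argument silently assumes this completeness.

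The repair is immediate and your estimates already support it: define $E_R$ by $|V_t-V_s|\le R|t-s|^{1/3}$ instead. Since $V^j_0=0$, this $E_R$ is compact in $(\T,\norm{\cdot}_T)$; your $K_M$ is then tight and closed for the weak topology on $\mathcal{P}_S(\T^\Z)$ coming from the product sup-norm, hence compact there, hence compact for the coarser $D_T$. Your bound $|V^i_t-V^i_s|\le|t-s|^{1/3}\bigl(\sigma[W^i]_{1/3}+\sigma T^{1/6}\norm{\theta^i}_{L^2}\bigr)$ is precisely the H\"older control on $V^i$ needed here---the subsequent $f$-Lipschitz step was unnecessary. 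For comparison, the paper avoids building compacts altogether: it splits on the event $\{\frac1N\sup_t\sum_j(\theta^j_t)^2 < C_M\}$ from Lemma~\ref{Lemma: bound theta}, controls the Radon--Nikodym derivative $dQ^n/dP^{\otimes N}$ on that event by Cauchy--Schwarz (the exponential-martingale factor is bounded by $1$, the remaining factor by $e^{NTC_M}$), and then imports $K_M$ directly from the exponential tightness of $\Pi_0^n$, which is automatic from the LDP of Theorem~\ref{Theorem: Pi 0 LDP}.
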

\begin{proof}
Consider the event $\mathfrak{K}_{n,M}$ defined by
\begin{equation}\label{eq: mathfrak K n}
\mathfrak{K}_{n,M} = \bigg\lbrace  \frac{1}{N}\sup_{t\in [0,T]}\sum_{j\in I_n}\big(\theta^j_t\big)^2 \geq C_M \bigg\rbrace 
\end{equation}
By Lemma \ref{Lemma: bound theta}, we can find  $C_M$ such that
\begin{equation}\label{eq:minuskappa}
\lsup{n}\frac{1}{N}\log Q^n\big(\mathfrak{K}_{n,M} \big) \leq -M.
\end{equation}
For any compact set $K_M$ of $\PS(\T^{\Z})$, we have $\Pi^n\big(K_M^c\big)=Q^n\left( \emp^{-1}(K_M^c) \right)$ so that, by \eqref{eq:minuskappa}
\begin{align*}
\lsup{n}\frac{1}{N}\log \Pi^n\big(K_M^c\big) &\leq \max\bigg\lbrace\lsup{n}\frac{1}{N}\log Q^n\big(\emp^{-1}(K_M^c) \cap\mathfrak{K}_{n,M}^c \big) ,
 \lsup{n}\frac{1}{N}\log Q^n\big( \mathfrak{K}_{n,M}\big)\bigg\rbrace \\
&\leq \max\bigg\lbrace \lsup{n}\frac{1}{N}\log Q^n\big(\emp^{-1}(K_M^c) \cap\mathfrak{K}_{n,M}^c \big),\,- M \bigg\rbrace,
\end{align*}
so that it suffices for us to prove that
\begin{equation}\label{eq: to prove Qn}
\lsup{n}\frac{1}{N}\log Q^n\big(\emp^{-1}(K_M^c) \cap\mathfrak{K}_{n,M}^c\big) \leq - M.
\end{equation}
By Proposition \ref{prop:RNderiv1}, and using the Cauchy-Schwarz Inequality,
\begin{align*}
 Q^n\big(\emp^{-1}(K_M^c) \cap\mathfrak{K}_{n,M}^c \big) = \int_{\emp^{-1}(K_M^c) \cap\mathfrak{K}_{n,M}^c}\exp\bigg(\sum_{j\in I_n}\int_0^T \theta_s^j dB^j_s - \frac{1}{2}\sum_{j\in I_n}\int_0^T \big(\theta_s^j\big)^2 ds \bigg) dP^{\otimes N}(B)\\
\leq \bigg\lbrace\int_{\emp^{-1}(K_M^c) \cap\mathfrak{K}_{n,M}^c}\exp\bigg(2\sum_{j\in I_n}\int_0^T \theta_s^j dB^j_s -2\sum_{j\in I_n}\int_0^T \big(\theta_s^j\big)^2 ds \bigg) dP^{\otimes N}(B)\bigg\rbrace^{\frac{1}{2}}\\ \times \bigg\lbrace\int_{\emp^{-1}(K_M^c) \cap\mathfrak{K}_{n,M}^c}\exp\bigg(\sum_{j\in I_n}\int_0^T \big(\theta_s^j\big)^2 ds \bigg) dP^{\otimes N}(B)\bigg\rbrace^{\frac{1}{2}}.
\end{align*}
Now using the properties of a supermartingale,
\begin{multline*}
\int_{\emp^{-1}(K_M^c) \cap\mathfrak{K}_{n,M}^c}\exp\bigg(2\sum_{j\in I_n}\int_0^T \theta_s^j dB^j_s -2\sum_{j\in I_n}\int_0^T \big(\theta_s^j\big)^2 ds \bigg) dP^{\otimes N}(B) \\
\leq \int_{\T^{N}}\exp\bigg(2\sum_{j\in I_n}\int_0^T \theta_s^j dB^j_s -2\sum_{j\in I_n}\int_0^T \big(\theta_s^j\big)^2 ds \bigg) dP^{\otimes N}(B) \leq 1.
\end{multline*}
Using the definition of $\mathfrak{K}_{n,M}$ in \eqref{eq: mathfrak K n}, and since $\Pi_0^n = P^{\otimes N} \circ \emp^{-1}$
\begin{multline*}
\int_{\emp^{-1}(K_M^c) \cap\mathfrak{K}_{n,M}^c}\exp\bigg(\sum_{j\in I_n}\int_0^T \big(\theta_s^j\big)^2 ds \bigg) dP^{\otimes N}(B)
\leq \exp\big(NT C_M \big)P^{\otimes N}\big( \emp^{-1}(K_M^c)\big)\\
=\exp\big(NT C_M \big)\Pi_0^n\big( K_M^c \big).
\end{multline*}
Now $\big( \Pi_0^n \big)_{n\in\Z^+}$ is exponentially tight (a direct consequence of Theorem \ref{Theorem: Pi 0 LDP}), which means that we can choose $K_{M}$ to be such that
\[
\lsup{n}\frac{1}{N} \log \Pi_0^n\big(K_{M}^c \big) \leq -\big( 2 M + T C_M\big),
\]
so that we can conclude \eqref{eq: to prove Qn} as required.
\end{proof}

\subsection{Exponentially Equivalent Approximations using $\Psi^m$}
\label{subsec:main lemma}
The following Lemma, which is central in the proof of Theorem \ref{Theorem:Main}, is the main result of this section. Its proof is long and technical and uses four auxiliary Lemmas, Lemmas~\ref{lem:alpha1}-\ref{lem:alpha4} whose proofs are found in Appendix~\ref{app:proofofalphas}.

%
\begin{lemma}\label{Lemma Exp Equivalent 1}
For any $\delta > 0$,
\begin{equation}\label{eq: exponentially equivalent 4}
\lsup{m}\lsup{n}\frac{1}{N}\log Q^n\left( D_{T,L^2}\left(\Psi^m\left(\emp(\sigma W_n)\right),\emp(V_n) \right)>\delta   \right)=-\infty.
\end{equation}
\end{lemma}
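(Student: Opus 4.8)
The plan is to first reduce the statement to a superexponential estimate on the trajectories $V^m_n$ and $V_n$ themselves, and then to obtain that estimate by a Gronwall argument whose input is supplied by the four auxiliary Lemmas~\ref{lem:alpha1}--\ref{lem:alpha4}.

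\textbf{Reduction to the trajectories.}
By Lemma~\ref{Lem: Psim continuous} we have $\Psi^m\big(\emp(\sigma W_n)\big)=\emp(V^m_n)$, where $V^m_n$ solves \eqref{eq:SD1}; by Corollary~\ref{Corollary Measure Representation} this process lives, together with $V_n$, on $(\T^N,Q^n)$ and, crucially, both are driven by the \emph{same} Brownian motions $(\sigma W^j)_{j\in I_n}$. Plugging the diagonal coupling $\xi=\frac1N\sum_{i\in I_n}\delta_{(S^iV^m_{n,p},\,S^iV_{n,p})}$ into the definition of $D_{T,L^2}$, and then using the shift invariance of the empirical measure, $\sum_{k\in\Z}b_k=b<\infty$, the $1$-Lipschitz property of $f$ and Cauchy--Schwarz, one gets
\[
D_{T,L^2}\big(\emp(V^m_n),\emp(V_n)\big)\ \leq\ \frac1N\sum_{i\in I_n}d_{L^2}\big(S^iV^m_{n,p},S^iV_{n,p}\big)\ =\ b\,\frac1N\sum_{j\in I_n}\norm{f(V^{m,j})-f(V^j)}_{L^2}\ \leq\ b\,\sqrt{T\,\Delta_m}\,,
\]
where $\Delta_m:=\frac1N\sum_{j\in I_n}\sup_{t\in[0,T]}\big(V^{m,j}_t-V^j_t\big)^2$. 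It therefore suffices to prove that for every $\varepsilon>0$, $\lsup{m}\lsup{n}\frac1N\log Q^n\big(\Delta_m>\varepsilon\big)=-\infty$.

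\textbf{A priori control.}
Fix $M>0$. By Lemmas~\ref{Lemma: bound theta} and~\ref{Lemma: bound eta} there is $C_M>0$ such that the event
\[
\mathcal G_{n,M}:=\Big\{\sup_{t\in[0,T]}\frac1N\sum_{j\in I_n}(\theta^j_t)^2\leq C_M\Big\}\cap\Big\{\sup_{t\in[0,T]}\frac1N\sum_{j\in I_n}(\theta^{m,j}_t)^2\leq C_M\Big\}
\]
satisfies $\lsup{n}\frac1N\log Q^n(\mathcal G_{n,M}^c)\leq-M$. Using $\log(a+b)\leq\log2+\max(\log a,\log b)$, it is enough to bound $Q^n\big(\{\Delta_m>\varepsilon\}\cap\mathcal G_{n,M}\big)$ and then let $M\to\infty$; on $\mathcal G_{n,M}$ all the discretisation and truncation errors appearing below can be controlled deterministically.

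\textbf{Gronwall argument and the auxiliary lemmas.}
Since $V^{m,j}_t-V^j_t=\sigma\int_0^t(\theta^{m,j}_s-\theta^j_s)\,ds$, Cauchy--Schwarz gives $\Delta_m\leq\sigma^2T\int_0^T\frac1N\sum_{j\in I_n}(\theta^{m,j}_s-\theta^j_s)^2\,ds$. I would then expand $\theta^{m,j}-\theta^j$, comparing \eqref{eq: theta SDE1} with \eqref{eq:vmtthetamt}, as a telescoping sum along intermediate systems interpolating between the exact and the approximate equations, collecting the terms into three groups: (a) the replacement of $\emp(V_n)$ and of the $dV$-integrators by $\emp(V^m_n)$ and the $dV^m$-integrators --- the \emph{Gronwall term}, bounded by $c\,\sup_{r\leq s}\frac1N\sum_{j}(V^{m,j}_r-V^j_r)^2$ thanks to the Lipschitz continuity of $\mu\mapsto K_\mu$ and $\mu\mapsto L_\mu$ (Proposition~\ref{prop:Amununif}); (b) the replacement of the time arguments $s,u$ and of the upper integration limit by the grid points $s^{(m)},u^{(m)}$ inside $G$, inside $\Lambda$ and inside the inner stochastic integral --- the \emph{time-discretisation error}; (c) the replacement of the range $I_n$ by $I_{q_m}$ and of $R_\J(\cdot\bmod I_n,\cdot)$ by $R_\J(\cdot\bmod I_{q_m},\cdot)$, i.e. the passage from the field $G$ to the finite-range field $G^m$ --- the \emph{spatial-truncation error}, whose tails are absolutely summable because $a_k=\smallO{1/|k|^3}$ and $\sum_l b_l<\infty$. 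Lemmas~\ref{lem:alpha1}--\ref{lem:alpha4} are precisely the statements that, on $\mathcal G_{n,M}$ and outside a $Q^n$-event of superexponentially small probability (uniformly in $n$), each of these contributions is at most $\rho(m)$, with $\rho(m)\to0$ as $m\to\infty$, provided $q_m$ is chosen as in Remark~\ref{rem:choice of m}. Substituting these bounds into the estimate for $\frac1N\sum_j(\theta^{m,j}_s-\theta^j_s)^2$ and applying Gronwall's inequality produces, on a $Q^n$-event of superexponentially high probability, $\Delta_m\leq C(M,T)\,\rho(m)$. For $m$ large enough $C(M,T)\rho(m)<\varepsilon$, whence $\lsup{n}\frac1N\log Q^n\big(\{\Delta_m>\varepsilon\}\cap\mathcal G_{n,M}\big)=-\infty$ for all such $m$; taking $\lsup{m}$ and then letting $M\to\infty$ concludes. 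It is essential here that the outer limit is $\lsup{m}$: for small $\varepsilon$ the deterministic bias $\rho(m)$ may exceed $\varepsilon$ at a fixed $m$, so the inner $\lsup{n}$ need not be $-\infty$ for every fixed $m$.

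\textbf{Main obstacle.}
The delicate part, isolated in Lemmas~\ref{lem:alpha1}--\ref{lem:alpha4}, is controlling the change of law of the Gaussian field: one has to compare $\Lambda_t(G)$ with $\Lambda_{t^{(m)}}(G^m)$ together with the covariances $K_{\emp(V_n)}$, $L_{\emp(V_n)}$ and their truncated, time-discretised analogues. Since $\Lambda$ is a ratio of exponentials of quadratic functionals of the field, this requires simultaneously handling the time-discretisation of the exponent, the perturbation of the covariance kernel (full range $I_n$ versus cut-off range $I_{q_m}$, and $\emp(V_n)$ versus $\emp(V^m_n)$), and the two normalising denominators, with all constants uniform in $n$. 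This is where the summability hypotheses \eqref{eq:akbl} and the spectral positivity \eqref{eq:bound on spectrum} enter --- through the covariance estimates of the appendices and through Proposition~\ref{prop:Amununif} --- and where passing to the Discrete Fourier Transform (exactly as in the proof of Lemma~\ref{Lemma: bound theta}) is what turns the quadratic sums $\frac1N\sum_{j\in I_n}(\cdot)^2$ into tractable expressions to which Doob's inequality and Gaussian moment bounds can be applied.
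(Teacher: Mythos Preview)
Your reduction to the trajectories and the overall architecture---decompose $\theta^{m,j}-\theta^j$, feed the small pieces into a Gronwall inequality, and invoke the four auxiliary lemmas---match the paper's proof. The paper also passes to the DFT and writes the difference $\tilde{\theta}^p_s-\tilde{\theta}^{m,p}_s$ as a sum of five pieces $\alpha^1$--$\alpha^5$: your (b) is $\alpha^1$, your (c) is $\alpha^2+\alpha^3$, and your (a) is $\alpha^4+\alpha^5$. The integrator part $\alpha^5$ is indeed removed by a direct Gronwall, exactly as you describe.

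The gap is in your treatment of the measure--replacement part, the paper's $\alpha^4$. You claim it is bounded by $c\sup_{r\le s}\tfrac1N\sum_j(V^{m,j}_r-V^j_r)^2$ with a deterministic $c$ coming from Proposition~\ref{prop:Amununif}, controlled on your $\mathcal G_{n,M}$. In fact the bound one obtains (see the proof of Lemma~\ref{lem:alpha4}) is of the form
\[
\alpha^4_{v\eta_m}\ \le\ D\,\bigg(\int_0^{v\eta_m}\frac1N\sum_{k\in I_n}\big(V^k_s-V^{m,k}_s\big)^2\,ds\bigg)\times\frac{1}{N^2}\sum_{k,l\in I_n}\Big(\int_0^{v\eta_m} f(V^k_{r^{(m)}})\,dW^l_r\Big)^2,
\]
so the Gronwall coefficient is a \emph{random} stochastic--integral functional, not controlled by $\sup_t\tfrac1N\sum_j(\theta^j_t)^2$ or $\sup_t\tfrac1N\sum_j(\theta^{m,j}_t)^2$; your $\mathcal G_{n,M}$, as defined, does not see it. Relatedly, Lemma~\ref{lem:alpha4} is \emph{not} a $\rho(m)$--type statement on a good event: its conclusion is conditional on the event $\{\tau(\epsilon,\mathfrak c)\ge u\eta_m\}$ and the bound it produces is $\tfrac{\epsilon\mathfrak c}{TC\sigma^2}e^{v\eta_m\mathfrak c}$, with $\epsilon$ fixed by $\delta$, not tending to $0$ with $m$.

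This is exactly why the paper does not run a single Gronwall on a good set but instead introduces the stopping time
\[
\tau(\epsilon,\mathfrak c)=\inf\Big\{t:\ \tfrac1{N^2}\sum_{p}|\tilde V^{m,p}_t-\tilde V^p_t|^2=\epsilon e^{\mathfrak c t}\Big\}
\]
and argues \emph{iteratively over the grid}: on $\{\tau\ge u\eta_m\}$ the quantity $\tfrac1N\sum_k(V^k_s-V^{m,k}_s)^2$ is already $\le\epsilon e^{\mathfrak c s}$ for $s\le u\eta_m$, which decouples the Gronwall input from the random multiplicative factor; then Lemmas~\ref{lem:alpha1}--\ref{lem:alpha4} give, for each $u$, that $Q^n(\tau\in[u\eta_m,(u+1)\eta_m])$ is superexponentially small. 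Summing over $u$ yields \eqref{eq:final 0}. Your sketch can be repaired either by adopting this stopping--time mechanism or by enlarging $\mathcal G_{n,M}$ to also control $\sup_t\tfrac1{N^2}\sum_{k,l}(\int_0^t f(V^k)\,dW^l)^2$ via Lemma~\ref{lem:BG} and Doob, and then re-deriving a version of Lemma~\ref{lem:alpha4} unconditioned on $\tau$; but as written the Gronwall step does not close.
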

\begin{proof}
The proof uses the following ideas.\\
By Lemma~\ref{Lem: Psim continuous}, $\Psi^m\big( \hat{\mu}_n(\sigma W_n)\big)=\hat{\mu}_n(V^m_n)$. By Lemma~\ref{lem:useful ineq}, we can find an  upperbound of
$D_{T,L^2}(\hat{\mu}_n(V^m_n),\emp(V_n))$ using the $L^2$ distance between 
$V^m_n$  and $V_n$, so that the proof boils down to comparing the solution 
$V_n$ to the system of equations \eqref{eq: theta SDE} and 
\eqref{eq: theta SDE1} to the solution $V^m_n$ to the approximating system of 
equations \eqref{eq:vmtthetamt}  constructed in 
Section~\ref{subsub:approx} by an $L^2$ distance. 
By equations \eqref{eq:vmtthetamt} and \eqref{eq: theta SDE} this is equivalent to 
comparing the $L^2$ distance between $\theta^m$ and $\theta$. 
As already mentioned, it is technically easier to work in the Fourier domain 
with the $L^2$  distance between $\tilde{\theta}^{m,p}$ and $\tilde{\theta}^p$, 
$p \in I_n$, the Fourier transforms of $(\theta^{m,j})_{j \in I_n}$ and 
$(\theta^j)_{j \in I_n}$. This distance naturally brings in the operators 
$\bar{L}_{\emp(V_n)}^t$ and $\bar{L}_{\emp(V^m_n)}^t$ defined in 
Appendix~\ref{app:covariances}, in effect their Fourier transforms.

The following Lemma (proved page~\pageref{prooflemusefulineq}) relates the Wasserstein distance $D_{T,L^2}$ between two empirical measures associated with two elements of $\T^N$ to the $L^2$ distance between these elements.
\begin{lemma}\label{lem:useful ineq}
For all $X_n,\,Y_n \in \T^N$ we have
\[
D_{T, L^2}(\emp(X_n),\emp(Y_n))^2 \leq \frac{b^2}{N} \sum_{k \in I_n} \norm{X^k-Y^k}_{L^2}^2
\]
	where $b$ is defined by \eqref{eq:ab}.
\end{lemma}
We now follow our plan for the proof of Lemma \ref{Lemma Exp Equivalent 1}.\\
By Lemmas~\ref{Lem: Psim continuous} and \ref{lem:useful ineq} we write
\[
D_{T,L^2}\left(\Psi^m\left(\emp(\sigma W_n)\right),\emp(V_n) \right)^2 \leq \frac{b^2}{N}\sum_{j\in I_n} \norm{V^{m,j} - V^j}_{L^2}^2.
\]
By Parseval's Theorem,
\begin{align*}
\frac{1}{N}\sum_{j\in I_n} \norm{V^{m,j} - V^j}_{L^2}^2 = \frac{1}{N}\sum_{j\in I_n}\int_0^T \big| V^{m,j}_t - V^j_t \big|^2 dt 
 = \frac{1}{N^2}\sum_{p\in I_n}\int_0^T \big| \tilde{V}^{m,p}_t - \tilde{V}^p_t \big|^2 dt,
\end{align*}
 In order to prove \eqref{eq: exponentially equivalent 4} it therefore suffices for us to prove that for any arbitrary $M,\delta > 0$, \emph {which are now fixed throughout the rest of this proof},
\begin{equation}
\lsup{m}\,\lsup{n}\frac{1}{N}\log Q^n\bigg( \sup_{t\in [0,T]}\frac{1}{N^2}\sum_{p\in I_n} \big|\tilde{V}^{m, p}_t - \tilde{V}_t^p\big|^2 >\delta^2 / T  \bigg)\leq -M.\label{eq:final 0}
\end{equation}
Using the expression in \eqref{eq: tilde V 0}, it follows from the Cauchy-Schwarz inequality that for any $t\in [0,T]$,
\begin{equation}\label{eq: tilde YV}
 \frac{1}{N^2}\sum_{p\in I_n} \big| \tilde{V}^{m, p}_t - \tilde{V}^p_t \big|^2
 \leq \frac{t \sigma^2}{N^2}\sum_{p\in I_n}\int_0^t \big| \tilde{\theta}^p_s - \tilde{\theta}^{m, p}_s \big|^2 ds \leq \frac{T \sigma^2}{N^2}\sum_{p\in I_n}\int_0^t \big| \tilde{\theta}^p_s - \tilde{\theta}^{m, p}_s \big|^2 ds.
 \end{equation}
In order to continue our plan we introduce the discrete time approximation $^m\tilde{\theta}^p_{s^{(m)}}$ of $\tilde{\theta}^p_s$
\begin{equation}
\label{eq:def mthetaps}
^m\tilde{\theta}^p_{s^{(m)}} = \frac{1}{ N\sigma^{2}} \Exp^{\gamma^{\emp(V_n)}} \left[ \tilde{\Lambda}_{s^{(m)}}^{|p|}(\tilde{G})\tilde{G}^p_{s^{(m)}} \int_0^{s^{(m)}} \tilde{G}^{-p}_{r^{(m)}}  d\tilde{V}^{p}_r \right].
\end{equation}
We obtain in the following Lemma a characterization of \(^m\tilde{\theta}^p_{s^{(m)}}\)
\begin{lemma}\label{lem:tildethetamp}
Assume $s^{(m)} = v \eta_m$, $v=0,\cdots,m$. We have
\[
 ^m\tilde{\theta}^p_{v \eta_m} =  \sigma^{-2} \left( \bar{\tilde{L}}^{p}_{\emp(V_n)}\delta \tilde{V}^p \right)(v \eta_m),
\]
where $\bar{\tilde{L}}^{ p}_{\emp(V_n)}$ is the $(v+1) \times (v+1)$ matrix $(\tilde{L}^{p}_{\emp(V_n)}(w \eta_m, u \eta_m))_{w,\,u= 0,\cdots,v}$ defined by
\[
\tilde{L}^{p}_{\emp(V_n)}(v \eta_m, w \eta_m)= N^{-1} \Exp^{\gamma^{\emp(V_n)}}\left[ \tilde{\Lambda}_{v \eta_m}^{|p|}(\tilde{G})\tilde{G}^p_{v \eta_m} \tilde{G}^{-p}_{w \eta_m}  \right],
\]
and $\delta \tilde{V}^p$ is the $v+1$-dimensional vector
\begin{equation}\label{eq:deltatildeVp}
 \delta \tilde{V}^p_w = \left\{
 \begin{array}{ll}
 0 &  w = 0\\
 \tilde{V}^p_{w \eta_m}- \tilde{V}^p_{(w-1) \eta_m} & w=1,\cdots, v
 \end{array}
 \right.
 \end{equation}
\end{lemma}
\begin{proof}
We give a short proof.
 Since $s^{(m)} = v \eta_m$, $v=0,\cdots,m$, and using Remark~\ref{rem:Lmukt} and the notations of Appendix~\ref{app:covariances}
 \begin{align*}
 ^m\tilde{\theta}^p_{v \eta_m} &= \sigma^{-2} N^{-1} \Exp^{\gamma^{\emp(V_n)}} \left[ \tilde{\Lambda}_{v \eta_m}^{|p|}(\tilde{G})\tilde{G}^p_{v \eta_m} \int_0^{v \eta_m} \tilde{G}^{-p}_{r^{(m)}}  d\tilde{V}^{p}_r \right] \\
& = \sigma^{-2} \sum_{w = 0}^{v-1}  N^{-1} \Exp^{\gamma^{\emp(V_n)}}\left[ \tilde{\Lambda}_{v \eta_m}^{|p|}(\tilde{G})\tilde{G}^p_{v \eta_m} \tilde{G}^{-p}_{w \eta_m}  \right] 
 (\tilde{V}^p_{(w+1) \eta_m}- \tilde{V}^p_{w \eta_m}) \\
 & = \sigma^{-2} \sum_{w = 0}^{v-1} \tilde{L}^{ p}_{\emp(V_n)}(v \eta_m, w \eta_m)  (\tilde{V}^p_{(w+1) \eta_m}- \tilde{V}^p_{w \eta_m}) \\
& = \sigma^{-2} \left( \bar{\tilde{L}}^{p}_{\emp(V_n)}\delta \tilde{V}^p \right)(v \eta_m),
 \end{align*}
 where $\bar{\tilde{L}}^{p}_{\emp(V_n)}$ is the $(v+1) \times (v+1)$ matrix $(\tilde{L}^{p}_{\emp(V_n)}(w \eta_m, u \eta_m))_{w,\,u= 0,\cdots,v}$ defined in Remark~\ref{rem:Lmukt} and Appendix~\ref{app:covdiscrete}.
\end{proof}
The autocorrelation function $L_{\emp(V_n)}$ (resp. $L_{\emp(V^m_n)}$) involved in the sequence $(V^j)_{j \in I_n}$  (resp. $(V^{m,j})_{j \in I_n}$) and hence in the sequence $(\theta^j)_{j \in I_n}$ (resp. $(\theta^{m,j})_{j \in I_n}$)  arises from the values of the autocorrelation function $R_\J$, defined in \eqref{eq:cov}, on a grid $I_n \times I_n$ (resp. $I_{q_m} \times I_n $). Since we are working in the discrete Fourier domain, it is natural, as explained in Appendix \ref{app:covdiscrete}, and in fact necessary, to consider the following four operators (in the discrete time setting, matrixes) in order to compare $\tilde{\theta}^p$ and $\tilde{\theta}^{m,p}$. In detail, $\bar{\tilde{L}}^{ p}_{\emp(V_n)}$, (resp. $\bar{\tilde{L}}^{p}_{\emp(V^m_n)}$), $p \in I_n$ is obtained by taking the length $N$ DFT of the length $N$ sequence $(L_{\emp(V_n)}^{i})_{i \in I_n}$ (resp. $(L_{\emp(V^m_n)}^{i})_{i \in I_n}$). Similarly, $\bar{\tilde{L}}^{q_m, p}_{\emp(V_n)}$, (resp. $\bar{\tilde{L}}^{q_m, p}_{\emp(V^m_n)}$), $p \in I_n$ is obtained by taking the length $N$ DFT of the length $Q_m$ sequence $(L_{\emp(V_n)}^{i})_{i \in I_{q_m}}$ (resp. $(L_{\emp(V^m_n)}^{i})_{i \in I_{q_m}}$) padded with $N-Q_m$ zeros.\label{page:explicationInIQm}
 
 We then use the following decomposition
 \begin{multline*}
  \left|  \tilde{\theta}^p_s - \tilde{\theta}^{m,p}_s \right| \leq  \left| \tilde{\theta}^p_s -{^m}\tilde{\theta}^p_{v \eta_m} \right|+
  \sigma^{-2} \left| \left( \left( \bar{\tilde{L}}^{p}_{\emp(V_n)} - \bar{\tilde{L}}^{q_m, p}_{\emp(V_n)} \right) \delta \tilde{V}^p \right)(v \eta_m) \right|+\\
   \sigma^{-2} \left| \left( \left( \bar{\tilde{L}}^{q_m, p}_{\emp(V_n)}-\bar{\tilde{L}}^{p}_{\emp(V_n^m)} \right)\delta \tilde{V}^p \right)(v \eta_m) \right|+
    \sigma^{-2} \left| \left( \bar{\tilde{L}}^{ p}_{\emp(V_n^m)} \left( \delta \tilde{V}^p-\delta \tilde{V}^{m, p} \right) \right)(v \eta_m) \right|+\\
 \left|   \sigma^{-2} \left( \bar{\tilde{L}}^{p}_{\emp(V_n^m)}\delta \tilde{V}^{m, p} \right)(v \eta_m)-\tilde{\theta}^{m,p}_s  \right|,
 \end{multline*}

Each term on the right hand side performs a specific comparison:
\begin{description}
	\item[First term:] Allows to compare $\tilde{\theta}^p_s$ and its time discretized version ${^m}\tilde{\theta}^p_{s^{(m)}}$ which is equal, thanks to Lemma  \ref{lem:tildethetamp}, to $\sigma^{-2}\left( \bar{\tilde{L}}^{p}_{\emp(V_n)}\delta \tilde{V}^p \right)(v \eta_m)$.
	\item[Second term:] Allows to compare the operator $\bar{\tilde{L}}^{p}_{\emp(V_n)}$ with its space/correlation truncated and Fourier interpolated version  $\bar{\tilde{L}}^{q_m, p}_{\emp(V_n)}$.
	\item[Third term:] Allows to compare the operator $\bar{\tilde{L}}^{q_m, p}_{\emp(V_n)}$ with the operator $\bar{\tilde{L}}^{p}_{\emp(V_n^m)}$ corresponding to the approximated solution.
	\item[Fourth term:] Allows to compare the time discretized versions of the $\tilde{V}_n$ and $\tilde{V}^m_n$ processes.
	\item[Fifth term] Allows to compare the space/correlation truncated and Fourier interpolated opertor  $\bar{\tilde{L}}^{q_m, p}_{\emp(V_n^m)}$ with its Fourier interpolation $\tilde{\theta}^{m,p}$.
\end{description}
By slightly changing the order of the terms we write, remember that $s^{(m)} = v \eta_m$,
 \begin{align}
\nonumber\frac{1}{N^2} \sum_{p \in I_n} \left|  \tilde{\theta}^p_s - \tilde{\theta}^{m, p}_s \right|^2 &\leq    \frac{5}{N^2} \sum_{p \in I_n}  \left| \tilde{\theta}^p_s -{^m}\tilde{\theta}^p_{v \eta_m} \right|^2&\Big\}\alpha^1_s\hspace{4mm}\\
\nonumber&+  \frac{5}{N^2\sigma^4} \sum_{p \in I_n}  \left| \left( \left( \bar{\tilde{L}}^{ p}_{\emp(V_n)} - \bar{\tilde{L}}^{q_m,  p}_{\emp(V_n)} \right) \delta \tilde{V}^p \right)(v \eta_m) \right|^2&\Big\}\alpha^2_{v \eta_m} \\ 
\nonumber& +    \frac{5}{N^2} \sum_{p \in I_n}  \left| \sigma^{-2} \left(   \bar{\tilde{L}}^{ p}_{\emp(V_n^m)}\delta \tilde{V}^{m, p} \right)(v \eta_m)-\tilde{\theta}^{m,p}_s \right|^2 &\Big\}\alpha^3_{v \eta_m}\\
\nonumber& +  \frac{5}{N^2\sigma^4} \sum_{p \in I_n} \left| \left( \left( \bar{\tilde{L}}^{q_m,  p}_{\emp(V_n)}-\bar{\tilde{L}}^{ p}_{\emp(V_n^m)} \right)\delta \tilde{V}^p \right)(v \eta_m) \right|^2&\Big\}\alpha^4_{v \eta_m}\\
& + \frac{5}{N^2\sigma^4} \sum_{p \in I_n}  \left|\left( \bar{\tilde{L}}^{ p}_{\emp(V_n^m)} \left( \delta \tilde{V}^p-\delta \tilde{V}^{m, p} \right) \right)(v \eta_m) \right|^2&\Big\} \alpha^5_{v \eta_m}.\label{eq:tildetheta-eta}
\end{align}
  Our first action is to remove the term $\alpha^5$ through the use of Gronwall's Lemma.
  
  Since, by Proposition~\ref{prop:Lkmuregular},  $| \tilde{L}^{ p}_{\emp(V_n^m)}(v\eta_m, w\eta_m) |$  is uniformly bounded
     by some constant $K > 0$ independent of $w,\, v,\, p,\, q_m, \, n,\, V_n^m$, and according to equations \eqref{eq: tilde V 0}, \eqref{eq: tilde Vm 0} and \eqref{eq:deltatildeVp}
  \begin{align*}
   \alpha^5_{v \eta_m}  &\leq  \frac{5  K^2}{N^2\sigma^{4}} \sum_{p \in I_n} \left( \sum_{w=1}^{v}  | \delta \tilde{V}^p_w-\delta \tilde{V}^{m, p}_w | \right)^2 = 
   \frac{ 5 K^2}{N^2\sigma^{2}} \sum_{p \in I_n} \left( \sum_{w=0}^{v-1} \left| \int_{ w \eta_m}^{ (w+1) \eta_m} ( \tilde{\theta}^p_r -  \tilde{\theta}^{m, p}_r )\,dr \right| \right)^2\\
& \leq     \frac{5 v \eta_m  K^2}{N^2\sigma^{2}} \sum_{p \in I_n}  \int_0^{v \eta_m}  \left| \tilde{\theta}^p_r -  \tilde{\theta}^{m, p}_r  \right|^2 \,dr \leq   \frac{ 5 T   K^2}{N^2\sigma^{2}} \sum_{p \in I_n} \int_0^s  \left| \tilde{\theta}^p_r- \tilde{\theta}^{m, p}_r  \right|^2 \,dr
  \end{align*}
  Inserting this uppper bound for $ \alpha^5_{v \eta_m}$ in the right hand side of \eqref{eq:tildetheta-eta} and applying Gronwall's Lemma we obtain
  \[
  \frac{1}{N^2} \sum_{p \in I_n}  \left|  \tilde{\theta}^p_s - \tilde{\theta}^{m, p}_s \right|^2 \leq C \sup_{r \in  [0, s]} \left\{  \alpha^1_r + \sum_{i = 2}^4 \alpha^i_{r^{(m)}}  \right\},
  \]
  with 
  \begin{equation}\label{eq:C1}
  C =  \exp \left( 5 T^2  \sigma^{-2} K^2 \right).
  \end{equation}
   Hence, by \eqref{eq: tilde YV}
  \begin{align}\nonumber
   \frac{1}{N^2}\sum_{p\in I_n} \big| \tilde{V}^{m, p}_t - \tilde{V}^p_t \big|^2
 &\leq T C \sigma^2 \int_0^t \sup_{r \in  [0, s]} \left\{ \alpha^1_r + \sum_{i = 2}^4 \alpha^i_{r^{(m)}} \right\} \,ds  \\
 &\leq
 TC \sigma^2 \left(  \int_0^t  \sup_{r \in [0,s]} \alpha^1_r \,ds + \sum_{i = 2}^4 \int_0^t  \sup_{r \in [0,s]} \alpha^i_{r^{(m)}} \,ds \right).
 \label{eq:upper}
  \end{align}
The next step in the proof is the definition of the following stopping time. For $\mathfrak{c} > 0$ and $\epsilon \leq \exp\big(-\mathfrak{c}T\big)\delta^2 / T$, define 
\begin{equation}\label{eq:deftau}
\tau(\epsilon,\mathfrak{c}) = \inf\bigg\lbrace  t \in [0,T]:   \frac{1}{N^2}\sum_{p\in I_n} \big| \tilde{V}^{m, p}_t - \tilde{V}^p_t \big|^2 = \epsilon \exp\big(t\mathfrak{c}\big)\bigg\rbrace.
\end{equation}
\begin{remark}
The random time $\tau(\epsilon,\mathfrak{c})$ is the time at which the $L^2$ distance between the $N$ trajectories $V_n$ and $V^m_n$  differ on average by more than
$\exp\left(-\mathfrak{c}(T-t)\right)\delta^2 / T (\leq \delta^2/T)$.
\end{remark}
The crucial idea of the proof is to upper bound the left hand side of \eqref{eq:final 0}  by 
\[
\lsup{n} \frac{1}{N} \log \left( m \max_{u=0,\cdots,m-1} Q^n\left( \left\{  \tau(\epsilon, \mathfrak{c}) \in [u \eta_m, (u+1) \eta_m] \right\} \right) \right),
\] 
see \eqref{eq:bound with tau} below.

The proof proceeds iteratively through the time steps: we show that if $\tau(\epsilon,\mathfrak{c})\geq u \eta_m$, for $u=0,\cdots,m-1$ then with very high probability $\tau(\epsilon,\mathfrak{c})\geq (u+1)\eta_m$.
We show in the proof of Lemma \ref{lem:alpha4} that there exists $\mathfrak{c} > 0$ such that for any $\epsilon < \delta^2 \exp(-\mathfrak{c}T) / T$, for all $m$ sufficiently large, for all $0\leq u < m$,
\begin{equation}
\lsup{n}\frac{1}{N} \log Q^n\bigg(\tau(\epsilon,\mathfrak{c}) \in \big[ u \eta_m,(u+1) \eta_m] \bigg) \leq -M. \label{eq: toshow tau hitting time}
\end{equation}	
	Indeed this suffices for proving Lemma~\ref{Lemma Exp Equivalent 1}.
	We have
	\[
	\frac{1}{N^2}\sum_{p\in I_n} \big|\tilde{V}^{m,p}_t - \tilde{V}^p_t\big|^2 =\frac{\delta^2}{T} \Longrightarrow\tau(\epsilon, \mathfrak{c}) \leq t.
	\]
	So
\[
	\left\{ \sup_{t \in [0,T]} 	\frac{1}{N^2}\sum_{p\in I_n} \big|\tilde{V}^{m,p}_t - \tilde{V}^p_t\big|^2 \geq \delta^2/T \right\} 
	\subset \left\{  \tau(\epsilon, \mathfrak{c})  \leq T \right\},
	\]
	and we can conclude that
	\[
	Q^n\left(  \left\{ \sup_{t \in [0,T]} \frac{1}{N^2}\sum_{p\in I_n} \big|\tilde{V}^{m,p}_t - \tilde{V}^p_t\big|^2 > \delta^2/T \right\} \right) \leq \sum_{u=0}^{m-1} Q^n\left( \left\{  \tau(\epsilon, \mathfrak{c}) \in [u \eta_m, (u+1) \eta_m] \right\} \right).
	\]
	This commands that
	\begin{multline}\label{eq:bound with tau}
	\lsup{n}\frac{1}{N}\log Q^n\bigg( \sup_{t\in [0,T]}\frac{1}{N^2}\sum_{p\in I_n} \big|\tilde{V}^{m, p}_t - \tilde{V}^p_t\big|^2 >\delta^2 /T  \bigg)\\
	\leq \lsup{n} \frac{1}{N} \log \left( m \max_{u=0,\cdots,m-1} Q^n\left( \left\{  \tau(\epsilon, \mathfrak{c}) \in [u \eta_m, (u+1) \eta_m] \right\} \right) \right) \leq -M,
	\end{multline}
	by \eqref{eq: toshow tau hitting time}, so that we may conclude that \eqref{eq:final 0} holds.

It remains to prove \eqref{eq: toshow tau hitting time} which requires the four technical Lemma \ref{lem:alpha1} to \ref{lem:alpha4} below.\\
\textit{Proof of \eqref{eq: toshow tau hitting time}:}
Fix $\epsilon < \delta^2 \exp\big(-\mathfrak{c}T\big) / T$. We first establish that
\begin{equation}\label{eq:Qnbound preliminaryzero}
Q^n\bigg(\tau(\epsilon,\mathfrak{c}) \in \big[ u \eta_m,(u+1) \eta_m] \bigg) \leq Q^n\bigg(\bigcup_{j = 1}^3 (\mathfrak{B}^j)^c \bigcup_{v = 0}^{u} (\mathfrak{B}^4_v)^c\text{ and } \tau(\epsilon, \mathfrak{c}) \geq u \eta_m\bigg),
\end{equation}
for the following events 
\begin{align}
\mathfrak{B}^j =& \bigg\lbrace \sup_{s \in [0,T]} \alpha^j_{s^{(m)}} \leq \frac{\epsilon}{3 T^{2} C \sigma^2} \bigg\rbrace\label{eq: cond1},\, j =1, 2, 3\\
\mathfrak{B}^4_v =& \bigg\lbrace \alpha^4_{v \eta_m} \leq \frac{\epsilon \mathfrak{c}}{  T C \sigma^2} \exp \left (v \eta_m \mathfrak{c} \right)  \bigg\rbrace,\,v=0,\cdots,u,  \label{eq: cond4}
\end{align}
the constant $C$ being defined in \eqref{eq:C1}.
Taking the complements of the events, 
\eqref{eq:Qnbound preliminaryzero} is equivalent to
\[
Q^n\bigg(\bigcap_{j = 1}^3 \mathfrak{B}^j \bigcap_{v = 0}^{u} \mathfrak{B}^4_v \text{ or } \tau(\epsilon, \mathfrak{c}) < u \eta_m \bigg)     \leq Q^n\bigg(\tau(\epsilon,\mathfrak{c}) \notin \big[ u \eta_m,(u+1) \eta_m) \bigg).
\]
Now, using the equality $\mathbb{P}(A \cup B) = \mathbb{P}(A \cap B^c) + \mathbb{P}(B)$,
\begin{multline}
Q^n\bigg(\bigcap_{j = 1}^3 \mathfrak{B}^j \bigcap_{v = 0}^{u} \mathfrak{B}^4_v \text{ or } \tau(\epsilon, \mathfrak{c}) < u \eta_m \bigg) = \\
Q^n\bigg(\bigcap_{j = 1}^3 \mathfrak{B}^j \bigcap_{v = 0}^{u} \mathfrak{B}^4_v \text{ and } \tau(\epsilon, \mathfrak{c}) \geq u \eta_m \bigg)+Q^n\bigg( \tau(\epsilon, \mathfrak{c}) < u \eta_m \bigg), 
\end{multline}
and
\[
 Q^n\bigg(\tau(\epsilon,\mathfrak{c}) \notin \big[ u \eta_m,(u+1) \eta_m) \bigg)=Q^n\bigg( \tau(\epsilon, \mathfrak{c}) < u \eta_m \bigg)+Q^n\bigg( \tau(\epsilon, \mathfrak{c}) \geq (u+1) \eta_m \bigg).
\]
It therefore suffices for us to prove that
\begin{equation}\label{eq:Qnbound preliminary}
Q^n\bigg(\bigcap_{j = 1}^3 \mathfrak{B}^j \bigcap_{v = 0}^{u} \mathfrak{B}^4_v \text{ and } \tau(\epsilon, \mathfrak{c}) \geq u \eta_m \bigg) \leq Q^n\bigg( \tau(\epsilon, \mathfrak{c}) \geq (u+1) \eta_m \bigg).
\end{equation}
Indeed, if the above  conditions $\lbrace \mathfrak{B}^j\rbrace, j=1,2,3$  it follows from \eqref{eq:upper} and \eqref{eq: cond1}, that for $t \in [u \eta_m, (u+1) \eta_m]$, i.e. for $t^{(m)} = u \eta_m$,
\begin{align}\nonumber
 \frac{1}{N^2}\sum_{p\in I_n} \big| \tilde{V}^{m, p}_t - \tilde{V}^p_t \big|^2 &\leq TC \sigma^2 \left( \int_0^t \sup_{r \in [0, s]} \alpha^1_r \, ds + \sum_{j=2}^3 \int_0^t \sup_{r \in [0, s]} \alpha^j_{r^{(m)}} \, ds+
  \int_0^t \sup_{r \in [0, s]} \alpha^4_{r^{(m)}} \, ds \right)\\
& \leq   \epsilon + TC \sigma^2 \int_0^t \sup_{r \in [0, s]} \alpha^4_{r^{(m)}} \, ds .\label{eq:bounding}
\end{align}
Because the conditions \eqref{eq: cond4}, $\lbrace \mathfrak{B}^4_v \rbrace$, $v=0,\cdots, u$, are all satisfied we can write
\begin{multline*}
\int_0^t \sup_{r \in [0, s]} \alpha^4_{r^{(m)}} \, ds = \sum_{v=0}^{u-1} \int_{v \eta_m}^{(v+1) \eta_m} \sup_{r \in [0, s]} \alpha^4_{r^{(m)}} \, ds + \int_{u \eta_m}^t \sup_{r \in [0, s]} \alpha^4_{r^{(m)}} \, ds = \\
\eta_m \sum_{v=0}^{u-1}  \sup_{r \in [0, v \eta_m]} \alpha^4_{r^{(m)}} + \int_{u \eta_m}^t \sup_{r \in [0, s]} \alpha^4_{r^{(m)}} \, ds \leq 
\frac{\epsilon \mathfrak{c} \eta_m}{TC \sigma^2}  \sum_{v=0}^{u-1}  \exp{\mathfrak{c} v \eta_m} + \int_{u \eta_m}^t \sup_{r \in [0, s]} \alpha^4_{r^{(m)}} \, ds = \\
\frac{\epsilon \mathfrak{c} \eta_m}{TC \sigma^2}  \sum_{v=0}^{u-1} \exp{\mathfrak{c} v \eta_m} + \int_{u \eta_m}^t \sup_{r \in [0, u \eta_m]} \alpha^4_{r^{(m)}} \, ds \leq 
\frac{\epsilon \mathfrak{c} \eta_m}{TC \sigma^2} \sum_{v=0}^{u-1}  \exp{\mathfrak{c} v \eta_m} + (t-u \eta_m) \frac{\epsilon \mathfrak{c}}{TC \sigma^2}  \exp{\mathfrak{c} u \eta_m} \leq \\
\frac{\epsilon \mathfrak{c} \eta_m}{TC \sigma^2}   \sum_{v=0}^{u}  \exp{\mathfrak{c} v \eta_m} =
 \frac{\epsilon \mathfrak{c} \eta_m}{TC \sigma^2} \frac{ \exp{\mathfrak{c} (u+1) \eta_m} - 1 }{\exp{\mathfrak{c} \eta_m}-1}.
\end{multline*}
Since $x \leq \exp{x}-1$ for $x \geq 0$, it follows that
\[
\int_0^t \sup_{r \in [0, s]} \alpha^4_{r^{(m)}} \, ds 
\leq \frac{\epsilon}{TC\sigma^2} \left( \exp{\mathfrak{c} (u+1) \eta_m}-1 \right),
\]
and, because of \eqref{eq:bounding},
 \begin{equation} \label{eq:taugttup1}
  \frac{1}{N^2} \sum_{p\in I_n} \big| \tilde{V}^{m, p}_t - \tilde{V}^p_t \big|^2  \leq   \epsilon \exp \mathfrak{c} (u+1) \eta_m.
\end{equation}
for $t \in [u \eta_m, (u+1) \eta_m]$.

This means that if conditions \eqref{eq: cond1}-\eqref{eq: cond4} are satisfied, and $\tau(\epsilon, \mathfrak{c}) \geq u \eta_m$, then $\tau(\epsilon,\mathfrak{c}) \geq (u+1) \eta_m$, and we have established \eqref{eq:Qnbound preliminary}.

Now
\begin{multline}\label{eq:Qnbound preliminary2}
Q^n\bigg(\bigcup_{j = 1}^3 (\mathfrak{B}^j)^c \bigcup_{v = 0}^{u} (\mathfrak{B}^4_v)^c\text{ and } \tau(\epsilon, \mathfrak{c}) \geq u \eta_m\bigg) \leq \\ 
\sum_{j=1}^3 Q^n\bigg( (\mathfrak{B}^j)^c\bigg)+\sum_{v=0}^uQ^n\bigg( (\mathfrak{B}^4_v)^c \text{ and } \tau(\epsilon, \mathfrak{c}) \geq u \eta_m\bigg).
\end{multline}
We use the following four Lemmas
 \begin{lemma}\label{lem:alpha1}
For any  $M > 0$, for all $m\in \N$ sufficiently large,
\[
\lsup{n}\frac{1}{N} \log Q^n\bigg(\sup_{s\in [0,T]}\alpha^1_{s} \geq  \frac{\epsilon}{3 T C \sigma^2}  \bigg) \leq -M.
\]
\end{lemma} 
\begin{lemma}\label{lem:alpha2}
For any $M > 0$, for all $m\in \N$ sufficiently large,
\begin{equation}
\lsup{n}\frac{1}{N} \log Q^n\bigg(\sup_{s\in [0,T]}\alpha^2_{s^{(m)}} \geq  \frac{\epsilon}{3 T C \sigma^2}  \bigg) \leq -M,
\end{equation}
if the function $\psi(n,q_m): \N \to \R^+$ defined in the proof is such that $\lim_{n,m \to \infty} N m \psi(n,q_m) = 0$.
\end{lemma}
\begin{lemma}\label{lem:alpha3}
For any $M > 0$, for all $m\in \N$ sufficiently large,
\[
\lsup{n}\frac{1}{N} \log Q^n\bigg(\sup_{s\in [0,T]}\alpha^3_{s^{(m)}} \geq  \frac{\epsilon}{3 T C \sigma^2}  \bigg) \leq -M.
\]
\end{lemma}
\begin{lemma}\label{lem:alpha4}
For any $M > 0$, there exists a constant $\mathfrak{c}$ such that for all $m \in \N$ sufficiently large, all $0 \leq u \leq m$ and all $0 \leq v \leq u$ and all $\epsilon \leq \exp\big(-\mathfrak{c}T\big)\delta^2 / T$,
\[
\lsup{n}\frac{1}{N} \log Q^n\bigg(\alpha^4_{v\eta_m} \geq \frac{\epsilon \mathfrak{c}}{T C \sigma^2} \exp \left (v \eta_m \mathfrak{c} \right)  \text{ and } \tau(\epsilon,\mathfrak{c}) \geq u \eta_m\bigg) \leq -M.
\]
\end{lemma}
It follows from Lemmas~\ref{lem:alpha1} to \ref{lem:alpha3} that
\begin{equation*}
\lsup{n}\frac{1}{N}\log Q^n\big((\mathfrak{B}^j)^c \big) \leq -M,\,j = 1, 2, 3
\end{equation*}
and from Lemma~\ref{lem:alpha4} that
\begin{equation*}
\lsup{n}\frac{1}{N}\log Q^n\big((\mathfrak{B}^4_v)^c\text{ and } \tau(\epsilon, \mathfrak{c}) \geq u \eta_m\big) \leq -M,
\end{equation*}
for all $0\leq v \leq u$, for $m$ sufficiently large.  This means that
\begin{multline*}
\lsup{n}\frac{1}{N} \log Q^n\bigg(\bigcup_{j = 1}^{3} (\mathfrak{B}^j)^c \bigcup_{v=0}^u (\mathfrak{B}^4_v)^c\text{ and } \tau(\epsilon, \mathfrak{c}) \geq u \eta_m\bigg)  \\
 \leq\lsup{n}\frac{1}{N} \log \left( \sum_{j=1}^{3} Q^n\bigg( (\mathfrak{B}^j)^c\bigg)+\sum_{v=0}^u Q^n\bigg( (\mathfrak{B}^4_v)^c \text{ and } \tau(\epsilon, \mathfrak{c}) \geq u \eta_m\bigg) \right)
   \\
 \leq \lsup{n}\frac{1}{N} \log (u+4) \max_{j,v} \left\{ Q^n\bigg( (\mathfrak{B}^j)^c\bigg),\,Q^n\bigg( (\mathfrak{B}^4_v)^c \text{ and } \tau(\epsilon, \mathfrak{c}) \geq u \eta_m\bigg)\right\}  \\
 = \lsup{n} \max_{j,v} \left\{ \frac{1}{N} \log Q^n\bigg( (\mathfrak{B}^j)^c\bigg), \, \frac{1}{N} \log Q^n\bigg( (\mathfrak{B}^4_v)^c \text{ and } \tau(\epsilon, \mathfrak{c}) \geq u \eta_m\bigg)\right\} \leq
 -M.
\end{multline*}
We can therefore conclude \eqref{eq: toshow tau hitting time}, and this finishes the proof of Lemma \ref{Lemma Exp Equivalent 1}.
\end{proof}
\begin{proof}[Proof of Lemma~\ref{lem:useful ineq}]
\label{prooflemusefulineq} By \eqref{eq: L squared distance} we write
\[
D_{T, L^2}(\emp(X_n),\emp(Y_n))\leq  \sum_{i \in \Z} b_i \int \norm{f(u^i)-f(v^i)}_{L^2}  \,d\xi(u,v),
\]
for all stationary couplings $\xi$ between $\emp(X_n)$ and $\emp(Y_n)$. Because of the stationarity of $\xi$ and the Lipschitz continuity of $f$ we have
\begin{multline*}
D_{T, L^2}(\emp(X_n),\emp(Y_n)) \leq  b \int \norm{f(u^0)-f(v^0)}_{L^2}  \,d\xi(u,v) \leq \\
b \int \norm{u^0-v^0}_{L^2}  \,d\xi(u,v) \leq b \left( \int \norm{u^0-v^0}_{L^2}^2 \,d\xi(u,v) \right)^{1/2},
\end{multline*}
where $b$ is defined by \eqref{eq:ab}.

Consider the set $\mathcal{S}_n$ of permutations $s$ of the set $I_n$. If $X_n=(X^{-n},\cdots,X^n)$, we note $s(X_n)$ the element $(X^{s(-n)},\cdots,X^{s(n)})$.  The knowledge of $\emp(X_n)$ does not imply that of $X_n$, in effect it implies the knowledge of all $s(X_n)$s without knowing which permutation is the correct one. Choose one such element, say $s_0(X_n)$. Similarly choose $s_1(Y_n)$. There exists a family of couplings\footnote{For example $\xi^s(u,v)=\frac{1}{N} \sum_{i \in I_n} \delta_{S^i s_0(X_n)}(u)\delta_{S^i s(s_1(Y_n))}(v)$.} $\xi^s$ such that
\[
\int \norm{u^0-v^0}_{L^2}^2 \,d\xi^s(u,v)=\frac{1}{N} \sum_{k \in I_n} \norm{X^{s_0(k)}-Y^{s(s_1(k))}}_{L^2}^2,
\]
from which we obtain, for $s=s_0s_1^{-1}$
\[
D_{T, L^2}(\emp(X_n),\emp(Y_n))^2 \leq \frac{b^2}{N}\sum_{k \in I_n} \norm{X^k-Y^k}_{L^2}^2,
\]
which is the announced result.
\end{proof}
The proofs of Lemma \ref{lem:alpha1}-\ref{lem:alpha4} are found in Appendix \ref{app:proofofalphas}.

\subsection{Characterization of the Limiting Process}\label{Section Limiting Process}
We prove in this Section that the limit equations are given by \eqref{eq: limit equations}, i.e. Theorem \ref{Theorem:Main}{.iv}. This is achieved by first showing that the solution to  \eqref{eq: limit equations}, without the condition that $\mu_*$ is the law of $Z$, is unique and has a closed form expression as a function of the Brownian motions $W^j$. This is the content of the following Lemma whose proof can be found in Appendix \ref{app:stochvolterra}. This proof is based on an adaptation of the theory of Volterra equations of the second type \cite{tricomi:57} to our, stochastic, framework.
\begin{lemma}\label{lem:limit equations}
Let $\mu \in \mP_S(\T^\Z)$. The system of equations \eqref{eq: limit equations} 
\begin{align*}
V^j_t &=\sigma W^j_t + \sigma \int_0^t \theta^j_s ds \\
\theta^j_t &= \sigma^{-2}\sum_{i\in \Z} \int_0^t L_{\mu}^{i-j}(t,s)dV^{i}_s\ .\nonumber 
\end{align*}
has a unique solution given by
\begin{multline}\label{eq:Msol}
V_t^j = \sigma W_t^j + \sum_{i \in \Z}  \int_0^t 
\left( \int_0^s L^{ i}_{\mu}(s, u) \, dW_u^{i+j} \right) \, ds+\\
 \sigma^{-1} \sum_{i, \ell \in \Z}\int_0^t \left( \int_0^s M_{\mu}^i(s, u) \left(  \int_0^u L^{\ell-i}_{\mu}(u, v) \, dW_v^{\ell+j} \right) \, du \right) \, ds,
\end{multline}
where $M^k_{\mu}$ is defined in the proof and satisfies
\[
\sup_{s,u \in [0,t]} \sum_k \left| M^k_{\mu}(s,u) \right| < \infty.
\]
\end{lemma}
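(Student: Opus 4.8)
The plan is to eliminate $\theta$, recast the system \eqref{eq: limit equations} as a single linear stochastic Volterra equation of the second kind in the time variable, solve it by the classical iterated-kernel (resolvent) construction of \cite{tricomi:57} adapted to our stochastic, spatially-infinite setting, and then recover $V$ by integration.

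I would first substitute $dV^i_s = \sigma\, dW^i_s + \sigma\theta^i_s\, ds$, read off the first line of \eqref{eq: limit equations}, into the second line, obtaining for every $j\in\Z$
\begin{equation*}
\theta^j_t = F^j_t + \sigma^{-1}\sum_{i\in\Z}\int_0^t L_{\mu}^{i-j}(t,s)\,\theta^i_s\, ds,\qquad F^j_t := \sigma^{-1}\sum_{i\in\Z}\int_0^t L_{\mu}^{i-j}(t,s)\, dW^i_s .
\end{equation*}
Conversely, any solution of this equation yields a solution of \eqref{eq: limit equations} through $V^j_t = \sigma W^j_t + \sigma\int_0^t\theta^j_s\, ds$, so it suffices to establish existence and uniqueness of $\theta$ in the space of adapted processes $(\theta^j)_{j\in\Z}$ with $\sup_{j}\sup_{t\le T}\Exp[(\theta^j_t)^2]<\infty$; note that $F$ itself lies in this space because, by the It\^o isometry, $\Exp[(F^j_t)^2] = \sigma^{-2}\sum_i\int_0^t L_\mu^{i-j}(t,s)^2\, ds \le \sigma^{-2} T\,\Lambda^2$ with $\Lambda$ as below.

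The decisive input, which I would import from the properties of $L_\mu$ proved in Appendix~\ref{app:covariances} (and which themselves rest on the decay hypotheses \eqref{eq:akbl}, in particular $a_k=\smallO{1/|k|^3}$ and $\sum_l b_l<\infty$, together with the boundedness of $f$), is that $L_\mu$ is absolutely summable in the spatial index: $\Lambda := \sup_{s,u\in[0,T]}\sum_{k\in\Z}|L_\mu^k(s,u)|<\infty$. This lets me view the equation for $\theta$ as a linear Volterra equation in $t$ with values in $\ell^\infty(\Z)$. Defining the iterated kernels by $L_\mu^{*1}:=L_\mu$ and $(L_\mu^{*(n+1)})^k(t,u) := \sum_{\ell\in\Z}\int_u^t L_\mu^{k-\ell}(t,s)(L_\mu^{*n})^\ell(s,u)\, ds$, an induction using $\Lambda<\infty$ gives $\sum_k|(L_\mu^{*n})^k(t,u)| \le \Lambda^n (t-u)^{n-1}/(n-1)!$, so the resolvent series
\begin{equation*}
M_\mu^k(t,u) := \sum_{n\ge1}\sigma^{1-n}(L_\mu^{*n})^k(t,u)
\end{equation*}
converges absolutely and uniformly, satisfies $\sup_{s,u\in[0,T]}\sum_k|M_\mu^k(s,u)| \le \Lambda\, e^{\Lambda T/\sigma}<\infty$ (the bound claimed in the Lemma), and solves the resolvent identity $M_\mu^k(t,u) = L_\mu^k(t,u) + \sigma^{-1}\sum_\ell\int_u^t L_\mu^{k-\ell}(t,s)M_\mu^\ell(s,u)\, ds$. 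I would then set $\theta^j_t := F^j_t + \sigma^{-1}\sum_i\int_0^t M_\mu^{i-j}(t,s)F^i_s\, ds$ and verify, using the resolvent identity and a Fubini interchange of the spatial sums with the time integrals, that this $\theta$ solves the Volterra equation; substituting the definition of $F$ and applying the stochastic Fubini theorem — valid by the It\^o isometry combined with $\Lambda<\infty$ — to swap the orders of the $ds$/$dW$ integrations and the sums over $\Z$ produces exactly the formula \eqref{eq:Msol} for $V^j_t = \sigma W^j_t + \sigma\int_0^t\theta^j_s\, ds$.

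Uniqueness is the standard Volterra argument: two solutions differ by $\Delta$ with $\Delta^j_t=\sigma^{-1}\sum_i\int_0^t L_\mu^{i-j}(t,s)\Delta^i_s\, ds$, and iterating this identity $n$ times while bounding in the norm $\|\Delta_t\|:=\sup_j(\Exp[(\Delta^j_t)^2])^{1/2}$ gives $\|\Delta_t\| \le (\sigma^{-1}\Lambda)^n t^n/n!\,\sup_{s\le T}\|\Delta_s\| \to 0$, hence $\Delta\equiv0$, and uniqueness of $V$ follows immediately. The hard part will not be the Volterra machinery, which is routine once $\Lambda<\infty$, but the infinite-dimensional bookkeeping: rigorously justifying every interchange of the sum over $i\in\Z$ with the stochastic and Lebesgue time integrals (the stochastic Fubini theorem with countably many independent driving Brownian motions) and securing the iterated-kernel estimates uniformly in $n$ and in the spatial indices. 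All of this rests on the summability of $L_\mu$ established in Appendix~\ref{app:covariances}, which is precisely where the decay assumptions on $R_\J$ enter.
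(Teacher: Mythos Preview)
Your proposal is correct and follows essentially the same resolvent/iterated-kernel strategy as the paper: both reduce \eqref{eq: limit equations} to a linear Volterra equation of the second kind, construct the resolvent $M_\mu^k(t,u)=\sum_{n\ge 1}\sigma^{-(n-1)}L_{\mu,n}^k(t,u)$ via iterated kernels, verify convergence using the spatial summability of $L_\mu$ from Appendix~\ref{app:covariances}, and check that the resulting formula \eqref{eq:Msol} solves the equation, with uniqueness following from the homogeneous case. The only substantive differences are presentational: the paper iterates Picard-style on $V$ and then differentiates to obtain the processes $\Phi^{j,n}$, while you pass immediately to the equation for $\theta$, which is cleaner; and in bounding the iterated kernels the paper uses Cauchy--Schwarz together with the recursion $F_k(t,s)=\int_s^t A^2(u)F_{k-1}(u,s)\,du$ to obtain a $1/\sqrt{k!}$ decay, whereas your direct $\ell^1$ estimate $\sum_k|(L_\mu^{*n})^k(t,u)|\le \Lambda^n(t-u)^{n-1}/(n-1)!$ via Young's inequality is both simpler and sharper, giving an explicit bound $\Lambda\,e^{\Lambda T/\sigma}$ on $\sup\sum_k|M_\mu^k|$. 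Your identification of the stochastic Fubini interchanges as the main bookkeeping burden is apt; the paper handles these implicitly through the $L^2$ convergence of the finite partial sums.
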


Note $Q^{m, n}$ the law of the solution to \eqref{eq:SD}. Lemma \ref{Lemma Exp Equivalent 1} indicates that $\Pi^{m,n} = Q^{m,n} \circ \emp(V^m_n)$ satisfies an LDP with the same good rate function $H$ as $\Pi^n$.
\begin{lemma}\label{lem:limit law of Qmn}
The limit law of $Q^{m, n}$ when $m,\,n \to \infty$ is $\mu_*$, the unique zero of the rate function $H$. Moreover, for all $k \in I_n$, $t,\,s \in [0,\,T]$
\[
\lim_{m,\, n \to \infty} \int_{\T^N} L_{\emp(V^m_n)}^k(t,\,s) \, dQ^{m, n}(V_n^m) = L_{\mu_*}^k(t,\,s).
\]
\end{lemma}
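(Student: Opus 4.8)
The plan is to deduce both assertions from the concentration of $\emp(V^m_n)$ at $\mu_*$ together with the regularity of the functional $\nu\mapsto L_\nu$. As recorded in the text preceding the Lemma, $\Pi^{m,n}=Q^{m,n}\circ\emp(V^m_n)^{-1}$ satisfies, for the relevant choice $m=m(n)$ and via the exponential equivalence of Lemma~\ref{Lemma Exp Equivalent 1}, the LDP on $\bigl(\mathcal{P}_S(\T^\Z),D_T\bigr)$ with good rate function $H$, whose unique zero is $\mu_*$ by Theorem~\ref{Theorem:Main}(iii); alternatively, for each fixed $m$ the contraction principle applied to Theorem~\ref{Theorem: Pi 0 LDP} through the continuous map $\Psi^m$ (Lemma~\ref{Lem: Psim continuous}) gives an LDP with good rate function $\mu\mapsto\inf\{I^{(3)}(\zeta):\Psi^m(\zeta)=\mu\}$, whose unique zero is $\Psi^m(P^{\otimes\Z})$, and $\Psi^{m_j}(P^{\otimes\Z})\to\Psi(P^{\otimes\Z})=\mu_*$ by the uniform convergence on the level sets of $I^{(3)}$ of Section~\ref{subsection:proofT2.4} (equation~\eqref{eq:cont on level sets}), since $P^{\otimes\Z}$ lies in $\mathcal{L}_{I^{(3)}}(\alpha)$ for every $\alpha\geq 0$. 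In either case, for any open $U\ni\mu_*$ the large deviation upper bound applied to the closed set $U^c$ gives $\lsup{n}\frac1N\log\Pi^{m,n}(U^c)\leq-\inf_{\mu\in U^c}H(\mu)<0$, the infimum being strictly positive because a good rate function has compact level sets and, by lower semicontinuity, $\mu_*\notin U^c$ is its unique zero; hence $\Pi^{m,n}(U^c)\to0$, i.e.\ $\Pi^{m,n}\Rightarrow\delta_{\mu_*}$, which says $\emp(V^m_n)\to\mu_*$ in $Q^{m,n}$-probability. Since $Q^{m,n}$ is shift-invariant (the $V^m_n$ are $N$-periodic and the dynamics is translation-covariant) this is the same as the weak convergence of $Q^{m,n}$, viewed as a measure on $\T^\Z$, to $\mu_*$; this is the first assertion. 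Exponential tightness of $\Pi^n$ (Lemma~\ref{Lem: exponentially tight}) is available if one prefers to argue through subsequential limits, but it is not needed here.

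For the second assertion, fix $k$ and $0\leq s,t\leq T$. The functional $\nu\mapsto L^k_\nu(t,s)$ on $\mathcal{P}_S(\T^\Z)$ is continuous --- this is exactly the continuity of $\nu\mapsto L_\nu$ invoked in the proof of Lemma~\ref{Lem: Psim continuous}, see Proposition~\ref{prop:Amununif} and Appendix~\ref{app:covariances} --- and uniformly bounded by the constant $K$ of Proposition~\ref{prop:Lkmuregular}. Since $L^k_{\emp(V^m_n)}(t,s)$ depends on $V^m_n$ only through $\emp(V^m_n)$, whose law under $Q^{m,n}$ is $\Pi^{m,n}$, a change of variables gives $\int_{\T^N}L^k_{\emp(V^m_n)}(t,s)\,dQ^{m,n}(V^m_n)=\int_{\mathcal{P}_S(\T^\Z)}L^k_\nu(t,s)\,d\Pi^{m,n}(\nu)$, and this converges to $L^k_{\mu_*}(t,s)$ as $m,n\to\infty$ because $\Pi^{m,n}\Rightarrow\delta_{\mu_*}$ and $L^k_{\cdot}(t,s)$ is bounded and continuous (a bounded continuous function integrated against a weakly convergent sequence of probability measures converges to its value at the limit point). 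This is the second assertion.

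The substance of the Lemma lies almost entirely in results imported from elsewhere: the identification of $\mu_*$ as the unique zero of $H$ (Theorem~\ref{Theorem:Main}(iii), already proved) together with the exponential equivalence of Lemma~\ref{Lemma Exp Equivalent 1} (or the uniform convergence $\Psi^{m_j}(P^{\otimes\Z})\to\mu_*$), and the boundedness and continuity of $\nu\mapsto L_\nu$ from Appendix~\ref{app:covariances}. Given those, the only genuine step is the passage from the concentration of $\emp(V^m_n)$ at $\mu_*$ to convergence of the \emph{expectation} of $L^k_{\emp(V^m_n)}(t,s)$, and the point to watch is that this uses the \emph{uniform} bound on $L^k$ to dominate the integrand on the exponentially small but otherwise uncontrolled event that $\emp(V^m_n)$ is far from $\mu_*$. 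If the downstream use in Section~\ref{Section Limiting Process} requires this limit uniformly in $(s,t)$ and summable over $k$, one replaces bare continuity of $\nu\mapsto L_\nu$ by the quantitative equicontinuity and decay estimates established in Appendix~\ref{app:covariances}.
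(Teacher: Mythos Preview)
Your proof is correct and follows essentially the same route as the paper: both deduce $\Pi^{m,n}\Rightarrow\delta_{\mu_*}$ from the LDP with unique zero $\mu_*$, use shift-invariance of $Q^{m,n}$ to upgrade this to weak convergence of $Q^{m,n}$ to $\mu_*$, and then treat the second assertion by observing that $\nu\mapsto L_\nu^k(t,s)$ is a bounded continuous functional (via Propositions~\ref{prop:Amununif} and~\ref{prop:Lkmuregular}) so that its $\Pi^{m,n}$-integral converges. The paper spells out the shift-invariance step slightly more explicitly---writing $\int f\,dQ^{m,n}=\int\!\big(\int f\,d\emp(V^m_n)\big)\,dQ^{m,n}$ for $f\in C_b(\T^\Z)$---whereas you compress this; conversely you give more detail on why the LDP forces $\Pi^{m,n}(U^c)\to 0$, which the paper simply asserts.
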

\begin{proof}
We know that $H$ has a unique zero, noted $\mu_*$. This implies that $\Pi^{m, n}$ converges weakly to $\delta_{\mu_*}$ and therefore, for all $F \in C_b(\mP(\T^\Z))$,
\[
\lim_{m,\,n \to \infty} \int_{\mP(\T^\Z)} F(\mu) \, d\Pi^{m, n}(\mu) =  F(\mu_*).
\] 
From the relation $\Pi^{m, n} = Q^{m, n} \circ \emp(V^m_n)^{-1}$ we infer that
\[
\lim_{m, \,n \to \infty} \int_{\T^N} F(\emp(V^m_n)) \, dQ_{m, n}(V^m_n) = F(\mu_*).
\]
Let us choose a function $f \in C_b(\T^\Z)$ and define $F : \mP(\T^\Z) \to \R$ by
\[
F(\mu) = \int_{\T^\Z} f(V) \, d\mu(V),
\]
so that we have
\begin{multline*}
\lim_{m, \, n \to \infty} \int_{\T^N} \int_{\T^\Z} f(V) \, d\emp(V^m_n)(V) \, dQ^{m, n}(V^m_n) =\\
\lim_{m, \, n \to \infty}\frac{1}{N} \sum_{i \in I_n} \int_{\T^N} f(S^i V^m_n) dQ^{m, n}(V^m_n) =
\int_{\T^\Z} f(V) \, d\mu_*(V)
\end{multline*}
We note that $Q^{m, n}$ is invariant under a uniform shift of the indexes, i.e. satisfies
\[
Q^{m, n} \circ S^i = Q^{m, n}
\]
for all $i \in I_n$, so that
\[
\frac{1}{N} \sum_{i \in I_n} \int_{\T^N} f(S^i V^m_n) dQ^{m, n}(V^m_n) = \int_{\T^N} f(V^m_n) \, dQ^{m, n}(V^m_n),
\]
and therefore
\[
\lim_{m, \, n \to \infty} \int_{\T^N} f(V^m_n) \, dQ^{m, n}(V^m_n) = \int_{\T^\Z} f(V) \, d\mu_*(V).
\]
Since this is true for all $f \in C_b(\T^\Z)$ we have proved that the limiting law of $Q^{m, n}$ is $\mu_*$.

Next consider the function $F: \mP(\T^\Z) \to \R$
\[
F(\mu) = L_\mu^{k}(t, s)
\]
for a given $k \in I_n$ and $t,\,s \in [0, T]$.  We have
\[
\lim_{m, \, n \to \infty} \int_{\T^N} L_{\emp(V^m_n)}^{ k} (t, s) \, dQ^{m, n}(V^m_n) = L_{\mu_*}^{ k} (t, s),
\]
which also reads
\[
\lim_{m, \, n \to \infty} \Exp \left[   L_{\emp(V^m_n)}^{ k} (t, s) -  L_{\mu_*}^{ k} (t, s)   \right] =0.
\]
\end{proof}
We now prove Theorem \ref{Theorem:Main}{.iii}
\begin{theorem}
The equations describing the unique 0, $\mu_*$, of the rate function $H$ are \eqref{eq: limit equations}.
\end{theorem}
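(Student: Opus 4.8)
The plan is to characterise $\mu_*$ as the law of the process $Z$ supplied by Lemma~\ref{lem:limit equations}, i.e. the unique solution of \eqref{eq: limit equations} with the \emph{deterministic} kernel $L_{\mu_*}$; since the explicit representation \eqref{eq:Msol} writes $Z$ as a linear functional of the driving Brownian motions, this will simultaneously give that $\mu_*$ is Gaussian. The bridge between the two objects is the approximating system \eqref{eq:vmtthetamt} (equivalently \eqref{eq:SD1}), whose law $Q^{m,n}$ converges to $\mu_*$ in $\PS(\T^\Z)$ by Lemma~\ref{lem:limit law of Qmn}; because $\Pi^{m,n}=Q^{m,n}\circ\emp^{-1}$ obeys the LDP with good rate function $H$ whose unique zero is $\mu_*$, we moreover have $\Pi^{m,n}\to\delta_{\mu_*}$ weakly and hence $\emp(V^m_n)\to\mu_*$ in $Q^{m,n}$-probability. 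The idea is that in the joint limit $m,n\to\infty$ the empirical-measure-dependent kernel $L_{\emp(V^m_n)}$ appearing in \eqref{eq:vmtthetamt} may be replaced by its deterministic limit $L_{\mu_*}$, the time mesh $\eta_m$ shrinks to $0$, and the spatial cut-off $q_m$ disappears, so that the approximating equations degenerate exactly into \eqref{eq: limit equations}.

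Concretely, I would work on the common probability space of Corollary~\ref{Corollary Measure Representation}, on which the Brownian motions $W^j$ are fixed, realise $V^m_n=V^m_n(W_n)$ as the solution of \eqref{eq:vmtthetamt} and $Z=Z(W)$ through \eqref{eq:Msol}, and compare the two. Using Remark~\ref{rem:Lmukt} (as in the proofs of Lemma~\ref{Lem: Psim continuous} and Lemma~\ref{lem:tildethetamp}), for $t\in[u\eta_m,(u+1)\eta_m)$ the drift of \eqref{eq:vmtthetamt} is the Riemann--Stieltjes sum
\[
\theta^{m,j}_t=\sigma^{-2}\sum_{k\in I_{q_m}}\sum_{w=0}^{u-1}L^k_{\emp(V^m_n)}(u\eta_m,w\eta_m)\,\bigl(V^{m,k+j}_{(w+1)\eta_m}-V^{m,k+j}_{w\eta_m}\bigr),
\]
whereas the drift in \eqref{eq: limit equations} is $\theta^j_t=\sigma^{-2}\sum_{i\in\Z}\int_0^t L^{i-j}_{\mu_*}(t,s)\,dZ^i_s$ (after the relabelling $i=k+j$). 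Passing to the Fourier domain (as in \eqref{eq: tilde V 0}, where the spatial convolution becomes a product so that the limiting system is diagonal in $p$) and mimicking the decomposition into $\alpha^1,\dots,\alpha^5$ from the proof of Lemma~\ref{Lemma Exp Equivalent 1}, I would bound $\frac{1}{N^2}\sum_{p\in I_n}\int_0^T|\tilde V^{m,p}_t-\tilde Z^p_t|^2\,dt$ by the sum of (a) a time-discretisation error, controlled by the modulus of continuity of the $V^{m,j}$ (itself controlled by Lemma~\ref{Lemma: bound eta}) and the joint continuity of $L^k_{\mu_*}(\cdot,\cdot)$; (b) a spatial truncation error, controlled uniformly by $\sup_{s,u}\sum_k|L^k_\mu(s,u)|<\infty$ (Lemma~\ref{lem:limit equations}) together with $\sum_l b_l<\infty$ and $a_k=\smallO{1/|k|^3}$; (c) a kernel-difference term involving $L_{\emp(V^m_n)}-L_{\mu_*}$, controlled by $\emp(V^m_n)\to\mu_*$ and the continuity of $\mu\mapsto L_\mu$ (Proposition~\ref{prop:Amununif}, Lemma~\ref{lem:limit law of Qmn}); plus (d) a term in $\delta\tilde V^p-\delta\tilde Z^p$ absorbed by Gronwall's lemma. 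Each of (a)--(c) tends to $0$ in $Q^{m,n}$-probability, so Gronwall yields $\frac{1}{N}\sum_{j\in I_n}\norm{V^{m,j}-Z^j}_{L^2}^2\to0$ in probability.

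Combining this with Lemma~\ref{lem:useful ineq} gives $D_{T,L^2}(\emp(V^m_n),\emp(Z_n))\to0$ in probability, and since $(Z^j)_{j\in\Z}$ is a shift-equivariant functional of an i.i.d.\ sequence, hence stationary and ergodic, $\emp(Z_n)\to\text{Law}(Z)$ in $\PS(\T^\Z)$ almost surely; therefore $\Pi^{m,n}\to\delta_{\text{Law}(Z)}$ weakly. Since also $\Pi^{m,n}\to\delta_{\mu_*}$, we conclude $\mu_*=\text{Law}(Z)$, which is Gaussian by \eqref{eq:Msol}; this is precisely Theorem~\ref{Theorem:Main}(iv). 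The main obstacle is term (c): the operator $L_{\emp(V^m_n)}$ is random and is built from the empirical measure generated by the very solution $V^m_n$ being analysed, so closing the Gronwall estimate requires combining the quantitative continuity of $\mu\mapsto L_\mu$ proved in the appendices with the concentration $\emp(V^m_n)\to\mu_*$. This is the same self-referential difficulty that Lemma~\ref{Lemma Exp Equivalent 1} resolves for the comparison of $V^m_n$ with $V_n$, and here it only needs to be carried through at the level of convergence in probability rather than superexponential decay.
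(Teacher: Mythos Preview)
Your proposal is correct and follows essentially the same route as the paper: couple the approximating system $V^m_n$ from \eqref{eq:vmtthetamt} with the solution $Z$ of \eqref{eq: limit equations} on the common space of the Brownian motions, decompose the drift error into time-discretisation, spatial-truncation, kernel-difference and feedback contributions, pass to the Fourier domain so that the spatial convolution becomes a product, and close the feedback loop by a Gronwall/Volterra argument with the uniform kernel bounds of Proposition~\ref{prop:Lkmuregular} and Proposition~\ref{prop:Amununif}.

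Two tactical differences are worth noting. First, the paper handles the feedback term by writing the Fourier-transformed equation as a scalar Volterra equation of the second kind and invoking the resolvent kernel explicitly (with the uniform bounds on $\tilde L_n(\varphi)$ guaranteeing the resolvent is bounded independently of $n,m$), then applies Parseval and spatial stationarity to reduce to a single-coordinate $L^2$ estimate $\Exp\big[\sup_{s\leq t}|\theta^j_s-\theta^{m,j}_s|^2\big]\to 0$; you instead propose a direct Gronwall bound on the Fourier side, which is exactly what is done in the proof of Lemma~\ref{Lemma Exp Equivalent 1} and works equally well here. Second, for the identification of the limit, the paper uses Lemma~\ref{lem:limit law of Qmn} (the law $Q^{m,n}$ of $V^m_n$ converges to $\mu_*$) together with the coordinate-wise convergence $V^{m,j}\to Z^j$, whereas you route through the empirical measure and invoke the ergodicity of the stationary Gaussian field $Z$ to get $\emp(Z_n)\to\text{Law}(Z)$ a.s.; both conclusions are valid, and your version has the small advantage of making the Gaussianity of $\mu_*$ immediate from the explicit representation \eqref{eq:Msol}.
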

\begin{proof}
We prove that for all $n \geq 0$
\[
\lim_{m,n \to \infty} \Exp \left[ \sup_{s \in [0,t]} \left| \theta^j_s -\theta^{m,j}_s  \right|^2 \right] = 0.
\]
Indeed, as shown below, this is sufficient to prove that
\[
\lim_{m,n \to \infty} \Exp \left[ \sup_{s \in [0,t]} \left| V^j_s -V^{m,j}_s  \right| \right] = 0.
\]
We recall that the equations \eqref{eq:vmtthetamt}  satisfied by $V^m$ are, for $j \in I_n$, 
\begin{align*}
V^{m,j}_t &= \sigma W^j_t + \sigma \int_0^t \theta^{m,j}_s ds \\
\theta^{m,j}_t &= \sigma^{-2}\sum_{i\in I_{q_m}} \Exp^{\bar{\gamma}^{\emp(V^m_n)}_t}\bigg[G^{m,0}_{t^{(m)}}\int_0^{t^{(m)}}  G^{m,i}_{s^{(m)}} dV^{m, i+j}_s \bigg]  \\
& = \sigma^{-2}\sum_{i\in I_{q_m}} \int_0^{t^{(m)}} L_{\emp(V^m_n)}^{i}(t^{(m)}, s^{(m)}) \, dV^{m, i+j}_s.
\end{align*}
We also have, for $j \in \Z$. 
\[
\theta^j_t = \sigma^{-2}\sum_{i\in \Z} \int_0^t L^{i}_{\mu_*}(t, s) dV^{i+j}_s.
\]
Write 
\begin{align*}
\theta^j_t - \theta^{m,j}_t  =&  
\sigma^{-2} \sum_{i \in I_{q_m}} \int_0^t ( L_{\mu_*}^{i}(t,s) - L_{\mu_*}^{i}(t^{(m)},s^{(m)}) ) \, dV^{i+j}_s &\Big\} \alpha^{j, 1}_t\\
& + \sigma^{-2} \sum_{i \in I_{q_m}}   \int_{t^{(m)}}^t L_{\mu_*}^{i}(t^{(m)},s^{(m)})  \, dV^{i+j}_s & \Big\}\alpha^{j, 2}_t\\
&+ \sigma^{-2} \sum_{i \in I_{q_m}}  \int_0^{t^{(m)}} \left( L_{\mu_*}^{ i}(t^{(m)},s^{(m)})  - L_{\emp(V^m_n)}^{ i}(t^{(m)}, s^{(m)}) \right) \, dV^{i+j}_s & \Big\}\alpha^{j, 3}_t \\
&+\sigma^{-2} \sum_{i \in \Z/I_{q_m}} \int_0^t  L_{\mu_*}^{i}(t,s) \, dV^{i+j}_s & \Big\}\alpha^{j, 4}_t\\
& +\sigma^{-2} \sum_{i \in I_{q_m}}   \int_0^{t^{(m)}} L_{\emp(V^m_n)}^{i}(t^{(m)}, s^{(m)}) (\theta^{i+j}_s - \theta^{m, i+j}_s)\,ds,& 
\end{align*}

so that we have
\[
\theta^j_t - \theta^{m,j}_t = \sum_{k=1}^{4} \alpha^{j,k}_t+\sigma^{-2} \sum_{i \in I_{q_m}}   \int_0^{t^{(m)}} L_{\emp(V^m_n)}^{i}(t^{(m)}, s^{(m)}) (\theta^{i+j}_s - \theta^{m, i+j}_s)\,ds.
\]
To simplify notations further we write $L_n^i(t^{(m)}, s^{(m)})$ for $L_{\emp(V^m_n)}^{ i}(t^{(m)}, s^{(m)})$ since there is no ambiguity, and define
\[
\Phi^j_t := \theta^j_t - \theta^{m,j}_t \quad j \in I_n \quad t \in [0,T].
\]
The previous equation writes
\begin{equation}\label{eq:volterratheta}
\Phi^j_t = \sum_{k=1}^{4} \alpha^{j,k}_t+\sigma^{-2} \sum_{i \in I_{q_m}}   \int_0^{t^{(m)}} L_n^i(t^{(m)}, s^{(m)})\Phi^{i+j}_s \,ds
\end{equation}
This is a Volterra equation of the second type \cite{tricomi:57}. We solve it for $\Phi$ as a function of the $\alpha$s and use
the following  Lemma whose proof can be found in Appendix \ref{app:alphajs}.
\begin{lemma}\label{lem:alphaj}
For all $\varepsilon > 0$, there exists $m_0(\varepsilon)$ in $\N$ such that for all $m \geq m_0$
\[
\Exp \left[ \max_{k=1,2,3,4} \sup_{s \in [0,t]} \left| \alpha_s^{j,k} \right|^2  \right] \leq C \varepsilon 
\]
for some positive constant $C$ independent of $j$.
\end{lemma}



Since equation \eqref{eq:volterratheta} is affine we solve it for each $\alpha^{k,j}$, $k=1,2,3,4$ and add the four solutions. In what follows we thus drop the $k$ index and solve
\[
\Phi^j_t = \alpha^{j}_t+\sigma^{-2} \sum_{k \in I_{q_m}}   \int_0^{t^{(m)}} L_n^k(t^{(m)}, s^{(m)})\Phi^{k+j}_s \,ds.
\]
We take continuous Fourier transforms of both sides to obtain
\[
\tilde{\Phi}_t(\varphi) = \tilde{\alpha}_t(\varphi)+\sigma^{-2}  \int_0^{t^{(m)}} \tilde{L}_n^{*}(\varphi)(t^{(m)}, s^{(m)})\tilde{\Phi}_s(\varphi)\,ds,
\]
where $\ ^*$ indicates complex conjugate and, for example
\[
\tilde{\Phi}_t(\varphi)=\sum_{j \in I_n} \Phi^j_t e^{-ij\varphi},\ \varphi \in [-\pi, \pi[,
\]
and, as explained page~\pageref{page:explicationInIQm}, the Fourier transform of \(L^j\) is given by:
\[
\tilde{L}_n(\varphi)(t^{(m)}, s^{(m)}) = \sum_{j \in I_n} \mathbbm{1}_{I_{q_m}}(j)L_n^j(t^{(m)}, s^{(m)}) e^{-ij\varphi},\ \varphi \in [-\pi, \pi[.
\] 
We use standard results on Volterra equations \cite{tricomi:57} to write
\begin{equation}\label{eq:voltsolved}
\tilde{\Phi}_t(\varphi) = \tilde{\alpha}_t(\varphi)+\lambda \int_0^{t^{(m)}} \tilde{H}(\varphi)(t^{(m)}, s^{(m)},\lambda) \tilde{\alpha}_s(\varphi)\,ds,
\end{equation}
where we have noted $\lambda = \sigma^{-2}$, the ``resolvent kernel'' $\tilde{H}(\varphi)(t,s,\lambda)$ is given by the series of iterated kernels
\begin{equation}\label{eq:series}
\tilde{H}(\varphi)(t^{(m)}, s^{(m)},\lambda)=\sum_{\ell=0}^\infty \lambda^\ell \tilde{L}_{n,\ell+1}^{*}(\varphi)(t^{(m)}, s^{(m)}),
\end{equation}
and
\[
\tilde{L}_{n,\ell+1}^{*}(\varphi)(t^{(m)}, s^{(m)}) = \int_0^{t^{(m)}} \tilde{L}_{n}^{*}(\varphi)(t^{(m)}, u^{(m)})\tilde{L}_{n,\ell}^{*}(\varphi)(u^{(m)}, s^{(m)})\,du.
\]
The convergence of the series \eqref{eq:series} is guaranteed by the fact that the two functions
\[
A_n(\varphi,t)^2 = \int_0^T \left| \tilde{L}_n(\varphi)(t,s) \right|^2 \,ds \text{  and  } B_n(\varphi,s)^2 = \int_0^T \left| \tilde{L}_n(\varphi)(t,s) \right|^2 \,dt
\]
are upperbounded by $T^2a^2b^2$ {\em independently} of $n$, thanks to Proposition~\ref{prop:Lkmuregular}.
The theory of Volterra equations then guarantees that
\[
\tilde{H}(\varphi)(t^{(m)}, s^{(m)},\lambda) \leq C
\]
for some positive constant $C$ independent of $n,\,m$.

Equation \eqref{eq:voltsolved} then implies that
\[
\left| \tilde{\Phi}_t(\varphi) \right|^2 \leq 2\left|   \tilde{\alpha}_t(\varphi) \right|^2 +2 \lambda^2 C^2 \int_0^t  \left|   \tilde{\alpha}_s(\varphi) \right|^2\,ds.
\]
By Parseval's Theorem
\[
\sum_{j \in I_n} \left| \Phi^j_t \right|^2 \leq 2 \sum_{j \in I_n} \left| \alpha^j_t \right|^2 + 2 \lambda^2 C^2 \int_0^t \sum_{j \in I_n} \left| \alpha^j_s \right|^2 \,ds.
\]
Taking the expected value of both sides and using the spatial stationarity of $(\Phi^j_t)_{j \in I_n}$ and $(\alpha^j_t)_{j \in I_n}$ we have for any $j \in I_n$
\[
\Exp\left[ \left| \Phi^j_t \right|^2 \right] \leq 2 \Exp\left[ \left|  \alpha^j_t \right|^2 \right] + 2 \lambda^2 C^2 \int_0^t \Exp\left[ \left| \alpha^j_s \right|^2 \right]\,ds.
\]
Since by Lemma \ref{lem:alphaj} 
 \[
\Exp \left[ \max_{k=1,2,3,4} \sup_{s \in [0,t]} \left| \alpha_s^{j,k} \right|^2  \right] \rightarrow 0,
\] 
we conclude that
\[
\lim_{m,n \to \infty} \Exp \left[ \sup_{s \in [0,t]} \left| \theta^j_s - \theta^{m,j}_s \right|^2  \right] = 0,
\]
and therefore
\[
 \sup_{s \in [0,t]} \left| V^j_s - V^{m,j}_s \right|  \leq    \int_0^t \sup_{\rho \in [0,s]} \left|  \theta^j_\rho - \theta^{m,j}_\rho \right|  \,ds   \leq \sqrt{t}\ \left( \int_0^t \sup_{\rho \in [0,s]} \left( \theta^j_\rho - \theta^{m,j}_\rho \right)^2\,ds \right)^{1/2} ,
\]
and by Cauchy-Schwarz again
\begin{align*}
\Exp \left[  \sup_{s \in [0,t]} \left| V^j_s - V^{m,j}_s \right| \right]  \leq &   \sqrt{t}\ \left(  \Exp \left[  \int_0^t \sup_{\rho \in [0,s]} \left( \theta^j_\rho - \theta^{m,j}_\rho \right)^2\,ds \right] \right)^{1/2} \\& = \sqrt{t}\ \left(   \int_0^t \Exp \left[ \sup_{\rho \in [0,s]} \left( \theta^j_\rho - \theta^{m,j}_\rho \right)^2 \right]\,ds  \right)^{1/2}.
\end{align*}
We conclude that
\[
\lim_{m,n \to \infty} \Exp \left[ \sup_{s \in [0,t]} \left| V^j_s - V^{m,j}_s \right| \right] = 0
\]
for all $j \in \Z$ and $t \in [0,T]$.
\end{proof}

\section*{Acknowledgements}
This research has received funding from the Europe
an Union's Horizon 2020 Framework Programme
for Research and Innovation under the Specific Grant Agreement No. 785907 (Human Brain Project SGA2).
 \appendix

\section{A martingale Expectation Inequality}
We recall the result used in \cite{ben-arous-guionnet:95}:
\begin{lemma}\label{lem:BG}
Consider \(B^{-n}, \cdots, B^{n}\) \(N\) independent Brownian motions and 
\(h^{-n}, \cdots, h^n\) \(N\) previsible processes 
such that $N^{-1}\sum_{j\in I_n}\big(h^j_t\big)^2 \leq 1$.
Then, for all $\varepsilon < 1/ (2\sqrt{T})$, we have
\[
\Exp\bigg[\exp\bigg\lbrace\frac{\varepsilon^2}{2N}\sum_{i,j\in I_n}\bigg(\int_0^T h^i_t dB^j_t \bigg)^2 \bigg\rbrace \bigg] \leq  (1-4\varepsilon^2 T)^{-N/4}.
\]
\end{lemma}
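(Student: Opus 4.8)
The plan is to recast the double sum as the squared Frobenius norm of a matrix-valued martingale and to dominate its exponential moment by an explicit exponential supermartingale built from the solution of a scalar Riccati equation.

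First I would set $V^{ij}_t=\int_0^t h^i_s\,dB^j_s$ for $i,j\in I_n$ and $S_t=\sum_{i,j\in I_n}\big(V^{ij}_t\big)^2$, noting that the hypothesis $N^{-1}\sum_{j}(h^j_t)^2\le 1$ forces $|h^i_t|\le\sqrt N$, so each $V^{ij}$ is a genuine square-integrable martingale and $S$ is well defined; the target quantity is $\Exp[\exp\{\tfrac{\varepsilon^2}{2N}S_T\}]$. By It\^o's formula and the independence of the $B^j$,
\begin{align*}
dS_t &= 2\sum_{j\in I_n}\Big(\sum_{i\in I_n}V^{ij}_t h^i_t\Big)dB^j_t + N\sum_{i\in I_n}(h^i_t)^2\,dt,\\
d\langle S\rangle_t &= 4\sum_{j\in I_n}\Big(\sum_{i\in I_n}V^{ij}_t h^i_t\Big)^2 dt .
\end{align*}
Two elementary bounds are then used: by Cauchy--Schwarz together with the hypothesis, $\sum_{j}\big(\sum_i V^{ij}_t h^i_t\big)^2\le\big(\sum_i(h^i_t)^2\big)\,S_t\le N\,S_t$, so $d\langle S\rangle_t\le 4N\,S_t\,dt$; and again by the hypothesis the drift coefficient satisfies $N\sum_i(h^i_t)^2\le N^2$.

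Next I would introduce $Z_t=\exp\{\lambda(t)S_t-\mu(t)\}$ for deterministic $\lambda,\mu$, so that $dZ_t=Z_t\big[\lambda(t)\,(\text{local martingale increment}) + D_t\,dt\big]$ where, thanks to the two bounds above, $D_t\le\big(\lambda'(t)+2N\lambda(t)^2\big)S_t+\big(N^2\lambda(t)-\mu'(t)\big)$. Choosing $\lambda$ to solve the Riccati equation $\lambda'=-2N\lambda^2$ with terminal value $\lambda(T)=\varepsilon^2/(2N)$ gives $\lambda(t)=\dfrac{\varepsilon^2}{2N\,(1-\varepsilon^2(T-t))}$, which is positive and finite on all of $[0,T]$ precisely because $\varepsilon^2T<1$ (the hypothesis $\varepsilon<1/(2\sqrt T)$ gives this with room to spare); choosing $\mu'=N^2\lambda$ with $\mu(0)=0$ makes $D_t\le 0$. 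Then $Z$ is a nonnegative local supermartingale, hence a supermartingale, with $Z_0=1$ since $V_0=0$, so $\Exp[Z_T]\le 1$, i.e. $\Exp[\exp\{\tfrac{\varepsilon^2}{2N}S_T\}]\le e^{\mu(T)}$. A direct integration, with the substitution $u=T-t$, yields $\mu(T)=N^2\int_0^T\lambda(t)\,dt=-\tfrac N2\log(1-\varepsilon^2T)$, hence $\Exp[\exp\{\tfrac{\varepsilon^2}{2N}S_T\}]\le(1-\varepsilon^2T)^{-N/2}$.

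Finally, to reach the form stated in the Lemma I would invoke the elementary inequality $(1-x)^2\ge 1-4x$ valid for $x\ge 0$, which for $x=\varepsilon^2T<1/4$ gives $(1-\varepsilon^2T)^{-N/2}\le(1-4\varepsilon^2T)^{-N/4}$, concluding the proof. The only genuinely delicate point is the bookkeeping around the Riccati ODE: ensuring $\lambda$ stays positive and bounded on the whole of $[0,T]$ (this is exactly where the smallness of $\varepsilon$ is consumed) and checking that integrating $\mu'=N^2\lambda$ produces no constant larger than claimed. Everything else is routine stochastic calculus, the one formal subtlety being that the local supermartingale $Z$ is automatically a true supermartingale because it is nonnegative.
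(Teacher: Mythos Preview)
Your argument is correct and takes a genuinely different route from the paper. The paper decomposes $e^{\alpha Y_t}$ as an exponential supermartingale times a remainder, applies Cauchy--Schwarz to the expectation, then uses Jensen's inequality twice to reach a nonlinear integral inequality $g(t)\le \tfrac{e^{\varepsilon^2 Nt}}{t}\int_0^t g(s)^{2\varepsilon^2 t}\,ds$ for $g(t)=\Exp[e^{\alpha Y_t}]^2$, which it closes via Bihari's lemma. Your approach instead builds a single time-inhomogeneous exponential supermartingale $\exp\{\lambda(t)S_t-\mu(t)\}$ and lets the Riccati equation $\lambda'=-2N\lambda^2$ absorb the quadratic variation bound $d\langle S\rangle_t\le 4NS_t\,dt$; this yields the sharper intermediate estimate $(1-\varepsilon^2 T)^{-N/2}$ directly, without Jensen or Bihari, and the final relaxation to $(1-4\varepsilon^2 T)^{-N/4}$ via $(1-x)^2\ge 1-4x$ is clean. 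What the paper's route buys is that it never needs the explicit Riccati solution and works with soft inequalities throughout; what your route buys is a shorter, more transparent proof with a tighter constant and a wider range of validity ($\varepsilon^2 T<1$ for the intermediate bound) before the last cosmetic step consumes the hypothesis $\varepsilon<1/(2\sqrt T)$.
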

\begin{proof}
Define $\alpha:=\frac{\varepsilon^2}{2N}$, $X^{ij}_t=\int_0^t h^i_s\,dB^j_s$, $S_t:=\sum_{i \in I_n}(h^i_t)^2$, and $Y_t:=\sum_{i,j \in I_n} (X^{ij}_t)^2$. Using It\^o's rule we obtain
\[
Y_t=2 \sum_{i,j \in I_n} \int_0^t h^i_s\left( \int_0^s h^i_u \,dB^j_u \right)\,dB^j_s+N\int_0^t S_u\,du.
\]
Define the martingale
\[
Z_t:=\sum_{i,j \in I_n} \int_0^t h^i_s\left( \int_0^s h^i_u \,dB^j_u \right)\,dB^j_s.
\]
Using the fact that $\langle B^j,\,B^l \rangle_t=\delta_{jl} t$, we have
\[
\langle Z,\,Z \rangle_t= \sum_{i,j,k \in I_n} \int_0^t h^i_s h^k_s \left( \int_0^s h^i_u\,dB^j_u \right) \left( \int_0^s h^k_u\,dB^j_u \right) \,ds.
\]
Apply Cauchy-Schwarz to obtain
\[
\sum_{i \in I_n} \left|   h^i_s\left( \int_0^s h^i_u \,dB^j_u \right) \right| \leq \left( \sum_{i \in I_n}(h^i_s)^2 \right)^{1/2}\,\left( \sum_{i \in I_n}\left( \int_0^s h^i_u\,dB^j_u \right)^2 \right)^{1/2},
\]
from which it follows that
\[ 
\langle Z,\,Z \rangle_t=| \langle Z,\,Z \rangle_t | \leq \int_0^t S_u\,Y_u\,du.
\]
Now we have
\[
e^{\alpha Y_t}=e^{2\alpha Z_t+\alpha N \int_0^t S_s\,ds}=e^{2\alpha Z_t - 4\alpha^2 \langle Z,\,Z \rangle_t} \times e^{4\alpha^2 \langle Z,\,Z \rangle_t+\alpha N\int_0^t S_u\,du}.
\]
Apply Cauchy-Schwarz again to obtain
\[
\Exp\left[ e^{\alpha Y_t} \right]^2 \leq \Exp \left[ e^{4\alpha Z_t - 8\alpha^2 \langle Z,\,Z \rangle_t} \right] \times \Exp \left[ e^{8\alpha^2 \int_0^t S_u\,Y_u\,du+2\alpha N\int_0^t S_u\,du} \right].
\]
By supermartingale properties, the first expected value in the right hand side of the previous inequality is bounded by \(1\), hence
\[
\Exp\left[ e^{\alpha Y_t} \right]^2 \leq \Exp \left[ e^{8\alpha^2 \int_0^t S_u\,Y_u\,du+2\alpha N\int_0^t S_u\,du} \right].
\]
Now use the fact that $S_u \leq N$ uniformly in $u$ to conclude
\[
\Exp\left[ e^{\alpha Y_t} \right]^2 \leq e^{2\alpha N^2  t} \, \Exp\left[ e^{8 \alpha^2 N \int_0^t Y_u \,du } \right]= e^{\varepsilon^2 N t} \, \Exp\left[ e^{4 \varepsilon^2 \alpha \int_0^t Y_u \,du } \right],
\]
then use Jensen's inequality to obtain
\[
\Exp\left[ e^{\alpha Y_t} \right]^2 \leq e^{\varepsilon^2 N t} \, \frac{1}{t}\int_0^t \Exp\left[ e^{4 \varepsilon^2 t \alpha  Y_u } \right]\,du.
\]
If $4 \varepsilon^2 t  < 1$ we can use again Jensen's inequality
\[
\Exp\left[ e^{\alpha Y_t} \right]^2 \leq e^{\varepsilon^2 N t} \, \frac{1}{t}\int_0^t \left( \Exp\left[ e^{ \alpha  Y_u } \right]\right)^{4 \varepsilon^2 t} \,du=\frac{e^{\varepsilon^2 N t}}{t} \, \int_0^t \left(\left( \Exp\left[ e^{ \alpha  Y_u } \right]\right)^{2}\right)^{2 \varepsilon^2 t} \,du.
\] 
Define $g(t):=\Exp\left[ e^{\alpha Y_t} \right]^2$, the above inequality reads
\[
g(t) \leq \frac{e^{\varepsilon^2 N t}}{t} \int_0^t (g(s))^{2 \varepsilon^2 t}\,ds.
\]
Since $4 \varepsilon^2 t  < 1$ implies $2 \varepsilon^2 t  < 1$ we can apply Bihari's Lemma \cite[Chap. 1, Th. 8.2]{mao:08} to obtain 
\[
\Exp \left[ e^{\alpha Y_t} \right] \leq \left( (1-2\varepsilon^2 t)e^{\varepsilon^2 t N} \right)^{\frac{1}{2(1-2\varepsilon^2 t)}} \leq e^{\frac{\varepsilon^2t}{2(1-2\varepsilon^2 t)}N},
\]
and, since $1-4\varepsilon^2t < 1-2\varepsilon^2t$, 
\[
\Exp \left[ e^{\alpha Y_t} \right] \leq e^{\frac{\varepsilon^2 t}{(1-4\varepsilon^2 t)}N}=e^{\frac{4\varepsilon^2 t}{(1-4\varepsilon^2 t)}N/4},
\]
and, since $-\frac{x}{1-x} > \log(1-x)$, $0 < x <1$
\begin{align*}
\Exp \left[ e^{\alpha Y_t} \right] & \leq e^{-\frac{N}{4} \log(1-4\varepsilon^2 t))}\\
\Exp \left[ e^{\alpha Y_T} \right] & \leq (1-4\varepsilon^2 T)^{-N/4}.
\end{align*}
\end{proof}
\section{Discrete Fourier Transforms (DFT) of Gaussian processes}
\label{app:DFTs}
Define $F_N:=e^{\frac{2\pi i}{N}}$. Let $a:=(a^j)_{j \in I_n}$ be an $N$-periodic complex sequence. Its DFT $\tilde{a}:=(\tilde{a}^p)_{p \in I_n}$ is defined by
\[
\tilde{a}^p = \sum_{j \in I_n} a^j F_N^{-jp},
\]
from which the original sequence can be recovered by the inverse DFT (IDFT)
\[
a^j = \frac{1}{N} \sum_{p \in I_n} \tilde{a}^p F_N^{jp}.
\]
We need two Lemmas about the DFT of $N$-periodic sequences defined on $I_n$. 
The first one is about the DFT of a translated sequence.
\begin{lemma}\label{lem:translation}
The DFT of the sequence $a_k:=(a^{j+k})_{j \in I_n}$, $k \in \Z$ is given by 
\[
DFT(a_k)^p=F_N^{kp}\, \tilde{a}^p
\]
\end{lemma}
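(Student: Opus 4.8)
The plan is to prove this directly from the definition of the length-$N$ DFT recalled above; it is the standard translation (shift) theorem for the DFT, so the argument is short and the only thing to watch is the re-indexing of the sum.

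First I would apply the definition to the translated sequence $a_k = (a^{j+k})_{j \in I_n}$: by definition,
\[
DFT(a_k)^p = \sum_{j \in I_n} a^{j+k} F_N^{-jp}.
\]
Next I would perform the change of summation index $j' := j+k$. As $j$ ranges over $I_n$, the new index $j'$ ranges over the shifted window $I_n + k = \{-n+k, \dots, n+k\}$. The key observation is that the summand $j' \mapsto a^{j'} F_N^{-(j'-k)p}$ is $N$-periodic in $j'$: indeed $a$ is $N$-periodic by hypothesis, and $F_N^{-j'p}$ depends only on $j' \bmod N$ since $F_N^N = 1$. Hence the sum over the window $I_n + k$ (a complete system of residues modulo $N$) equals the sum over $I_n$.

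This gives
\[
DFT(a_k)^p = \sum_{j' \in I_n} a^{j'} F_N^{-(j'-k)p} = F_N^{kp} \sum_{j' \in I_n} a^{j'} F_N^{-j'p} = F_N^{kp}\, \tilde a^p,
\]
the last equality being exactly the definition of $\tilde a^p$. There is no real obstacle here; the only step deserving a sentence of justification is the invariance of the sum under the shift of the summation window, which, as noted, follows from the $N$-periodicity of both $a$ and of $j \mapsto F_N^{-jp}$.
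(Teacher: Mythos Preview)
Your proof is correct and is exactly the standard change-of-index argument one would supply; the paper itself leaves this proof to the reader, so there is nothing to compare against.
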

\begin{proof}
The proof is left to the reader.
\end{proof}
The second Lemma is about the DFT of the convolution of two sequences. Let $(a^j)_{j \in I_n}$ and $(b^j)_{j \in I_n}$. We define their (circular or periodic) convolution as
\[
(a \star b)^j=\sum_{k \in I_n}a^k b^{j-k}=\sum_{k \in I_n}a^{j-k}b^k,
\]
where indexes are taken modulo $I_n$.
We have the Lemma.
\begin{lemma}\label{lem:convolution}
\[
DFT^{-1}(\tilde{a} \star \tilde{b})^j=N\, a^j b^j,
\]
and hence
\[
(\tilde{a} \star \tilde{b})^p=N \,DFT(ab)^p
\]
\end{lemma}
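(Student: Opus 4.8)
The plan is a direct computation in the ``time'' domain; the only nontrivial ingredient is the orthogonality relation for the $N$-th roots of unity: for every integer $\ell$,
\[
\sum_{j\in I_n} F_N^{j\ell} = N\,\mathbbm{1}\{\ell \equiv 0 \bmod N\}.
\]
This holds because $F_N^{j\ell}$ is $N$-periodic in $j$, so summing over the complete residue system $I_n$ equals $\sum_{j=0}^{N-1}(F_N^\ell)^j$, a finite geometric sum equal to $N$ when $F_N^\ell=1$ and to $0$ otherwise.

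Then I would compute, for $p\in I_n$, using the IDFT representations of $a^j$ and $b^j$,
\[
DFT(ab)^p = \sum_{j\in I_n} a^j b^j F_N^{-jp}
= \frac{1}{N^2}\sum_{j\in I_n}\Big(\sum_{q\in I_n}\tilde a^q F_N^{jq}\Big)\Big(\sum_{r\in I_n}\tilde b^r F_N^{jr}\Big)F_N^{-jp}.
\]
Exchanging the (finite) summations and applying the orthogonality relation to the inner sum $\sum_{j\in I_n}F_N^{j(q+r-p)}$ forces $r\equiv p-q \bmod N$ and collapses the triple sum to
\[
DFT(ab)^p = \frac{1}{N}\sum_{q\in I_n}\tilde a^q\,\tilde b^{p-q} = \frac{1}{N}\,(\tilde a\star \tilde b)^p,
\]
where $\tilde b^{p-q}$ is the $N$-periodic extension, consistently with the definition of the circular convolution $\star$.

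The two stated identities follow immediately: multiplying by $N$ gives $(\tilde a\star \tilde b)^p = N\,DFT(ab)^p$, and applying $DFT^{-1}$ to both sides together with $DFT^{-1}\circ DFT = \mathrm{Id}$ gives $DFT^{-1}(\tilde a\star \tilde b)^j = N\,a^j b^j$. There is no genuine obstacle here — this is the classical discrete convolution theorem in disguise — and the only point requiring (minor) care is the bookkeeping of indices modulo $I_n$, which is already the convention fixed when $\star$ was defined.
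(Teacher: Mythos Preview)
Your proof is correct and is the standard direct computation via the orthogonality of the $N$-th roots of unity. The paper itself does not give a proof of this lemma (it states ``The proof is left to the reader''), so there is no alternative approach to compare against.
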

\begin{proof}
The proof is left to the reader.
\end{proof}
We derive some properties of the Fourier transforms of the synaptic weights 
$(J^{ij}_n)_{i,j \in I_n}$ and the Gaussian processes $G^j_t$. We define 
$(\tilde{R}_\J(p,l))_{p,l \in I_n}$ to be the length $N$ DFT w.r.t to the first 
index of the sequence $(R_\J(k,l)_{k,l \in I_n})$, that \footnote{
There is no conflict with the definition \eqref{eq:RJtilde} since they are always used in different contexts.
}
\[
\tilde{R}_\J(p,l)=\sum_{k \in I_n} R_\J(k,l) W_N^{-kp}.
\]
 We first characterize the 
joint laws of the synaptic weights under $\gamma$.

\begin{lemma}\label{lem:Fourier coeffs uncorrelated}
Define
\[
\tilde{J}_n^{pk}:=\sum_{j \in I_n} J^{jk}_n F_N^{-jp},
\]
to be the DFT of the synaptic weights $J^{jk}$ w.r.t the first index. Their covariance is
\[ 
\Exp^\gamma\left[\tilde{J}_n^{pk}\tilde{J}_n^{ql}\right]=\left\{
\begin{array}{lcl}
\tilde{R}_\J(p,k-l \mod I_n) & \text{if} & p+q=0\\
0 & \text{otherwise} &\  
\end{array}
\right.
\]
\end{lemma}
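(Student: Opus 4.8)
The plan is a direct computation in the discrete Fourier domain; there is no genuine difficulty here, only index bookkeeping. I would begin by expanding the product of the two transforms,
\[
\tilde{J}_n^{pk}\tilde{J}_n^{ql}=\sum_{a,b\in I_n}J_n^{ak}J_n^{bl}\,F_N^{-ap}F_N^{-bq},
\]
taking $\Exp^\gamma$ inside this finite sum and applying the covariance formula \eqref{eq:cov} to the pair $\big(J_n^{ak},J_n^{bl}\big)$: in the notation of \eqref{eq:cov} the first weight carries indices $(a,k)$ and the second $(b,l)$, so $\Exp^\gamma\big[J_n^{ak}J_n^{bl}\big]=\frac1N R_\J\big((b-a)\bmod I_n,\,(l-k)\bmod I_n\big)$, and the second argument $(l-k)\bmod I_n$ does not depend on $a,b$.

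Next I would change summation variable to $m=(b-a)\bmod I_n$, so that $b\equiv a+m\pmod N$ and hence $F_N^{-bq}=F_N^{-(a+m)q}$ by $N$-periodicity of $F_N$. Summing over $(a,m)\in I_n^2$ the double sum factorises,
\[
\Exp^\gamma\big[\tilde{J}_n^{pk}\tilde{J}_n^{ql}\big]=\frac1N\Big(\sum_{m\in I_n}R_\J\big(m,(l-k)\bmod I_n\big)F_N^{-mq}\Big)\Big(\sum_{a\in I_n}F_N^{-a(p+q)}\Big),
\]
and the last factor equals $N$ if $p+q\equiv0\pmod N$ and $0$ otherwise. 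Since $p,q\in I_n=[-n\cdots n]$ and $N=2n+1$, the congruence $p+q\equiv0\pmod N$ holds if and only if $p+q=0$; this already gives the vanishing of the covariance for $p+q\neq0$.

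For $p+q=0$, i.e. $q=-p$, the surviving term is $\sum_{m\in I_n}R_\J\big(m,(l-k)\bmod I_n\big)F_N^{mp}$, and it remains to identify this with $\tilde{R}_\J(p,(k-l)\bmod I_n)=\sum_{m\in I_n}R_\J\big(m,(k-l)\bmod I_n\big)F_N^{-mp}$. Here I would invoke that $R_\J$, as the autocovariance of the real centered stationary process $(J^{ij})_{i,j\in\Z}$, is even: $R_\J(k,l)=R_\J(-k,-l)$. Substituting $m\mapsto-m$ and using this symmetry converts $\sum_m R_\J\big(m,(l-k)\bmod I_n\big)F_N^{mp}$ into $\sum_m R_\J\big(m,(k-l)\bmod I_n\big)F_N^{-mp}$, which is exactly $\tilde{R}_\J(p,(k-l)\bmod I_n)$. (Equivalently, since the weights are real, $\tilde{J}_n^{-p,l}=\overline{\tilde{J}_n^{p,l}}$, and evenness of $R_\J$ is precisely what makes $\overline{\tilde{R}_\J(p,l-k)}=\tilde{R}_\J(p,k-l)$.)

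The computation has no real obstacle; the only points that demand care are tracking faithfully which index difference occupies which slot of $R_\J$ (the row and column roles of the weight matrix are not interchangeable in \eqref{eq:cov}), the reduction of the mod-$N$ congruence $p+q\equiv0$ to the plain equality $p+q=0$ on $I_n$, and the appeal to the evenness of $R_\J$ in the last step.
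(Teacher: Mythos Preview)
Your proof is correct and follows essentially the same route as the paper: expand the product, insert the covariance \eqref{eq:cov}, exploit the orthogonality relation $\sum_{a\in I_n}F_N^{-a(p+q)}=N\delta_{p+q}$, and use the evenness of $R_\J$ to land on $\tilde{R}_\J(p,k-l)$. The only cosmetic difference is that the paper invokes the symmetry of $R_\J$ at the outset (flipping $R_\J(h-j,l-k)$ to $R_\J(j-h,k-l)$ before applying the translation lemma), whereas you factor first and apply evenness at the end.
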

\begin{proof}
By \eqref{eq:cov} and the symmetry of $R_\J$
\begin{multline*}
\Exp^\gamma\left[\tilde{J}_n^{pk}\tilde{J}_n^{ql}\right]=\sum_{j,h \in I_n} \Exp^\gamma \left[J_n^{jk}J_n^{hl}\right]F_N^{-jp}F_N^{-hq}=\frac{1}{N} \sum_{j,h \in I_n} R_{\J}(h-j,l-k)F_N^{-jp}F_N^{-hq}=\\
\frac{1}{N} \sum_{j,h \in I_n} R_{\J}(j-h,k-l)F_N^{-jp}F_N^{-hq}.
\end{multline*}
By Lemma \ref{lem:translation} we have
\[
\sum_{j \in I_n} R_{\J}(j-h,k-l)F_N^{-jp}=\tilde{R}_\J(p,k-l)F_N^{-hp},
\]
and, since $\sum_{h \in I_n} F_N^{-h(p+q)} = N\delta_{p+q}$,
\[
\frac{1}{N} \sum_{j,h \in I_n} R_{\J}(h-j,l-k)F_N^{-jp}F_N^{-hq}=\left\{
\begin{array}{lcl}
\tilde{R}_\J(p,k-l) & \text{if} & p+q=0\\
0 & \text{otherwise} &\  
\end{array}
\right.,
\]
\end{proof}
\begin{remark}
In the terminology of complex Gaussian vectors to be found, e.g. in \cite{gallager:13}, Lemma \ref{lem:Fourier coeffs uncorrelated} states the following. Consider the $N$ centered complex $N$-dimensional Gaussian vectors $\tilde{J}_n^p=(\tilde{J}_n^{pk})_{k \in I_n}$, $p \in I_n$. Note that the complex conjugate $\tilde{J}_n^{p *}$ of $\tilde{J}_n^p$ is $\tilde{J}_n^{-p}$, $p \in I_n$. If $p \neq 0$ $\tilde{J}_n^p$ is such that its pseudo-covariance matrix $\Exp^\gamma \left[ \tilde{J}_n^p \,^t \tilde{J}_n^p  \right] =0$ and its covariance matrix $\Exp^\gamma \left[ \tilde{J}_n^p \,^t \tilde{J}_n^{-p}  \right]$ is equal to the circulant matrix
$C^p_n:=(R_\J(p,k-l))_{k,l \in I_n}$. If $p=0$ $\tilde{J}_n^0$ is in effect real and its covariance and pseudo-covariance matrixes are both equal to $C^0$.
\end{remark}
\begin{remark}\label{rem:Cppositive}
Note that the covariance matrices 
$C^p_n=\Exp^\gamma \left[\tilde{J}_n^{p}\,\,^t\tilde{J}_n^{-p}\right]$, 
$p \in I_n$, are circulant Hermitian, i.e. $C^p_n=\,^tC^{p\,*}_n$, because 
$R_\J$ is even. They are positive definite because, being circulant, their 
eigenvalues are the values of the length $N$ DFT of the sequence 
$(\tilde{R}_\J(p,k))_{k \in I_n}$ which are positive because $R_\J$ is an 
autocorrelation function hence has a positive spectrum. 
Hypothesis 
\eqref{eq:bound on spectrum} guarantees that for $N$ large enough these 
eigenvalues are strictly positive, hence $C^p_n$ is invertible.
\end{remark}
\begin{remark}
Complex Gaussian calculus indicates that the probability density function under $\gamma$ of $\tilde{J}_n^p$, $p \neq 0$ is
\[
p_\gamma(\tilde{J}_n^p)=\frac{1}{\pi^N |det(C^p_n)|} \exp \left\{ -\frac{1}{2} \left[ \,^t\tilde{J}_n^{-p} \ \,^t\tilde{J}_n^p \right] 
\left[
\begin{array}{cc}
C^p_n & 0\\
0 & C^{p*}_n
\end{array}
\right]^{-1}
\left[
\begin{array}{c}
\tilde{J}_n^p\\
\tilde{J}_n^{-p}
\end{array}
\right]
 \right\},
\]
and, since $C^p_n$ is invertible (see Remark \ref{rem:Cppositive}),
\begin{equation}\label{eq:pgammaJp}
p_\gamma(\tilde{J}_n^p)=\frac{1}{\pi^N |det(C^p_n)|} \exp \left\{ -\frac{1}{2} \left[ \,^t\tilde{J}_n^{-p} \ \,^t\tilde{J}_n^p \right] 
\left[
\begin{array}{cc}
(C^p_n)^{-1} & 0\\
0 & (C^{p*}_n)^{-1}
\end{array}
\right]
\left[
\begin{array}{c}
\tilde{J}_n^p\\
\tilde{J}_n^{-p}
\end{array}
\right]
 \right\},
\end{equation}
\end{remark}
\begin{remark}\label{rem:JpJqindependent}
Note that Lemma \ref{lem:Fourier coeffs uncorrelated} implies that the complex centered Gaussian vectors $\tilde{J}_n^p$ and $\tilde{J}_n^q$ are independent under $\gamma$ if $p+q \neq 0$. Indeed, complex Gaussian calculus indicate that the four jointly Gaussian $N$-dimensional centered real vectors ${\rm Re}(\tilde{J}_n^p), {\rm Im}(\tilde{J}_n^p),{\rm Re}(\tilde{J}_n^q), {\rm Im}(\tilde{J}_n^q)$ are independent if $p+q \neq 0$.
\end{remark}
Given a Hermitian matrix $A$ of size $N$, we note  $\lambda_1(A) \geq \cdots \geq \lambda_N(A)$ its eigenvalues. 
As a consequence of Lemma \ref{lem:Fourier coeffs uncorrelated} we obtain a useful upper bound.
\begin{corollary}\label{cor:boundgamma}
For all $n\in \Z^+$, all $p \in I_n$ and all vectors $\zeta=(\zeta^j)_{j \in I_n}$ and $\xi=(\xi^j)_{j \in I_n}$ of $\R^N$,
\begin{align*}
\sup_{p\in I_n}\left| \sum_{j,k \in I_n}\Exp^\gamma \bigg[\tilde{J}_n^{pj}\tilde{J}_n^{- pk} \bigg]\zeta^j \xi^k \right|\leq a b   \norm{\zeta}_2 \norm{\xi}_2
\end{align*}
$a$ and $b$ are defined in \eqref{eq:ab}.
\end{corollary}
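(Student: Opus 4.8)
The plan is to combine Lemma~\ref{lem:Fourier coeffs uncorrelated}, which says that the off-diagonal Fourier blocks of the weights vanish, with the $\ell^1$-type bound \eqref{eq:RJdef} on $R_\J$ and an elementary convolution inequality. First I would apply Lemma~\ref{lem:Fourier coeffs uncorrelated} with its second index equal to $-p$: for every $p\in I_n$,
\[
\sum_{j,k\in I_n}\Exp^\gamma\big[\tilde J_n^{pj}\,\tilde J_n^{-pk}\big]\,\zeta^j\xi^k=\sum_{j,k\in I_n}\tilde R_\J\big(p,(j-k)\bmod I_n\big)\,\zeta^j\xi^k,
\]
so the problem reduces to bounding the right-hand side uniformly in $p$. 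Recalling $\tilde R_\J(p,l)=\sum_{m\in I_n}R_\J(m,l)F_N^{-mp}$ and using \eqref{eq:RJdef} together with $\sum_{m\in I_n}a_m\le a$ (from \eqref{eq:ab}), one gets the $p$-free bound $|\tilde R_\J(p,l)|\le\sum_{m\in I_n}|R_\J(m,l)|\le a\,b_l$ for $l\in I_n$. Hence the quantity to estimate is at most $a\sum_{j,k\in I_n}b_{(j-k)\bmod I_n}\,|\zeta^j|\,|\xi^k|$.

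The remaining double sum is a circular convolution, and I would bound it by Cauchy--Schwarz after splitting $b_{(j-k)\bmod I_n}=\sqrt{b_{(j-k)\bmod I_n}}\cdot\sqrt{b_{(j-k)\bmod I_n}}$ (the $b_l$ being nonnegative):
\[
\sum_{j,k\in I_n}b_{(j-k)\bmod I_n}\,|\zeta^j|\,|\xi^k|\le\Big(\sum_{j,k}b_{(j-k)\bmod I_n}|\zeta^j|^2\Big)^{1/2}\Big(\sum_{j,k}b_{(j-k)\bmod I_n}|\xi^k|^2\Big)^{1/2}.
\]
For fixed $j$ the map $k\mapsto(j-k)\bmod I_n$ is a bijection of $I_n$, so $\sum_{k\in I_n}b_{(j-k)\bmod I_n}=\sum_{l\in I_n}b_l\le b$, and symmetrically $\sum_{j\in I_n}b_{(j-k)\bmod I_n}\le b$; the two factors are therefore at most $b\norm{\zeta}_2^2$ and $b\norm{\xi}_2^2$. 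This gives $\sum_{j,k}b_{(j-k)\bmod I_n}|\zeta^j||\xi^k|\le b\norm{\zeta}_2\norm{\xi}_2$, and combining with the factor $a$ and taking the supremum over $p\in I_n$ yields the claim. Equivalently, one could invoke Young's convolution inequality, or note that a circulant matrix with symbol $(b_l)_{l\in I_n}$ has operator norm at most $\sum_l b_l$.

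I do not expect any genuine obstacle. The only point that needs a little care is the modular arithmetic: $R_\J$ inside $\tilde R_\J(p,l)$ must be understood as evaluated at representatives in $I_n$, so that \eqref{eq:RJdef} applies termwise; this matches the conventions of \eqref{eq:cov} and of the proof of Lemma~\ref{lem:Fourier coeffs uncorrelated}. Conceptually the corollary is immediate once one knows that the cross-blocks $\Exp^\gamma[\tilde J_n^{pj}\tilde J_n^{qk}]$ vanish for $p+q\ne0$, since then a single summability bound on $R_\J$, guaranteed by \eqref{eq:akbl}, controls the entire quadratic form.
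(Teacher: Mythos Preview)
Your proof is correct. Both arguments start from Lemma~\ref{lem:Fourier coeffs uncorrelated} and the entrywise bound $|\tilde R_\J(p,l)|\le a\,b_l$; they diverge only in the final estimate. The paper recognizes the bilinear form as ${}^t\zeta\,C^p_n\,\xi$ with $C^p_n$ circulant, and bounds $\|C^p_n\|_2$ by its largest eigenvalue, which as a DFT value of the row $(\tilde R_\J(p,k))_k$ is at most $\sum_k|\tilde R_\J(p,k)|\le ab$. You instead bound the entries first and then control the resulting nonnegative circulant form by the Cauchy--Schwarz/Schur argument, which is exactly the statement that a circulant matrix with nonnegative symbol $(b_l)$ has operator norm at most $\sum_l b_l$---as you yourself note at the end. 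Your route is slightly more elementary in that it bypasses the spectral description of circulant matrices; the paper's is a touch shorter once that description is available. Either way the content is the same.
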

\begin{proof}
According to Remark \ref{rem:Cppositive} we have
\[
\left| \sum_{j,k \in I_n}\Exp^\gamma \bigg[\tilde{J}_n^{pj}\tilde{J}_n^{- pk} \bigg]\zeta^j \xi^k \right| = \left| \ ^t \zeta C^p_n \xi \right| \leq \norm{C^p_n}_2 \norm{\zeta}_2 \norm{\xi}_2
\]
Next we have $\norm{C^p_n}_2 = \lambda_1(C^p_n)$,  where $\lambda_1(A)$ is the largest eigenvalue of the Hermitian matrix $A$.  By Remark \ref{rem:Cppositive} the eigenvalues of the circulant matrix $C^p_n$ are the values of the  DFT of the sequence $(\tilde{R}_\J(p,k))_{k\in I_n}$. According to \eqref{eq:RJdef} and \eqref{eq:ab} they are  all upperbounded in magnitude by $ab$, and so is $\norm{C^p_n}_2$.

\end{proof}
Let $(Z^j_t)$, $j \in I_n$ be an element of $\T^N$. We recall the definition of the centered Gaussian field $(G^j_t)$:
\[
G^j_t = \sum_{l\in I_n} J_n^{jl} f(Z^l_t).
\]
Taking the length $N$ DFT of the $I_n$-periodic sequence $(G^j_t)_{j \in I_n}$, we introduce the following $I_n$-periodic stationary sequence of centered complex Gaussian processes $(\tilde{G}^p)_{p \in I_n}$  
\begin{equation}\label{eq:Gtildep}
\tilde{G}^p_t = \sum_{l \in I_n} \tilde{J}_n^{pl} f(Z^l_t).
\end{equation}
We have the following independence result.
\begin{lemma}\label{lem:tildeGpGq}
If $p+q \neq 0$,  under $\gamma^{\emp(Z_n)}$, the centered complex Gaussian processes $(\tilde{G}^p)_t$ and $(\tilde{G}^q)_s$ are independent on $[0,T]$ and
\[
\Exp^{\gamma^{\emp(Z_n)}}\left[\tilde{G}^p_t\tilde{G}^q_s\right]=\left\{
\begin{array}{lcl}
\sum_{l,k \in I_n} \tilde{R}_\J(p,l-k)f(Z^l_t)f(Z^k_s) & \text{if} & p+q=0\\
0 & \text{otherwise} &
\end{array}
\right.
\]
\end{lemma}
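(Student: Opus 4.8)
The plan is to transport the Gaussian structure of the synaptic weights recorded in Lemma~\ref{lem:Fourier coeffs uncorrelated} and Remark~\ref{rem:JpJqindependent} through the (deterministic, for $Z_n$ fixed) linear map $\tilde J_n \mapsto \tilde G$ of \eqref{eq:Gtildep}. First I would reduce the statement ``under $\gamma^{\emp(Z_n)}$'' to one ``under $\gamma$ with $Z_n$ frozen'': for a deterministic $Z_n\in\T^N$ the centered Gaussian field $G^j_t=\sum_{l\in I_n}J^{jl}_nf(Z^l_t)$ has, under $\gamma$, covariance $K_{\emp(Z_n)}$ by \eqref{eq:covGiGk}, which by Remarks~\ref{rem:Gnit}--\ref{rem:Kmuk} is exactly the covariance of the field under $\gamma^{\emp(Z_n)}$; two centered Gaussian fields with the same covariance have the same law, and since $(\tilde G^p_t)_{p\in I_n}$ is the length-$N$ DFT of $(G^j_t)_{j\in I_n}$ it has, under $\gamma^{\emp(Z_n)}$, the same law as $\sum_{l\in I_n}\tilde J_n^{pl}f(Z^l_t)$ under $\gamma$.

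Next I would compute the (pseudo-)covariances directly from \eqref{eq:Gtildep}. By bilinearity,
\[
\Exp^\gamma\!\big[\tilde G^p_t\tilde G^q_s\big]=\sum_{l,k\in I_n}\Exp^\gamma\!\big[\tilde J_n^{pl}\tilde J_n^{qk}\big]\,f(Z^l_t)f(Z^k_s),
\]
and Lemma~\ref{lem:Fourier coeffs uncorrelated} annihilates every term unless $p+q\equiv 0$, in which case $\Exp^\gamma[\tilde J_n^{pl}\tilde J_n^{qk}]=\tilde R_\J(p,l-k\bmod I_n)$. This is precisely the displayed covariance formula, and in particular it gives $\Exp^\gamma[\tilde G^p_t\tilde G^q_s]=0$ whenever $p+q\neq 0$.

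For the independence assertion I would argue that, $Z_n$ being non-random under $\gamma^{\emp(Z_n)}$, each process $(\tilde G^p_t)_{t\in[0,T]}$ is a fixed linear function of the single complex Gaussian vector $\tilde J_n^p=(\tilde J_n^{pl})_{l\in I_n}$, and likewise $(\tilde G^q_s)_s$ of $\tilde J_n^q$; since by Remark~\ref{rem:JpJqindependent} the vectors $\tilde J_n^p$ and $\tilde J_n^q$ are independent when $p+q\neq 0$ (the two indices being distinct), independence is inherited by the two image processes. Alternatively, and without leaving the second-moment world, one notes that $(\tilde G^p_\cdot)_{p\in I_n}$ is a complex Gaussian family (image of the Gaussian family $(G^j_\cdot)_j$), that $\overline{\tilde G^q_s}=\tilde G^{-q}_s$ because $J_n$ and $f$ are real, so the cross-covariance $\Exp^\gamma[\tilde G^p_t\overline{\tilde G^q_s}]=\Exp^\gamma[\tilde G^p_t\tilde G^{-q}_s]$ and the cross-pseudo-covariance $\Exp^\gamma[\tilde G^p_t\tilde G^q_s]$ both vanish as soon as $p\neq\pm q$, which for jointly complex Gaussian families is equivalent to independence.

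This lemma is essentially a bookkeeping exercise; the only point requiring a little care is the complex Gaussian convention that independence of complex Gaussian vectors needs both the covariance and the pseudo-covariance to vanish --- hence the role of the conjugation identity $\overline{\tilde G^q_s}=\tilde G^{-q}_s$ and of the precise index condition. I do not anticipate a genuine obstacle.
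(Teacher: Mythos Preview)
Your proposal is correct and follows essentially the same route as the paper: represent $\tilde G^p_t$ as the deterministic linear image $\sum_{l\in I_n}\tilde J_n^{pl}f(Z^l_t)$ of the vector $\tilde J_n^p$, read off the covariance formula from Lemma~\ref{lem:Fourier coeffs uncorrelated}, and deduce independence from Remark~\ref{rem:JpJqindependent}. Your extra care in identifying $\gamma^{\emp(Z_n)}$ with $\gamma$ for frozen $Z_n$, and in noting that complex Gaussian independence requires both the covariance and the pseudo-covariance to vanish (via $\overline{\tilde G^q_s}=\tilde G^{-q}_s$), is more explicit than the paper but not a different method.
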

\begin{proof}
We write
\[
\tilde{G}^p_t = \sum_{l\in I_n} \tilde{J}_n^{pl} f(Z^l_t),\ \ \tilde{G}^q_s = \sum_{k \in I_n} \tilde{J}_n^{qk} f(Z^k_s),
\]
The independence under $\gamma^{\emp(Z_n)}$ follows from the independence under $\gamma$ of $\tilde{J}_n^p$ and $\tilde{J}_n^q$ if $p+q \neq 0$ proved in Remark \ref{rem:JpJqindependent}.
Moreover
\[
\Exp^{\gamma^{\emp(Z_n)}}\left[\tilde{G}^p_t\tilde{G}^q_s\right]= \sum_{l,k \in I_n} \Exp^\gamma \left[\tilde{J}_n^{pl} \tilde{J}_n^{qk} \right] f(Z^l_t)f(Z^k_s).
\]
The result follows from Lemma \ref{lem:Fourier coeffs uncorrelated}.
\end{proof}
We recall the expression \eqref{eq:defLambdat} for $\Lambda_t(G)$
\[
\Lambda_t(G):= \frac{\exp\left\{ -\frac{1}{2\sigma^2} \int_0^t \sum_{k \in I_n} \left( G^k_s\right)^2\,ds\right\} }{\Exp^{\gamma^{\hat{\mu}_n(Z_n)}} \left[ \exp\left\{ -\frac{1}{2\sigma^2} \int_0^t \sum_{k \in I_n} \left( G^k_s\right)^2\,ds\right\} \right]},
\]
and define
\begin{equation}\label{eq:Lambdapt}
\tilde{\Lambda}^p_t(\tilde{G}):= \frac{\exp\left\{ -\alpha_p \displaystyle\int_0^t  \left| \tilde{G}^p_s\right|^2\,ds\right\} }{\Exp^{\gamma^{\hat{\mu}_n(Z_n)}} \left[ \exp\left\{ -\alpha_p 
	\displaystyle\int_0^t  \left| \tilde{G}^p_s\right|^2\,ds\right\} \right]}\  p \in I_n,
\end{equation}
where $\alpha_0=\frac{1}{2\sigma^2 N}$, $\alpha_p=\frac{1}{\sigma^2 N}$, $p \neq 0$.

Define also $U_t^n$ to be the $N\times N$ symmetric positive semi-definite matrix with elements 
	$U^{n,jk}_t = \int_0^t f(Z^j_s)f(Z^k_s)ds$, $j,\,k \in I_n$.
\begin{lemma}\label{lem:Lambdatindep}
The $\tilde{\Lambda}^p_t(\tilde{G})$, $p \in I_n, p \geq 0$, are independent under $\gamma^{\emp(Z_n)}$ and
we have
\begin{equation}\label{eq:LambdaLambdap}
\Lambda_t(G)=\prod_{p \in I_n,\, p \geq 0} \tilde{\Lambda}^p_t(\tilde{G})
\end{equation}
\end{lemma}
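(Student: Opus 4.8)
The plan is to reduce the statement to Parseval's identity together with the (complex) Gaussian independence already recorded in Lemma~\ref{lem:tildeGpGq}. First I would rewrite the quadratic functional appearing in the exponent of $\Lambda_t(G)$ in the Fourier domain. Since $(G^j_s)_{j\in I_n}$ is a real $N$-periodic sequence, $\tilde G^{-p}_s=\overline{\tilde G^p_s}$, hence $|\tilde G^p_s|^2=|\tilde G^{-p}_s|^2$, and because $N=2n+1$ is odd the only self-conjugate mode is $p=0$. Parseval's theorem then gives
\[
\frac{1}{2\sigma^2}\int_0^t\sum_{k\in I_n}\big(G^k_s\big)^2\,ds
=\frac{1}{2\sigma^2N}\int_0^t\Big(|\tilde G^0_s|^2+2\sum_{p=1}^{n}|\tilde G^p_s|^2\Big)\,ds
=\sum_{p=0}^{n}\alpha_p\int_0^t|\tilde G^p_s|^2\,ds,
\]
with $\alpha_0=\tfrac{1}{2\sigma^2N}$ and $\alpha_p=\tfrac{1}{\sigma^2N}$ for $p\geq1$, exactly the constants of \eqref{eq:Lambdapt}. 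Consequently the numerator of $\Lambda_t(G)$ factorises as $\prod_{p=0}^{n}\exp\big\{-\alpha_p\int_0^t|\tilde G^p_s|^2\,ds\big\}$.

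The second, and main, step is to prove that the processes $(\tilde G^p_s)_{s\in[0,t]}$, $p=0,\dots,n$, are \emph{mutually} independent under $\gamma^{\emp(Z_n)}$. Under $\gamma^{\emp(Z_n)}$ the field $(G^j_s)$ is, by definition, centered Gaussian, so the real and imaginary parts of all the $\tilde G^p_s=\sum_j G^j_s F_N^{-jp}$ form a single jointly Gaussian family (the environment $Z_n$ being fixed, the coefficients $f(Z^l_s)$ are deterministic). For a jointly Gaussian family, pairwise independence implies mutual independence; and here any two distinct indices $p,q\in\{0,\dots,n\}$ satisfy $p+q\neq0$, so Lemma~\ref{lem:tildeGpGq}, together with Remark~\ref{rem:JpJqindependent} — which yields genuine independence of the real and imaginary components, not merely the vanishing of $\Exp^{\gamma^{\emp(Z_n)}}[\tilde G^p_t\tilde G^q_s]$ — gives the required pairwise, hence mutual, independence.

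Finally, each $\tilde\Lambda^p_t(\tilde G)$ is, by \eqref{eq:Lambdapt}, a fixed measurable function of the single process $(\tilde G^p_s)_{s\leq t}$, the denominator being a deterministic constant; so the family $\{\tilde\Lambda^p_t(\tilde G)\}_{p=0}^{n}$ inherits the mutual independence. In particular the expectation of the product of the factors $\exp\{-\alpha_p\int_0^t|\tilde G^p_s|^2\,ds\}$ equals the product of their expectations, i.e.
\[
\Exp^{\gamma^{\emp(Z_n)}}\!\Big[\exp\big\{-\tfrac{1}{2\sigma^2}\int_0^t\!\sum_{k\in I_n}(G^k_s)^2\,ds\big\}\Big]
=\prod_{p=0}^{n}\Exp^{\gamma^{\emp(Z_n)}}\!\Big[\exp\big\{-\alpha_p\!\int_0^t|\tilde G^p_s|^2\,ds\big\}\Big],
\]
and dividing the factorised numerator by this factorised denominator yields $\Lambda_t(G)=\prod_{p=0}^{n}\tilde\Lambda^p_t(\tilde G)$, which is \eqref{eq:LambdaLambdap}. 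The one point requiring care is exactly the passage from the decorrelation statement of Lemma~\ref{lem:tildeGpGq} to honest independence of the full family: this is where the joint Gaussianity of the real/imaginary parts under the fixed environment $Z_n$ is essential, and why Remark~\ref{rem:JpJqindependent} (rather than the covariance computation alone) must be invoked.
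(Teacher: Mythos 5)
Your proof is correct and follows essentially the same route as the paper's: Parseval's identity to decompose the quadratic exponent into Fourier modes indexed by $p=0,\dots,n$, then the independence of the Fourier modes of the synaptic weights (Remark~\ref{rem:JpJqindependent}, packaged as Lemma~\ref{lem:tildeGpGq}) to factorize the normalizing expectation and to transmit independence to the $\tilde{\Lambda}^p_t$. You are somewhat more explicit than the paper about the passage from pairwise to mutual independence via joint Gaussianity, a step the paper leaves implicit by citing Remark~\ref{rem:JpJqindependent} directly.
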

\begin{proof}
By Parseval's theorem
\[
\sum_{k \in I_n} \left( G^k_s\right)^2=\frac{1}{N} \sum_{p \in I_n} \left| \tilde{G}^p_s\right|^2,
\]
since the $G^k$s are real, $\tilde{G}^p_s=\tilde{G}^{-p\,*}_s$, and we have
\[
\sum_{k \in I_n} \left( G^k_s\right)^2=\frac{1}{N} \left| \tilde{G}^0_s\right|^2+\frac{2}{N} \sum_{p \in I_n, p > 0} \left| \tilde{G}^p_s\right|^2,
\]
so that
\[
-\frac{1}{2\sigma^2} \int_0^t \sum_{k \in I_n} \left( G^k_s\right)^2\,ds = -\sum_{p=0}^n \alpha_p \int_0^t \left| \tilde{G}^p_s\right|^2\,ds
\]
Note that
\[
 \int_0^t \left| \tilde{G}^p_s \right|^2\,ds = \,^t \tilde{J}_n^{-p} U_t^n \tilde{J}_n^p,
\]
implying that
\[
\exp\left\{ -\frac{1}{2\sigma^2} \int_0^t \sum_{k \in I_n} \left( G^k_s\right)^2\,ds\right\}=\prod_{p=0}^n \exp\left\{-\alpha_p  \,^t \tilde{J}_n^{-p} U_t^n \tilde{J}_n^p \right\},
\]
and hence
\[
\Exp^{\gamma^{\hat{\mu}_n(Z_n)}} \left[ \exp\left\{ -\frac{1}{2\sigma^2} \int_0^t \sum_{k \in I_n} \left( G^k_s\right)^2\,ds\right\} \right] =
\Exp^\gamma \left[ \prod_{p=0}^n \exp\left\{-\alpha_p  \,^t \tilde{J}_n^{-p} U_t^n \tilde{J}_n^p \right\}  \right]
\]
Because of the independence under $\gamma$, proved in Remark \ref{rem:JpJqindependent}, of $\tilde{J}_n^p$ and $\tilde{J}_n^q$ if $p+q \neq 0$, we have
\begin{align*}
\Exp^\gamma \left[ \prod_{p=0}^n \exp\left\{-\alpha_p  \,^t \tilde{J}_n^{-p} U_t^n \tilde{J}_n^p \right\}  \right]= \prod_{p=0}^n \Exp^\gamma \left[  \exp\left\{-\alpha_p  \,^t \tilde{J}_n^{-p} U_t^n \tilde{J}_n^p \right\}  \right] \\
= \prod_{p=0}^n \Exp^{\gamma^{\hat{\mu}_n(Z_n)}} \left[ \exp\left\{ -\frac{1}{N\sigma^2} \int_0^t  \left| \tilde{G}^p_s\right|^2\,ds\right\} \right],
\end{align*}
and \eqref{eq:LambdaLambdap} follows.

The independence under $\gamma^{\hat{\mu}_n(Z_n)}$ of the $\tilde{\Lambda}^p_t(\tilde{G})$, $p=0,\cdots,n$, follows from the independence under $\gamma$, proved in Remark \ref{rem:JpJqindependent}, of $\tilde{J}_n^p$ and $\tilde{J}_n^q$ if $p+q \neq 0$. This concludes the proof of the Lemma.
\end{proof}
We next characterize the law of  $(\tilde{J}_n^{p}, p \in I_n)$ under the law $\bar{\gamma}^{\emp(Z_n)}_t=\Lambda_t(G) \cdot \gamma^{\emp(Z_n)}$. 
\begin{proposition}\label{prop:gamma tilde p t}
For any $Z_n$ in $\T^N$, any $p,\,q \in I_n$, $p+q \neq 0$, $\tilde{J}_n^p$ and $\tilde{J}_n^q$ are, under $\bar{\gamma}^{\emp(Z_n)}_t$ independent centered complex Gaussian vectors. The covariance of $\tilde{J}_n^p$ under $\bar{\gamma}^{\emp(Z_n)}_t$ is given by
\[
\Exp^{\bar{\gamma}^{\emp(Z_n)}_t} \left[ \tilde{J}_n^{-p} \,^t\tilde{J}_n^p \right]=((C^p_n)^{-1}+\alpha_p U_t^n)^{-1}
\]
\end{proposition}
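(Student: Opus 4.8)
The plan is to read off the claim from the factorisation of the tilting density established in Lemma~\ref{lem:Lambdatindep} combined with the block--independence of the Fourier coefficients of the weights. Recall that $\Lambda_t(G)=\prod_{p=0}^{n}\tilde\Lambda^{p}_t(\tilde G)$ and that, by the identity $\int_0^t|\tilde G^p_s|^2\,ds={}^t\tilde J_n^{-p}U_t^n\tilde J_n^p$ exploited in the proof of Lemma~\ref{lem:Lambdatindep}, each factor $\tilde\Lambda^{p}_t(\tilde G)$ of \eqref{eq:Lambdapt} is a normalised function of the single complex vector $\tilde J_n^p$ alone (since $\tilde J_n^{-p}=\overline{\tilde J_n^p}$). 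On the other hand, by Remark~\ref{rem:JpJqindependent}, the vectors $\tilde J_n^0,\dots,\tilde J_n^n$ are mutually independent under $\gamma$, hence also under $\gamma^{\emp(Z_n)}$ (this transfer is the one already used at the end of the proof of Lemma~\ref{lem:Lambdatindep}). Therefore the density of $\bar\gamma^{\emp(Z_n)}_t=\Lambda_t(G)\cdot\gamma^{\emp(Z_n)}$ with respect to $\gamma^{\emp(Z_n)}$ factorises over the blocks $p=0,\dots,n$, so $\tilde J_n^0,\dots,\tilde J_n^n$ remain mutually independent under $\bar\gamma^{\emp(Z_n)}_t$; in particular $\tilde J_n^p$ and $\tilde J_n^q$ are independent whenever $|p|\neq|q|$, i.e. whenever $p+q\neq0$ and $p\neq q$.

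I would then identify each block law. Fix $p\neq 0$. By \eqref{eq:pgammaJp}, the $\gamma^{\emp(Z_n)}$--density of $\tilde J_n^p$ is proportional to $\exp\{-\tfrac12({}^t\tilde J_n^{-p}(C^p_n)^{-1}\tilde J_n^p+{}^t\tilde J_n^p(C^{p*}_n)^{-1}\tilde J_n^{-p})\}$. Multiplying by $\tilde\Lambda^p_t(\tilde G)\propto\exp\{-\alpha_p\,{}^t\tilde J_n^{-p}U_t^n\tilde J_n^p\}$ and using that $U_t^n$ is real and symmetric --- so that ${}^t\tilde J_n^{-p}U_t^n\tilde J_n^p$ is real and equals its conjugate ${}^t\tilde J_n^p U_t^n\tilde J_n^{-p}$ --- the exponent of the tilted density becomes
\[
-\tfrac12\Big({}^t\tilde J_n^{-p}\big[(C^p_n)^{-1}+\alpha_p U_t^n\big]\tilde J_n^p+{}^t\tilde J_n^p\big[(C^{p*}_n)^{-1}+\alpha_p U_t^n\big]\tilde J_n^{-p}\Big),
\]
which is again of the form \eqref{eq:pgammaJp} with $C^p_n$ replaced by $\big((C^p_n)^{-1}+\alpha_p U_t^n\big)^{-1}$; here $(C^p_n)^{-1}+\alpha_p U_t^n$ is Hermitian positive definite, since $(C^p_n)^{-1}$ is (Remark~\ref{rem:Cppositive}) and $\alpha_p U_t^n$ is positive semidefinite, so the inverse exists. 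Hence under $\bar\gamma^{\emp(Z_n)}_t$ the vector $\tilde J_n^p$ is a centred complex Gaussian vector with vanishing pseudo--covariance and $\Exp^{\bar\gamma^{\emp(Z_n)}_t}\big[\tilde J_n^{-p}\,{}^t\tilde J_n^p\big]=\big((C^p_n)^{-1}+\alpha_p U_t^n\big)^{-1}$. The case $p=0$ is treated in exactly the same way, starting from the real centred Gaussian density of $\tilde J_n^0$ with covariance $C^0_n$ and tilting by $\tilde\Lambda^0_t(\tilde G)\propto\exp\{-\alpha_0\,{}^t\tilde J_n^0U_t^n\tilde J_n^0\}$.

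The argument is essentially bookkeeping once Lemma~\ref{lem:Lambdatindep} is granted, and I expect the one delicate point to be the complex Gaussian calculus. One must check that each $\tilde\Lambda^p_t$ enters only through the Hermitian quadratic form $\tilde J_n^p\mapsto {}^t\tilde J_n^{-p}U_t^n\tilde J_n^p$, that tilting by the exponential of (minus) this form preserves the proper (circularly symmetric) structure of the complex Gaussian and simply adds $\alpha_p U_t^n$ to the precision matrix rather than producing a nonzero pseudo--covariance, and that the constants $\alpha_p$ and the normalisations match between \eqref{eq:Lambdapt} and the stated formula. The realness and symmetry of $U_t^n$ (Lemma~\ref{lem:Lambdatindep}) is precisely what makes these verifications go through.
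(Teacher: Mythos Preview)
Your proposal is correct and follows essentially the same approach as the paper's proof: factorise $\Lambda_t(G)$ over the Fourier blocks via Lemma~\ref{lem:Lambdatindep}, use the independence of the $\tilde J_n^p$ under $\gamma$ (Remark~\ref{rem:JpJqindependent}) to obtain the product form of the tilted density, and then read off the new covariance by combining the Gaussian density \eqref{eq:pgammaJp} with the quadratic tilt $\exp\{-\alpha_p\,{}^t\tilde J_n^{-p}U_t^n\tilde J_n^p\}$. Your write-up is in fact slightly more careful than the paper's in two respects: you make explicit that the realness and symmetry of $U_t^n$ are what keep the pseudo-covariance zero, and you note that the factorisation only yields independence for $|p|\neq|q|$ (the case $p=q\neq 0$ being vacuous).
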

\begin{proof}
By Lemma \ref{lem:Lambdatindep} and Remark \ref{rem:JpJqindependent} we write
\[
p_{\bar{\gamma}^{\emp(Z_n)}_t}(\tilde{J}_n^p,\tilde{J}_n^q)=p_\gamma(\tilde{J}_n^p,\tilde{J}_n^q) \tilde{\Lambda}^p_t(\tilde{G})\tilde{\Lambda}^q_t(\tilde{G})=
p_\gamma(\tilde{J}_n^p) \tilde{\Lambda}^p_t(\tilde{G}) \times p_\gamma(\tilde{J}_n^q) \tilde{\Lambda}^q_t(\tilde{G}),
\]
and the independence follows.

Next we have
\[
\tilde{\Lambda}^p_t(\tilde{G}) = \frac{\exp \left\{ -\alpha_p \,^t \tilde{J}_n^{-p} U_t^n \tilde{J}_n^p  \right\}  }{ \Exp^\gamma \left[ \exp \left\{ -\alpha_p \,^t \tilde{J}_n^{-p} U_t^n \tilde{J}_n^p  \right\}   \right]  },
\]
and since $\alpha_p$ and $U_t^n$ are real and $U_t^n$ is symmetric
\[
\tilde{\Lambda}^p_t(\tilde{G}) = \frac{\exp \left\{  -\frac{\alpha_p}{2} \left[ \,^t\tilde{J}_n^{-p} \ \,^t\tilde{J}_n^p \right] 
\left[
\begin{array}{cc}
U_t^n & 0\\
0 & U_t^n
\end{array}
\right]
\left[
\begin{array}{c}
\tilde{J}_n^p\\
\tilde{J}_n^{-p}
\end{array}
\right]  \right\}  }{ \Exp^\gamma \left[ \exp \left\{ -\alpha_p \,^t \tilde{J}_n^{-p} U_t^n \tilde{J}_n^p  \right\}   \right]  }.
\]
Combining this equation with \eqref{eq:pgammaJp}, we write
\begin{multline*}
p_\gamma(\tilde{J}_n^p) \tilde{\Lambda}^p_t = 
\frac{1}{\pi^N |det(C^p_n)|\Exp^\gamma \left[ \exp \left\{ -\alpha_p \,^t \tilde{J}_n^{-p} U_t^n \tilde{J}_n^p  \right\}   \right] }\\
\times \exp \left\{ -\frac{1}{2} \left[ \,^t\tilde{J}_n^{-p} \ \,^t\tilde{J}_n^p \right] 
\left[
\begin{array}{cc}
(C^p_n)^{-1}+\alpha_p U_t^n & 0\\
0 & (C^{p*}_n)^{-1}+\alpha_p U_t^n
\end{array}
\right]
\left[
\begin{array}{c}
\tilde{J}_n^p\\
\tilde{J}_n^{-p}
\end{array}
\right]
 \right\},
\end{multline*}
which shows that, under $\bar{\gamma}^{\emp(Z_n)}_t$, $\tilde{J}_n^p$ is centered complex Gaussian with covariance $((C^p_n)^{-1}+\alpha_p U_t^n)^{-1}$
\end{proof}
\begin{corollary}\label{cor:tilde Gp independent tilde Gq}
The centered processes $\tilde{G}^p_t$ and $\tilde{G}^q_s$, $p, q \in I_n$ are still Gaussian and independent under $\bar{\gamma}^{\emp(Z_n)}_t$ for all $s \leq t$ except for $p+q=0$. 
Moreover
\[
\Exp^{\bar{\gamma}^{\emp(Z_n)}_t} \left[ \tilde{G}^p_t \tilde{G}^{-p}_s   \right]=\Exp^{\gamma^{\emp(Z_n)}} \left[ \tilde{\Lambda}^{|p|}_t(\tilde{G})  \tilde{G}^p_t \tilde{G}^{-p}_s \right].
\]
\end{corollary}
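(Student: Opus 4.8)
The plan is to reduce everything to Proposition~\ref{prop:gamma tilde p t} and Lemma~\ref{lem:Lambdatindep}, exploiting that, for a fixed $Z_n\in\T^N$, $\tilde{G}^p$ is a \emph{deterministic} linear functional of the single complex Gaussian vector $\tilde{J}_n^p=(\tilde{J}_n^{pl})_{l\in I_n}$, namely $\tilde{G}^p_t=\sum_{l\in I_n}\tilde{J}_n^{pl}f(Z^l_t)$ by \eqref{eq:Gtildep} (the expectations bear on the weights only, $Z_n$ being frozen).

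\textbf{Gaussianity and independence.} First I would record that, by Proposition~\ref{prop:gamma tilde p t}, under $\bar{\gamma}^{\emp(Z_n)}_t$ each $\tilde{J}_n^p$ is a centered complex Gaussian vector, that $\tilde{J}_n^{-p}=\overline{\tilde{J}_n^{p}}$ since $J_n$ is real, and that $\{\tilde{J}_n^p:\ p=0,\dots,n\}$ is an independent family (for $0\le p\ne q\le n$ one has $p+q\ne 0$). A finite family of independent complex Gaussian vectors is jointly complex Gaussian, and adjoining the deterministic functions $\tilde{J}_n^{-p}=\overline{\tilde{J}_n^p}$ preserves this, so $(\tilde{J}_n^p)_{p\in I_n}$ is jointly complex Gaussian under $\bar{\gamma}^{\emp(Z_n)}_t$. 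Applying to it the deterministic linear maps $\tilde{J}_n^p\mapsto\tilde{G}^p$ shows that the processes $(\tilde{G}^p)_{p\in I_n}$ are jointly Gaussian, hence each $\tilde{G}^p$ is Gaussian; and for $p+q\ne 0$, $\tilde{G}^p$ and $\tilde{G}^q$ are independent because they are measurable functions of the independent vectors $\tilde{J}_n^p$ and $\tilde{J}_n^q$.

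\textbf{The moment identity.} For the second claim I would start from the definition $\bar{\gamma}^{\emp(Z_n)}_t=\Lambda_t(G)\cdot\gamma^{\emp(Z_n)}$, so that
\[
\Exp^{\bar{\gamma}^{\emp(Z_n)}_t}\!\left[\tilde{G}^p_t\tilde{G}^{-p}_s\right]=\Exp^{\gamma^{\emp(Z_n)}}\!\left[\Lambda_t(G)\,\tilde{G}^p_t\tilde{G}^{-p}_s\right],
\]
and then insert the factorisation $\Lambda_t(G)=\prod_{p'\in I_n,\,p'\ge 0}\tilde{\Lambda}^{p'}_t(\tilde{G})$ of Lemma~\ref{lem:Lambdatindep}. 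Both $\tilde{\Lambda}^{|p|}_t(\tilde{G})$ and $\tilde{G}^p_t\tilde{G}^{-p}_s$ are measurable with respect to $\tilde{J}_n^{|p|}$ (using $\tilde{J}_n^{-p}=\overline{\tilde{J}_n^p}$ and the form \eqref{eq:Lambdapt}), whereas each remaining factor $\tilde{\Lambda}^{p'}_t(\tilde{G})$ with $p'\ge 0$, $p'\ne|p|$, is measurable with respect to $\tilde{J}_n^{p'}$ and hence independent of them under $\gamma^{\emp(Z_n)}$, because $\tilde{J}_n^{|p|}$ and $\tilde{J}_n^{p'}$ are independent when $|p|+p'\ne 0$. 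Factorising the expectation and using the normalisation $\Exp^{\gamma^{\emp(Z_n)}}\big[\tilde{\Lambda}^{p'}_t(\tilde{G})\big]=1$ built into \eqref{eq:Lambdapt}, all factors with $p'\ne|p|$ drop out, leaving $\Exp^{\gamma^{\emp(Z_n)}}\big[\tilde{\Lambda}^{|p|}_t(\tilde{G})\,\tilde{G}^p_t\tilde{G}^{-p}_s\big]$, which is the asserted formula.

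\textbf{Main obstacle.} There is no deep difficulty here; the only point needing care is the measurability and independence bookkeeping in the Fourier domain — keeping track of which $\tilde{J}_n^{p'}$ each factor depends on, and being consistent about real versus complex conjugation, so that the independent family is correctly indexed by $p'\ge 0$ rather than by all of $I_n$. Everything else is immediate from Proposition~\ref{prop:gamma tilde p t} and Lemma~\ref{lem:Lambdatindep}.
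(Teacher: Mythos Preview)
Your proposal is correct and follows essentially the same route as the paper: both arguments rest on Proposition~\ref{prop:gamma tilde p t} (the $\tilde{J}_n^p$ are centered complex Gaussian and independent for $p+q\ne 0$ under $\bar{\gamma}^{\emp(Z_n)}_t$) together with the factorisation $\Lambda_t(G)=\prod_{p'\ge 0}\tilde{\Lambda}^{p'}_t(\tilde{G})$ of Lemma~\ref{lem:Lambdatindep}, and the moment identity is obtained in both cases by peeling off the independent normalised factors $\tilde{\Lambda}^{p'}_t$ with $p'\ne|p|$. The only cosmetic difference is that the paper establishes Gaussianity by invoking Lemma~\ref{lem:Lmun} and then deduces independence by checking that both $\Exp^{\bar{\gamma}}[\tilde{G}^p_t\tilde{G}^q_s]$ and $\Exp^{\bar{\gamma}}[\tilde{G}^p_t\tilde{G}^{-q}_s]$ vanish, whereas you go directly through the $\tilde{J}_n^p$ and read off independence from measurability; your route is slightly more economical but neither approach adds anything the other lacks.
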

\begin{proof}
By Lemma \ref{lem:Lmun} the process $(G^k_t)_{k \in I_n, t \in [0,T]}$ is Gaussian centered under $\bar{\gamma}_t^{\emp(Z_n)}$ and therefore so is the process $(\tilde{G}^p_t)_{p \in I_n, t \in [0,T]}$.
By Lemma \ref{lem:Lambdatindep}
\begin{align*}
\Exp^{\bar{\gamma}^{\emp(Z_n)}_t} \left[ \tilde{G}^p_t \tilde{G}^q_s   \right] &= \Exp^{\gamma^{\emp(Z_n)}} \left[ \Lambda_t(G) \tilde{G}^p_t \tilde{G}^q_s  \right]=
\Exp^{\gamma^{\emp(Z_n)}} \left[ \tilde{G}^p_t \tilde{G}^q_s \prod_{r=0}^n \tilde{\Lambda}^r_t(\tilde{G}) \right] \\
&= \Exp^{\gamma^{\emp(Z_n)}} \left[ \tilde{G}^p_t \tilde{\Lambda}^{|p|}_t(\tilde{G})   \tilde{G}^q_t \tilde{\Lambda}^{|q|}_t(\tilde{G}) \right]
\end{align*}
By rewriting the last term in the right hand side of the previous equation as a function of $\tilde{J}_n^p$ and $\tilde{J}_n^q$ and applying Proposition~\ref{prop:gamma tilde p t} one finds that if $p+q \neq 0$
\[
\Exp^{\bar{\gamma}^{\emp(Z_n)}_t} \left[ \tilde{G}^p_t \tilde{G}^q_s   \right] = \Exp^{\gamma^{\emp(Z_n)}} \left[ \tilde{G}^p_t \tilde{\Lambda}^{|p|}_t(\tilde{G}) \right] 
\Exp^{\gamma^{\emp(Z_n)}} \left[ \tilde{G}^q_t \tilde{\Lambda}^{|q|}_t(\tilde{G}) \right] =0.
\]
Therefore, for all $p, q \in I_n$, $p+q \neq 0$
\[
\Exp^{\bar{\gamma}^{\emp(Z_n)}_t} \left[ \tilde{G}^p_t \tilde{G}^q_s   \right]=\Exp^{\bar{\gamma}^{\emp(Z_n)}_t} \left[ \tilde{G}^p_t \tilde{G}^{-q}_s   \right]=0.
\]
This implies that the four real and imaginary parts of $\tilde{G}^p_t$ and $\tilde{G}^q_s$ are uncorrelated and therefore, being Gaussian, independent.
If $p+q=0$
\[
\Exp^{\bar{\gamma}^{\emp(Z_n)}_t} \left[ \tilde{G}^p_t \tilde{G}^{-p}_s   \right]=\Exp^{\gamma^{\emp(Z_n)}} \left[ \Lambda_t(G) \tilde{G}^p_t \tilde{G}^{-p}_s  \right],
\]
and by Proposition~\ref{prop:gamma tilde p t}
\[
\Exp^{\bar{\gamma}^{\emp(Z_n)}_t} \left[ \tilde{G}^p_t \tilde{G}^{-p}_s   \right]=\Exp^{\gamma^{\emp(Z_n)}} \left[ \tilde{\Lambda}^{|p|}_t(\tilde{G})  \tilde{G}^p_t \tilde{G}^{-p}_s \right]
\]
for all $p \in I_n$ and all $0 \leq s \leq t \leq T$.
\end{proof}
\begin{remark}\label{rem:upeigen}
Note that since $C^p_n$ is Hermitian positive definite, it is invertible and its inverse is also Hermitian positive definite. $U_t^n$ is real symmetric positive hence also Hermitian positive.
The sum $(C^p_n)^{-1}+\alpha_p U_t^n$ is therefore Hermitian positive. The dual Weyl inequality \cite{tao:12} commands that
\[
\lambda_{i+j-N}((C^p_n)^{-1}+\alpha_p U_t^n) \geq \lambda_i((C^p_n)^{-1})+\lambda_j(\alpha_p U_t^n)
\]
whenever $1 \leq i,\,j$, $i+j-N \leq N$. Since $(C^p_n)^{-1}$ is Hermitian positive definite for $N$ large enough, and $\alpha_p U_t^n$ is Hermitian positive, this inequality implies that $\lambda_N((C^p_n)^{-1}+\alpha_p U_t^n)  > 0$ and hence that $(C^p_n)^{-1}+\alpha_p U_t^n$ is invertible. 

Next we have 
\[
\lambda_1(((C^p_n)^{-1}+\alpha_p U_t^n)^{-1})=\frac{1}{\lambda_N((C^p_n)^{-1}+\alpha_p U_t^n)} \leq \frac{1}{\lambda_i((C^p_n)^{-1})+\lambda_j(\alpha_p U_t^n)},
\]
for $1 \leq i,\,j \leq N$ and $i+j=N$.
Since $\lambda_j(\alpha_p U_t^n) \geq 0$ for $j=1,\cdots,N$ and $\lambda_i((C^p_n)^{-1}) \geq \lambda_N((C^p_n)^{-1})  = \lambda_1(C^p_n)^{-1} > 0$ for all $i=1,\cdots,N$ we conclude that
\begin{equation}\label{eq:CJ}
\lambda_1(((C^p_n)^{-1}+\alpha_p U_t^n)^{-1}) \leq \frac{1}{\lambda_1(C^p_n)} \leq C_\J
\end{equation}
for some positive constant $C_\J$ independent of $N$ and $p$.
\end{remark}
In several places we use the following Lemma.
\begin{lemma}\label{Lemma Fourier Bound on R Jn}
For all $n\in \Z^+$ and $Z \in \T^N$, and all vectors $\zeta=(\zeta^j)_{j \in I_n}$ and $\xi=(\xi^j)_{j \in I_n}$ of $\R^N$,
\begin{align*}
\sup_{p\in I_n}\left| \sum_{j,k \in I_n}\Exp^{\gamma^{\emp(Z_n)}}\bigg[\tilde{\Lambda}^p_t(\tilde{G})\tilde{J}_n^{pj}\tilde{J}_n^{- pk} \bigg]\zeta^j \xi^k \right|\leq C_\J   \norm{\zeta}_2 \norm{\xi}_2
\end{align*}
where $C_\J$ is defined in \eqref{eq:CJ}.
$\tilde{\Lambda}^p_t(\tilde{G})$ is defined by \eqref{eq:Lambdapt}, $\tilde{G}^p_t = \sum_{l\in I_n} \tilde{J}_n^{pl} f(Z^l_t)$, and $\norm{\,}_2$ is the usual Euclidean norm.
\end{lemma}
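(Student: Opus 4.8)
The plan is to recognize the weighted second moment appearing in the statement as a bilinear form ${}^{t}\zeta\,B^p\,\xi$, where $B^p$ is the covariance matrix of the Fourier coefficient vector $\tilde{J}_n^{p}$ under the tilted measure $\bar{\gamma}^{\emp(Z_n)}_t$ of Proposition~\ref{prop:gamma tilde p t}, and then to bound this form by $\norm{B^p}_2\norm{\zeta}_2\norm{\xi}_2$, controlling $\norm{B^p}_2$ uniformly in $n$ and $p$ via the eigenvalue estimate \eqref{eq:CJ}. In short, the lemma is the $\Lambda$-tilted counterpart of Corollary~\ref{cor:boundgamma}: one replaces $\gamma$ by $\bar{\gamma}^{\emp(Z_n)}_t$, Lemma~\ref{lem:Fourier coeffs uncorrelated} by Proposition~\ref{prop:gamma tilde p t}, and the bound $\norm{C^p_n}_2 \le ab$ by $\lambda_1\big(((C^p_n)^{-1}+\alpha_p U_t^n)^{-1}\big) \le C_\J$ of Remark~\ref{rem:upeigen}.

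The first step is to fix $p\in I_n$ and to show that
\[
\Exp^{\gamma^{\emp(Z_n)}}\big[\tilde{\Lambda}^p_t(\tilde{G})\,\tilde{J}_n^{pj}\tilde{J}_n^{-pk}\big] \;=\; \Exp^{\bar{\gamma}^{\emp(Z_n)}_t}\big[\tilde{J}_n^{pj}\tilde{J}_n^{-pk}\big]\qquad (j,k\in I_n).
\]
Because $G$ is real one has $\tilde{\Lambda}^p_t(\tilde{G})=\tilde{\Lambda}^{|p|}_t(\tilde{G})$ and $\overline{\tilde{J}_n^{|p|}}=\tilde{J}_n^{-|p|}$, so both $\tilde{\Lambda}^p_t(\tilde{G})$ and $\tilde{J}_n^{pj}\tilde{J}_n^{-pk}$ are functions of the single vector $\tilde{J}_n^{|p|}$. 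By Lemma~\ref{lem:Lambdatindep}, $\Lambda_t(G)=\prod_{q\in I_n,\,q\ge 0}\tilde{\Lambda}^q_t(\tilde{G})$ with $\gamma^{\emp(Z_n)}$-independent factors, each of $\gamma^{\emp(Z_n)}$-expectation $1$ by the normalisation in \eqref{eq:Lambdapt}; since $\bar{\gamma}^{\emp(Z_n)}_t=\Lambda_t(G)\cdot\gamma^{\emp(Z_n)}$ and the factors $\tilde{\Lambda}^q_t$ with $q\ne|p|$ are functions of the variables $\tilde{J}_n^q$, which are $\gamma$-independent of $\tilde{J}_n^{|p|}$ by Remark~\ref{rem:JpJqindependent}, integrating them out yields the displayed identity.

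The second step assembles these identities into matrix form: with $B^p:=\Exp^{\bar{\gamma}^{\emp(Z_n)}_t}\big[\tilde{J}_n^{p}\,{}^{t}\tilde{J}_n^{-p}\big]$, the sum in the statement is ${}^{t}\zeta\,B^p\,\xi$, and Proposition~\ref{prop:gamma tilde p t} identifies $B^p$, up to transposition, with the Hermitian positive definite matrix $((C^p_n)^{-1}+\alpha_p U_t^n)^{-1}$; hence $\norm{B^p}_2=\lambda_1\big(((C^p_n)^{-1}+\alpha_p U_t^n)^{-1}\big)$. For real $\zeta,\xi$ this gives $\big|{}^{t}\zeta\,B^p\,\xi\big|\le\norm{B^p}_2\norm{\zeta}_2\norm{\xi}_2\le C_\J\norm{\zeta}_2\norm{\xi}_2$ by \eqref{eq:CJ}, and taking the supremum over $p\in I_n$ concludes.

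I do not expect a serious obstacle here: the two ingredients that carry real content, Proposition~\ref{prop:gamma tilde p t} (the tilted covariance of the Fourier weights) and the eigenvalue bound \eqref{eq:CJ}, are already in place, so the lemma is essentially a corollary. The only delicate point is the bookkeeping in the first step — keeping track of the conjugation symmetry $\tilde{J}_n^{-p}=\overline{\tilde{J}_n^{p}}$, the identification $\tilde{\Lambda}^p_t=\tilde{\Lambda}^{|p|}_t$, and the degenerate index $p=0$ (where $\tilde{J}_n^{0}$ is real and $\alpha_0=\tfrac{1}{2\sigma^2 N}$) — but all of these are handled uniformly by Lemma~\ref{lem:Lambdatindep} and Proposition~\ref{prop:gamma tilde p t}, so no new argument is needed.
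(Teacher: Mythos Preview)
Your proposal is correct and follows essentially the same route as the paper: identify the weighted second moment as ${}^{t}\zeta\,D^p_n\,\xi$ with $D^p_n=((C^p_n)^{-1}+\alpha_p U_t^n)^{-1}$ via Proposition~\ref{prop:gamma tilde p t}, then bound by $\norm{D^p_n}_2\norm{\zeta}_2\norm{\xi}_2\le C_\J\norm{\zeta}_2\norm{\xi}_2$ using Remark~\ref{rem:upeigen}. Your Step~1 makes explicit the passage from the $\tilde{\Lambda}^p_t$-weighting to the full $\bar{\gamma}^{\emp(Z_n)}_t$-expectation, which the paper leaves implicit in its invocation of Proposition~\ref{prop:gamma tilde p t}, but the substance is identical.
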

\begin{proof}
We use Proposition~\ref{prop:gamma tilde p t}. We define $D^p_n:=((C^p_n)^{-1}+\alpha_p U_t^n)^{-1}$, $p \in I_n$, $p \geq 0$. We can write
\[
\sum_{j,k \in I_n}\Exp^{\gamma^{\emp(Z_n)}}\bigg[\tilde{\Lambda}^p_t(\tilde{G})\tilde{J}_n^{pj}\tilde{J}_n^{-pk} \bigg]\zeta^j \xi^k={\,^t}\zeta D^p_n \xi,
\]
hence
\[
\left| \sum_{j,k \in I_n}\Exp^{\gamma^{\emp(Z_n)}}\bigg[\tilde{\Lambda}^p_t(\tilde{G})\tilde{J}_n^{pj}\tilde{J}_n^{-pk} \bigg]\zeta^j \xi^k \right| = \left| {\,^t}\zeta D^p_n \xi \right|.
\]
Considering the Euclidean norm  in $\R^N$ and the corresponding matrix norm, both noted $\norm{\ }_2$, we have
\[
\left| {\,^t}\zeta D^p_n \xi \right| \leq \norm{D^p_n}_2 \norm{\zeta}_2 \norm{\xi}_2.
\]
By definition of the Euclidean norm, $\norm{D^p_n}_2=\lambda_1(D^p_n)\leq C_\J$, by Remark \ref{rem:upeigen}.
\end{proof}
\section{Covariance functions}\label{app:covariances}
\subsection{Time continuous setting}\label{app:covcont}
One of the basic constructions in this paper is the following. Given a measure $\mu \in \mP_S(\T^\Z)$, an integer $n$ (possibly infinite),  and a time $t \in [0,T]$, define the following
sequence of functions $K_\mu^k: [0,\,t]^2 \to \R$
\begin{equation}\label{eq:Kmudef}
K_\mu^k(s,u) = \sum_l R_\J(k,l) \int_{\T^\Z} f(v^0_s)f(v^l_u)\, d\mu(v),
\end{equation}
for $s,\,u \in [0,\,t]$. The summation w.r.t $l$ in the right hand side is either over the set $I_n$ for finite $n$ or over $\Z$. The index $k$ in the left hand side has the same range as $l$. In case of $n$ infinite, the right hand side is well defined because of the absolute summability of the sequences $(R_\J(k,l))_{l \in \Z}$ for all $k \in \Z$ and the fact that $0 \leq f \leq 1$. In the case of $n$ finite, the sequence $(K_\mu^k)_{k \in I_n}$, noted  $K_\mu^{n,k}$,   is $N$-periodic.

It is easy to check that the sequence $(K^k_\mu(s,u))_k$ of functions is the covariance of a centered stationary Gaussian process noted $G^j_s$, with $s \in [0,\,t]$ and $j$ is in $I_n$ for finite $n$ or in $\Z$ otherwise. There are several possible representations of this process. In the case of finite $n$ we use
\begin{equation}\label{eq:Gmu}
G^j_s = \sum_{k \in I_n} J^{jk}_n f(v^k_s),
\end{equation}
and noted $\gamma^{\mu^{I_n}}$ the law under which it has covariance $K_\mu^n$, i.e.
\[
\Exp^{\gamma^{\mu^{I_n}}} \left[ G^i_s G^j_u \right] = K_\mu^{n, j-i}(s,u),
\]
see the proof of Lemma \ref{lem:L2positive} below.
A second representation is provided by the consideration of the operator defined by the sequence $K_\mu^{k}$. This operator is defined on the Hilbert space $L^2(\Z \times [0,\,t]):=\bigoplus_{i \in \Z} L^2([0,t])$ (or $L^2(I_n \times [0,\,t])$) of infinite (or finite) sequences of measurable square integrable complex functions $g^k_s$ on $[0,\,t]$ such that
\[
\sum_k \int_0^t \left| g^k_s \right|^2\,ds < \infty,
\]
where, as usual, the summation w.r.t. $k$ is over $I_n$ for $n$ finite or over $\Z$ otherwise. In the sequel we treat only the case of infinite $n$, i.e. $I_n = \Z$, the case of $n$ finite being easily deduced from this one.

We prove in Lemma \ref{lem:L2} that the operator $\bar{K}_\mu$ acting on $L^2(\Z \times [0,\,t])$ by
\begin{equation}\label{eq:barKmu}
\left( \bar{K}_\mu\,g \right)^k_s = \sum_l \int_0^t K_\mu^{k-l}(s,u) g^l_u \, du,\ g \in L^2(\Z \times [0,\,t]),
\end{equation}
is continuous, self-adjoint, and compact.

Note that by Fourier transform the space $L^2(\Z \times [0,t])$ is isomorphic to the space $L^2([-\pi,\pi] \times [0,t])$.
Each element $g$ of $L^2(\Z \times [0,\,t])$ features a Fourier transform $\tilde{g}$ such that
\[
\tilde{g}(\varphi)(s) = \sum_k g^k_s e^{-i k \varphi},
\]
where the series in the right hand side is absolutely convergent. For each $\varphi \in [-\pi,\pi[$, $\tilde{g}(\varphi) \in  L^2([0,t])$. 

By the convolution theorem, the operator $\bar{K}_\mu$ on $L^2(\Z \times [0,t])$ induces an operator $\bar{\tilde{K}}_\mu$ on $L^2([-\pi,\pi] \times [0,t])$ acting on such functions by
\[
\left( \bar{\tilde{K}}_\mu\tilde{g}\right)(\varphi) (s) = \int_0^t \tilde{K}_\mu(\varphi)(s,u) \tilde{g}(\varphi)(u)\,du,
\]
where
\[
\tilde{K}_\mu(\varphi)(s,u) = \sum_k K_\mu^k(s,u) e^{-ik\varphi}.
\]
\begin{lemma}\label{lem:L2}
The linear operator $\bar{K}_\mu$ defined by \eqref{eq:barKmu} maps $L^2(\Z \times [0,\,t])$  to itself and is continuous, self-adjoint, and compact. Its norm is upperbounded by $abt$.
\end{lemma}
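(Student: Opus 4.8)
The plan is to prove the three properties separately; only the compactness requires genuine care. \emph{Continuity and the norm bound.} First I would record the pointwise estimate $|K_\mu^k(s,u)|\le a_k b$ for all $k\in\Z$ and $s,u\in[0,t]$, which is immediate from \eqref{eq:Kmudef}, \eqref{eq:RJdef}, $\sum_l b_l=b$ and $0\le f\le 1$. Then, given $g,h\in L^2(\Z\times[0,t])$, set $G^l:=\norm{g^l}_{L^2([0,t])}$ and $H^k:=\norm{h^k}_{L^2([0,t])}$, so that $(G^l)_{l},(H^k)_{k}\in\ell^2(\Z)$ have $\ell^2$-norms $\norm{g}$ and $\norm{h}$. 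Applying Cauchy--Schwarz in the time variables,
\[
|\langle \bar K_\mu g,h\rangle|\le\sum_{k,l}\int_0^t\!\!\int_0^t |K_\mu^{k-l}(s,u)|\,|g^l_u|\,|h^k_s|\,du\,ds\le bt\sum_{k,l}a_{k-l}\,G^l H^k=bt\,\langle a\star G,H\rangle_{\ell^2(\Z)},
\]
where $\star$ is discrete convolution. Young's convolution inequality gives $\norm{a\star G}_{\ell^2}\le\norm{a}_{\ell^1}\norm{G}_{\ell^2}=a\norm{g}$, whence $|\langle\bar K_\mu g,h\rangle|\le abt\,\norm{g}\,\norm{h}$. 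Running the same estimate in a single spatial coordinate $k$ shows each $(\bar K_\mu g)^k$ is a well-defined element of $L^2([0,t])$ and that $\bar K_\mu g\in L^2(\Z\times[0,t])$; thus $\bar K_\mu$ maps the space to itself and $\norm{\bar K_\mu}\le abt$.

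\emph{Self-adjointness.} This reduces to the symmetry $K_\mu^k(s,u)=K_\mu^{-k}(u,s)$. I would obtain it from two facts: $R_\J$ is even, $R_\J(-k,-l)=R_\J(k,l)$ (being the autocorrelation of the real stationary field $(J^{ij})_{i,j\in\Z}$), and $\mu$ is $S$-invariant, which yields $\int_{\T^\Z}f(v^0_u)f(v^l_s)\,d\mu(v)=\int_{\T^\Z}f(v^{-l}_u)f(v^0_s)\,d\mu(v)$. Substituting into \eqref{eq:Kmudef}, re-indexing $l\mapsto-l$ in the sum, and using the evenness of $R_\J$ and commutativity of the product gives the identity. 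Since $K_\mu$ is real-valued, the kernel $(k,s;l,u)\mapsto K_\mu^{k-l}(s,u)$ of $\bar K_\mu$ in \eqref{eq:barKmu} coincides with its adjoint kernel, so $\bar K_\mu$ is self-adjoint.

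\emph{Compactness.} Here one must be careful, because $\bar K_\mu$ is translation invariant in the spatial variable, so the natural way to read (and to prove) compactness is fiberwise. I would pass to the spatial Fourier side: the map $g\mapsto\tilde g$, $\tilde g(\varphi)(s)=\sum_k g^k_s e^{-ik\varphi}$, is a unitary isomorphism of $L^2(\Z\times[0,t])$ onto $L^2([-\pi,\pi]\times[0,t])$ conjugating $\bar K_\mu$ to the operator $\bar{\tilde K}_\mu$ described just above the statement, which acts fiberwise as $w\mapsto\int_0^t\tilde K_\mu(\varphi)(\cdot,u)\,w(u)\,du$ on $L^2([0,t])$, with $\tilde K_\mu(\varphi)(s,u)=\sum_k K_\mu^k(s,u)e^{-ik\varphi}$. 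For each fixed $\varphi$ this series converges absolutely and uniformly, dominated by $\sum_k a_k b=ab$, so $\tilde K_\mu(\varphi)$ is bounded by $ab$ on $[0,t]^2$, in particular square integrable there; hence $\tilde K_\mu(\varphi)$ is a Hilbert--Schmidt, and so compact, self-adjoint operator on $L^2([0,t])$. This yields compactness of $\bar K_\mu$ directly in the finite-range setting, where $I_n$ replaces $\Z$ and $\bar K_\mu$ is a finite sum of Hilbert--Schmidt blocks, and in the infinite-range setting it gives compactness of the fiber operators $\tilde K_\mu(\varphi)$ — which is exactly what is used in the Volterra/resolvent arguments later, where one works at fixed $\varphi$ and applies the spectral theorem to each $\tilde K_\mu(\varphi)$.

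I expect the compactness assertion to be the main obstacle, for the structural reason just noted: the spatial convolution form of $\bar K_\mu$ forces it to be understood fiberwise (equivalently, via the finite-$n$ truncations) rather than as a single compact operator on all of $L^2(\Z\times[0,t])$. The boundedness estimate with the constant $abt$ and the self-adjointness, by contrast, are routine once the pointwise bound $|K_\mu^k|\le a_k b$ and the evenness/$S$-invariance symmetries are recorded.
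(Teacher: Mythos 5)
Your treatment of boundedness (with the same constant $abt$) and self-adjointness is correct and in substance identical to the paper's: the paper estimates $\norm{\bar K_\mu g}$ directly by Cauchy--Schwarz in time followed by Young's convolution theorem, while you bound the bilinear form $\langle\bar K_\mu g,h\rangle$, but the inputs (the pointwise bound $|K_\mu^k|\le a_k b$, Cauchy--Schwarz, Young) and the constant are the same. For self-adjointness, the paper simply cites the symmetry $K_\mu^k(u,s)=K_\mu^{-k}(s,u)$; you supply the two-line derivation from evenness of $R_\J$ and $S$-invariance of $\mu$, which is the right justification.

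On compactness you have not merely chosen a different route: you have found a genuine defect both in the lemma's statement for $I_n=\Z$ and in the paper's proof of it. The paper invokes the Kolmogorov--Riesz--Fr\'echet criterion and writes, as an equality,
\[
\norm{\tau_h(\bar{\tilde K}_\mu\tilde g^k)-\bar{\tilde K}_\mu\tilde g^k}^2=\int_{-\pi}^\pi\!\!\int_0^t\Big|\int_0^t\big(\tilde K_\mu(\varphi+h_1)(s+h_2,u)-\tilde K_\mu(\varphi)(s,u)\big)\,\tilde g^k(\varphi,u)\,du\Big|^2d\varphi\,ds,
\]
but $\tau_h$ also shifts the argument of $\tilde g^k$: the integrand should contain $\tilde g^k(\varphi+h_1,u)$, and the missing contribution $\int_0^t\tilde K_\mu(\varphi)(s,u)\big(\tilde g^k(\varphi+h_1,u)-\tilde g^k(\varphi,u)\big)\,du$ is exactly the term that cannot be controlled uniformly over a bounded sequence $(\tilde g^k)_k$. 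And the conclusion really does fail: $\bar K_\mu$ is a spatial convolution with an operator-valued symbol $\varphi\mapsto\bar{\tilde K}_\mu(\varphi)$ that is generically nonzero, so choosing $w\in L^2([0,t])$ with $\|\bar{\tilde K}_\mu(\varphi)w\|$ not a.e.\ zero and applying $\bar{\tilde K}_\mu$ to the orthonormal sequence $\frac{1}{\sqrt{2\pi}}e^{in\varphi}w(s)$ produces, by Riemann--Lebesgue, a sequence whose pairwise distances stay bounded away from zero as $|n-m|\to\infty$; no convergent subsequence exists. Your fiberwise reading, in which each $\bar{\tilde K}_\mu(\varphi)$ is Hilbert--Schmidt (its kernel being bounded on the finite-measure square $[0,t]^2$) hence compact and self-adjoint on $L^2([0,t])$, together with compactness of the finite-$n$ truncations on $L^2(I_n\times[0,t])$, is the correct statement and is what the Volterra/resolvent computations in Proposition~\ref{prop:Lkmuregular} and the $\alpha$-lemmas actually exploit. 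The one caveat worth flagging, which you allude to, is that the paper subsequently extracts a countable orthonormal eigenbasis of $\bar K_\mu$ (the SVD after Lemma~\ref{lem:L2positive} and the Karhunen--Lo\`eve construction in Lemma~\ref{lem:Lmun}); making that rigorous for $I_n=\Z$ requires either working with the finite-$n$ truncations or a direct-integral spectral decomposition over $\varphi$, rather than a single countable eigenbasis on $L^2(\Z\times[0,t])$.
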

\begin{proof}\ \\
1) Well-defined and continuous:\\
We prove that $\bar{K}_\mu$ maps $L^2(\Z \times [0,\,t])$ onto itself. In effect, by Cauchy-Schwarz
\begin{equation}\label{eq:Ineed}
\left| \left( \bar{K}_\mu\,g \right)^k_s \right| \leq \sum_l \left( \int_0^t \left| K_\mu^{ k-l}(s,u) \right|^2 \, du \right)^{1/2} \left( \int_0^t  \left| g^l_u \right|^2 \, du \right)^{1/2} .
\end{equation}
By Young's convolution Theorem, $0 \leq f \leq 1$, \eqref{eq:RJdef} and \eqref{eq:Kmudef}
\begin{multline*}
\left( \sum_k \left| \left( \bar{K}_\mu\,g \right)^k_s \right|^2 \right)^{1/2} \leq \sum_k \left| \left( \bar{K}_\mu\,g \right)^k_s \right| \leq \\
\sum_k \left( \int_0^t \left| K_\mu^{k}(s,u) \right|^2 \, du \right)^{1/2} \times \left( \sum_k  \int_0^t  \left| g^k_u \right|^2 \, du  \right)^{1/2} \leq
a b \sqrt{t} \norm{ g  }_{  L^2(\Z \times [0,\,t])}
\end{multline*}
so that, 
\[
\norm{\bar{K}_\mu g}_{  L^2(\Z \times [0,\,t])}^2 = \sum_k \int_0^t \left| \left( \bar{K}_\mu\,g \right)^k_s \right|^2\,ds \leq
a^2 b^2 t^2 \norm{ g  }_{  L^2(\Z \times [0,\,t])}^2,
\]
and therefore $\bar{K}_\mu$ is well-defined as a linear mapping from $L^2(\Z \times [0,\,t])$ to itself , bounded and therefore continuous with $\norm{ \bar{K}_\mu}_{  L^2(\Z \times [0,\,t])} \leq a b t$.\\
2) Self-adjoint:\\
This follows directly from the identity $K_\mu^{k}(u,s) = K_\mu^{-k}(s,u)$.
\\
3) Compactness:\\
We sketch the proof. We use the Kolmogorov-Riesz-Fr\'echet Theorem \cite[Th. 4.26]{brezis:10} for the compactness of bounded set of $L^p(\R^n)$, the analog of the Ascoli-Arzel\`a Theorem for continuous functions.

 Let $\tilde{g} \in L^2([-\pi,\pi] \times [0,t])$. Let $h=(h_1,h_2) \in \R^2$. We define the operator $\tau_h : L^2([-\pi,\pi] \times [0,t]) \to L^2([-\pi,\pi] \times [0,t])$ by 
\[
(\tau_h \tilde{g})(\varphi,s)=\tilde{g}(\varphi+h_1, s+h_2),
\]
where the values are taken modulo $2\pi$ and modulo $t$, respectively. Given a 
bounded sequence $(\tilde{g}^k)_{k \in \N}$ of $L^2([-\pi,\pi] \times [0,t])$ 
we want to prove that the set $(\bar{\tilde{K}}_\mu \tilde{g}^k)_k$ 
is 
relatively compact. 
According to the Kolmogorov-Riesz-Fr\'echet Theorem, it is 
sufficient to prove that
\begin{equation}\label{eq:KRF}
\lim_{|h| \to 0} \norm{\tau_h (\bar{\tilde{K}}_\mu \tilde{g}^k) - (\bar{\tilde{K}}_\mu\tilde{g}^k)}_{L^2([-\pi,\pi] \times [0,t])} = 0
\end{equation}
uniformly in $k$. In effect we have
\begin{multline}\label{eq:thenorm}
\norm{\tau_h (\bar{\tilde{K}}_\mu \tilde{g}^k) - (\bar{\tilde{K}}_\mu\tilde{g}^k)}_{L^2([-\pi,\pi] \times [0,t])}^2=\\
\int_{-\pi}^\pi \int_0^t \left| \int_0^t (\tilde{K}_\mu(\varphi+h_1)(s+h_2,u)-\tilde{K}_\mu(\varphi)(s,u))\tilde{g}^k(\varphi,u)\,du  \right|^2\,d\varphi\,ds.
\end{multline}
We write, by \eqref{eq:Kmudef},
\begin{multline}\label{eq:thenorm1}
\tilde{K}_\mu(\varphi+h_1)(s+h_2,u)-\tilde{K}_\mu(\varphi)(s,u) \\
=
\sum_{l \in \Z} \tilde{R}_\J(\varphi+h_1,l)\int_\T f(v^0_{s+h_2})f(v^l_u)\,d\mu(v)-\tilde{R}_\J(\varphi,l)\int_\T f(v^0_{s})f(v^l_u)\,d\mu(v)\\
=
\sum_{l \in \Z} (\tilde{R}_\J(\varphi+h_1,l)-\tilde{R}_\J(\varphi,l))\int_\T f(v^0_{s+h_2})f(v^l_u)\,d\mu(v)\\+ \sum_{l \in \Z} \tilde{R}_\J(\varphi,l)\int_\T(f(v^0_{s+h_2})-f(v^0_{s}))f(v^l_u)\,d\mu(v),
\end{multline}
where we have noted
\[
\tilde{R}_\J(\varphi,l) = \sum_k R_\J(k,l) e^{-ik\varphi}.
\]
We first upperbound the magnitude of the first term in the right hand side of \eqref{eq:thenorm1}.
By the mean value theorem and \eqref{eq:akbl}
\[
\left| \tilde{R}_\J(\varphi+h_1,l)-\tilde{R}_\J(\varphi,l) \right| \leq |h_1| \sum_k  |k| \left| R_\J(k,l) \right| \leq |h_1|\, b_l \sum_k |k| a_k.
\]
Because of $0 \leq f \leq 1$ and \eqref{eq:akbl} again, 
we have
\begin{equation}\label{eq:maj1}
\left|  \sum_{l \in \Z} (\tilde{R}_\J(\varphi+h_1,l)-\tilde{R}_\J(\varphi,l))\int_{\T^\Z} f(v^0_{s+h_2})f(v^l_u)\,d\mu(v) \right| \leq C_1 |h_1|,
\end{equation}
for some positive constant $C_1$.

	We next  upperbound the magnitude of the second term in the right hand side of \eqref{eq:thenorm1}. First, thanks to the Dominated Convergence Theorem, the function
	$s \to \int_\T f(v^0_s) \, d\mu(v)$ is continuous on $[0,t]$
	$0 \leq t \leq T$, and hence uniformly continuous, 
	\begin{equation}\label{eq:maj2}
	\forall \varepsilon >0\ \exists \delta(\varepsilon) \geq 0,\,|h_2| \leq \delta \Rightarrow \left|  \int_{\T^\Z}(f(v^0_{s+h_2})-f(v^0_{s}))\,d\mu(v) \right| \leq \varepsilon
	. 
	\end{equation}
	Second, 
	$|\tilde{R}_\J(\varphi,l)| \leq ab_l$.

Combining \eqref{eq:thenorm}-\eqref{eq:maj2} with the fact that $(\tilde{g}^k)_k$ is bounded and Cauchy-Schwarz implies \eqref{eq:KRF}.
\end{proof}
We now prove that $\bar{K}_\mu$ is non negative.
\begin{lemma}\label{lem:L2positive}
The linear operator $\bar{K}_\mu$ defined by \eqref{eq:barKmu} is non negative.
\end{lemma}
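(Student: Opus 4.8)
\textit{Overview of the plan.} The idea is to exhibit $\bar K_\mu$, up to a harmless unitary conjugation, as the covariance operator of a Gaussian field, so that $\langle \bar K_\mu g, g\rangle$ becomes a genuine variance. Since $L^2(\Z\times[0,t])$ is a complex Hilbert space and $\bar K_\mu$ is bounded (Lemma~\ref{lem:L2}), the quadratic form $g\mapsto\langle\bar K_\mu g,g\rangle$ is continuous, and because $\bar K_\mu$ is self-adjoint with real-valued kernel $K_\mu^k$ one checks that the cross terms cancel, so that $\langle\bar K_\mu g,g\rangle=\langle\bar K_\mu \Real{g},\Real{g}\rangle+\langle\bar K_\mu \Imag{g},\Imag{g}\rangle$. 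As the finitely supported sequences are dense, it suffices to prove $\langle\bar K_\mu g,g\rangle\ge 0$ for every real $g\in L^2(\Z\times[0,t])$ with $g^k=0$ for $|k|>m$.

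\textit{A finite Gaussian approximation.} Let $(J^{ij})_{i,j\in\Z}$ be the centered stationary Gaussian field with autocorrelation $R_\J$ postulated in Section~\ref{sec:firstlook}, and, on an enlarged probability space, let $v$ have law $\mu$, independent of $(J^{ij})$. For $L\in\N$ and $|k|\le m$ define the (finite, hence well-defined) centered Gaussian linear combinations
\[
H^{k,L}_s := \frac{1}{\sqrt{2L+1}}\sum_{|j|\le L} J^{-k,\,j}\, f(v^j_s),\qquad s\in[0,t].
\]
Using $\Exp[J^{ab}J^{cd}]=R_\J(c-a,d-b)$, the independence of $(J^{ij})$ and $v$, and the $S$-invariance of $\mu$ (which gives $\Exp[f(v^j_s)f(v^{j'}_u)]=\int_{\T^\Z}f(v^0_s)f(v^{j'-j}_u)\,d\mu(v)$), one obtains, after reindexing by $p=j'-j$,
\[
\Exp\big[H^{k,L}_s H^{l,L}_u\big]=\sum_{|p|\le 2L}\Big(1-\tfrac{|p|}{2L+1}\Big)R_\J(k-l,p)\int_{\T^\Z} f(v^0_s)f(v^p_u)\,d\mu(v).
\]
Since $|R_\J(k-l,p)|\le a_{k-l}b_p$ by \eqref{eq:RJdef} and $(b_p)$ is summable by \eqref{eq:akbl}, dominated convergence shows $\Exp[H^{k,L}_s H^{l,L}_u]\to K_\mu^{k-l}(s,u)$ as $L\to\infty$, by the definition \eqref{eq:Kmudef}.

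\textit{Conclusion.} For real $g$ supported on $|k|\le m$,
\[
0\le \Exp\bigg[\Big(\sum_{|k|\le m}\int_0^t g^k_s H^{k,L}_s\,ds\Big)^2\bigg]=\sum_{|k|,|l|\le m}\int_0^t\!\!\int_0^t g^k_s g^l_u\,\Exp\big[H^{k,L}_s H^{l,L}_u\big]\,ds\,du,
\]
and letting $L\to\infty$ (dominated convergence over the finite index set, the integrand being dominated by $a_{k-l}b\,|g^k_s||g^l_u|$) the right-hand side tends to $\sum_{|k|,|l|\le m}\int_0^t\int_0^t K_\mu^{k-l}(s,u)\,g^l_u g^k_s\,ds\,du$, which is exactly $\langle\bar K_\mu g,g\rangle$ by \eqref{eq:barKmu}. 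Hence $\langle\bar K_\mu g,g\rangle\ge 0$, and the first paragraph finishes the proof.

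\textit{Where the difficulty lies.} The main obstacle is that one cannot simply assert that $\bar K_\mu$ is the covariance operator of the field $G^k_s=\sum_j J^{kj}f(v^j_s)$: this series does not converge in $L^2$, its formal variance $\sum_k\int_0^t K_\mu^0(s,s)\,ds$ being infinite (in accordance with $\bar K_\mu$ not being trace class). This forces one to normalise the truncations by $(2L+1)^{-1/2}$, keep $g$ finitely supported, and take expectations only of the finite linear functionals $\sum_{|k|\le m}\int_0^t g^k_s H^{k,L}_s\,ds$; the Cesàro weights $1-|p|/(2L+1)$ in the covariance formula are precisely what keeps the limit finite. The reflection $j\mapsto J^{-k,j}$ is immaterial — it merely conjugates $\bar K_\mu$ by the unitary $g^k\mapsto g^{-k}$, which preserves non-negativity — and is only included so that the covariance of the approximating field matches the kernel $K_\mu^{k-l}$ of \eqref{eq:barKmu} exactly. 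The verification of the covariance identity for $H^{k,L}$ is the only computational step requiring genuine care.
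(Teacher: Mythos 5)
Your proof is correct, and it takes a genuinely different route from the paper's. The paper works with the $N$-periodic finite system: it takes $G^i_s = \sum_{j\in I_n} J^{ij}_n f(v^j_s)$ with the periodic weights of \eqref{eq:cov}, computes $\Exp^{\gamma^{\mu^{I_n}}}[G^i_s G^k_u] = K_\mu^{n,k-i}(s,u)$ (the periodic wrap-around in \eqref{eq:cov} is exactly what makes each shift $p=l-j$ appear $N$ times, so no Ces\`aro weight arises), deduces non-negativity of the truncated operator $\bar K_\mu^n$ on $L^2(I_n\times[0,t])$, and then asserts non-negativity of $\bar K_\mu$ ``by taking the limit $n\to\infty$.'' You instead work directly with the infinite stationary field $(J^{ij})_{i,j\in\Z}$ of Section~\ref{sec:firstlook}, replace periodicity by the $(2L+1)^{-1/2}$-normalized truncation $H^{k,L}$, and let the resulting Ces\`aro weights $1-|p|/(2L+1)$ disappear in the $L\to\infty$ limit by dominated convergence. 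Both arguments rest on the same device — realizing the quadratic form as a variance — but yours is more self-contained in two respects the paper leaves implicit: you reduce the complex case to the real one by noting the cross terms cancel for a self-adjoint real-kernel operator, and you make the approximation-and-limit step (density of finitely supported $g$, continuity of the form, dominated convergence of the covariance) explicit rather than gesturing at ``$n\to\infty$.'' The reflection $j\mapsto J^{-k,j}$ you introduce is a minor cosmetic choice to match the kernel orientation of \eqref{eq:barKmu} directly; as you note, omitting it would merely conjugate by the flip unitary and costs nothing.
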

\begin{proof}
Consider
\[
G^i_s=\sum_{j \in I_n} J^{ij}_n f(v^j_s).
\]
This implies, because of \eqref{eq:cov} and the stationarity of $\mu$, that
\begin{align*}
\Exp^{\gamma^{\mu^{I_n}}} \left[ G^i_s G^k_u   \right] &= \Exp^{\gamma^{\mu^{I_n}}} \left[ \sum_{j,l \in I_n} J^{ij}_n J^{kl}_n f(v^j_s) f(v^l_u) \right] 
=\frac{1}{N} \sum_{j,l \in I_n}  R_\J(k-i,l-j) \Exp^{\mu^{I_n}} \left[  f(v^j_s) f(v^l_u) \right] \\&= \frac{1}{N} \sum_{j,l \in I_n}  R_\J(k-i,l-j) \Exp^{\mu^{I_n}} \left[  f(v^0_s) f(v^{l-j}_u) \right] \\
& = \sum_{l \in I_n}  R_\J(k-i,l) \Exp^{\mu^{I_n}} \left[  f(v^0_s) f(v^l_u) \right] = K_\mu^{n,k-i}(s,u),
\end{align*}
from which it follows that
\begin{align*}
\left \langle \bar{K}_\mu^n g, g   \right \rangle_{L^2(I_n \times [0,t])} &= \sum_{k,l \in I_n} \int_0^t \left( \int_0^t K_\mu^{n,k-l}(s,u) g^l_u \, du  \right) (g^k_s)^*\,ds \\
&= \sum_{k,l \in I_n} \int_0^t \int_0^t \Exp^{\gamma^{\mu^{I_n}}} \left[ G^k_s G^l_u   \right] g^l_u (g^k_s)^*\,du\,ds\\&=
\Exp^{\gamma^{\mu^{I_n}}} \left[ \left| \sum_{k \in I_n} \int_0^t G^k_s g^k_s \,ds \right|^2 \right] \geq 0.
\end{align*}
We conclude that $\bar{K}_\mu^n$ is positive as an operator on $L^2(I_n \times [0,t])$  and hence, taking the limit $n \to \infty$ that $\bar{K}_\mu$ is a positive operator on $L^2( \Z \times [0,t]  )$.
\end{proof}

We have the following Lemma related to the Fourier representation of the sequence $(K_\mu^{k}(s,u))_{k \in \Z}$.
\begin{lemma}\label{lem:FTKmut}
The sequence $(K_\mu^{k}(s,u))_{k \in \Z}$ is the Fourier series of a three times continuously differentiable periodic function $[-\pi,\pi[ \to \R$, $\varphi \to \tilde{K}_\mu(\varphi)( s,u)$ which is continuous w.r.t. $(s,u)$. This implies that the $K^k_\mu(s,u)$ are $\smallO{1/ |k|^3}$. Furthermore this convergence is uniform in $s,\,u,\,\mu$.
\end{lemma}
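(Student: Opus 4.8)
The plan is to read off the smoothness of $\varphi\mapsto\tilde K_\mu(\varphi)(s,u)$ directly from the regularity of the Fourier data, reducing everything to the decay hypotheses \eqref{eq:akbl}, exactly as in Remark~\ref{rem:twicediff}. First I would write, for fixed $s,u\in[0,t]$,
\[
\tilde K_\mu(\varphi)(s,u)=\sum_{k\in\Z} K_\mu^{k}(s,u)\,e^{-ik\varphi}
=\sum_{k\in\Z}\Big(\sum_{l\in\Z} R_\J(k,l)\int_{\T^\Z} f(v^0_s)f(v^l_u)\,d\mu(v)\Big)e^{-ik\varphi}.
\]
Using $0\le f\le 1$ and the bound \eqref{eq:RJdef}, $|R_\J(k,l)|\le a_kb_l$, the inner $l$-sum is absolutely convergent and bounded in modulus by $a_k b$ where $b=\sum_l b_l<\infty$; hence $|K_\mu^{k}(s,u)|\le a_k b$ uniformly in $s,u,\mu$. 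Since $a_k=\smallO{1/|k|^3}$, the sequences $(k^jK_\mu^{k}(s,u))_{k\in\Z}$ are absolutely summable for $j=0,1,2,3$, with summability bounds uniform in $s,u,\mu$. By the standard theorem on termwise differentiation of Fourier series (an absolutely convergent Fourier series with $\sum_k|k|^j|c_k|<\infty$ defines a $C^j$ periodic function, obtained by differentiating term by term), this gives that $\varphi\mapsto\tilde K_\mu(\varphi)(s,u)$ is three times continuously differentiable on $[-\pi,\pi[$, with $\partial_\varphi^j\tilde K_\mu(\varphi)(s,u)=\sum_k(-ik)^jK_\mu^{k}(s,u)e^{-ik\varphi}$.

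Next I would address continuity in $(s,u)$ and the uniformity claims. Each $K_\mu^{k}$ is continuous in $(s,u)$: the map $(s,u)\mapsto\int_{\T^\Z}f(v^0_s)f(v^l_u)\,d\mu(v)$ is continuous by the dominated convergence theorem (the integrand is bounded by $1$ and $t\mapsto f(v^0_t)$ is continuous for each path $v$, since $f$ is continuous and paths are continuous), and the $l$-series defining $K_\mu^k$ converges uniformly in $(s,u)$ because it is dominated by $a_k\sum_l b_l$. So $(s,u)\mapsto K_\mu^{k}(s,u)$ is continuous, hence so is $(\varphi,s,u)\mapsto\partial_\varphi^j\tilde K_\mu(\varphi)(s,u)$ for $j=0,\dots,3$, the $k$-series being uniformly convergent on $[-\pi,\pi]\times[0,t]^2$ thanks to the $\smallO{1/|k|^3}$ bound which is itself uniform in $s,u,\mu$. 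The fact that $K^k_\mu(s,u)=\smallO{1/|k|^3}$ uniformly in $s,u,\mu$ is immediate from $|K_\mu^k(s,u)|\le b\,a_k$ and $a_k=\smallO{1/|k|^3}$.

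I do not expect a genuine obstacle here: the statement is essentially a packaging of the elementary fact that a summable sequence with cubic-power-weighted summability is the Fourier series of a $C^3$ function, combined with dominated convergence for the $(s,u)$-dependence. The only point requiring a little care is bookkeeping the three sources of uniformity simultaneously — uniformity in $s$, in $u$, and in $\mu\in\mP_S(\T^\Z)$ — but all three follow from the single $\mu$-, $s$-, $u$-independent majorant $|K_\mu^k(s,u)|\le b\,a_k$ with $a_k=\smallO{1/|k|^3}$, so there is nothing delicate. (If one wants the case of finite $n$, the same argument applies verbatim to the $N$-periodic sequence $(K_\mu^{n,k})_{k\in I_n}$, the relevant trigonometric polynomial being a partial sum, but the lemma as stated concerns $n=\infty$.)
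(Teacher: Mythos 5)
Your proof takes essentially the same route as the paper: both reduce to the decay of the Fourier coefficients via the majorant $|K_\mu^k(s,u)| \le b\,a_k$ coming from \eqref{eq:RJdef} and then invoke termwise differentiation of the Fourier series (the paper factors through Remark~\ref{rem:twicediff} applied to $Q_\J(k) = \sum_l R_\J(k,l)$, whereas you bound $K_\mu^k$ directly; this is only a bookkeeping difference). However, there is a concrete gap in the step where you assert that $a_k = \smallO{1/|k|^3}$ makes the sequences $(k^j K_\mu^k(s,u))_{k\in\Z}$ absolutely summable for all $j = 0,1,2,3$. This is true only for $j \le 1$: one has $|k|\,a_k = \smallO{1/|k|^2}$, which is summable, but $|k|^2 a_k = \smallO{1/|k|}$ need not sum (consider $a_k \sim \frac{1}{|k|^3 \log|k|}$), and a fortiori $|k|^3 a_k = \smallO{1}$ merely tends to zero with no control on $\sum_k |k|^3 a_k$. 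The standard termwise-differentiation criterion ($\sum_k |k|^j |c_k| < \infty \Rightarrow C^j$) therefore delivers only $C^1$ from the hypothesis \eqref{eq:akbl} as written, not $C^3$.

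To be fair, the paper's own proof is in the same boat: Remark~\ref{rem:twicediff}, which this Lemma cites, makes the identical leap from $Q_\J(k) = \smallO{1/|k|^3}$ to $C^3$ without supplying the missing summability. So the error is not one you introduced, but it is nonetheless an error in your step. To close it one would either need a stronger decay hypothesis on $(a_k)$ — for instance $\sum_k |k|^3 a_k < \infty$, which is what the termwise argument genuinely requires — or an argument that goes beyond termwise differentiation. As it stands, your proof establishes with full rigor the $C^1$ statement (and the uniformity in $s,u,\mu$, and the continuity in $(s,u)$ via dominated convergence, all of which are correct), but the claims for the second and third derivatives do not follow from the stated premise.
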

\begin{proof}
It follows from Lemma \ref{lem:L2}  that for all $s,\,u \in [0,t]$ that the sequence $(K_\mu^{k}(s,u))_{k \in \Z}$ is the Fourier series of a continuous periodic function $[-\pi,\pi[ \to \R$, $\varphi \to \tilde{K}_\mu(\varphi)( s,u)$ which is continuous w.r.t. $(s,u)$.
By definition
\[
 \tilde{K}_\mu(\varphi)( s,u) = \sum_k K_\mu^{k}(s,u) e^{-i k \varphi},
\]
where the series in the right hand side is absolutely convergent. By \eqref{eq:Kmudef} we have
\[
\tilde{K}_\mu(\varphi)( s,u)=\sum_l \tilde{R}_\J(\varphi,l) \int_{\T^\Z} f(v^0_s)f(v^l_u)\, d\mu(v),
\]
and the order three differentiability of $\tilde{K}_\mu(\varphi)( s,u)$ follows from Remark~\ref{rem:twicediff} as well as the uniform convergence of $K^k_\mu(s,u)$.
\end{proof}
We have the following useful result.
\begin{lemma}\label{lem:Kmusquare}
 We have
 \[
  \left| \tilde{K}_\mu(\varphi)(s,u)  \right| \leq  a b \quad \forall s,\,u \in [0,t],\, \varphi \in [-\pi,\pi[.
 \]
 \end{lemma}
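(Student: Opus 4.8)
The statement is an immediate consequence of the uniform bound $|R_\J(k,l)| \le a_k b_l$ from \eqref{eq:RJdef}, the boundedness $0 \le f \le 1$, and the summability \eqref{eq:akbl}. The plan is simply to expand $\tilde{K}_\mu(\varphi)(s,u)$ as a double series in $k$ and $l$, estimate term by term, and collapse the estimate using the definitions \eqref{eq:ab} of $a$ and $b$.

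Concretely, I would first insert the definition \eqref{eq:Kmudef} of $K_\mu^k(s,u)$ into the Fourier series defining $\tilde{K}_\mu(\varphi)(s,u)$, obtaining
\[
\tilde{K}_\mu(\varphi)(s,u) = \sum_{k}\sum_{l} R_\J(k,l)\, e^{-ik\varphi}\int_{\T^\Z} f(v^0_s)f(v^l_u)\, d\mu(v).
\]
Before taking absolute values, I would note that the double series is absolutely convergent: since $|R_\J(k,l)| \le a_k b_l$, $|e^{-ik\varphi}| = 1$, and $0 \le \int_{\T^\Z} f(v^0_s) f(v^l_u)\,d\mu(v) \le 1$ (because $0 \le f \le 1$), the sum of absolute values of the terms is bounded by $\sum_{k,l} a_k b_l = ab < \infty$ thanks to \eqref{eq:akbl}. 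This legitimizes all rearrangements (and is already implicit in Lemma~\ref{lem:FTKmut}, which guarantees that the defining series is absolutely convergent).

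Then, applying the triangle inequality term by term and using the same three bounds once more,
\[
\left| \tilde{K}_\mu(\varphi)(s,u) \right| \le \sum_{k}\sum_{l} a_k b_l \cdot 1 \cdot 1 = \left(\sum_{k} a_k\right)\left(\sum_{l} b_l\right) = ab,
\]
uniformly in $s,u \in [0,t]$ and $\varphi \in [-\pi,\pi[$, which is the claim. (When the index $l$ ranges over a finite set $I_n$ rather than over $\Z$, the same computation gives a bound by the corresponding partial sums, hence again by $ab$.) There is no real obstacle here; the only point requiring a word of care is the justification of the interchange/estimate of the double sum, which is handled by the absolute convergence noted above.
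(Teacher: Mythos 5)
Your proof is correct and takes essentially the same route as the paper: both reduce to the term-by-term estimate $|R_\J(k,l)|\le a_k b_l$ together with $0\le f\le 1$ and then sum, the only cosmetic difference being that the paper first collapses the $k$-sum into $\tilde R_\J(\varphi,l)$ and squares the resulting inequality, while you bound the double sum directly.
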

 \begin{proof}
  By \eqref{eq:Kmudef}
  \[
 \left| \tilde{K}_\mu(\varphi)(s,u)  \right| \leq \sum_{l \in \Z} \left| \tilde{R}_\J(\varphi,l)  \right|,
 \]
 where
 \[
 \tilde{R}_\J(\varphi,l) = \sum_{k \in \Z} R_\J(k,l) e^{-ik\varphi}.
 \]
 This implies that
 \[
 \left| \tilde{K}_\mu(\varphi)(s,u)  \right|^2 \leq \left( \sum_{l \in \Z}\left|   \tilde{R}_\J(\varphi,l)  \right| \right)^2,
 \]
 and, since by \eqref{eq:RJdef}, \eqref{eq:akbl}
 \[
 \left|   \tilde{R}_\J(\varphi,l)  \right| \leq \sum_{k \in \Z} \left| R_\J(k,l)  \right| \leq b_l \sum_{k \in \Z} a_k =ab_l.
 \]
 We conclude that
 \[
 \left| \tilde{K}_\mu(\varphi)(s,u)  \right|^2 \leq a^2 b^2.
 \]
 \end{proof}
 By Lemmas \ref{lem:L2} and \ref{lem:L2positive} it follows that the spectrum of $\bar{K}_\mu$ is discrete and composed of non negative eigenvalues noted $\lambda_m^\mu$, $m \in \N$. Let $(h^\mu_m)$ be a corresponding  orthonormal basis of eigenvectors i.e. such as
\[
 \bar{K}_{\mu} h_m^{\mu} =\lambda^{\mu}_m \,  h_m^{\mu}, \quad \langle h_m^{\mu}, h_{m'}^{\mu} \rangle = \delta_{mm'}\ \ \forall m,\,m' \in \N.
 \]
Next define $g_m^{\mu}=\sqrt{\lambda^{\mu}_m} h_m^{\mu}$, $m \in \N$. 
One has the following ``SVD'' decomposition of the operator $\bar{K}_\mu$.
 \[
 K_{\mu}^{k}(s,u)=\sum_{m \in \N} \sum_l g_m^{\mu}(l,s)g_m^{\mu}(l+k,u).
 \]
 Given a covariance $(K_\mu^k)_{k \in \Z}$ we know that there exists a centered Gaussian process $(\Omega,\mathcal{A},\gamma,(G^k_t)_{k \in \Z} )$ with covariance $(K_\mu^k)_{k \in \Z}$. For any such process, if $H_\mu$ denotes the Gaussian space associated (the closed linear span of $(G^k_t)_{k \in \Z}$ in $L^2(\Omega,\mathcal{A},\gamma)$), then $H_\mu$ is isomorphic to the autoreproducing Hilbert space $\mathcal{H}_\mu$ associated to $(K_\mu^k)_{k \in \Z}$ by
 \[
 \begin{array}{rcl}
 \phi : H_\mu & \to & \mathcal{H}_\mu\\
 Z  & \to & \Exp^\gamma\left[  Z G^\cdot_\cdot \right].
 \end{array}
 \]
 The space $\mathcal{H}_\mu \subset L^2(\Z, [0,T])$ admits $(g_m^\mu)_{m \geq 0}$ as an orthonormal basis. If $\xi^\mu_m=\phi^{-1}(g^\mu_m)$, then $(\xi^\mu_m)_{m \geq 0}$ is a sequence of i.i.d. $\mathcal{N}(0,1)$ random variables in $H_\mu$ 
and   we have the following representation for the Gaussian process $G^i_s$:
 \[
 G^{i}_s=\sum_{m \geq 0} g^{\mu}_m (i, s) \xi^{\mu}_m,
 \]
 where the convergence is in $L^2(\Omega,\mathcal{A},\gamma)$. We note $\gamma^\mu$ the law on $(\Omega, \mathcal{A})$ under which the sequence $(G^i_s)$, $i \in \Z$, $s \in [0,t]$ has covariance $K_\mu^{k}$.
 \begin{remark}\label{rem:opcomposition}
 Note that given two measures $\mu_1$ and $\mu_2$ in $\mP_S(\T^\Z)$ and the corresponding operators $\bar{K}_{\mu_1}$ and $\bar{K}_{\mu_2}$, the operator $\bar{K}:=\bar{K}_{\mu_1} \circ \bar{K}_{\mu_2}$ has the following kernel
 \[
 K^k(s,u)=\sum_l \int_0^t K_{\mu_1}^{k-l}(s,v) K_{\mu_2}^{l}(v,u)\,dv,
 \]
 or, in the (continuous) Fourier domain
 \[
 \tilde{K}(\varphi)(s,u)=\int_0^t \tilde{K}_{\mu_1}(\varphi)(s,v) \tilde{K}_{\mu_2}(\varphi)(v,u)\,dv,
 \]
 and in the discrete case
 \[
 \tilde{K}^p(s,u)=\int_0^t \tilde{K}_{\mu_1}^{p}(s,v) \tilde{K}_{\mu_2}^{p}(v,u)\,dv,\ p \in I_n.
 \]
 \end{remark}
 \noindent
Consider the new  self-adjoint positive compact operator $\bar{L}_{\mu}$ on $L^2(\Z \times [0,t])$ defined by
 \begin{equation}\label{eq:Lmubarn}
 \bar{L}_{\mu}=({\rm Id}+\sigma^{-2} \bar{K}_{\mu})^{-1} \bar{K}_{\mu},
 \end{equation}
 and let $L_{\mu}$ be its kernel:
 \[
 L_{\mu}^{k}(s,u)=\sum_{m \geq 0} \frac{1}{1+\frac{\lambda^{\mu}_m}{\sigma^2}} \sum_lg^{\mu}_m(l,s)g^{\mu}_m(l+k,u).
 \]
 \begin{remark}\label{rem:commute}
 Note that $({\rm Id}+\sigma^{-2} \bar{K}_{\mu})^{-1}$ and $\bar{K}_{\mu}$ commute, i.e.,
 \[
  \bar{L}_{\mu}=({\rm Id}+\sigma^{-2} \bar{K}_{\mu})^{-1} \bar{K}_{\mu} =\bar{K}_{\mu}  ({\rm Id}+\sigma^{-2} \bar{K}_{\mu})^{-1},
 \]
 as can be readily seen by noticing that both sides of the previous equality are equal to $\sigma^2 \left( {\rm Id} -   \left( {\rm Id}+\sigma^{-2}\bar{K}_\mu \right)^{-1} \right)$, so that we also have
 \begin{equation}\label{eq:Lmubarn1}
  \bar{L}_{\mu} = \sigma^2 \left( {\rm Id} -   \left( {\rm Id}+\sigma^{-2}\bar{K}_\mu \right)^{-1} \right).
 \end{equation}
 \end{remark}
 \begin{remark}
 Just as for the operator $\bar{K}_\mu$ we also use the finite size version $\bar{L}_\mu^{n}$ of $\bar{L}_\mu$ whose kernel is written $L_\mu^{n,k}$, $k \in I_n$.
 \end{remark}
 We have the analog of Lemma \ref{lem:FTKmut} for the Fourier transform $\tilde{L}_\mu(\varphi)$ of $L_\mu^{k}$.
 \begin{proposition}\label{prop:Lkmuregular}
 The sequence $(L_\mu^{k}(s,u))_{k \in \Z}$ is the Fourier series of a three times continuously differentiable periodic function $\varphi \to \tilde{L}_\mu(\varphi)( s,u)$ which is continuous w.r.t. $(s,u)$. The Fourier coefficients of $\tilde{L}_\mu(\varphi)(s,u)$, i.e. the kernel $(L_\mu^{k}(s,u))_{k \in \Z}$ of the operator $\bar{L}_\mu$, is $\smallO{1/|k|^3}$, uniformly in $s$, $u$ in $[0,t]$ and $\mu$. Therefore there exist constants $C$ and $D$ independent of $\mu$ such that $\forall s,\,u \in [0,t],\,\forall \varphi \in [-\pi, \pi)$, 
 \begin{align*}
 \sum_{k \in \Z} |L^{k}_\mu(s,u)| \leq C \\
 \sum_{k \in \Z} \left( L_\mu^{k}(s,u)  \right)^2 \leq D\\
  \left| \tilde{L}_\mu(\varphi) (s,u) \right| \leq \sqrt{D} \quad .
 \end{align*}
 \end{proposition}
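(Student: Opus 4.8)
The plan is to pass to the spatial Fourier domain, where the operator identity \eqref{eq:Lmubarn1} of Remark~\ref{rem:commute} decouples into a one-parameter family of identities between integral operators on $L^2([0,t])$, and then to read off the three estimates and the regularity from the already established facts about $\bar K_\mu$ (Lemmas~\ref{lem:L2}, \ref{lem:L2positive}, \ref{lem:Kmusquare} and \ref{lem:FTKmut}). Since every operator occurring in $\bar L_\mu=\sigma^2\big({\rm Id}-({\rm Id}+\sigma^{-2}\bar K_\mu)^{-1}\big)$ is a convolution in the discrete index, taking the Fourier series in that index diagonalises it into fibres indexed by $\varphi\in[-\pi,\pi[$:
\[
\tilde{\bar K}_\mu(\varphi)\big(I+\sigma^{-2}\tilde{\bar K}_\mu(\varphi)\big)^{-1}=\tilde{\bar L}_\mu(\varphi)=\sigma^2\big(I-(I+\sigma^{-2}\tilde{\bar K}_\mu(\varphi))^{-1}\big),
\]
where $\tilde{\bar K}_\mu(\varphi)$ is the integral operator on $L^2([0,t])$ with kernel $\tilde K_\mu(\varphi)(s,u)=\sum_k K_\mu^k(s,u)e^{-ik\varphi}$. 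By Lemmas~\ref{lem:L2} and~\ref{lem:L2positive}, and since its kernel is bounded by $ab$ (Lemma~\ref{lem:Kmusquare}) hence square integrable on $[0,t]^2$, $\tilde{\bar K}_\mu(\varphi)$ is a non-negative self-adjoint Hilbert--Schmidt operator; thus $I+\sigma^{-2}\tilde{\bar K}_\mu(\varphi)$ is invertible with resolvent $R_\mu(\varphi):=(I+\sigma^{-2}\tilde{\bar K}_\mu(\varphi))^{-1}$ of operator norm $\le1$, and $\tilde{\bar L}_\mu(\varphi)$ is a well-defined Hilbert--Schmidt operator whose kernel is exactly $\tilde L_\mu(\varphi)(s,u)=\sum_k L_\mu^k(s,u)e^{-ik\varphi}$.

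First I would derive the resolvent kernel equation by applying $I+\sigma^{-2}\tilde{\bar K}_\mu(\varphi)$ to $\tilde{\bar L}_\mu(\varphi)$, namely
\[
\tilde L_\mu(\varphi)(s,u)=\tilde K_\mu(\varphi)(s,u)-\sigma^{-2}\int_0^t\tilde K_\mu(\varphi)(s,v)\,\tilde L_\mu(\varphi)(v,u)\,dv.
\]
For fixed $u$ the function $v\mapsto\tilde L_\mu(\varphi)(v,u)$ equals $R_\mu(\varphi)\big[\tilde K_\mu(\varphi)(\cdot,u)\big]$, so $\norm{\tilde L_\mu(\varphi)(\cdot,u)}_{L^2([0,t])}\le\norm{\tilde K_\mu(\varphi)(\cdot,u)}_{L^2([0,t])}\le ab\sqrt t$, and likewise $\norm{\tilde K_\mu(\varphi)(s,\cdot)}_{L^2([0,t])}\le ab\sqrt t$. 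Cauchy--Schwarz in the displayed integral then yields $|\tilde L_\mu(\varphi)(s,u)|\le ab+\sigma^{-2}a^2b^2t=:\sqrt D$, uniformly in $s,u,\varphi$ and $\mu$, which is the third asserted bound; the second follows at once by Parseval, $\sum_k\big(L_\mu^k(s,u)\big)^2=\frac1{2\pi}\int_{-\pi}^\pi|\tilde L_\mu(\varphi)(s,u)|^2\,d\varphi\le D$. Continuity of $(s,u)\mapsto\tilde L_\mu(\varphi)(s,u)$ is inherited from that of $\tilde K_\mu(\varphi)$ (Lemma~\ref{lem:FTKmut}) together with the strong continuity in $L^2([0,t])$ of $s\mapsto\tilde K_\mu(\varphi)(s,\cdot)$ and of $u\mapsto R_\mu(\varphi)\tilde K_\mu(\varphi)(\cdot,u)$.

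For the $C^3$-regularity in $\varphi$ and the decay of the coefficients I would differentiate the fibre identity. By \eqref{eq:akbl} (see Remark~\ref{rem:twicediff}), $\sum_{l\in\Z}\sup_\varphi|\partial_\varphi^j\tilde R_\J(\varphi,l)|\le\big(\sum_lb_l\big)\big(\sum_k|k|^ja_k\big)<\infty$ for $j=0,1,2,3$; since $0\le f\le1$, this justifies term-by-term differentiation of $\tilde K_\mu(\varphi)(s,u)=\sum_l\tilde R_\J(\varphi,l)\int_{\T^\Z}f(v^0_s)f(v^l_u)\,d\mu(v)$ and shows that $\varphi\mapsto\tilde{\bar K}_\mu(\varphi)$ is of class $C^3$ into the bounded operators on $L^2([0,t])$, with $\sup_{\varphi,\mu}\|\partial_\varphi^j\tilde{\bar K}_\mu(\varphi)\|<\infty$ and $\partial_\varphi^j\tilde K_\mu(\varphi)(s,u)$ bounded uniformly in $s,u,\mu$. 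Because inversion is smooth on the open set of invertibles and $R_\mu(\varphi)$ has norm $\le1$, $\varphi\mapsto R_\mu(\varphi)$ is of class $C^3$ into the bounded operators with uniformly bounded derivatives (e.g.\ $\partial_\varphi R_\mu=-\sigma^{-2}R_\mu(\partial_\varphi\tilde{\bar K}_\mu)R_\mu$, and similarly at orders two and three). Rewriting the kernel as
\[
\tilde L_\mu(\varphi)(s,u)=\tilde K_\mu(\varphi)(s,u)-\sigma^{-2}\big\langle\tilde K_\mu(\varphi)(s,\cdot),\overline{R_\mu(\varphi)\tilde K_\mu(\varphi)(\cdot,u)}\big\rangle_{L^2([0,t])}
\]
exhibits it as a $C^3$ function of $\varphi$ whose derivatives up to order three are bounded uniformly in $s,u,\mu$, so $(L_\mu^k(s,u))_k$ is the Fourier series of a $C^3$ periodic function and $|L_\mu^k(s,u)|\le|k|^{-3}(2\pi)^{-1}\int_{-\pi}^\pi|\partial_\varphi^3\tilde L_\mu(\varphi)(s,u)|\,d\varphi$ for $k\neq0$. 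Upgrading the resulting $O(1/|k|^3)$ to $\smallO{1/|k|^3}$ uniformly in $s,u,\mu$ is done exactly as in the proof of Lemma~\ref{lem:FTKmut}, by carrying the same estimate one notch further using \eqref{eq:akbl}; together with $|L_\mu^k(s,u)|\le\sqrt D$ this gives $\sum_k|L_\mu^k(s,u)|\le C$ uniformly, which is the first bound.

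The main obstacle is the passage from smoothness of the operator-valued map $\varphi\mapsto R_\mu(\varphi)$ to smoothness of the pointwise kernel $\tilde L_\mu(\varphi)(s,u)$: kernel evaluation is not continuous for the operator-norm topology, which is precisely why the resolvent kernel equation is the right device, as it writes $\tilde L_\mu(\varphi)(s,u)$ as a bilinear pairing of the two $C^3$ maps $\varphi\mapsto\tilde K_\mu(\varphi)(s,\cdot)$ and $\varphi\mapsto R_\mu(\varphi)\tilde K_\mu(\varphi)(\cdot,u)$ with values in $L^2([0,t])$. The second delicate point is keeping every constant free of the three ``parameters'' $s,u$ and $\mu\in\mP_S(\T^\Z)$; this is possible only because the quantities controlling the estimates --- $|\tilde K_\mu(\varphi)(s,u)|\le ab$ and $\sum_l\sup_\varphi|\partial_\varphi^j\tilde R_\J(\varphi,l)|<\infty$ --- are themselves independent of $\mu$.
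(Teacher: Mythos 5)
Your proof follows the paper's own route — diagonalising the operator identity $\bar L_\mu = \sigma^2\big(\mathrm{Id} - (\mathrm{Id}+\sigma^{-2}\bar K_\mu)^{-1}\big)$ over the Fourier fibres $\varphi$ and then importing the regularity of $\tilde K_\mu(\varphi)(s,u)$ from Lemma~\ref{lem:FTKmut} — but where the paper's one-line proof merely asserts that the $C^3$ regularity and $\smallO{1/|k|^3}$ decay of $L_\mu^k$ ``follow,'' you supply the missing mechanism: the resolvent kernel equation $\tilde L_\mu(\varphi)(s,u)=\tilde K_\mu(\varphi)(s,u)-\sigma^{-2}\langle\tilde K_\mu(\varphi)(s,\cdot),\overline{R_\mu(\varphi)\tilde K_\mu(\varphi)(\cdot,u)}\rangle$, which writes the pointwise kernel as a bilinear pairing of $C^3$ $L^2([0,t])$-valued maps and simultaneously yields the explicit constants $\sqrt D=ab+\sigma^{-2}a^2b^2t$, $D$, and $C$. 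One small technical overstatement to repair: the line claiming $\sum_k|k|^j a_k<\infty$ for $j\le3$ does not actually follow from $a_k=\smallO{1/|k|^3}$ (take $a_k\sim 1/(|k|^3\log|k|)$); you should instead invoke Lemma~\ref{lem:FTKmut} directly for the uniformly bounded $\partial_\varphi^j\tilde K_\mu$, $j\le 3$ — this is the same looseness already present in the paper's Remark~\ref{rem:twicediff}, so it is inherited rather than introduced, and it does not affect the argument once Lemma~\ref{lem:FTKmut} is taken as the starting point.
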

 \begin{proof}
 It follows from \eqref{eq:Lmubarn} and Remark \ref{rem:commute} that
\begin{equation}\label{eq:Ltildebar}
\bar{\tilde{L}}_\mu(\varphi)=\left( {\rm Id} + \sigma^{-2} \bar{\tilde{K}}_\mu(\varphi)   \right)^{-1} \bar{\tilde{K}}_\mu(\varphi)=\bar{\tilde{K}}_\mu(\varphi) \left( {\rm Id} + \sigma^{-2} \bar{\tilde{K}}_\mu(\varphi)   \right)^{-1}.
\end{equation}
The order three continuous differentiability of $\tilde{L}_\mu(\varphi)(s,u)$ w.r.t. $\varphi$ follows from that of $\tilde{K}_\mu(\varphi)(s,u)$ proved in Lemma \ref{lem:FTKmut}. We also obtain the fact that the $L^k_\mu(s,u)$ are $\smallO{1/|k|^3}$ uniformly in $s,\,u$ in $[0,t]$ and $\mu$.
 \end{proof}
We have the following important Lemma which establishes that the kernels $L_\mu^{k}(s,u)$ are the covariance of the centered Gaussian field defined by \eqref{eq:Gmu} under another probability law than $\gamma^\mu$.
 \begin{lemma}\label{lem:Lmun}
 For all $t \in [0,T]$ and all $s,\,u \in [0,t]$, under the new law $
 \Lambda_t(G) \cdot \,\gamma^\mu$, the family of processes $(G^i_s)$ is still centered and Gaussian with covariance $L_\mu$ given by
 \begin{equation}\label{eq:Ktilde}
 \Exp^{\gamma^{\mu}} \left[ \Lambda_t(G) G_s^{0} G_u^{k} \right]=L_\mu^{t, k}(s,u),
 \end{equation}
 where
 \[
\Lambda_t(G) =  \frac{\exp \left\{ -\frac{1}{2\sigma^2}  \sum_j \int_0^t (G_u^{j})^2 \, du \right\}  }{\Exp^{\gamma^{\mu}} \left[ \exp \left\{ -\frac{1}{2\sigma^2} \sum_j  \int_0^t (G_u^{j})^2 \, du  \right\} \right] }.
 \]
 In the above, the summation w.r.t. $j$ is over $I_n$ for finite $n$ or over $\Z$ otherwise.
 
 In agreement with \eqref{eq:gammatilde} and Remark \ref{rem:Kmuk} we note $\bar{\gamma}_t^{\mu}$ the corresponding probability law on $(\Omega,\mathcal{A})$
 \end{lemma}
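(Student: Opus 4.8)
The plan is to recognise $\Lambda_t(G)$ as a \emph{quadratic tilt} of the centred Gaussian law $\gamma^\mu$ of the field $(G^i_s)$, and then to read off the new covariance by elementary Gaussian calculus; I would prove this first for finite $n$ and then obtain the case $n=\infty$ by a limiting argument.

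\textbf{Set-up (finite $n$).} First I would fix $t\in[0,T]$ and work in the Hilbert space $\mathcal{H}:=L^2(I_n\times[0,t])$. By the Karhunen--Lo\`eve/RKHS construction recalled just above, under $\gamma^\mu$ the field $(G^i_s)_{i\in I_n,\,s\in[0,t]}$ is a centred Gaussian element of $\mathcal{H}$ --- indeed $\Exp^{\gamma^\mu}\bigl[\|G\|_{\mathcal{H}}^2\bigr]=N\int_0^t K_\mu^{n,0}(s,s)\,ds<\infty$ --- with covariance operator $\bar K_\mu^n$, which by Lemma~\ref{lem:L2} is self-adjoint, non-negative and compact, hence trace class, its kernel being continuous on the compact domain. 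Since $\sum_{j\in I_n}\int_0^t(G^j_u)^2\,du=\|G\|_{\mathcal{H}}^2$, the ratio defining $\Lambda_t(G)$ is exactly the density of the measure obtained from $\gamma^\mu$ by tilting with $\exp\bigl\{-\tfrac12\langle G,\,\sigma^{-2}\mathrm{Id}\,G\rangle_{\mathcal{H}}\bigr\}$.

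\textbf{Tilting.} Next I would diagonalise $\bar K_\mu^n$: let $(h_m^\mu)$ be an orthonormal eigenbasis with eigenvalues $\lambda_m^\mu\ge0$, and write $G=\sum_m\sqrt{\lambda_m^\mu}\,\xi_m\,h_m^\mu$ with $(\xi_m)$ i.i.d.\ $\mathcal N(0,1)$ under $\gamma^\mu$, as recalled before the statement. Then $\|G\|_{\mathcal{H}}^2=\sum_m\lambda_m^\mu\xi_m^2$, so
\[
\Lambda_t(G)=\prod_m\Bigl(1+\lambda_m^\mu/\sigma^2\Bigr)^{1/2}\exp\Bigl\{-\tfrac{\lambda_m^\mu}{2\sigma^2}\,\xi_m^2\Bigr\},
\]
the product $\prod_m(1+\lambda_m^\mu/\sigma^2)^{-1/2}\in(0,1]$ converging because $\sum_m\lambda_m^\mu=\mathrm{tr}\,\bar K_\mu^n<\infty$. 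Hence under $\bar\gamma_t^\mu=\Lambda_t(G)\cdot\gamma^\mu$ the $\xi_m$ stay independent and Gaussian, with $\xi_m\sim\mathcal N\bigl(0,(1+\lambda_m^\mu/\sigma^2)^{-1}\bigr)$. Consequently $(G^i_s)$ stays Gaussian (its finite linear combinations are $L^2$-limits of Gaussians) and stays centred ($\Lambda_t$ is a function of $\|G\|_{\mathcal{H}}$ alone and $\gamma^\mu$ is symmetric under $G\mapsto-G$), and its covariance operator is $\bar K_\mu^n$ with every eigenvalue $\lambda_m^\mu$ replaced by $\lambda_m^\mu/(1+\lambda_m^\mu/\sigma^2)$, i.e.\ it equals $(\mathrm{Id}+\sigma^{-2}\bar K_\mu^n)^{-1}\bar K_\mu^n=\bar L_\mu^n$ by \eqref{eq:Lmubarn} and Remark~\ref{rem:commute}. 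Reading off its kernel gives \eqref{eq:Ktilde} at size $n$, $\Exp^{\gamma^\mu}[\Lambda_t(G)G^0_sG^k_u]=L_\mu^{n,k}(s,u)$. Equivalently, and more invariantly, one diagonalises $\bar K_\mu$ in the spatial Fourier variable and notes that the tilt acts mode by mode, sending $\bar{\tilde{K}}_\mu(\varphi)$ to $\bar{\tilde{L}}_\mu(\varphi)$ via \eqref{eq:Ltildebar}.

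\textbf{The limit $n=\infty$, and the main obstacle.} Finally I would let $n\to\infty$. The size-$n$ identity just obtained reads $\Exp^{\gamma^\mu}[\Lambda_t^n(G)G^0_sG^k_u]=L_\mu^{n,k}(s,u)$, and by Proposition~\ref{prop:Lkmuregular} the kernels $L_\mu^{n,k}(s,u)$ are $\smallO{1/|k|^3}$ uniformly in $n,s,u,\mu$ and uniformly bounded; together with the convergence of the truncated/periodised operators $\bar K_\mu^n\to\bar K_\mu$, and hence $\bar L_\mu^n\to\bar L_\mu$, this yields $L_\mu^{n,k}(s,u)\to L_\mu^k(s,u)$, the claimed covariance. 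The Gaussian computation itself is routine; the two delicate points are (a) the infinite-dimensional bookkeeping behind the quadratic tilt --- that the normalising constant is finite and strictly positive and that the tilted law is genuinely Gaussian, which is precisely where trace-class-ness of $\bar K_\mu^n$ (Lemma~\ref{lem:L2}) is needed --- and (b) the passage to $n=\infty$, which relies on the uniform kernel estimates of Proposition~\ref{prop:Lkmuregular}. I expect (b) to be the harder point, since it requires controlling the family $\bar K_\mu^n$ uniformly in $n$.
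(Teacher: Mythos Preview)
Your proposal is correct and follows essentially the same route as the paper: diagonalise the compact self-adjoint covariance operator $\bar K_\mu$ in the Karhunen--Lo\`eve basis $(h_m^\mu)$, write $G=\sum_m\sqrt{\lambda_m^\mu}\,\xi_m^\mu\,h_m^\mu$ with $(\xi_m^\mu)$ i.i.d.\ $\mathcal N(0,1)$, observe that $\Lambda_t(G)$ factorises over $m$, and conclude that under the tilted law each $\xi_m^\mu$ becomes $\mathcal N\bigl(0,(1+\lambda_m^\mu/\sigma^2)^{-1}\bigr)$, whence the new covariance operator is $(\mathrm{Id}+\sigma^{-2}\bar K_\mu)^{-1}\bar K_\mu=\bar L_\mu$.

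The only real difference is in the limiting step. The paper does not take $n\to\infty$; instead it truncates the Karhunen--Lo\`eve expansion at level $M$, computes the moment generating function $\Exp^{\gamma^\mu}\bigl[\exp\{\delta G_t^{M,k}-\tfrac{1}{2\sigma^2}\sum_j\int_0^t(G_s^{M,j})^2\,ds\}\bigr]$ explicitly (your product formula), passes to the limit $M\to\infty$ via uniform integrability, and then differentiates twice at $\delta=0$ and polarises. This handles the finite-$n$ and infinite-$n$ cases in one stroke, since the argument depends only on compactness of $\bar K_\mu$ and the convergence of $\sum_m(g_m^\mu(k,t))^2=K_\mu^0(t,t)$, not on $\bar K_\mu$ being trace class. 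Your route through $n\to\infty$ is valid but adds an extra layer: you must control $L_\mu^{n,k}\to L_\mu^k$ and match the finite-$n$ tilted laws to the infinite one, whereas the paper's $M\to\infty$ limit is purely internal to the eigen-expansion and does not require comparing different spatial domains.
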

 \begin{proof}
 Let $\delta$ be a real number and $G_t^{M,k}=\sum_{m=0}^M  g^{\mu}_m(k,t) \xi^{\mu}_m$. Using the properties of the basis $(g^{\mu}_m)_{m \geq 0}$ we have
 \begin{multline*}
 \Exp^{\gamma^{\mu}} \left[ \exp \left \{ \delta G^{M,k}_t -\frac{1}{2\sigma^2} \sum_j \int_0^t (G^{M, j}_s)^2 \, ds \right\} \right] \\
 =\Exp^{\gamma^\mu} \left[ \exp \left \{ \delta \sum_{m=0}^M  g^{\mu}_m(k,t) \xi^{\mu}_m -\frac{1}{2\sigma^2}  \sum_{m=0}^M \lambda^{\mu}_m  (\xi^{\mu}_m)^2 \right\} \right].
 \end{multline*}
 Because of the independence of the $\xi_n^{\mu}$, this is equal to
\[
 \prod_{m=0}^M  \Exp^{\gamma^\mu} \left[ \exp \left \{ \delta g^{\mu}_m (k,t) \xi^{\mu}_n -\frac{1}{2\sigma^2}  \lambda^{\mu}_m (\xi^\mu_m)^2 \right\} \right],
 \]
 and, using standard Gaussian calculus, we obtain 
 \begin{multline*}
  \Exp^{\gamma^{\mu}} \left[ \exp \left \{ \delta G^{M,k}_t -\frac{1}{2\sigma^2} \sum_j \int_0^t (G^{M, j}_s)^2 \, ds \right\} \right] \\
	= \prod_{m=0}^M \left(1+\frac{\lambda^{\mu}_m}{\sigma^2}\right)^{-1/2} \, \exp \left(\frac{\delta^2}{2}  \sum_{m=0}^M \frac{1}{1+ \frac{\lambda^{\mu}_m}{\sigma^2}}\left(g^{\mu}_m(k,t)\right)^2 \right).
 \end{multline*}
  In particular
 \begin{align}\label{eq:Gamma1nM}
  \Exp^{\gamma^{\mu}} \left[ \exp \left \{  -\frac{1}{2\sigma^2} \sum_j \int_0^t (G^{M, j}_s)^2 \, ds \right\} \right] &= \left(\prod_{m=0}^M (1+ \frac{\lambda^{\mu}_m}{\sigma^2})\right)^{-1/2}\\
  \frac{  \Exp^{\gamma^{\mu}} \left[ \exp \left \{ \delta G^{M,k}_t -\frac{1}{2\sigma^2} \sum_j \int_0^t (G^{M, j}_s)^2 \, ds \right\} \right]}{  \Exp^{\gamma^{\mu}} \left[ \exp \left \{  -\frac{1}{2\sigma^2} \sum_j \int_0^t (G^{M, j}_s)^2 \, ds \right\} \right]} & =  \exp \frac{\delta^2}{2} \left\{ \sum_{m=0}^M \frac{1}{1+\frac{\lambda^{\mu}_m}{\sigma^2}}\left(g^\mu_m(k,t)\right)^2 \right\}.
 \nonumber 
 \end{align}
 The same formula shows that the sequence $\exp \left \{ \delta G^{M,k}_t -\frac{1}{2\sigma^2} \sum_j \int_0^t (G^{M, j}_s)^2 \, ds \right\}$ is bounded in $L^{1+\rho}(\Omega,\mathcal{A},\gamma)$ for any positive real $\rho$ so that this sequence is uniformly integrable. It converges in probability to $\exp \left \{ \delta G^{k}_t -\frac{1}{2\sigma^2} \sum_j \int_0^t (G^j_s)^2 \, ds \right\}$. 
 We conclude that
 \begin{align}\label{eq:Gamma1nn}
 \Exp^{\gamma^{\mu}} \left[ \exp \left \{  -\frac{1}{2\sigma^2} \sum_j \int_0^t (G^j_s)^2 \, ds \right\} \right]   &= \prod_{m \in \N} \left(1+ \frac{\lambda^{\mu}_m}{\sigma^2}\right)^{-1/2}
\\
  \frac{  \Exp^{\gamma^{\mu}} \left[ \exp \left \{ \delta G^{k}_t -\frac{1}{2\sigma^2} \sum_j \int_0^t (G^{j}_s)^2 \, ds \right\} \right]}{  \Exp^{\gamma^{\mu}} \left[ \exp \left \{  -\frac{1}{2\sigma^2} \sum_j \int_0^t (G^{j}_s)^2 \, ds \right\} \right]} & =  \exp \frac{\delta^2}{2} \left\{ \sum_{m \in \N} \frac{1}{1+\frac{\lambda^{\mu}_m}{\sigma^2}}\left(g^\mu_m(k,t)\right)^2 \right\}.
  \label{eq:moments1}
 \end{align}
 We have computed the moment generating function of $G^i_s$ under the new law $\Lambda_t(G) \cdot \gamma^\mu$.
 It is still Gaussian centered with covariance obtained by deriving \eqref{eq:moments1} twice at $\delta=0$ to obtain:
 \[
  \frac{\Exp^{\gamma^{\mu}} \left[ (G^{k}_t)^2 \exp \left \{ -\frac{1}{2\sigma^2} \sum_j \int_0^t (G^j_s)^2 \, ds \right\} \right]}{\Exp^{\gamma^{\mu}} \left[ \exp \left \{ -\frac{1}{2\sigma^2} \sum_j \int_0^t (G^j_s)^2 \, ds\right\}  \right]}=  \sum_{m \in \N} \frac{1}{1+\frac{\lambda^{\mu}_m}{\sigma^2}}\left(g^\mu_m(k,t)\right)^2 ,
 \]
 which yields \eqref{eq:Ktilde} by polarization.
 \end{proof}
  \begin{proposition}\label{prop:Amununif}
 The application $\mu \to L_{\mu}$ is Lipschitz continuous: There exists a positive constant $C_{t}$ such that
 \[
  | L_{\mu}^{k}(s,u) - L_{\nu}^{k}(s,u) |  \leq \smallO{1/|k|^3} C_{t} D_t(\mu, \nu)\ \forall s,\,u \in [0,t]
 \]
 for all $k \in \Z$.
 \end{proposition}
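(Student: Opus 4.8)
The plan is to first control the difference of the covariance kernels $K_\mu-K_\nu$ directly by $D_t(\mu,\nu)$, and then to transfer that bound to $L_\mu-L_\nu$ through the algebraic relation \eqref{eq:Lmubarn1}. For the first part, fix $s,u\in[0,t]$, let $\xi$ be an arbitrary coupling of $\mu,\nu$ with generic point $(w,v)$ (so $w\sim\mu$, $v\sim\nu$), and write
\[
K_\mu^k(s,u)-K_\nu^k(s,u)=\Exp^{\xi}\Big[\sum_{l}R_\J(k,l)\big(f(w^0_s)f(w^l_u)-f(v^0_s)f(v^l_u)\big)\Big].
\]
The crucial point is to carry out the sum over $l$ \emph{before} estimating: since $0\le f\le 1$, the bracketed quantity is at most $\sum_l|R_\J(k,l)|\big(\norm{f(w^l)-f(v^l)}_t+\norm{f(w^0)-f(v^0)}_t\big)$, and bounding $|R_\J(k,l)|\le a_k b_l$ from \eqref{eq:RJdef} makes the weights $b_l$ match exactly the weights defining $d_t$ in \eqref{eq:metric2}:
\[
\sum_{l}|R_\J(k,l)|\big(\norm{f(w^l)-f(v^l)}_t+\norm{f(w^0)-f(v^0)}_t\big)\le a_k\,d_t(w,v)+a_k b\,\norm{f(w^0)-f(v^0)}_t\le C_0\,a_k\,d_t(w,v),
\]
with $C_0:=1+b/b_0$ and $b_0>0$. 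Taking expectations and then the infimum over couplings gives $|\delta K^k(s,u)|\le C_0\,a_k\,D_t(\mu,\nu)$ for all $k$, uniformly in $s,u\in[0,t]$, where $\delta K^k:=K_\mu^k-K_\nu^k$; by \eqref{eq:akbl} this sequence is $\smallO{1/|k|^3}$ and summable in $k$, with both bounds proportional to $D_t(\mu,\nu)$.

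For the passage to $L$, put $\bar A_\rho:={\rm Id}+\sigma^{-2}\bar K_\rho$ on $L^2(\Z\times[0,t])$; positivity of $\bar K_\rho$ (Lemma \ref{lem:L2positive}) makes $\bar A_\rho\ge{\rm Id}$ invertible, and \eqref{eq:Lmubarn1} reads $\bar A_\rho^{-1}={\rm Id}-\sigma^{-2}\bar L_\rho$, $\bar L_\rho=\sigma^2({\rm Id}-\bar A_\rho^{-1})$. The resolvent identity then yields
\[
\bar L_\mu-\bar L_\nu=\sigma^2(\bar A_\nu^{-1}-\bar A_\mu^{-1})=\bar A_\mu^{-1}(\bar K_\mu-\bar K_\nu)\bar A_\nu^{-1}=({\rm Id}-\sigma^{-2}\bar L_\mu)(\bar K_\mu-\bar K_\nu)({\rm Id}-\sigma^{-2}\bar L_\nu).
\]
Expanding the last product and reading off kernels via Remark \ref{rem:opcomposition}, $L_\mu^k(s,u)-L_\nu^k(s,u)$ is the sum of $\delta K^k(s,u)$, $-\sigma^{-2}(L_\mu\star\delta K)^k(s,u)$, $-\sigma^{-2}(\delta K\star L_\nu)^k(s,u)$ and $\sigma^{-4}(L_\mu\star\delta K\star L_\nu)^k(s,u)$, where $(\alpha\star\beta)^k(s,u):=\sum_l\int_0^t\alpha^{k-l}(s,v)\beta^l(v,u)\,dv$.

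The final, and I expect main, obstacle is to show that these kernel compositions inherit the $\smallO{1/|k|^3}$ decay with constants uniform in $\mu$ and $\nu$. Here I would use the elementary lemma: if $|\alpha^k|\le P_k$, $|\beta^k|\le Q_k$ with $(P_k),(Q_k)$ nonnegative and $\smallO{1/|k|^3}$ (hence in $\ell^1$), then $|(\alpha\star\beta)^k|\le t\sum_l P_{k-l}Q_l$, and splitting $\sum_l$ at $|l|=|k|/2$ gives $|k|^3\sum_l P_{k-l}Q_l\le 8\big((\sup_{|m|\ge|k|/2}|m|^3P_m)\sum_l Q_l+(\sup_{|m|\ge|k|/2}|m|^3Q_m)\sum_l P_l\big)\to 0$. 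Applying this with $P$ the uniform bound on $\sup_{s,v,\rho}|L_\rho^k(s,v)|$ furnished by Proposition \ref{prop:Lkmuregular} (which is $\smallO{1/|k|^3}$, hence $\ell^1$, uniformly in $\rho$) and $Q=(C_0 a_k D_t(\mu,\nu))_k$, and vice versa, each of the two single compositions is bounded by $\smallO{1/|k|^3}\,C_t\,D_t(\mu,\nu)$ with the $\smallO{\cdot}$ sequence and $C_t$ independent of $\mu,\nu,s,u$ and depending on $t$ only through a power of $t\le T$; the triple composition $L_\mu\star\delta K\star L_\nu=(L_\mu\star\delta K)\star L_\nu$ is treated by applying the lemma once more. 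Adding the four terms gives $|L_\mu^k(s,u)-L_\nu^k(s,u)|\le\smallO{1/|k|^3}\,C_t\,D_t(\mu,\nu)$ uniformly in $s,u\in[0,t]$, which is the assertion. The algebra of the second step and the coupling estimate of the first are routine; the delicate point is precisely the uniformity in $\mu,\nu$ of the decay of the composed kernels, which is why the uniform estimates of Proposition \ref{prop:Lkmuregular} are essential.
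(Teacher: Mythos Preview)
Your proof is correct and follows essentially the same route as the paper: both use the resolvent identity $\bar L_\mu-\bar L_\nu=({\rm Id}+\sigma^{-2}\bar K_\mu)^{-1}(\bar K_\mu-\bar K_\nu)({\rm Id}+\sigma^{-2}\bar K_\nu)^{-1}$, bound $\delta K^k$ by $C\,a_k\,D_t(\mu,\nu)$ via the same coupling argument, and then invoke that convolving two $\smallO{1/|k|^3}$ sequences gives an $\smallO{1/|k|^3}$ sequence together with the uniform decay of Proposition~\ref{prop:Lkmuregular}. The only cosmetic difference is that the paper keeps the inverses as single operators $\bar H_\rho:=({\rm Id}+\sigma^{-2}\bar K_\rho)^{-1}$ and bounds the triple composition $H_\nu\star\delta K\star H_\mu$ directly (remarking that Proposition~\ref{prop:Lkmuregular} ``clearly applies'' to $H_\rho$), whereas you substitute $\bar A_\rho^{-1}={\rm Id}-\sigma^{-2}\bar L_\rho$ and expand into four terms, which lets you apply Proposition~\ref{prop:Lkmuregular} exactly as stated; your explicit splitting argument for the convolution lemma is also a small bonus, as the paper simply asserts it.
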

 \begin{proof}
 According to \eqref{eq:Lmubarn1} we have
 \begin{align*}
 \bar{L}_\mu - \bar{L}_\nu & = \sigma^2 \left( \left( {\rm Id}+\sigma^{-2}\bar{K}_\nu \right)^{-1} -   \left( {\rm Id}+\sigma^{-2}\bar{K}_\mu \right)^{-1} \right)\\
 &=
 \left( {\rm Id}+\sigma^{-2}\bar{K}_\nu \right)^{-1} \left( \bar{K}_\mu - \bar{K}_\nu  \right)  \left( {\rm Id}+\sigma^{-2}\bar{K}_\mu \right)^{-1}.
 \end{align*}
 Define $\bar{H}_\mu = \left( {\rm Id}+\sigma^{-2}\bar{K}_\mu \right)^{-1}$ and $\bar{H}_\nu = \left( {\rm Id}+\sigma^{-2}\bar{K}_\nu \right)^{-1}$. Using Remark \ref{rem:opcomposition}
 we have
 \[
 L_\mu^k(s,u) - L_\nu^k(s,u) = \sum_{l,j} \int_0^t \int_0^t  H_\nu^{k-l}(s,s_1) ( K_\mu^{l-j}(s_1,s_2) - K_\nu^{l-j}(s_1,s_2) ) H_\mu^{j}(s_2,u)\,ds_1\,ds_2.
 \]
 Let $\xi$ be a coupling  between $\mu$ and $\nu$, \eqref{eq:Kmudef} commands that 
 \begin{multline*}
 \left| L_\mu^k(s,u) - L_\nu^k(s,u)  \right| \leq  \\
 \sum_{l,j,m} \int_0^t \int_0^t  \left| H_\nu^{k-l}(s,s_1) \right| \left| R_\J(l-j,m) \right| \Exp^\xi \left[  \left| f(w^0_{s_1})f(w^m_{s_2}) - f( w^{'0}_{s_1})f(w^{'m}_{s_2}) \right| \right]  \left| H_\mu^{j}(s_2,u) \right|\,ds_1\,ds_2.
 \end{multline*}
  Observing that $f(w^0_{s_1})f(w^m_{s_2}) -f(w^{'0}_{s_1})f(w^{'m}_{s_2})=f(w^0_{s_1})(f(w^m_{s_2})-f(w^{'m}_{s_2}))+f(w^{'m}_{s_2})(f(w^0_{s_1})-f(w^{'0}_{s_1}))$, we obtain, using $0 \leq f \leq 1$
 \begin{multline*}
 \sum_{m \in \Z}\left | R_{\J}(l-j,m) \right |\int_0^t \int | f(w^0_{s_1})f(w^m_{s_2}) -f(w^{'0}_{s_1})f(w^{'m}_{s_2}) | \,d\xi(w,w')  \\
  \leq\left( \sum_{m \in \Z}\left | R_{\J}(l-j,m) \right | \right) \int | f(w^0_{s_1})-f(w^{'0}_{s_1}) |\,d\xi(w,w')\\+\sum_{m \in \Z}  \left | R_{\J}(l-j,m) \right |\int | f(w^m_{s_2})-f(w^{'m}_{s_2}) |\,d\xi(w,w').
 \end{multline*}
Equations \eqref{eq:RJdef} and  \eqref{eq:metric2} imply 
\begin{align*}
\left( \sum_{m \in \Z}\left | R_{\J}(l-j,m) \right | \right) 
\int | f(w^0_{s_1})-f(w^{'0}_{s_1}) |\,d\xi(w,w') &\leq  \frac{b}{b_0} a_{l-j} 
\int d_t(w,w')\, d\xi(w,w')\\
\sum_{m \in \Z}  \left | R_{\J}(l-j,m) \right |\int | f(w^m_{s_2})-f(w^{'m}_{s_2}) |\,d\xi(w,w') &\leq  a_{l-j}  \int d_t(w,w')\, d\xi(w,w').
\end{align*}
This commands that
\begin{multline}\label{eq:lmuk-lnuk}
\left| L_\mu^k(s,u) - L_\nu^k(s,u)  \right| \\ \leq C \sum_{l,j} \int_0^t \int_0^t  \left| H_\nu^{k-l}(s,s_1) \right| a_{l-j} \left| H_\mu^{j}(s_2,u) \right|\,ds_1\,ds_2 \times  \int d_t(w,w')\, d\xi(w,w')
\end{multline}
for some constant $C > 0$. We use Proposition \ref{prop:Lkmuregular}, which clearly applies to $\bar{H}_\mu$ and $\bar{H}_\nu$.
Since convolving two sequences $(c_k)_{k \in \Z}$ and $(d_k)_{k \in \Z}$ whose terms are $\smallO{1/|k|^3}$ results in a sequence which is also $\smallO{1/|k|^3}$ it follows from \eqref{eq:lmuk-lnuk} that
\[
\left| L_\mu^k(s,u) - L_\nu^k(s,u)  \right| \leq \smallO{1/|k|^3} C t^2 D_t(\mu,\nu).
\]
 \end{proof}

 \subsection{Discrete time setting}\label{app:covdiscrete}
 In several parts of the paper we use time-discretized versions of these operators. Two cases occur. The first is that of a general measure in $\mP_S(\T^\Z)$, typically the limit measure $\mu_*$. The second is that of an empirical measure $\emp(V_n)$ or $\emp(V^m_n)$. Given a partition of $[0,T]$ into the $(m+1)$ points $v \eta_m = v\frac{T}{m}$, with $\eta_m := T/m$, for $v=0$ to $m$ we deal with the operators $\bar{K}_{\mu}$ and $\bar{L}_{\mu}$. It will be clear from the context whether these operators are defined by a finite, e.g. $(\bar{K}_{\mu}^{i})_{i \in I_n}$, or infinite, e.g. $(\bar{K}_{\mu}^{i})_{i \in \Z}$,sequence. In the finite case these operators are $Nv \times Nv$ matrixes which are block Toeplitz for $\bar{K}_\mu$ and
 $\bar{L}_\mu$. 
 
 We also consider several Fourier transforms of these operators. The continuous one noted $\bar{\tilde{K}}_\mu(\varphi)$,
 $\varphi \in [-\pi, \pi[$ in both the infinite and finite cases, and the discrete one.  In the continuous case we have
 \[
\tilde{K}_\mu(\varphi)=\sum_{j \in I_n} K_{\mu}^{ j} e^{-i j \varphi},\quad i^2=-1.
 \]
 For the discrete case, and this applies only to $\mu=\emp(V_n)$ and $\mu=\emp(V^m_n)$, the operators $\bar{K}_{\mu}$ and $\bar{L}_{\mu}$ are defined by the $N$ $v \times v$ matrixes $K_\mu^{j}$, $j \in I_n$. We consider their length $N$ Discrete Fourier Transform (DFT), i.e. the sequence of $N$ $v \times v$ matrixes $\tilde{K}_\mu^{p}$, $p \in I_n$ with
 \[
 \tilde{K}_\mu^{p}=\sum_{j \in I_n} K_\mu^{j} F_N^{-jp},
 \]
 the corresponding operator, noted $\bar{\tilde{K}}_\mu^{v \eta_m}$, is  block diagonal, the blocks having size $v \times v$.
 
 We also consider the sequence of $Q_m$ $v \times v$ matrixes, noted $K_\mu^{q_m, j}$, $j \in I_{q_m}$, pad it with $N-Q_m$ nul matrixes, and consider its length $N$ Discrete Fourier Transform (DFT), i.e. the sequence of $N$ $v \times v$ matrixes noted $\tilde{K}_\mu^{q_m, p}$, $p \in I_n$ with
 \[
 \tilde{K}_\mu^{q_m, p}=\sum_{j \in I_{q_m}} K_\mu^{j} F_N^{-jp},
 \]
 the corresponding operator, noted $\bar{\tilde{K}}_\mu^{q_m}$, is  also block diagonal, the blocks having also size $v \times v$.
 
 Note that we have
 \begin{equation}\label{eq:Kqmp}
 \tilde{K}_\mu^{ p} = \tilde{K}_\mu\left( \frac{2 \pi p}{N} \right), \quad p \in I_{n},
 \end{equation}
 and
\begin{equation}\label{eq:Kqmnp}
  \tilde{K}_\mu^{q_m,  p} = \tilde{K}_\mu^{q_m}\left( \frac{2 \pi p}{N} \right), \quad p \in I_n.
\end{equation}
All this holds mutatis mutandis if we replace $K_\mu$ by $L_\mu$.

Also note that the following relations hold
\begin{equation}\label{eq:tildeLdefdis}
\tilde{L}_{\emp(Z_n)}^{ p}(v \eta_m, w \eta_m) = \frac{1}{N} \Exp^{\gamma^{\emp(Z_n)}} \left[ \tilde{\Lambda}^{|p|}_{v \eta_m}(\tilde{G}) \tilde{G}^{-p}_{v \eta_m}\tilde{G}^p_{w \eta_m}  \right],\ p \in I_n,\,w \leq v \in \{0, \cdots, m\},
\end{equation}
where $Z_n=V_n$ or $V^m_n$.
We provide a short proof
\begin{proof}
According to \eqref{eq:Ktilde} we have, taking the length $N$ DFT of both sides,
\[
\tilde{L}_{\emp(Z_n)}^{ p}(v \eta_m, w \eta_m) = \Exp^{\gamma^{\emp(Z_n)}} \left[ \Lambda_{v\eta_m}(G) G^0_{v \eta_m} \tilde{G}^p_{w \eta_m}  \right].
\]
Using the inverse DFT relation,
\[
G^0_{v \eta_m} = \frac{1}{N} \sum_{q \in I_n} \tilde{G}^q_{v \eta_m},
\]
so that
\[
\tilde{L}_{\emp(Z_n)}^{ p}(v \eta_m, w \eta_m) = \frac{1}{N} \sum_{q \in I_n} \Exp^{\gamma^{\emp(Z_n)}} \left[ \Lambda_{v \eta_m}(G)  \tilde{G}^q_{v \eta_m} \tilde{G}^p_{w \eta_m}  \right].
\]
By Proposition~\ref{prop:gamma tilde p t} and Corollary~\ref{cor:tilde Gp independent tilde Gq} we have
\[
\tilde{L}_{\emp(Z_n)}^{ p}(v \eta_m, w \eta_m) = \frac{1}{N} \Exp^{\gamma^{\emp(Z_n)}} \left[ \tilde{\Lambda}^{|p|}_{v \eta_m}(\tilde{G}) \tilde{G}^{-p}_{v \eta_m} \tilde{G}^p_{w \eta_m} \right],
\]
which ends the proof.
\end{proof}
\section{Proof of Lemmas \ref{lem:alpha1}-\ref{lem:alpha4}}\label{app:proofofalphas}
\begin{proof}[{\bf Proof of Lemma \ref{lem:alpha1}}]
 We recall from \eqref{eq:tildetheta-eta} that
 \[
 \alpha^1_{s} = \frac{1}{N^2} \sum_{p \in I_n}  \left| \tilde{\theta}^p_s -{^m}\tilde{\theta}^p_{s^{(m)}} \right|^2.
 \]
 The proof is based on decomposing the right hand side of this equation into four terms.
  Using \eqref{eq:tildethetap} we write,
  \begin{multline}\label{eq:alpha1jpdef}
 \tilde{\theta}^p_s -{^m}\tilde{\theta}^p_{s^{(m)}}=  \sigma^{-2} N^{-1} \Exp^{\gamma^{\emp(V_n)}}\left[ \tilde{\Lambda}_s^p (\tilde{G}) \tilde{G}^p_s \int_0^s\tilde{G}^{-p}_r d\tilde{V}^{p}_r -\tilde{\Lambda}_{s^{(m)}}^p(\tilde{G})\tilde{G}^p_{s^{(m)}} \int_0^{s^{(m)}} \tilde{G}^{-p}_{r^{(m)}}  d\tilde{V}^{p}_r\right] = \\
 \sigma^{-2} N^{-1} \underbrace{\Exp^{\gamma^{\emp(V_n)}}\left[\left( \tilde{\Lambda}_s^p(\tilde{G})-\tilde{\Lambda}_{s^{(m)}}^p(\tilde{G})\right)\tilde{G}^p_{s^{(m)}} \int_0^{s^{(m)}} \tilde{G}^{-p}_{r^{(m)}}  d\tilde{V}^{p}_r\right]}_{\alpha^{1,1,p}_s}+\\
 \sigma^{-2} N^{-1} \underbrace{  \Exp^{\gamma^{\emp(V_n)}}\left[ \tilde{\Lambda}_s^p(\tilde{G})\left( \tilde{G}^p_s-\tilde{G}^p_{s^{(m)}} \right) \int_0^{s^{(m)}} \tilde{G}^{-p}_{r^{(m)}}  d\tilde{V}^{p}_r\right]}_{\alpha^{1,2,p}_s}+\\
 \sigma^{-2} N^{-1} \underbrace{ \Exp^{\gamma^{\emp(V_n)}}\left[\tilde{\Lambda}_s^p (\tilde{G}) \tilde{G}^p_s \int_0^{s^{(m)}} \left( \tilde{G}^{-p}_r - \tilde{G}^{-p}_{r^{(m)}}\right) d\tilde{V}^{p}_r \right]}_{\alpha^{1,3,p}_s} +\\
 \sigma^{-2} N^{-1} \underbrace{\Exp^{\gamma^{\emp(V_n)}}\left[ \tilde{\Lambda}_s^p (\tilde{G}) \tilde{G}^p_s \int_{s^{(m)}}^s \tilde{G}^{-p}_r d\tilde{V}^{p}_r \right]}_{\alpha^{1,4,p}_s},
 \end{multline}
 so that 
 \[
 \alpha^1_s \leq \frac{4}{\sigma^4} \frac{1}{N^4} \sum_{j=1}^4 \sum_{p \in I_n} | \alpha^{1,j,p}_s |^2.
 \]
 We prove that for any $M > 0$, for any $m$ sufficiently large, we have
\[
 \lsup{n} \frac{1}{N} \log Q^n \left( \sup_{s \in [0, T]}  \frac{1}{N^4}  \sum_{p \in I_n} | \alpha^{1,j,p}_s |^2  \geq \frac{\epsilon \sigma^2}{48 T C } \right) \leq -M \ j=1,\cdots,4.
\]
 The proofs are somewhat similar. They all rely upon the use of 
 Proposition~\ref{prop:gamma tilde p t}, Corollary~\ref{cor:tilde Gp independent tilde Gq}, Lemma~\ref{Lemma Fourier Bound on R Jn}, Isserlis'  and Cramer's Theorems.
 Let $0 \leq v \leq m$ be such that $s^{(m)}=v \eta_m$. For the rest of the proof we define 
 \begin{equation}\label{eq:Bdef}
 B:=\frac{\epsilon \sigma^2}{48 T C }.
 \end{equation}
 {\bf Proof for $\alpha^{1,1,p}_s$}\\ 
From \eqref{eq:alpha1jpdef} we have
 \[
 \alpha^{1,1,p}_s = \Exp^{\gamma^{\emp(V_n)}}\left[\left( \tilde{\Lambda}_s^p(\tilde{G})-\tilde{\Lambda}_{s^{(m)}}^p(\tilde{G})\right)\tilde{G}^p_{s^{(m)}} \int_0^{s^{(m)}} \tilde{G}^{-p}_{r^{(m)}}  d\tilde{V}^{p}_r\right].
 \]
 {\bf Step 1: An upper bound for $\left| \tilde{\Lambda}^p_{s}(\tilde{G}) -  \tilde{\Lambda}^p_{s^{(m)}}(\tilde{G})  \right|$}\\
 We recall the definition of $ \tilde{\Lambda}^p_{s}(\tilde{G})$:
\[
\tilde{\Lambda}^p_{s}(\tilde{G}) = \frac{e^{-\frac{u_p}{N \sigma^2} \int_0^{s} \left| \tilde{G}^p_u  \right|^2 \, du}}
{\Exp^{\gamma^{\emp(V_n)}} \left[ e^{-\frac{u_p}{N \sigma^2} \int_0^{s} \left| \tilde{G}^p_u  \right|^2 \, du}  \right]}:=\frac{X_p(s)}{\Exp^{\gamma^{\emp(V_n)} }\left[ X_p(s) \right]},
\]
with $u_p=1$ if $p \neq 0$ and $u_0=1/2$, see \eqref{eq:Lambdapt}. 
We then use the Lipschitz continuity of $x \to e^{-x}$ for $x \geq 0$:
\[
\left| e^{-x} - e^{-y} \right| \leq | x - y |,
\]
to obtain
	\begin{multline*}\label{eq:lambdapdiffbound}
\tilde{\Lambda}^p_{s}(\tilde{G}) -  \tilde{\Lambda}^p_{s^{(m)}}(\tilde{G}) =
\dfrac{X_p(s)-X_p(s^{(m)})}{\Exp^{\gamma^{\emp(V_n)}} \left[ X_p(s) \right]} - \frac{X_p(s^{(m)})}{\Exp^{\gamma^{\emp(V_n)}} \left[ X_p(s^{(m)}) \right]}
\left(1 - \dfrac{\Exp^{\gamma^{\emp(V_n)}} \left[ X_p(s^{(m)}) \right]}{\Exp^{\gamma^{\emp(V_n)}} \left[ X_p(s) \right]}\right),
\end{multline*}
\begin{multline}
\left|\tilde{\Lambda}^p_{s}(\tilde{G}) -  \tilde{\Lambda}^p_{s^{(m)}}(\tilde{G})\right| 
\leq 
\dfrac{\frac{u_p}{N \sigma^2}   \int_{s^{(m)}}^s \left| \tilde{G}^{p}_u \right|^2  \, du }{\Exp^{\gamma^{\emp(V_n)}} \left[ X_p(s) \right]} 
+ \tilde{\Lambda}^p_{s^{(m)}}(\tilde{G})\dfrac{\frac{u_p}{N \sigma^2} \Exp^{\gamma^{\emp(V_n)}} \left[  \int_{s^{(m)}}^s \left| \tilde{G}^{p}_u \right|^2  \, du  \right]}{\Exp^{\gamma^{\emp(V_n)}} \left[ e^{-\frac{u_p}{N \sigma^2} \int_0^{s} \left| \tilde{G}^{p}_u  \right|^2 \, du}  \right] }.
\end{multline}
We therefore have to find a strictly positive lower bound for $\Exp^{\gamma^{\emp(V_n)}} \left[ e^{-\frac{u_p}{N \sigma^2} \int_0^{s} \left| \tilde{G}^{p}_u  \right|^2 \, du}  \right] $ and show that there exists a positive constant $D$, independent of $p$ and $N$ such that
\begin{equation}\label{eq:minorExps}
0 < D \leq \Exp^{\gamma^{\emp(V_n)}} \left[ e^{-\frac{u_p}{N \sigma^2} \int_0^{s} \left| \tilde{G}^{p}_u  \right|^2 \, du}  \right] \leq 1 < \infty.
\end{equation}
Indeed, since $x \to e^{-x}$ is convex, Jensen's inequality commands that
\[
e^{-  \frac{u_p}{N \sigma^2} \int_0^{s} \Exp^{\gamma^{\emp(V_n)}} \left[ \left| \tilde{G}^{p}_u  \right|^2 \right]\,du } \leq \Exp^{\gamma^{\emp(V_n)}} \left[ e^{-\frac{u_p}{N \sigma^2} \int_0^{s} \left| \tilde{G}^{p}_u  \right|^2 \, du}  \right].
\]
	According to Lemma~\ref{lem:tildeGpGq}
	\[
	\Exp^{\gamma^{\emp(V_n)}} \left[ \left| \tilde{G}^{p}_u  \right|^2 \right] = \sum_{k,l \in I_n} \tilde{R}_\J(p,l-k)f(Z^k_u)f(Z^l_u)
	\leq N  \sum_{\ell \in I_n} \left|\tilde{R}_\J(p,\ell)\right|.
	\]
Next we recall that
\[
\tilde{R}_\J(p,\ell) = \sum_{k \in I_n} R_\J(k,\ell) F_N^{-pk},
\]
and, from,
\[
\left| \tilde{R}_\J(p,\ell)  \right| \leq \sum_{k \in I_n} \left| R_\J(k,\ell) \right| \leq b_\ell \sum_{k \in I_n} a_k,
\]
it follows from \eqref{eq:RJdef} and \eqref{eq:ab}
\[
\sum_{\ell \in I_n} \left| \tilde{R}_\J(p,\ell)  \right|  \leq ab.
\]
Finally
\[
\frac{u_p}{N \sigma^2} \int_0^{s} \Exp \left[ \left| \tilde{G}^{p}_u  \right|^2  \right]\, du  \leq \frac{u_p ab T}{\sigma^2} \leq \frac{ab T}{\sigma^2}
\]
and \eqref{eq:minorExps} is proved with
\(
D = e^{-\frac{ab T}{\sigma^2}}.
\).
Going back to \eqref{eq:lambdapdiffbound} and since $u_p \leq 1$, we have
\begin{equation}\label{eq:majdeslambdatilde}
\left| \tilde{\Lambda}^p_{s}(\tilde{G}) -  \tilde{\Lambda}^p_{s^{(m)}}(\tilde{G})  \right| \leq 
\frac{1}{ND\sigma^2}\left( \int_{s^{(m)}}^s \left| \tilde{G}^{p}_u \right|^2  \, du    +   \tilde{\Lambda}^p_{s^{(m)}}(\tilde{G}) \Exp^{\gamma^{\emp(V_n)}} \left[  \int_{s^{(m)}}^s \left| \tilde{G}^{p}_u \right|^2  \, du  \right] \right).
\end{equation}
\textbf{Step 2: upper bound for $\alpha^{1,1,p}_s$:}\\
From the definition of $\alpha^{1,1,p}_s$ in \eqref{eq:alpha1jpdef} and
\eqref{eq:majdeslambdatilde}, we have
 \begin{multline*}
 | \alpha^{1,1,p}_s |^2 \leq  \frac{2}{N^2 D^2 \sigma^4} \Bigg( \Exp^{\gamma^{\emp(V_n)}} \left[ \left( \int_{s^{(m)}}^s \left| \tilde{G}^{p}_u \right|^2  \, du \right)  \left|  \tilde{G}^p_{s^{(m)}} \right|\, \left| \int_0^{s^{(m)}} \tilde{G}^{-p}_{r^{(m)}}  d\tilde{V}^{p}_r \right| \right]^2\\
+\Exp^{\gamma^{\emp(V_n)}} \left[  \int_{s^{(m)}}^s \left| \tilde{G}^{p}_u \right|^2  \, du  \right]^2 \Exp^{\gamma^{\emp(V_n)}} \left[ \tilde{\Lambda}^p_{s^{(m)}}(\tilde{G}) \left|  \tilde{G}^p_{s^{(m)}} \right|\, \left| \int_0^{s^{(m)}} \tilde{G}^{-p}_{r^{(m)}}  d\tilde{V}^{p}_r \right|  \right]^2.
 \Bigg)
 \end{multline*}
By Cauchy-Schwarz again,
 \begin{multline*}
 | \alpha^{1,1,p}_s |^2 \leq  \frac{2}{N^2 D^2 \sigma^4}
 \Exp^{\gamma^{\emp(V_n)}} \left[ \left( \int_{s^{(m)}}^s \left| \tilde{G}^{p}_u \right|^2  \, du \right)^2 \right] \Bigg(
 \Exp^{\gamma^{\emp(V_n)}} \left[ \left|  \tilde{G}^p_{s^{(m)}} \right|^2\, \left| \int_0^{s^{(m)}} \tilde{G}^{-p}_{r^{(m)}}  d\tilde{V}^{p}_r \right|^2  \right]\\
+ \Exp^{\gamma^{\emp(V_n)}} \left[ \tilde{\Lambda}^p_{s^{(m)}}(\tilde{G}) \left|  \tilde{G}^p_{s^{(m)}} \right|^2 \right]\ 
 \Exp^{\gamma^{\emp(V_n)}} \left[ \tilde{\Lambda}^p_{s^{(m)}}(\tilde{G}) \left| \int_0^{s^{(m)}} \tilde{G}^{-p}_{r^{(m)}}  d\tilde{V}^{p}_r \right|^2 \right]
 \Bigg).
 \end{multline*}
 Applying once more Cauchy-Schwarz to the integral in the first factor in the right hand side we obtain
 \begin{multline}\label{eq:isserli1}
 | \alpha^{1,1,p}_s |^2 \leq  \frac{2}{ N^2 D^2 \sigma^4} (s-s^{(m)})  \left( \int_{s^{(m)}}^s \Exp^{\gamma^{\emp(V_n)}} \left[ \left| \tilde{G}^{p}_u \right|^4 \right] \, du  \right)\\
 \times \Bigg(
 \Exp^{\gamma^{\emp(V_n)}} \left[ \left|  \tilde{G}^p_{s^{(m)}} \right|^2\, \left| \int_0^{s^{(m)}} \tilde{G}^{-p}_{r^{(m)}}  d\tilde{V}^{p}_r \right|^2  \right]+\\
 \Exp^{\gamma^{\emp(V_n)}} \left[ \tilde{\Lambda}^p_{s^{(m)}}(\tilde{G}) \left|  \tilde{G}^p_{s^{(m)}} \right|^2 \right]\ 
 \Exp^{\gamma^{\emp(V_n)}} \left[ \tilde{\Lambda}^p_{s^{(m)}}(\tilde{G}) \left| \int_0^{s^{(m)}} \tilde{G}^{-p}_{r^{(m)}}  d\tilde{V}^{p}_r \right|^2 \right]
 \Bigg).
 \end{multline}
 {\bf Step 3: Apply Isserlis' Theorem}\\
 We recall Isserlis' formula for four centered Gaussian variables $X_k,\,k=1,\cdots,4$
 \begin{equation}\label{eq:Isserlis4}
\Exp \left[ X_1 X_2 X_3 X_4  \right] = \Exp \left[ X_1 X_2  \right] \Exp \left[ X_3 X_4  \right] +\Exp \left[ X_1 X_3  \right] \Exp \left[ X_2 X_4  \right] +\Exp \left[ X_1 X_4  \right] \Exp \left[ X_2 X_3  \right].
 \end{equation}
 For the first factor of the first term in the right hand side of \eqref{eq:isserli1} we let $X_1=X_2=\tilde{G}^p_u$ and $X_3=X_4=X_1^*=X_2^*=\tilde{G}^{-p}_u$. By Lemma \ref{lem:tildeGpGq} we have 
 \[
  \Exp^{\gamma^{\emp(V_n)}} \left[ X_1 X_2  \right] = \Exp^{\gamma^{\emp(V_n)}} \left[ X_3 X_4  \right] = 0,
 \]
 	if $p \neq 0$, and by Corollary \ref{cor:boundgamma}, and $0 \leq f \leq 1$
 \[
\max \left\{\Exp^{\gamma^{\emp(V_n)}} \left[ X_1 X_2  \right], \Exp^{\gamma^{\emp(V_n)}} \left[ X_3 X_4  \right]   \right\}  \leq Nab
 \]
 if $p=0$, as well as
	\[
 \max_{j=1,2, k=3,4}\Exp^{\gamma^{\emp(V_n)}}\left[X_jX_k\right] \leq N a b
 \]
 for all $p \in I_n$, so that 
 \[
 \int_{s^{(m)}}^s \Exp^{\gamma^{\emp(V_n)}} \left[ \left| \tilde{G}^{p}_u \right|^4 \right] \, du  \leq 3 (ab)^2 N^2 (s-s^{(m)}), \quad \forall p \in I_n.
 \]
 For the 
 	second factor of the first term 
 we use again \eqref{eq:Isserlis4} with  $X_1=\tilde{G}^p_{s^{(m)}}$, $X_2=X_1^*=\tilde{G}^{-p}_{s^{(m)}}$, $X_3=\int_0^{s^{(m)}} \tilde{G}^{-p}_{r^{(m)}}  d\tilde{V}^{p}_r$ and $X_4=X_3^*= \int_0^{s^{(m)}} \tilde{G}^{p}_{r^{(m)}}  d\tilde{V}^{-p}_r$. 
 By Lemma \ref{lem:tildeGpGq} again we have
 \[
 \Exp^{\gamma^{\emp(V_n)}} \left[ X_1 X_4  \right] = \Exp^{\gamma^{\emp(V_n)}} \left[ X_2 X_3  \right] = 0,
 \]
 if $p \neq 0$
 and, by Corollary \ref{cor:boundgamma}, and $0 \leq f \leq 1$
 \[
  \Exp^{\gamma^{\emp(V_n)}} \left[ X_1 X_2  \right] \leq N ab, \quad \Exp^{\gamma^{\emp(V_n)}}  \left[ X_3 X_4  \right] \leq ab \sum_{k \in I_n} \left| \int_0^{s^{(m)}} f(V^k_{r^{(m)}}) d\tilde{V}^{p}_r  \right|^2,
 \]
 as well as 
 \[
 \max \left\{  \Exp^{\gamma^{\emp(V_n)}} \left[ X_1 X_4  \right], \Exp^{\gamma^{\emp(V_n)}} \left[ X_2 X_3  \right]   \right\} \leq Nab
 \]
 if $p=0$. Furthermore, for the same reasons,
\[
\max\left(\left|\Exp^{\gamma^{\emp(V_n)}} \left[ X_1 X_3  \right]\right|,\left|\Exp^{\gamma^{\emp(V_n)}} \left[ X_3 X_4  \right]\right|\right)
\leq 
ab \sqrt{N} \left(  \sum_{k \in I_n} \left| \int_0^{s^{(m)}} f(V^k_{r^{(m)}}) d\tilde{V}^{p}_r  \right|^2 \right)^{1/2},
\] 
so that  
\[
\Exp^{\gamma^{\emp(V_n)}} \left[ \left|  \tilde{G}^p_{s^{(m)}} \right|^2\, \left| \int_0^{s^{(m)}} \tilde{G}^{-p}_{r^{(m)}}  d\tilde{V}^{p}_r \right|^2  \right]
\leq  3(ab)^2 N \sum_{k \in I_n} \left| \int_0^{s^{(m)}} f(V^k_{r^{(m)}}) d\tilde{V}^{p}_r  \right|^2.
\]
 By Lemma \ref{Lemma Fourier Bound on R Jn} and $0 \leq f \leq 1$
 \[
 \Exp^{\gamma^{\emp(V_n)}} \left[ \tilde{\Lambda}^p_{s^{(m)}}(\tilde{G}) \left|  \tilde{G}^p_{s^{(m)}} \right|^2 \right] \leq N C_\J,
 \]
 and
 \[
 \Exp^{\gamma^{\emp(V_n)}} \left[ \tilde{\Lambda}^p_{s^{(m)}}(\tilde{G}) \left| \int_0^{s^{(m)}} \tilde{G}^{-p}_{r^{(m)}}  d\tilde{V}^{p}_r \right|^2 \right] \leq
 C_\J \sum_{k \in I_n} \left| \int_0^{s^{(m)}} f(V^k_{r^{(m)}}) d\tilde{V}^{p}_r  \right|^2,
 \]
 so that  the second factor of the second term in the right hand side of \eqref{eq:isserli1} is upper bounded by $ (C_\J)^2 N \sum_{k \in I_n} \left| \int_0^{s^{(m)}} f(V^k_{r^{(m)}}) d\tilde{V}^{p}_r  \right|^2$.\\
 {\bf Step 4: Wrapping things up}\\
 Bringing all this together we find that
 \[
  \frac{1}{N^4} \sum_{p \in I_n} | \alpha^{1,1,p}_s |^2 \leq   
A \frac{1}{N^3} (s-s^{(m)})^2  \sum_{k \in I_n} \sum_{p \in I_n}\left| \int_0^{s^{(m)}} f(V^k_{r^{(m)}}) d\tilde{V}^{p}_r  \right|^2,
 \]
 for some positive constant $A$, and by Parseval's Theorem
 \[
  \frac{1}{N^4} \sum_{p \in I_n} | \alpha^{1,1,p}_s |^2 \leq  
 A \frac{1}{N^2} (s-s^{(m)})^2  \sum_{k \in I_n} \sum_{l \in I_n}\left( \int_0^{s^{(m)}} f(V^k_{r^{(m)}}) dV^l_r  \right)^2.
 \]
 Next we use Corollary \ref{Corollary Measure Representation} to write
 \[
 dV^l_r = \sigma dW^l_r + \sigma \theta^l_r dr,\quad l \in I_n,
 \]
 from which follows that
 \begin{multline*}
   \frac{1}{N^4} \sum_{p \in I_n} | \alpha^{1,1,p}_s |^2 \leq  
  \frac{2A}{N^2} (s-s^{(m)})^2 \Bigg(\sum_{k \in I_n} \sum_{l \in I_n}\left( \int_0^{s^{(m)}} f(V^k_{r^{(m)}}) dW^l_r  \right)^2 +\\
 \sum_{k \in I_n} \sum_{l \in I_n}\left( \int_0^{s^{(m)}} f(V^k_{r^{(m)}}) \theta^l_r dr  \right)^2
 \Bigg).
 \end{multline*}
 By Cauchy-Schwarz  and $0 \leq f \leq 1$, one has
 \[
 \left( \int_0^{s^{(m)}} f(V^k_{r^{(m)}}) \theta^l_r dr  \right)^2 \leq T \int_0^{s^{(m)}} (\theta^l_r)^2 dr.
 \]
 So that,
 \begin{multline*}
   \frac{1}{N^4} \sum_{p \in I_n} | \alpha^{1,1,p}_s |^2 \leq  
  \frac{2A}{N^2} (s-s^{(m)})^2 \left(\sum_{k \in I_n} \sum_{l \in I_n}\left( \int_0^{s^{(m)}} f(V^k_{r^{(m)}}) dW^l_r  \right)^2 +
NT  \sum_{l \in I_n}\int_0^{s^{(m)}} (\theta^l_r)^2 dr \right). 
 \end{multline*}
 We can conclude with Lemmas \ref{lem:BG} and \ref{Lemma: bound theta}.
 
 We provide the details. Since $s-s^{(m)} \leq T/m$,
 \begin{multline*}
 Q^n\left( \sup_{s \in [0,T]} \frac{1}{N^4} \sum_{p \in I_n} | \alpha^{1,1,p}_s |^2 \geq B   \right) \leq \\
 Q^n\left(\sup_{s \in [0,T]} \frac{1}{N^2 m^2} \sum_{k \in I_n} \sum_{l \in I_n}\left( \int_0^{s^{(m)}} f(V^k_{r^{(m)}}) dW^l_r  \right)^2 \geq \frac{B}{4T^2 A} \right) + \\
 Q^n\left(\sup_{s \in [0,T]} \frac{1}{N m^2} \sum_{l \in I_n}\int_0^{s^{(m)}} (\theta^l_r)^2 dr \geq \frac{B}{4T^3 A} \right),
 \end{multline*}
where  $B$ is defined in \eqref{eq:Bdef}.
 The logarithm of the left hand side is less than or equal to twice the maximum of the logarithms of the two terms in the right hand side.
 
 For the first term, writing $E:=\frac{B}{4T^2 A}$,  we have
 \begin{multline*}
 \log Q^n\left(\sup_{s \in [0,T]}\frac{1}{N^2 m^2} \sum_{k \in I_n} \sum_{l \in I_n}\left( \int_0^{s^{(m)}} f(V^k_{r^{(m)}}) dW^l_r  \right)^2 \geq
 E \right) =\\ \log Q^n\left(\sup_{s \in [0,T]} \frac{1}{m} \frac{1}{2N} \sum_{k \in I_n} \sum_{l \in I_n}\left( \int_0^{s^{(m)}} f(V^k_{r^{(m)}}) dW^l_r  \right)^2 \geq Nm\frac{E}{2} \right).
 \end{multline*}
 Now let $\zeta_s$ be the submartingale
 \[
 \zeta_s = \exp \left( \frac{1}{m} \frac{1}{2N} \sum_{k \in I_n} \sum_{l \in I_n}\left( \int_0^{s^{(m)}} f(V^k_{r^{(m)}}) dW^l_r  \right)^2  \right).
 \]
 By Doob's submartingale inequality we have
 \begin{multline*}
 Q^n\left(\sup_{s \in [0,T]} \frac{1}{m} \frac{1}{2N} \sum_{k \in I_n} \sum_{l \in I_n}\left( \int_0^{s^{(m)}} f(V^k_{r^{(m)}}) dW^l_r  \right)^2 \geq Nm\frac{E}{2} \right) = 
 Q^n \left( \sup_{s \in [0,T]} \zeta_s \geq \exp \left( Nm\frac{E}{2}  \right) \right)\\ \leq
 \exp  \left(- Nm\frac{E}{2}  \right) \Exp^{Q_n} \left[  \zeta_T  \right].
 \end{multline*}
 The application of Lemma~\ref{lem:BG} with $\varepsilon^2=\frac{1}{m}$ yields
 \[
 \log Q^n\left(\sup_{s \in [0,T]} \frac{1}{N^2 m^2} \sum_{k \in I_n} \sum_{l \in I_n}\left( \int_0^{s^{(m)}} f(V^k_{r^{(m)}}) dW^l_r  \right)^2 \geq
 E \right) \leq -Nm\frac{E}{2}-\frac{N}{4} \log (1- 4\frac{T}{m}),
 \]
 indicating that we can find $m$ large enough such that
 \begin{equation}\label{eq:1stalpha11}
 \lsup{n} \frac{1}{N} \log Q^n\left(\sup_{s \in [0,T]} \frac{1}{N^2 m^2} \sum_{k \in I_n} \sum_{l \in I_n}\left( \int_0^{s^{(m)}} f(V^k_{r^{(m)}}) dW^l_r  \right)^2 \geq
 E \right) \leq -M.
 \end{equation}
 For the second term, writing $E:=\frac{B}{4T^3 A}$,  we have
 \[
 Q^n\left(\sup_{s \in [0,T]} \frac{1}{N m^2} \sum_{l \in I_n}\int_0^{s^{(m)}} (\theta^l_r)^2 dr \geq E \right) \leq
 Q^n\left(\frac{1}{N} \sup_{r \in [0, T]} \sum_{l \in I_n} (\theta^l_r)^2 \geq \frac{E}{T} m^2 \right),
 \]
 and Lemma \ref{Lemma: bound theta} shows that, given $M  > 0$, we can find $m$ large enough such that
 \begin{equation}\label{eq:2ndalpha11}
 \lsup{n}\frac{1}{N} Q^n\left(\sup_{s \in [0,T]} \frac{1}{N m^2} \sum_{l \in I_n}\int_0^{s^{(m)}} (\theta^l_r)^2 dr \geq E \right) \leq -M .
 \end{equation}
 The combination of \eqref{eq:1stalpha11} and \eqref{eq:2ndalpha11} shows that for all $M>0$, for $m$ large enough
 \[
 \lsup{n} \frac{1}{N} \log Q^n \left(  \sup_{s \in [0,T]}  \frac{1}{N^4}  \sum_{p \in I_n} | \alpha^{1,1,p}_s |^2  \geq B \right) \leq -M,
 \]
 where $B$ being defined in \eqref{eq:Bdef}.
 
 The proof for $\alpha^{1,2,p}_s$ is very similar to that for $\alpha^{1,3,p}_s$ which we give now.

 {\bf Proof for $\alpha^{1,3,p}_s$}\\
  {\bf Step 1: An upper bound for $\left| \int_0^{s^{(m)}} \left( \tilde{G}^{-p}_r - \tilde{G}^{-p}_{r^{(m)}} \right)  d\tilde{V}^{p}_r \right|^2$}\\
 From \eqref{eq:alpha1jpdef} we have
 \[
\alpha^{1,3,p}_s =
 \Exp^{\gamma^{\emp(V_n)}}\left[\tilde{\Lambda}_s^p (\tilde{G}) \tilde{G}^p_s \int_0^{s^{(m)}} \left( \tilde{G}^{-p}_r  - \tilde{G}^{-p}_{r^{(m)}} \right) d\tilde{V}^{p}_r  \right].
  \]
  This commands, by Cauchy-Schwarz, that
  \[
  \left| \alpha^{1,3,p}_s  \right|^2 \leq \\
  \Exp^{\gamma^{\emp(V_n)}}\left[\tilde{\Lambda}_s^p (\tilde{G}) | \tilde{G}^p_s|^2 \right] \times \Exp^{\gamma^{\emp(V_n)}}\left[\tilde{\Lambda}_s^p(\tilde{G}) \left| \int_0^{s^{(m)}}\left( \tilde{G}^{-p}_r - \tilde{G}^{-p}_{r^{(m)}} \right) d\tilde{V}^{p}_r \right|^2 \right].
  \]
  By Lemma \ref{Lemma Fourier Bound on R Jn} and $0 \leq f \leq 1$
  \[
  \Exp^{\gamma^{\emp(V_n)}}\left[\tilde{\Lambda}_s^p (\tilde{G}) | \tilde{G}^p_s|^2 \right]  \leq C_\J 
  \sum_{j \in I_n}  f(V^j_s)^2 \leq N C_\J.
  \]
 By Lemma \ref{Lemma Fourier Bound on R Jn} again,
  \[
  \Exp^{\gamma^{\emp(V_n)}}\left[\tilde{\Lambda}_s^p(\tilde{G}) \left| \int_0^{s^{(m)}} \left( \tilde{G}^{-p}_r - \tilde{G}^{-p}_{r^{(m)}} \right)  d\tilde{V}^{p}_r \right|^2 \right] \leq
  C_\J \sum_{j \in I_n} \left| \int_0^{s^{(m)}} (f(V^j_r)-f(V^j_{r^{(m)}})) d\tilde{V}^{p}_r \right|^2 .
  \]
  By Parseval's Theorem
  \[
 C_\J \sum_{p \in I_n} \sum_{j \in I_n} \left| \int_0^{s^{(m)}} (f(V^j_r)-f(V^j_{r^{(m)}}) d\tilde{V}^{p}_r \right|^2 =
  N C_\J \sum_{j, k \in I_n} \left( \int_0^{s^{(m)}} (f(V^j_r)-f(V^j_{r^{(m)}}) dV^{k}_r \right)^2.
  \]
  and therefore
  \[
  \sum_{p \in I_n} \left| \alpha^{1,3,p}_s  \right|^2 \leq \\
  (C_\J)^2 N^2 \sum_{j, k \in I_n} \left( \int_0^{s^{(m)}} (f(V^j_r)-f(V^j_{r^{(m)}})) dV^{k}_r \right)^2.
  \]
  By \eqref{eq: theta SDE}-\eqref{eq: theta SDE1}  and Cauchy-Schwarz
  \begin{multline*}
  (C_\J)^2 N^2 \sum_{j, k \in I_n} \left| \int_0^{s^{(m)}} (f(V^j_r)-f(V^j_{r^{(m)}})) dV^{k}_r \right|^2 \leq \\
  A N^2 \left( \sum_{j, k \in I_n} \left( \int_0^{s^{(m)}} (f(V^j_r)-f(V^j_{r^{(m)}})) dW^{k}_r \right)^2 +
   \sum_{j, k \in I_n} \left( \int_0^{s^{(m)}} (f(V^j_r)-f(V^j_{r^{(m)}})) \theta^{k}_r \, dr \right)^2 \right),
  \end{multline*}
  for some constant $A > 0$, so that we have established that
 \begin{multline*}
   \frac{1}{N^4} \sum_{p \in I_n} \left| \alpha^{1,3,p}_s  \right|^2 \leq 
   A  \frac{1}{N^2} \Bigg( \sum_{j, k \in I_n} \left( \int_0^{s^{(m)}} (f(V^j_r)-f(V^j_{r^{(m)}})) dW^{k}_r \right)^2 +\\
   \sum_{j, k \in I_n} \left( \int_0^{s^{(m)}} (f(V^j_r)-f(V^j_{r^{(m)}})) \theta^{k}_r \, dr \right)^2 \Bigg).
  \end{multline*}
  By Cauchy-Schwarz on the second integral
  \begin{multline*}
   \frac{1}{N^4} \sum_{p \in I_n} \left| \alpha^{1,3,p}_s  \right|^2 \leq 
   A \frac{1}{N^2} \Bigg( \sum_{j, k \in I_n} \left( \int_0^{s^{(m)}} (f(V^j_r)-f(V^j_{r^{(m)}})) dW^{k}_r \right)^2 +\\
   \left( \sum_{j \in I_n}  \int_0^{s^{(m)}} (f(V^j_r)-f(V^j_{r^{(m)}}))^2 dr \right) \left( \sum_{k  \in I_n} \int_0^{s^{(m)}} (\theta^{k}_r)^2 \, dr \right)  \Bigg).
  \end{multline*}
  So that
  \begin{multline}\label{eq:alpha13p}
  Q^n\left( \sup_{s \in [0,T]}  \frac{1}{N^4} \sum_{p \in I_n} | \alpha^{1,3,p}_s |^2 \geq B  \right) \leq \\
  Q^n\left( \sup_{s \in [0,T]} \frac{1}{N^2} \sum_{j, k \in I_n} \left( \int_0^{s^{(m)}} (f(V^j_r)-f(V^j_{r^{(m)}})) dW^{k}_r \right)^2 \geq E \right) +\\
  Q^n\left(  \frac{1}{N} \left( \sum_{j \in I_n}  \int_0^{T} (f(V^j_r)-f(V^j_{r^{(m)}}))^2 dr \right)  \frac{1}{N} \left( \sum_{k  \in I_n} \int_0^{T} (\theta^{k}_r)^2 \, dr \right) \geq E \right),
  \end{multline}
 where $E= B/(2A) $.\\
{\bf Step 2: Upper bounding the second term in the right hand side of \eqref{eq:alpha13p}}\\
 Let $h(m): \N^* \to \R^+$ be such that $\lim_{m \to \infty} h(m) =0$. $h$ is specified later. The second term in the right hand side is dealt with as follows
 \begin{multline*}
 Q^n\left( \underbrace{ \frac{1}{N} \sum_{j \in I_n}  \int_0^{T} (f(V^j_r)-f(V^j_{r^{(m)}}))^2 dr}_{S_1} \times  \underbrace{\frac{1}{N}\sum_{k  \in I_n} \int_0^{T} (\theta^{k}_r)^2 \, dr }_{S_2} \geq E \right) =\\
Q^n\left( \mathbbm{ 1}_{S_1 > h(m)} S_1 S_2 +  \mathbbm{1}_{S_1 \leq h(m)} S_1 S_2 \geq E \right)
 \leq \\
 Q^n\left( \mathbbm{ 1}_{S_1 > h(m)} S_1 S_2 \geq E/2 \right)+Q^n\left(\mathbbm{1}_{S_1 \leq h(m)} S_1 S_2 \geq E/2 \right) \leq \\
 Q^n\left(S_1 > h(m) \right)+Q^n\left( S_2 \geq \frac{E}{2 h(m)} \right).
 \end{multline*}
 The term $Q^n\left( S_2 \geq \frac{E}{2 h(m)} \right)$ can be dealt with  Lemma \ref{Lemma: bound theta} since $\lim_{m \to \infty} h(m) =0$.
 Consider next the term $S_1:= \frac{1}{N} \sum_{j \in I_n}  \int_0^{T} (f(V^j_r)-f(V^j_{r^{(m)}}))^2 dr$. By the Lipschitz continuity of $f$, \eqref{eq: theta SDE}, Cauchy-Schwarz, and $r - r^{(m)} \leq T/m$ we have 
 \begin{align*}
 S_1 &\leq  \frac{1}{N} \sum_{j \in I_n} \int_0^{T} \left( V^j_r - V^j_{r^{(m)}} \right)^2 \, dr =  \frac{\sigma^2}{N} \sum_{j \in I_n} \int_0^{T} \left(  W^j_r - W^j_{r^{(m)}} + \int_{r^{(m)}}^r  \theta^j_s \,ds \right)^2\,dr  \\
 &\leq
  \frac{2 \sigma^2}{N} \sum_{j \in I_n} \int_0^{T} \left( W^j_r - W^j_{r^{(m)}}\right)^2 \, dr +  \frac{2 \sigma^2}{N} \sum_{j \in I_n} \int_0^{T} \left(  \int_{r^{(m)}}^r  \theta^j_s \,ds \right)^2\,dr  \\
&\leq   \frac{2 \sigma^2}{N} \sum_{j \in I_n} \int_0^{T} \left( W^j_r - W^j_{r^{(m)}}\right)^2 \, dr +  \frac{2 \sigma^2}{N} \sum_{j \in I_n} \int_0^{T} \left((r-r^{(m)})  \int_{r^{(m)}}^r  (\theta^j_s)^2  \,ds \right)\,dr \\
&\leq  \frac{2 \sigma^2}{N} \sum_{j \in I_n} \int_0^{T} \left( W^j_r - W^j_{r^{(m)}}\right)^2 \, dr + \frac{2\sigma^2 T^3}{m^2} \frac{1}{N} \sup_{s \in [0,T]} \sum_{j \in I_n} \left(  \theta^j_s \right)^2.
 \end{align*}
 We conclude that
 \begin{align*}
 Q^n \left( S_1 > h(m)   \right) \leq 
 Q^n \left( \frac{1}{N} \sum_{j \in I_n} \int_0^{T} \left( W^j_r - W^j_{r^{(m)}}\right)^2 \, dr \geq h(m)/(4 \sigma^2) \right)\\+
 Q^n\left(  \frac{1}{N}  \sum_{j \in I_n} \sup_{s \in [0,T]}\left(  \theta^j_s \right)^2 \geq \frac{1}{4 \sigma^2 T^3} m^2 h(m)\right).
 \end{align*}
 The second term in the right hand side of the previous inequality is dealt with  Lemma \ref{Lemma: bound theta}, provided that $\lim_{m \to \infty} m^2 h(m) = \infty$.
 
Regarding the first term, decomposing the integral, we have
\begin{align*}
\frac{1}{N} \sum_{j \in I_n} \int_0^{T} \left( W^j_r - W^j_{r^{(m)}}\right)^2 \, dr &=\frac{1}{N} \sum_{j \in I_n} \sum_{v=0}^{m-1} \int_{v \eta_m}^{(v+1)\eta_m} \left( W^j_r - W^j_{v \eta_m}\right)^2 \, dr\\
& =\frac{1}{N} \sum_{j \in I_n} \sum_{v=0}^{m-1} \int_{0}^{\eta_m} \left( W^{j,v}_r \right)^2 \, dr,
\end{align*}
where \((W^{j,v}_s)_{j,v}\) are independent Brownian motions.
\begin{align*}
\int_{0}^{\eta_m} \left( W^{j,v}_r \right)^2 dr & = \int_0^1 \left(W^{j,v}_{r\eta_m}\right)^2 \eta_m dr
= \int_0^1 \left(\dfrac{1}{\sqrt{\eta_m}}W^{j,v}_{r\eta_m}\right)^2 (\eta_m)^2 dr.
\end{align*}
We set \(\widehat{W}^{j,v}_{r} = \frac{1}{\sqrt{\eta_m}}W^{j,v}_{r\eta_m}\).
Thanks to the scaling property of the Brownian motion, 
\((\widehat{W}^{j,v}_{r})_{j,v}\) are independent Brownian motions,
so that
\[
\int_{0}^{\eta_m} \left( W^{j,v}_r \right)^2 dr = \eta_m^2\int_0^1 \left(\widehat{W}^{j,v}_r\right)^2dr.
\]
We deduce
\begin{multline*}
Q^n\left( \frac{1}{N} \sum_{j \in I_n} \int_0^{T} \left( W^j_r - W^j_{r^{(m)}}\right)^2 \, dr \geq h(m)/(4 \sigma^2) \right)\\
= Q^n \left(\frac{1}{Nm} \sum_{j \in I_n} \sum_{v=0}^{m-1}
\int_0^1 \left(\widehat{W}^{j,v}_r\right)^2dr \geq mh(m)\dfrac{1}{4T^2\sigma^2}\right).
\end{multline*}
This forces us to choose $h$ in such a way that $\lim_{m \to \infty} m h(m) = \infty$, e.g. $h(m)=1/\sqrt{m}$. Note that this implies that $\lim_{m \to \infty} m^2 h(m) = \infty$. In order to apply Cramer's Theorem, we require that the random variable $\int_0^1 \left(\widehat{W}^{j,v}_r\right)^2\,dr$ has exponential moments. This existence is due to the fact that, through Jensen's Inequality,
\begin{align*}
\Exp\bigg[\exp\bigg(\frac{1}{4}\int_0^1 (W_s)^2 ds \bigg) \bigg] \leq \Exp\bigg[ \int_0^1 \exp\bigg(\frac{1}{4}(W_s)^2\bigg)ds \bigg] \\
=  \int_0^1 \Exp\bigg[ \exp\bigg(\frac{1}{4}(W_s)^2\bigg) \bigg]  \leq \Exp\bigg[ \exp\bigg(\frac{1}{4}(W_1)^2\bigg) \bigg]  < \infty.
\end{align*}
{\bf Step 3: Upper bounding the first term in the right hand side of \eqref{eq:alpha13p}}\\
 In order to deal with the first term in the right hand side of \eqref{eq:alpha13p} we have to control the term $\frac{1}{N} \sup_{s \in [0, T]} \sum_{j \in I_n} (f(V^j_s)-f(V^j_{s^{(m)}}))^2$.
 In order to do this, we define the set
\[
K_{\kappa,n} = \left\lbrace V : \frac{1}{N}\sup_{s\in [0,T]}\sum_{j\in I_n}\big(f(V^j_s)-f(V^j_{s^{(m)}})\big)^2 \geq \frac{\kappa T}{m}\right\rbrace \subset \T^{N}.
\]
The following Lemma, whose proof is left to the reader, indicates that, for $\kappa$ large enough, the probability of this event is exponentially small for large $n$.
\begin{lemma}\label{lem:Kkappan}
For all $M > 0$, for $\kappa > 0$ large enough,
\begin{equation}\label{eq: intermediate 1}
\lsup{n} \frac{1}{N} \log Q^n \big( K_{\kappa,n} \big) \leq - M.
\end{equation}
\end{lemma}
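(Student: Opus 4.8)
The plan is to dominate the event $K_{\kappa,n}$ by two elementary events: a genuine Cram\'er-type large deviation event for an i.i.d.\ sum of functionals of Brownian increments, and a tail event for $\frac{1}{N}\sup_t\sum_j(\theta^j_t)^2$ which is already controlled by Lemma~\ref{Lemma: bound theta}. First I would use that $f$ is $1$-Lipschitz, so $\big(f(V^j_s)-f(V^j_{s^{(m)}})\big)^2\le\big(V^j_s-V^j_{s^{(m)}}\big)^2$, and then insert the representation $V^j_t=\sigma W^j_t+\sigma\int_0^t\theta^j_r\,dr$ of Corollary~\ref{Corollary Measure Representation}(i). Using $(a+b)^2\le2a^2+2b^2$ and, on the drift part, Cauchy--Schwarz together with $s-s^{(m)}\le T/m$ and $\int_{s^{(m)}}^s(\theta^j_r)^2\,dr\le\int_0^T(\theta^j_r)^2\,dr$, then summing over $j$ and dividing by $N$, one obtains
\[
\frac{1}{N}\sup_{s\in[0,T]}\sum_{j\in I_n}\big(f(V^j_s)-f(V^j_{s^{(m)}})\big)^2\ \le\ \frac{2\sigma^2}{N}\sum_{j\in I_n}\sup_{s\in[0,T]}\big(W^j_s-W^j_{s^{(m)}}\big)^2\ +\ \frac{2\sigma^2T^2}{m}\,\frac{1}{N}\sup_{t\in[0,T]}\sum_{j\in I_n}\big(\theta^j_t\big)^2 .
\]
Hence $K_{\kappa,n}\subset A_n\cup B_n$, where $A_n$ is the event that the first term on the right is $\ge\kappa T/(2m)$ and $B_n$ the event that the second is $\ge\kappa T/(2m)$, and it suffices to bound $Q^n(A_n)$ and $Q^n(B_n)$ at exponential rate $-M$.

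For $B_n$: dividing by $T/m$ shows $B_n=\{\frac{1}{N}\sup_t\sum_j(\theta^j_t)^2\ge\kappa/(4\sigma^2T)\}$, so taking $\kappa\ge4\sigma^2T\,C_M$ and invoking Lemma~\ref{Lemma: bound theta} gives $\lsup{n}\frac{1}{N}\log Q^n(B_n)\le-M$. For $A_n$: under $Q^n$ the $(W^j)_{j\in I_n}$ are i.i.d.\ Brownian motions, so the variables $Y^j:=\frac{m}{T}\sup_{s\in[0,T]}(W^j_s-W^j_{s^{(m)}})^2=\max_{0\le v<m}\sup_{u\in[0,1]}(\widehat W^{j,v}_u)^2$, where $\widehat W^{j,v}_u:=\sqrt{m/T}\,(W^j_{(v+u)T/m}-W^j_{vT/m})$ are i.i.d.\ Brownian motions on $[0,1]$ by the scaling property, are i.i.d.\ in $j$, and $A_n=\{\frac{1}{N}\sum_{j\in I_n}Y^j\ge\kappa/(4\sigma^2)\}$. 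The reflection principle gives a Gaussian tail for $\sup_{[0,1]}|B|$, hence $\Exp[e^{\lambda Y^j}]\le m\,\Exp[e^{\lambda\sup_{[0,1]}B^2}]<\infty$ for $\lambda<1/2$; Cram\'er's Theorem then yields $\lsup{n}\frac{1}{N}\log Q^n(A_n)\le-I(\kappa/(4\sigma^2))$ with a rate function $I$ satisfying $I(a)\ge a/4-\log(m\,\Exp[e^{\sup_{[0,1]}B^2/4}])\to\infty$ as $a\to\infty$. Choosing $\kappa$ large enough that both $I(\kappa/(4\sigma^2))\ge M$ and $\kappa\ge4\sigma^2T\,C_M$ hold, and combining the two bounds through $\log(a+b)\le\log2+\max(\log a,\log b)$, finishes the argument.

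The only mildly delicate point is the $A_n$ step: one must check that $Y^j$ has exponential moments of sufficiently small order, which follows from the Gaussian tail of $\sup_{[0,1]}|B|$, and that the Cram\'er rate genuinely diverges so that a single $\kappa$ suffices. Since $m$ is fixed in this lemma, the $m$-dependence of $\Exp[e^{\lambda Y^j}]$ is harmless and no uniformity in $m$ is needed here.
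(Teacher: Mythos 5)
The paper leaves this proof to the reader, but your argument is correct and is essentially what the authors intend: the Lipschitz bound on $f$, the $V=\sigma W+\sigma\int\theta$ representation from Corollary~\ref{Corollary Measure Representation}, Brownian scaling plus a Cram\'er/exponential-Chebyshev estimate for the i.i.d.\ Brownian part, and Lemma~\ref{Lemma: bound theta} for the drift part are precisely the tools used just above for the $L^2$-in-time analogue $S_1$. Your closing remark that the admissible $\kappa$ may drift like $\log m$ is harmless, consistent with how the lemma is used (only a fixed finite $\kappa$ per $m$ is needed in \eqref{eq:intermediate 2}).
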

Using this Lemma we write
\begin{multline}\label{eq:alpha1intbound}
\lsup{n} \frac{1}{N} \log Q^n \left( \sup_{s\in [0,T]}  \frac{1}{N^2} \sum_{j, k \in I_n} \left( \int_0^{s^{(m)}} (f(V^j_r)-f(V^j_{r^{(m)}})) dW^{k}_r \right)^2 \geq E  \right) \leq \\
\max \Bigg\{ \lsup{n} \frac{1}{N} \log Q^n \left( K_{\kappa,n} \right),\\
\lsup{n} \frac{1}{N} \log Q^n \left( K_{\kappa,n}^c \cap \left\{ \sup_{s\in [0,T]}  \frac{1}{N^2} \sum_{j, k \in I_n} \left( \int_0^{s^{(m)}} (f(V^j_r)-f(V^j_{r^{(m)}})) dW^{k}_r \right)^2 \geq E \right\}  \right) \Bigg\} \leq \\
\max \Bigg\{ -M,\\
 \lsup{n} \frac{1}{N} \log Q^n \left( K_{\kappa,n}^c \cap \left\{ \sup_{s\in [0,T]}  \frac{1}{N^2} \sum_{j, k \in I_n} \left( \int_0^{s^{(m)}} (f(V^j_r)-f(V^j_{r^{(m)}})) dW^{k}_r \right)^2 \geq E \right\}  \right) \Bigg\},
\end{multline}
where $\kappa$ is large enough so that \eqref{eq: intermediate 1} holds. 
Note that 
\begin{multline*}
 \sup_{s\in [0,T]}  \frac{1}{N^2} \sum_{j, k \in I_n} \left( \int_0^{s^{(m)}} (f(V^j_r)-f(V^j_{r^{(m)}})) dW^{k}_r \right)^2 \geq E \Longleftrightarrow \\
 \sup_{s\in [0,T]}  \frac{h(m)}{2 N} \sum_{j, k \in I_n} \left( \int_0^{s^{(m)}} \sqrt{\frac{2m}{\kappa T}}(f(V^j_r)-f(V^j_{r^{(m)}})) dW^{k}_r \right)^2 \geq \frac{h(m) m N E}{\kappa T},
\end{multline*}
where $h: \N \to \R^{+}$ is monotonically decreasing toward 0.
Now let $\zeta_s$ be the submartingale
\begin{equation}\label{eq:xit}
\zeta_s = \exp \left(   \frac{h(m)}{2 N} \sum_{j, k \in I_n} \left( \int_0^{s^{(m)}} \sqrt{\frac{2m}{\kappa T}}(f(V^j_r)-f(V^j_{r^{(m)}})) dW^{k}_r \right)^2 \right).
\end{equation}
 Through Doob's submartingale inequality,
\begin{multline}\label{eq:intermediate 2}
Q^n \bigg(K^c_{\kappa,n} \cap \left\{  \sup_{s\in [0,T]}  \frac{1}{N^2} \sum_{j, k \in I_n} \left( \int_0^{s^{(m)}} (f(V^j_r)-f(V^j_{r^{(m)}})) dW^{k}_r \right)^2 \geq E \right\} \bigg) \\
\leq \Exp^{Q^n}\big[ \zeta_T\cap K^c_{\kappa,n} \big] \exp \left(-\frac{h(m) m N E}{\kappa T} \right).
\end{multline}
Choosing, e.g. $h(m)=1/\sqrt{m}$ we can apply Lemma \ref{lem:BG} with $\varepsilon ^2 = h(m)$ and obtain $\Exp^{Q^n}\big[ \zeta_T\cap K^c_{\kappa,n} \big] \leq \left(1 - 4 \frac{T}{m} \right)^{-N/4}$. Hence, upon taking $m\to \infty$, we find that
\begin{multline*}
\lsup{n}\frac{1}{N} \log Q^n \left(K^c_{\kappa,n} \cap \left\{  \sup_{s\in [0,T]}  \frac{1}{N^2} \sum_{j, k \in I_n} \left( \int_0^{s^{(m)}} (f(V^j_r)-f(V^j_{r^{(m)}})) dW^{k}_r \right)^2 \geq E \right\} \right) \leq \\ -M .
\end{multline*}
We have established that for $m$ large enough
\[
\lsup{n} \frac{1}{N} \log Q^n \left(\sup_{s \in [0,T]} \frac{1}{N^4} \sum_{p \in I_n} \left| \alpha^{1,3,p}_s  \right|^2 \geq B \right) \leq  -M,
\]
where $B$ is defined in \eqref{eq:Bdef}.
\\
 {\bf Proof for $\alpha^{1,4,p}_s$}\\
   We next consider $\alpha^{1,4,p}_s$ in \eqref{eq:alpha1jpdef}.
  As in the previous derivations, by Corollary \ref{Corollary Measure Representation}, Cauchy-Schwarz inequality and Parseval's theorem, we write
 \begin{equation}\label{eq:beta3}
   \frac{1}{N^4} \sum_{p \in I_n}  \left| \alpha^{1,4,p}_s   \right|^2  \leq 
  A \frac{1}{N^2} \left( \sum_{j, k \in I_n} \left( \int_{s^{(m)}}^s f(V^j_r) dW^{k}_r \right)^2 + 
    \sum_{j, k \in I_n} \left( \int_{s^{(m)}}^s f(V^j_r) \theta^{k}_r \, dr\right)^2 \right),
  \end{equation}
 for some constant $A > 0$, independent of $n,\,m$.
In the remaining of this Appendix we neglect for simplicity the drift part, i.e. the second term in the right hand side of the previous equation,  since this can be dealt with similarly to the above by the use of Lemma \ref{Lemma: bound theta} or \ref{Lemma: bound eta}.

  From \eqref{eq:beta3}, neglecting the drift term, and letting $E:=  B/A$, we write
  \begin{multline*}
  \frac{1}{N} \log Q^n \left( \sup_{s \in [0,T]} A \frac{1}{N^2}  \sum_{j, k \in I_n} \left| \int_{s^{(m)}}^s f(V^j_r) dW^{k}_r \right|^2 \geq  B   \right) = \\
   \frac{1}{N} \log Q^n \left( \sup_{s \in [0,T]} \frac{1}{N^2}  \sum_{j, k \in I_n} \left| \int_{s^{(m)}}^s f(V^j_r) dW^{k}_r \right|^2 \geq  E  \right) = \\
  \frac{1}{N} \log Q^n  \left( \sup_{0 \leq u \leq m-1} \sup_{s \in [u \eta_m,(u+1) \eta_m]} \frac{1}{N^2}  \sum_{j, k \in I_n} \left| \int_{s^{(m)}}^s f(V^j_r) dW^{k}_r \right|^2 \geq  E   \right) \leq  \\
    \frac{1}{N} \log \left( m \sup_{0 \leq u \leq m-1} Q^n  \left(  \sup_{s \in [u \eta_m,(u+1) \eta_m]} \frac{1}{N^2}  \sum_{j, k \in I_n} \left| \int_{s^{(m)}}^s f(V^j_r) dW^{k}_r \right|^2 \geq  E  \right) \right) = \\
    \frac{1}{N} \log \left( m \sup_{0 \leq u \leq m-1} Q^n  \left(  \sup_{s \in [u \eta_m,(u+1) \eta_m]} \zeta_s \geq \exp \left(  \frac{ N h(m) E }{2}  \right) \right) \right),
  \end{multline*}
  where
  \[
  \zeta_s = \exp \left( \frac{ h(m) }{2N} \sum_{j, k \in I_n} \left| \int_{s^{(m)}}^s f(V^j_r) dW^{k}_r \right|^2 \right).
  \]
The function $h: \N \to \R^+$ is increasing and is defined just below.
  Since $\zeta_s$ is a submartingale for $s \in [u \eta_m,(u+1) \eta_m]$, by Doob's submartingale inequality,
\begin{align*}
Q^n  \left(  \sup_{s \in [u \eta_m,(u+1)\eta_m)]} \zeta_s \geq \exp \left(  \frac{ N h(m) E }{2}  \right) \right) &\leq \exp \left(  - \frac{ N h(m) E }{2}  \right) \Exp^{Q^n}\big[\zeta_{(u+1)\eta_m} \big].
\end{align*}
We apply Lemma \ref{lem:BG} with $\varepsilon = \sqrt{h(m)}$, $T = \eta_m $ and conclude that, if $h(m) \leq \frac{m}{4T}$ for $m$ large enough, e.g. $h(m) = \sqrt{m}$,
\[
\Exp^{Q^n}\big[\zeta_{(u+1)\eta_m} \big] \leq (1-4 h(m) \eta_m)^{-N/4},
\]
and therefore
\begin{multline*}
\frac{1}{N} \log Q^n  \left( \sup_{s \in [0,T]} A \frac{1}{N^2}  \sum_{j, k \in I_n} \left| \int_{s^{(m)}}^s f(V^j_r) dW^{k}_r \right|^2 \geq  B   \right) \leq \\
\frac{ \log m }{ N } - \frac{  h(m) E}{2 }-\frac{1}{4} \log (1-4 h(m) \eta_m).
\end{multline*}
We have established that for all $M>0$, for $m$ large enough
\[
\lsup{N}\frac{1}{N} \log Q^n (\sup_{s \in [0,T]} \frac{1}{N^4} \sum_{p \in I_n} \left| \alpha^{1,4,p}_s  \right|^2 \geq  B )) \leq  -M,
\]
and hence proved the Lemma.
 \end{proof}

\begin{proof}[{\bf Proof of Lemma~\ref{lem:alpha2}}]
The salient point in the proof is the use of the difference of the correlation functions $K_{\emp(V_n)}$ and $K_{\emp(V_n)}^{q_m}$, defined in Appendix \ref{app:covdiscrete}, over the sets $I_n \times I_{q_m}$ and $I_n \times I_n$. We remind the reader that $q_m$ is defined at the start of Section \ref{Section Definition Tilde Psi qm rm}. The proof shows that it is possible to choose $m$ and $q_m$ as functions of $n$ as stated in the Lemma. Assume that $s^{(m)} = v \eta_m$, $v = 0,\cdots, m-1$.\\
{\bf Step 1: Finding an upper bound of $\alpha^2_{v \eta_m}$ in terms of $\bar{K}_{\emp(V_n)}-\bar{K}_{\emp(V_n)}^{q_m}$}\\
In detail \eqref{eq:tildetheta-eta} implies that  
\begin{multline*}
\alpha^2_{v \eta_m} =  \frac{5}{N^2\sigma^4} \sum_{p \in I_n}  \left| \left( \left( \bar{\tilde{L}}^{ p}_{\emp(V_n)} - \bar{\tilde{L}}^{q_m,  p}_{\emp(V_n)} \right) \delta \tilde{V}^p \right)(v \eta_m) \right|^2 = \\
\frac{5}{N^2\sigma^4}\sum_{p\in I_n}  \bigg|
 \sum_{w=0}^{v} \left( \tilde{L}^{  p}_{\emp(V_n)}(v \eta_m,w \eta_m)-
 \tilde{L}^{q_m,  p}_{\emp(V_n)}(v \eta_m,w \eta_m) \right) \delta \tilde{V}^p_w \bigg|^2.
\end{multline*}
Next, by Cauchy-Schwarz on the $w$ index 
	\begin{align}
\nonumber
\alpha^2_{v \eta_m} & \leq 
\frac{5}{N^2\sigma^4} \sum_{p\in I_n} \sum_{w=0}^{v} \left| \tilde{L}^{ p}_{\emp(V_n)}(v \eta_m,w \eta_m)-
\tilde{L}^{q_m, p}_{\emp(V_n)}(v \eta_m,w \eta_m) \right|^2
\times \sum_{w=1}^{v}\left| \delta \tilde{V}^p_w \right|^2\\
&\leq	
\frac{5}{N^2\sigma^4}
\sum_{p\in I_n} \sum_{w=0}^{v} \left| \tilde{L}^{  p}_{\emp(V_n)}(v \eta_m,w \eta_m)-
\tilde{L}^{q_m,  p}_{\emp(V_n)}(v \eta_m,w \eta_m) \right|^2  \times \sum_{p\in I_n} \sum_{w=1}^{v} \left|\delta \tilde{V}^p_w\right|^2.
\label{eq:boundalpha2}
\end{align}
By \eqref{eq:Ltildebar}, for all $s,\,u \in [0, t]$ and for all $t \in [0, T]$, we have 
\[
\tilde{L}^{ p}_{\emp(V_n)}(s , u) - \tilde{L}^{q_m,  p}_{\emp(V_n)}(s , u) = \sigma^2 \left( {\rm Id} + \sigma^{-2} \bar{\tilde{K}}^{q_m,  p}_{\emp(V_n)} \right)^{-1}(s , u) - \sigma^2 \left( {\rm Id} + \sigma^{-2} \bar{\tilde{K}}^{ p}_{\emp(V_n)} \right)^{-1}(s , u).
\]
By the identity $A^{-1}-B^{-1}=A^{-1}(B-A)B^{-1}$
\begin{multline*}
\sigma^2 \left( {\rm Id} + \sigma^{-2} \bar{\tilde{K}}^{q_m,  p}_{\emp(V_n)} \right)^{-1}(s , u) - \sigma^2 \left( {\rm Id} + \sigma^{-2} \bar{\tilde{K}}^{ p}_{\emp(V_n)} \right)^{-1}(s , u)=\\
\left( {\rm Id} + \sigma^{-2} \bar{\tilde{K}}^{q_m,  p}_{\emp(V_n)} \right)^{-1}  \circ \left( \bar{\tilde{K}}^{ p}_{\emp(V_n)} -  \bar{\tilde{K}}^{q_m,  p}_{\emp(V_n)} \right) \circ \left( {\rm Id} + \sigma^{-2} \bar{\tilde{K}}^{ p}_{\emp(V_n)} \right)^{-1} (s , u),
\end{multline*}
where $\circ$ indicates the composition of the operators. By Remark~\ref{rem:opcomposition} in Appendix \ref{app:covariances}
we have 
\begin{multline*}
\tilde{L}^{ p}_{\emp(V_n)}(s , u) - \tilde{L}^{q_m,  p}_{\emp(V_n)}(s , u) = \\
 \int_0^t \int_0^t \left( {\rm Id} + \sigma^{-2} \bar{\tilde{K}}^{q_m,  p}_{\emp(V_n)} \right)^{-1}(s, x) \left( \tilde{K}^{ p}_{\emp(V_n)} -  \tilde{K}^{q_m,  p}_{\emp(V_n)} \right)(x,y) \left( {\rm Id} + \sigma^{-2} \bar{\tilde{K}}^{ p}_{\emp(V_n)} \right)^{-1}(y, u) \, dx\, dy,
\end{multline*}
for all $s,\,u \in [0, t]$ and for all $t \in [0, T]$.

We recall further that\footnote{This comes from the fact that, say for an $n \times n$ matrix $A$, but this is also true for general linear operators,
\[
\norm{A}_{max} = \max_{i, j} | A_{ij} | \leq \norm{A}_2 = \sigma_{max}(A) \text{ the largest singular value of } A
\]}
\begin{align}
\left( {\rm Id} + \sigma^{-2} \bar{\tilde{K}}^{q_m,   p}_{\emp(V_n)} \right)^{-1}(s, x)  & \leq 1, \nonumber\\
\mbox{ and }\quad
\left( {\rm Id} + \sigma^{-2} \bar{\tilde{K}}^{ p}_{\emp(V_n)} \right)^{-1}(s, x)  &\leq 1, \label{eq:boundondft}
\end{align}
we conclude that
\[
 \left| \tilde{L}^{ p}_{\emp(V_n)}(v \eta_m,w \eta_m) - \tilde{L}^{q_m, p}_{\emp(V_n)}(v \eta_m,w \eta_m) \right| \leq
 \int_0^{v \eta_m} \int_0^{v \eta_m}  \left|  \left( \tilde{K}^{ p}_{\emp(V_n)} -  \tilde{K}^{q_m,  p}_{\emp(V_n)} \right)(x,y)  \right| \, dx\,dy,
\]
and, by Cauchy-Schwarz, and $v \leq m$,
\[
 \left| \tilde{L}^{ p}_{\emp(V_n)}(v \eta_m,w \eta_m) - \tilde{L}^{q_m,  p}_{\emp(V_n)}(v \eta_m,w \eta_m) \right|^2 \leq
 T^2 \int_0^T \int_0^T \left|  \left( \tilde{K}^{ p}_{\emp(V_n)} -  \tilde{K}^{q_m,  p}_{\emp(V_n)} \right)(x,y)  \right|^2 \, dx\,dy,
\]
so that, by \eqref{eq:boundalpha2}, 
\begin{multline*}
\alpha^2_{v \eta_m} \leq  
\frac{5 T^2 }{N^2\sigma^4} \sum_{p\in I_n}  \int_0^T \int_0^T  \left|  \left( \tilde{K}^{ p}_{\emp(V_n)} -  \tilde{K}^{q_m,  p}_{\emp(V_n)} \right)(x,y)  \right|^2 \, dx\,dy \times  \sum_{p\in I_n} \sum_{w=1}^{v} \left|\delta \tilde{V}^p_w\right|^2  ,
\end{multline*}
and, by Parseval's theorem,
\[
\alpha^2_{v \eta_m} \leq  
\frac{5 T^2}{\sigma^4} \sum_{k \in I_n}  \int_0^T \int_0^T  \left(  \left( K^{  k}_{\emp(V_n)}  -  K^{q_m,  k}_{\emp(V_n)}\right)(x,y)  \right)^2 \, dx\,dy   \times  \sum_{k \in I_n}  \sum_{w=1}^{v} \left|\delta V^k_w\right|^2.
\]
{\bf Step 2: Choose $m$ and $q_m$ as functions of $n$}\\
We observe that $K^{q_m,  k}_{\emp(V_n)}$ is equal to $K^{  k}_{\emp(V_n)}$ over the set $I_{q_m}$ and to 0 over the complement of $I_{q_m}$ in $I_n$, their common value being 
\[
K^{ k}_{\emp(V_n)}(x,y)=\sum_{h \in I_n} R_\J(k,h)\frac{1}{N} \sum_{l \in I_n} f(V^l_x) f(V^{l+h}_y),
\]
so that we have
\[
\alpha_{v \eta_m}^2 \leq \frac{5T^2}{\sigma^4}   \sum_{k \in I_n \setminus I_{q_m}}   \int_0^T \int_0^T  \left( \sum_{h \in I_n}R_\J(k,h)\frac{1}{N} \sum_{l \in I_n} f(V^l_x) f(V^{l+h}_y)\right)^2 \, \, dx\,dy  \times  \sum_{k \in I_n}  \sum_{w=1}^{v} \left|\delta V^k_w\right|^2.
\]
Because $0 \leq f \leq 1$ we have
\[
\alpha^2_{v \eta_m} \leq  \frac{5T^4}{\sigma^4}  \sum_{k \in I_n \setminus I_{q_m}} \left( \sum_{h \in I_n}\left| R_\J(k,h)\right| \right)^2 \times  \sum_{k \in I_n} \sum_{w=1}^{v} \left|\delta V^k_w\right|^2.
\]
Define
\[
\psi(n,q_m):= \sum_{k \in I_n \setminus I_{q_m}} \left( \sum_{h \in I_n}\left| R_\J(k,h)\right| \right)^2.
\]
By choosing $q_m$ as a function of $m$, and $m$ as a function of $n$, $\psi(n,q_m)$ can be made arbitrarily small for large $n$ and $m$ . 
We have 
\[
\alpha^2_{v \eta_m} \leq  \frac{5T^4}{\sigma^4}  \psi(n,q_m)  \sum_{k \in I_n}  \sum_{w=1}^{v} \left|\delta V^k_w\right|^2.
\]
As before, we neglect the contribution of the drift term in \eqref{eq: theta SDE} and write that, for $m$, $n$ large enough
\[
\alpha^2_{v \eta_m} \leq  \frac{5T^4}{\sigma^4}  \psi(n,q_m)  \sum_{k \in I_n}  \sum_{w=1}^{v} \left|\delta W^k_w\right|^2.
\]
Let us define
\begin{equation}\label{eq:white}
\delta W^k_w:=\sqrt{\frac{T}{m}} \, \xi^{w,k},\, k \in I_n,\, w=1,\cdots,m,
\end{equation}
 where the $\xi^{w,k}$s are i.i.d. $\mathcal{N}(0,1)$.
Using \eqref{eq:white}, we have
\[
\sup_{v =0,\cdot,m-1} \alpha^2_{v \eta_m} \leq   \frac{5T^5}{\sigma^4}  N \psi(n,q_m)\frac{1}{Nm}  \sum_{k\in I_n}  \sum_{w=1}^{m} (\xi^{w,k})^2.
\]
Define 
$\varphi(n,m):= 5T^5  N \psi(n,q_m)/\sigma^4$ 
and assume that we have chosen $\psi(n,q_m)$ such that $\lim_{n,m \to \infty} \varphi(n,m)=0$. 
\begin{remark}\label{rem:choice of m}
Because of \eqref{eq:akbl} we have 
\[
\psi(n,q_m) \leq A b^2 \sum_{k = q_m+1}^n \frac{1}{k^4} \leq A b^2 (n-q_m) \frac{1}{(q_m+1)^4},
\]
for some $A > 0$ independent of $n$ and $m$, and therefore
\[
\varphi(n,m) \leq B (2n+1)(n-q_m)\frac{1}{(q_m+1)^4}
\]
with $B=A b^2T^5$.  Now choose $q_m=ng(m)$ with $g(m) \leq 1$. It follows that
\[
 (2n+1) (n-q_m) \frac{1}{(q_m+1)^4}=\frac{1}{n^2}(2+\frac{1}{n})(1-g(m))\frac{1}{\left(g(m)+\frac{1}{n}\right)^4}.
\]
	At this step, any choice of \(g \leq 1\) yields to $\lim_{n,m \to \infty} \varphi(n,m)=0$.
\end{remark}
\noindent
{\bf Step 3: Apply Cramer's Theorem and conclude}\\
Next we set $A:=\frac{\epsilon}{3 T C \sigma^2}$ and have
\begin{equation}\label{eq:int114}
Q^n(  \sup_{v =0,\cdot,m-1}  \alpha^2_{v \eta_m} \geq \frac{\epsilon}{3 T C \sigma^2} ) 
 = Q^n\left( \frac{1}{Nm} \sum_{k\in I_n} \sum_{w=1}^m (\xi^{w,k})^2 \geq  \frac{A}{ \varphi(n,m)}  \right).
\end{equation}
Since $\lim_{n,m \to \infty} \varphi(n,m)=0$ we can choose $n_0$ and $m_0$ such that $ \frac{A}{\varphi(n,m)}  > 1$ for $n \geq n_0$ and $m \geq m_0$, 1 being the mean of $ (\xi^{w,k})^2$. Let $\rho:= \frac{A}{\varphi(n_0,m_0)} $. We have
\[
Q^n\left( \frac{1}{Nm} \sum_{k\in I_n} \sum_{w=1}^m (\xi^{w,k})^2 \geq  \frac{A}{ \varphi(n,m)}  \right) \leq Q^n\left( \frac{1}{Nm} \sum_{k\in I_n} \sum_{w=1}^m (\xi^{w,k})^2 \geq \rho \right),
\]
as soon as $n \geq n_0$ and $m \geq m_0$.
We conclude thanks to Cramer's Theorem. We state in the following Lemma
	a version adapted to our setting.
\begin{lemma}\label{lem:cramer}
Let $\xi^{w,k}$, $w=0,\cdots,m-1$, $k \in I_n$, be a sequence of i.i.d. $\mathcal{N}(0,1)$ random variables under $Q^n$, and $\rho > 1$. There exists $\alpha > 0$ depending on $\rho$ such that
\[
\lsup{n} \frac{1}{N} \log Q^n\left( \frac{1}{Nm} \sum_{k\in I_n} \sum_{w=0}^{m-1} (\xi^{w,k})^2 \geq \rho \right) \leq -m \alpha.
\]
\end{lemma}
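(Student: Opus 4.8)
The plan is to prove this as the textbook Cramér upper bound for the empirical mean of $Nm$ i.i.d.\ chi-square random variables with one degree of freedom, using the exponential Markov (Chernoff) inequality together with the explicit moment generating function of $\xi^2$ for a standard Gaussian $\xi$.

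First I would fix $\lambda \in (0,1/2)$ and apply Markov's inequality to the nonnegative random variable $\exp\big(\lambda \sum_{k\in I_n}\sum_{w=0}^{m-1}(\xi^{w,k})^2\big)$, obtaining
\[
Q^n\left( \frac{1}{Nm}\sum_{k\in I_n}\sum_{w=0}^{m-1}(\xi^{w,k})^2 \geq \rho \right) \leq e^{-\lambda Nm\rho}\,\Exp^{Q^n}\!\left[ \exp\Big(\lambda \sum_{k\in I_n}\sum_{w=0}^{m-1}(\xi^{w,k})^2\Big)\right].
\]
By independence of the $Nm$ variables under $Q^n$ and the elementary identity $\Exp^{Q^n}[e^{\lambda\xi^2}] = (1-2\lambda)^{-1/2}$ (valid for $\lambda<1/2$), the expectation factorizes to $(1-2\lambda)^{-Nm/2}$. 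Taking logarithms and dividing by $N$ gives, for every $n$ and every $\lambda\in(0,1/2)$,
\[
\frac{1}{N}\log Q^n\left( \frac{1}{Nm}\sum_{k\in I_n}\sum_{w=0}^{m-1}(\xi^{w,k})^2 \geq \rho \right) \leq m\Big( -\lambda\rho - \tfrac12\log(1-2\lambda)\Big).
\]

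Next I would optimize the bracket over $\lambda$. The map $g(\lambda):=\lambda\rho + \tfrac12\log(1-2\lambda)$ is concave on $(0,1/2)$ with $g'(\lambda)=\rho-(1-2\lambda)^{-1}$, which vanishes at $\lambda^\ast=\tfrac12(1-1/\rho)\in(0,1/2)$ since $\rho>1$; evaluating there yields $g(\lambda^\ast)=\tfrac12(\rho-1-\log\rho)$. I would then set $\alpha := \tfrac12(\rho-1-\log\rho)$, which is strictly positive because $x-1\geq\log x$ for $x>0$ with equality only at $x=1$ and $\rho\neq1$. This gives $\frac{1}{N}\log Q^n(\cdots\geq\rho)\leq -m\alpha$ for all $n$, and passing to $\lsup{n}$ preserves the inequality, which is exactly the claim. (Alternatively, to avoid even this optimization one can simply pick any fixed $\lambda_0\in(0,1/2)$ with $g(\lambda_0)>0$ — such a $\lambda_0$ exists since $g(0)=0$ and $g'(0)=\rho-1>0$ — and take $\alpha:=g(\lambda_0)$.)

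There is no substantial obstacle here: the argument is the standard Cramér bound, and the only point requiring a line of justification is the strict positivity of $\alpha$, i.e.\ that the Legendre transform of the log-moment generating function of $\chi^2_1$ is positive away from its mean $1$; everything else is routine exponential-moment bookkeeping.
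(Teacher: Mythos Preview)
Your proof is correct and is precisely the standard Chernoff/Cram\'er upper bound specialized to i.i.d.\ $\chi^2_1$ variables; the paper itself gives no argument beyond citing \cite[Th.~2.2.3]{dembo-zeitouni:97}, so your explicit computation of the Legendre transform $\alpha=\tfrac12(\rho-1-\log\rho)$ simply unpacks that reference.
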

\begin{proof}
	See \cite[Th. 2.2.3]{dembo-zeitouni:97}.
\end{proof}
According to this Lemma there exists $\alpha(\rho) > 0$ such that
\[
\frac{1}{N} \log Q^n \left( \frac{1}{Nm} \sum_{k\in I_n} \sum_{w=1}^m (\xi^{w,k})^2 \geq \frac{A}{ \varphi(n,m)} \right) \leq 
-m \alpha(\rho).
\]
Combining this with \eqref{eq:int114} we obtain
\[
\frac{1}{N} \log Q^n \left(\sup_{s \in [0,T]} \alpha^2_{s^{(m)}} \geq  \frac{\epsilon}{3 T C \sigma^2 } \right) \leq  -m \alpha(\rho),
\]
as soon as $n \geq n_0$ and $m \geq m_0$.
This completes the proof.
\end{proof}

\begin{proof}[{\bf Proof of Lemma \ref{lem:alpha3}}]
The proof is based on a comparison of the length $N$ DFTs of a sequence of length $N$ and of the same sequence of length $Q_m$ padded with $N-Q_m$ zeroes followed by the use of Cramer's Theorem, i.e. Lemma \ref{lem:cramer}.\\
{\bf Step 1: Fourier analysis}\\
We have, with $s^{(m)}=v \eta_m$,
\[
\alpha^3_{v \eta_m} = \frac{5}{N^2} \sum_{p \in I_n}  \left| \sigma^{-2} ( \bar{\tilde{L}}^{  p}_{\emp(V^m_n)}\delta \tilde{V}^{m, p})(v \eta_m)-\tilde{\theta}^{m, p}_s\right|^2.
\]
By equations \eqref{eq:Ltildebar} and \eqref{eq:Kqmp} 
\begin{align*}
 (\bar{\tilde{L}}^{ p}_{\emp(V^m_n)}\delta \tilde{V}^{m, p})(v \eta_m) & =  \sigma^2 \sum_{w=0}^{v} \delta \tilde{V}^{m,p}_w-  \sigma^2 \sum_{w=0}^{v} (1+\sigma^{-2}\bar{\tilde{K}}^{  p}_{\emp(V^m_n)})^{-1}(v \eta_m,w \eta_m) \delta \tilde{V}^{m, p}_w  \\
& =\sigma^2 \sum_{w=0}^{v} \delta \tilde{V}^{m,p}_w-  \sigma^2 \sum_{w=0}^{v} (1+\sigma^{-2}\bar{\tilde{K}}^{ v \eta_m}_{\emp(V^m_n)}(\frac{2 \pi p}{N}))^{-1}(v \eta_m,w \eta_m) \delta \tilde{V}^{m, p}_w.
\end{align*}
By \eqref{eq:vmtthetamt} and \eqref{eq:Ktilde} we have
\[
\theta^{m,j}_s = \sigma^{-2} \sum_{k \in I_{q_m}} \sum_{w=0}^{v} L^{q_m,k}_{\emp(V^m_n)}(v \eta_m,w \eta_m)\delta V^{m, k+j}_w.
\]
Taking the length $N$ DFT of both sides and using Lemma \ref{lem:translation} we obtain for $p \in I_n$
\[
\tilde{\theta}^{m,p}_s = \sigma^{-2} \sum_{k \in I_{q_m}} F_N^{kp} \sum_{w=0}^{v} L^{q_m,k}_{\emp(V^m_n)}(v \eta_m,w \eta_m)\delta \tilde{V}^{m, p}_w,
\]
where $F_N=e^{\frac{2\pi i}{N}}$. The relation
\[
L^{q_m,k}_{\emp(V^m_n)}(v \eta_m,w \eta_m) = \frac{1}{Q_m} \sum_{q \in I_{q_m}} \tilde{L}^{q_m,q}_{\emp(V^m_n)}(v \eta_m,w \eta_m) F_{Q_m}^{qk},
\]
where $F_{Q_m}=e^{\frac{2i\pi }{Q_m}}$, implies 
\[
\tilde{\theta}^{m, p}_s=
\sigma^{-2} Q_m^{-1}\sum_{k,q\in I_{q_m}} F_N^{kp} F_{Q_m}^{kq}\sum_{w=0}^{v}\tilde{L}^{q_m,  q}_{\emp(V^m_n)}(v \eta_m,w \eta_m)\delta \tilde{V}^{m, p}_w.
\]
According to \eqref{eq:Lmubarn1},
\[
\bar{\tilde{L}}^{q_m,  q}_{\emp(V^m_n)}=\sigma^2 \left( {\rm Id} - ({\rm Id}+\sigma^{-2} \bar{\tilde{K}}^{q_m,  q}_{\emp(V^m_n)})^{-1}\right) = 
\sigma^2 \left( {\rm Id} - \left( {\rm Id}+\sigma^{-2} \bar{\tilde{K}}^{q_m}_{\emp(V^m_n)}\left( \frac{2\pi q}{Q_m} \right) \right)^{-1}\right),
\]
so that we have,
	using $\sum_{q \in I_{q_m}} F_{Q_m}^{kq}=Q_m \delta_k$, where $\delta_k=1$ if $k=0$ and 0 otherwise.
	 And therefore $\sum_{k,q\in I_{q_m}} F_N^{kp} F_{Q_m}^{kq}=Q_m$,
\begin{multline*}
\tilde{\theta}^{m, p}_s= \\
  \sum_{w=0}^{v}\delta \tilde{V}^{m, p}_w-  \sum_{w=0}^{v} \sum_{k \in I_{q_m}} \frac{1}{2\pi} \sum_{q \in I_{q_m}} e^{-ik (\frac{2\pi q}{Q_m}-\frac{2\pi p}{N})}\left( {\rm Id}+\sigma^{-2} \bar{\tilde{K}}^{q_m}_{\emp(V^m_n)}\left( \frac{2 \pi q}{Q_m} \right) \right)^{-1}(v \eta_m,w \eta_m) \frac{2\pi}{Q_m}\delta \tilde{V}^{m, p}_w.
\end{multline*}
We conclude that
\begin{multline*}
\sigma^{-2} ( \bar{\tilde{L}}^{q_m, p}_{\emp(V^m_n)}\delta \tilde{V}^{m, p})(v \eta_m)-\tilde{\theta}^{m, p}_s = \\
\sum_{w=0}^{v} \Bigg( \sum_{k \in I_{q_m}} \frac{1}{2\pi} \sum_{q \in I_{q_m}} e^{-ik (\frac{2\pi q}{Q_m}-\frac{2\pi p}{N})} \left( {\rm Id}+\sigma^{-2} \bar{\tilde{K}}^{q_m}_{\emp(V^m_n)}\left( \frac{2 \pi q}{Q_m} \right) \right)^{-1}(v \eta_m,w \eta_m)\frac{2\pi }{Q_m} - \\
  \left( {\rm Id}+\sigma^{-2} \bar{\tilde{K}}^{q_m}_{\emp(V^m_n)}\left( \frac{2 \pi p}{N} \right) \right)^{-1}(v \eta_m,w \eta_m) \Bigg) \delta \tilde{V}^{m, p}_w.
\end{multline*}
With a slight abuse of notation and ignoring the time dependency for the moment we write
\[
\left( {\rm Id}+\sigma^{-2} \bar{\tilde{K}}^{q_m}_{\emp(V^m_n)}\left( \frac{2 \pi q}{Q_m} \right) \right)^{-1}(v \eta_m,w \eta_m) =
\frac{1}{1 + \sigma^{-2} \bar{\tilde{K}}^{q_m}_{\emp(V^m_n)}\left( \frac{2 \pi q}{Q_m} \right)},
\]
and 
\[
\left( {\rm Id}+\sigma^{-2} \bar{\tilde{K}}^{q_m}_{\emp(V^m_n)}\left( \frac{2 \pi p}{N} \right) \right)^{-1}(v \eta_m,w \eta_m) =
\frac{1}{1 + \sigma^{-2} \bar{\tilde{K}}^{q_m}_{\emp(V^m_n)}\left( \frac{2 \pi p}{N} \right)}.
\]
Because
\[
\frac{1}{1 + \sigma^{-2}\bar{\tilde{K}}^{q_m}_{\emp(V^m_n)}\left( \frac{2 \pi p}{N} \right)}=\int_{-\pi}^\pi \frac{\delta(\varphi - \frac{2\pi p}{N})}{1 + \sigma^{-2} \bar{\tilde{K}}^{q_m}_{\emp(V^m_n)}\left(\varphi\right)}\,d\varphi,
\]
and
\[
\frac{1}{2\pi} \sum_{k \in \Z} e^{- ik (\varphi - \frac{2\pi p}{N})} = \delta(\varphi - \frac{2\pi p}{N}),
\]
we have
\begin{multline*}
\sum_{k \in I_{q_m}} \frac{1}{2\pi} \sum_{q \in I_{q_m}} \frac{e^{-ik (\frac{2\pi q}{Q_m}-\frac{2\pi p}{N})}}{1 + \sigma^{-2} \bar{\tilde{K}}^{q_m}_{\emp(V^m_n)}\left( \frac{2 \pi q}{Q_m} \right)}\frac{2\pi}{Q_m}-\frac{1}{1 + \sigma^{-2}\bar{\tilde{K}}^{q_m}_{\emp(V^m_n)}\left( \frac{2 \pi p}{N} \right)}=\\
\sum_{k \in I_{q_m}} \frac{1}{2\pi} \sum_{q \in I_{q_m}} \frac{e^{-ik (\frac{2\pi q}{Q_m}-\frac{2\pi p}{N})}}{1 + \sigma^{-2} \bar{\tilde{K}}^{q_m}_{\emp(V^m_n)}\left( \frac{2 \pi q}{Q_m} \right)}\frac{2\pi}{Q_m}- 
\int_{-\pi}^\pi \frac{\delta(\varphi - \frac{2\pi p}{N})}{1 + \sigma^{-2} \bar{\tilde{K}}^{q_m}_{\emp(V^m_n)}\left(\varphi\right)}\,d\varphi=\\
\sum_{k \in I_{q_m}} e^{2 \pi ik \frac{ p}{N}} \left(\frac{1}{2\pi} \sum_{q \in I_{q_m}} \frac{e^{-2\pi ik \frac{ q}{Q_m}}}{1 + \sigma^{-2} \bar{\tilde{K}}^{q_m}_{\emp(V^m_n)}\left( \frac{2 \pi q}{Q_m} \right)}\frac{2\pi}{Q_m} -\frac{1}{2\pi}
\int_{-\pi}^\pi  \frac{e^{- ik \varphi}}{1 + \sigma^{-2} \bar{\tilde{K}}^{q_m}_{\emp(V^m_n)}\left(\varphi\right)}\,d\varphi\right)-\\
\sum_{k \in \Z - I_{q_m}} e^{2 \pi ik \frac{ p}{N}}\frac{1}{2\pi} \int_{-\pi}^\pi  \frac{e^{-ik \varphi}}{1 + \sigma^{-2} \bar{\tilde{K}}^{q_m}_{\emp(V^m_n)}\left(\varphi\right)}\,d\varphi.
\end{multline*}
Define
\[
\forall \varphi \in [-\pi,\,\pi], \quad h(\varphi):= \frac{e^{-ik \varphi}}{1 + \sigma^{-2} \bar{\tilde{K}}^{q_m}_{\emp(V^m_n)}\left(\varphi\right)} 
\quad \text{ and } \quad
\Delta \varphi \:= \frac{2 \pi}{Q_m},
\]
and write
\[
\frac{1}{2\pi} \sum_{q \in I_{q_m}} \frac{e^{-2\pi ik \frac{ q}{Q_m}}}{1 + \sigma^{-2} \bar{\tilde{K}}^{q_m}_{\emp(V^m_n)}\left( \frac{2 \pi q}{Q_m} \right)}\frac{2\pi}{Q_m}=
\frac{1}{2\pi} \sum_{q=0}^{2q_m} h(-\pi+\frac{\Delta \varphi}{2}+q \Delta \varphi).
\]
This shows that the first term in the left hand side of the previous equations is the Riemann sum, corresponding to the midpoint rule, approximating $\int_{-\pi}^\pi h(\varphi)\,d\varphi$. This implies  that
\[
\left| \frac{1}{2\pi} \sum_{q \in I_{q_m}} \frac{e^{-2\pi ik \frac{ q}{Q_m}}}{1 + \sigma^{-2} \bar{\tilde{K}}^{q_m}_{\emp(V^m_n)}\left( \frac{2 \pi q}{Q_m} \right)}\frac{2\pi}{Q_m} -\frac{1}{2\pi}
\int_{-\pi}^\pi  \frac{e^{- ik \varphi}}{1 + \sigma^{-2} \bar{\tilde{K}}^{q_m}_{\emp(V^m_n)}\left(\varphi\right)}\,d\varphi \right| \leq \frac{ D}{Q_m^2},
\]
where $D$ is a positive constant that depends on the maximum value of the magnitude of the second order derivative of $h$ over the interval $[-\pi,\,\pi]$, hence bounded. Therefore we have proved that 
\begin{multline*}
\left| \sum_{k \in I_{q_m}} e^{2 \pi ik \frac{ p}{N}} \left(\frac{1}{2\pi} \sum_{q \in I_{q_m}} \frac{e^{-2\pi ik \frac{ q}{Q_m}}}{1 + \sigma^{-2} \bar{\tilde{K}}^{q_m}_{\emp(V^m_n)}\left( \frac{2 \pi q}{Q_m} \right)}\frac{2\pi}{Q_m} -\frac{1}{2\pi}
\int_{-\pi}^\pi  \frac{e^{- ik \varphi}}{1 + \sigma^{-2} \bar{\tilde{K}}^{q_m}_{\emp(V^m_n)}\left(\varphi\right)}\,d\varphi \right)\right| \leq \\
 \frac{D}{Q_m},\,\forall p \in I_n.
\end{multline*}
We now consider the term $\frac{1}{2\pi} \int_{-\pi}^\pi  \frac{e^{-ik \varphi}}{1 + \sigma^{-2} \bar{\tilde{K}}^{q_m}_{\emp(V^m_n)}\left(\varphi\right)}\,d\varphi $. It is the $k$th coefficient in the Fourier series of the periodic function $\varphi \to \frac{1}{1 + \sigma^{-2} \bar{\tilde{K}}^{q_m}_{\emp(V^m_n)}\left(\varphi\right)}$. Since $1 + \sigma^{-2} \bar{\tilde{K}}^{q_m}_{\emp(V^m_n)}\left(\varphi\right)$ is positive, three times differentiable with a bounded third order derivative, see Lemma \ref{lem:FTKmut},  a standard result in Fourier analysis indicates that this coefficient is $\smallO{1/|k|^3}$. Since $\sum_{h =|k|}^\infty \frac{1}{h^3}$ is of order $\mathcal{O}(1/k^2)$, we conclude that for $Q_m$ large enough
\[
\left| \sum_{k \in \Z - I_{q_m}} e^{2 \pi ik \frac{ p}{N}}\frac{1}{2\pi} \int_{-\pi}^\pi  \frac{e^{-ik \varphi}}{1 + \sigma^{-2} \bar{\tilde{K}}^{q_m}_{\emp(V^m_n)}\left(\varphi\right)}\,d\varphi \right| \leq \frac{D}{Q_m^2},
\]
for some constant $D > 0$.

Reintroducing the time dependency, and by Cauchy-Schwarz on the $w$ index, we have therefore proved that for $Q_m$ large enough

\begin{multline*}
\Bigg|  \sum_{w=0}^{v} \Bigg( \sum_{k \in I_{q_m}} \frac{1}{2\pi} \sum_{q \in I_{q_m}} e^{-ik (\frac{2\pi q}{Q_m}-\frac{2\pi p}{N})} \left( {\rm Id}+\sigma^{-2} \bar{\tilde{K}}^{q_m}_{\emp(V^m_n)}\left( \frac{2 \pi q}{Q_m} \right) \right)^{-1}(v \eta_m,w \eta_m)\frac{2\pi }{Q_m} - \\
  \left( {\rm Id}+\sigma^{-2} \bar{\tilde{K}}^{q_m}_{\emp(V^m_n)}\left( \frac{2 \pi p}{N} \right) \right)^{-1}(v \eta_m,w \eta_m) \Bigg) \delta \tilde{V}^{m, p}_w \Bigg| \leq \\
 \Bigg( \sum_{w=0}^{v} \Bigg| \sum_{k \in I_{q_m}} \frac{1}{2\pi} \sum_{q \in I_{q_m}} e^{-ik (\frac{2\pi q}{Q_m}-\frac{2\pi p}{N})} \left( {\rm Id}+\sigma^{-2} \bar{\tilde{K}}^{q_m}_{\emp(V^m_n)}\left( \frac{2 \pi q}{Q_m} \right) \right)^{-1}(v \eta_m,w \eta_m)\frac{2\pi }{Q_m} - \\
  \left( {\rm Id}+\sigma^{-2} \bar{\tilde{K}}^{q_m}_{\emp(V^m_n)}\left( \frac{2 \pi p}{N} \right) \right)^{-1}(v \eta_m,w \eta_m)  \Bigg|^2 \Bigg)^{1/2} \times \left( \sum_{w=0}^{v} | \delta \tilde{V}^{m, p}_w |^2 \right)^{1/2} \leq \\
\frac{D}{Q_m} \left( \sum_{w=0}^{v} | \delta \tilde{V}^{m, p}_w |^2 \right)^{1/2},
\end{multline*}
for some constant $D > 0$,
and therefore that
\[
\alpha^3_{v \eta_m} = \frac{5}{N^2} \sum_{p \in I_n}  \left| \sigma^{-2} ( \bar{\tilde{L}}^{q_m, p}_{\emp(V^m_n)}\delta \tilde{V}^{m, p})(v \eta_m)-\tilde{\theta}^{m, p}_s\right|^2 \leq \frac{ 5 D^2 }{N^2 Q_m^2} \sum_{p \in I_n} \sum_{w=0}^{v} | \delta \tilde{V}^{m, p}_w |^2,
 \]
 so that, by Parseval's theorem
 \[
 \alpha^3_{v \eta_m}  \leq \frac{ 5D^2 }{N Q_m^2} \sum_{k \in I_n}\sum_{w=0}^{v} | \delta V^{m, k}_w |^2.
 \]
 {\bf Step 2: Apply Cramer's Theorem and conclude}\\
As in previous proofs, Lemma \ref{Lemma: bound eta} allows us to neglect the contribution of the drift terms $\theta^{m,p}$ in the above so that we are interested in upper bounding the probability that the quantity $\frac{ 5 D^2 \sigma^2 }{N Q_m^2} \sum_{k \in I_n}\sum_{w=0}^{v} | \delta W^{k}_w |^2 = \frac{ 5 D^2 \sigma^2 T}{N m Q_m^2} \sum_{k \in I_n} \sum_{w=0}^v \left( \xi^{w,k}  \right)^2$ is larger than $\frac{\epsilon}{3 T C \sigma^2}$.
Following the same strategy as in the end of the proof of Lemma \ref{lem:alpha2}, we choose $m_0$ such that $\rho:= \frac{Q_{m_0}^2 \epsilon}{ 15  C T^2 D^2 \sigma^{4}  } > 1$. Applying again Lemma \ref{lem:cramer} shows that there exists $\alpha(\rho) > 0$ such that
\[
\frac{1}{N} \log Q^n (\sup_{s \in [0,T]} \alpha^3_{s^{(m)}} \geq  \frac{\epsilon}{3 T C \sigma^2}) \leq  -m \alpha(\rho),
\]
as soon as $m \geq m_0$.
This completes the proof.
 \end{proof}

\begin{proof}[{\bf Proof of Lemma \ref{lem:alpha4}}]\ \\
The proof uses the idea of  writing an upper bound of $\alpha^4_{v\eta_m}$ as a sum of three terms and upper bounding each of the three terms. We only provide the proof for one of the three terms, the one requiring the more work.\\
{\bf Step 1: An upper bound for $\alpha^4_{v\eta_m}$}\\
We go back to the initial definition of $\bar{\tilde{L}}^{q_m, p}_{\emp(V_n)}$ and $\bar{\tilde{L}}^{q_m, p}_{\emp(V_n^m)}$, see \eqref{eq:tildeLdefdis}, to write the expression for $\alpha^4_{v\eta_m}$ in \eqref{eq:tildetheta-eta} as
\begin{multline}
\alpha^4_{v\eta_m} = \frac{5}{N^2\sigma^4} \sum_{p \in I_n} \left| \left( \left( \bar{\tilde{L}}^{q_m, p}_{\emp(V_n)}-\bar{\tilde{L}}^{q_m, p}_{\emp(V_n^m)}  \right)\delta \tilde{V}^p \right)(v\eta_m) \right|^2 = \\
\frac{5}{N^4\sigma^4}  \sum_{p\in I_n}  \left| \Exp^{\gamma^{\emp(V_n)}}\left[ \tilde{\Lambda}^p_{v \eta_m}(\tilde{G}^{c,m}) \tilde{G}^{c,m, -p}_{v\eta_m} \int_0^{v\eta_m} \tilde{G}^{c,m,p}_{r^{(m)}} \, d\tilde{V}^p_r - \tilde{\Lambda}^p_{v \eta_m}(\tilde{G}^{m}) \tilde{G}^{m, -p}_{v\eta_m} \int_0^{v\eta_m} \tilde{G}^{m,p}_{r^{(m)}} \, d\tilde{V}^{p}_r\right]  \right|^2, \label{eq:alpha4}
\end{multline}
where $\tilde{G}^{c,m, p}$ is the length $N$ DFT obtained by padding with $N-Q_m$ zeros the length $Q_m$ stationary periodic sequence
\begin{equation}\label{eq:Gnmit}
G^{c,m, j}_t = \sum_{k \in I_n} J^{j k}_{n, m} f(V^k_t),\,j \in I_{q_m},
\end{equation}
and $\tilde{G}^{m, p}$ is the length $N$ DFT obtained by padding with $N-Q_m$ zeros the length $Q_m$ stationary periodic sequence
\begin{equation}\label{eq:Gmit}
G^{m, j}_t = \sum_{k \in I_n} J^{j k}_{n, m} f(V^{m, k}_t),\,j \in I_{q_m}.
\end{equation}
The coefficients $(J^{j k}_{n,m })_{j \in I_{q_m},\,k \in I_n}$ are defined in \eqref{eq:Hdef} and \eqref{lccl}.
In order to proceed, we upper bound the right  hand side of \eqref{eq:alpha4} by a sum of three terms 
\begin{multline*}
\alpha^4_{v\eta_m} \leq 
\frac{15 }{N^4\sigma^{4}}  \sum_{p\in I_n}  \underbrace{\left| \Exp^{\gamma^{\emp(V_n)}}\left[ \left(\tilde{\Lambda}^p_{v \eta_m}(\tilde{G}^{c,m}) -  \tilde{\Lambda}^p_{v \eta_m}(\tilde{G}^{m})  \right) \tilde{G}^{c,m, -p}_{v\eta_m} \int_0^{v\eta_m} \tilde{G}^{c,m,p}_{r^{(m)}} \, d\tilde{V}^p_r \right]  \right|^2}_{\alpha^{4,1,p}_{v \eta_m}} + \\
\frac{15 }{N^4\sigma^4}  \sum_{p\in I_n}  \underbrace{\left| \Exp^{\gamma^{\emp(V_n)}}\left[ \tilde{\Lambda}^p_{v \eta_m}(\tilde{G}^{m}) \left( \tilde{G}^{c,m, -p}_{v\eta_m} -  \tilde{G}^{m, -p}_{v\eta_m} \right) \int_0^{v\eta_m} \tilde{G}^{c,m,p}_{r^{(m)}} \, d\tilde{V}^p_r  \right]  \right|^2}_{\alpha^{4,2,p}_{v \eta_m}}+\\
\frac{15 }{N^4\sigma^{4}}  \sum_{p\in I_n}  \underbrace{\left| \Exp^{\gamma^{\emp(V_n)}}\left[ \tilde{\Lambda}^p_{v \eta_m}(\tilde{G}^{m}) \tilde{G}^{m, -p}_{v\eta_m} \int_0^{v\eta_m} \left( \tilde{G}^{c,m,p}_{r^{(m)}} -  \tilde{G}^{m,p}_{r^{(m)}} \right) \, d\tilde{V}^{p}_r \right]  \right|^2}_{\alpha^{4,3,p}_{v \eta_m}},
\end{multline*}
and show that for any $M > 0$, all $m \in \N$, there exists a constant $\mathfrak{c} > 0$ such that for all $\epsilon \leq \exp(-\mathfrak{c}T) \delta^2/T$, all $0 \leq u \leq m$ and  all $0 \leq v \leq u$ 
\[
\lsup{n}\frac{1}{N} \log Q^n\left(\frac{15}{N^4\sigma^4} \sum_{p \in I_n} \alpha^{4,j,p}_{v\eta_m} \geq \frac{\epsilon \mathfrak{c}}{3 T C \sigma^2} \exp \left (v \eta_m\mathfrak{c} \right)  \text{ and } \tau(\epsilon,\mathfrak{c}) \geq u\eta_m\right) \leq -M\ j=1,2,3.
\]
The proofs are somewhat similar. We provide a proof for the most complicated term corresponding to $j=1$ and leave it to the reader to provide proofs for the cases $j=2,3$.\\
{\bf Step 2: Upper bounding $\left| \tilde{\Lambda}^p_{v \eta_m}(\tilde{G}^{c,m}) -  \tilde{\Lambda}^p_{v \eta_m}(\tilde{G}^{m})  \right|$}\\
We first recall the definitions of $ \tilde{\Lambda}^p_{v \eta_m}(\tilde{G}^{c,m})$ and $\tilde{\Lambda}^p_{v \eta_m}(\tilde{G}^{m})$:
\[
\tilde{\Lambda}^p_{v \eta_m}(\tilde{G}^{c,m}) = \frac{e^{-\frac{u_p}{N \sigma^2} \int_0^{v \eta_m} \left| \tilde{G}^{c,m,p}_s  \right|^2 \, ds}}
{\Exp^{\gamma^{\emp(V_n)}} \left[ e^{-\frac{u_p}{N \sigma^2} \int_0^{v \eta_m} \left| \tilde{G}^{c,m,p}_s  \right|^2 \, ds}  \right]},
\]
and
\[
\tilde{\Lambda}^p_{v \eta_m}(\tilde{G}^{m}) = \frac{e^{-\frac{u_p}{N \sigma^2} \int_0^{v \eta_m} \left| \tilde{G}^{m,p}_s  \right|^2 \, ds}}
{\Exp^{\gamma^{\emp(V_n)}} \left[ e^{-\frac{u_p}{N \sigma^2} \int_0^{v \eta_m} \left| \tilde{G}^{m,p}_s  \right|^2 \, ds}  \right]},
\]
with $u_p=1$ if $p \neq 0$ and $u_0=1/2$, see \eqref{eq:Lambdapt}.
First note
\begin{multline*}
 \tilde{\Lambda}^p_{v \eta_m}(\tilde{G}^{c,m}) -  \tilde{\Lambda}^p_{v \eta_m}(\tilde{G}^{m}) =  
\frac{e^{-\frac{u_p}{N \sigma^2} \int_0^{v \eta_m} \left| \tilde{G}^{c,m,p}_s  \right|^2 \, ds} - e^{-\frac{u_p}{N \sigma^2} \int_0^{v \eta_m} \left| \tilde{G}^{c,m}_s  \right|^2 \, ds}}
{\Exp^{\gamma^{\emp(V_n)}} \left[ e^{-\frac{u_p}{N \sigma^2} \int_0^{v \eta_m} \left| \tilde{G}^{c,m,p}_s  \right|^2 \, ds}  \right]}\\
 +\tilde{\Lambda}^p_{v \eta_m}(\tilde{G}^{m}) \dfrac{\Exp^{\gamma^{\emp(V_n)}} \left[ e^{-\frac{u_p}{N \sigma^2} \int_0^{v \eta_m} \left| \tilde{G}^{m,p}_s  \right|^2 \, ds}  \right] - \Exp^{\gamma^{\emp(V_n)}} \left[ e^{-\frac{u_p}{N \sigma^2} \int_0^{v \eta_m} \left| \tilde{G}^{c,m,p}_s  \right|^2 \, ds}  \right]}{\Exp^{\gamma^{\emp(V_n)}} \left[ e^{-\frac{u_p}{N \sigma^2} \int_0^{v \eta_m} \left| \tilde{G}^{c,m,p}_s  \right|^2 \, ds}  \right]}.
\end{multline*}
Now, as in the proof of Lemma~\ref{lem:alpha1}, we use the Lipschitz continuity of $x \to e^{-x}$ for $x \geq 0$:
\[
\left| e^{-x} - e^{-y} \right| \leq | x - y |,
\]
to obtain
\begin{align*}
\left| \tilde{\Lambda}^p_{v \eta_m}(\tilde{G}^{c,m}) -  \tilde{\Lambda}^p_{v \eta_m}(\tilde{G}^{m})  \right| 
&\leq 
\frac{u_p}{N \sigma^2} \frac{  \left| \int_0^{v\eta_m} \left( \left| \tilde{G}^{c,m,p}_s  \right|^2 -  \left| \tilde{G}^{m,p}_s  \right|^2 \right)\, ds  \right|}{\Exp^{\gamma^{\emp(V_n)}} \left[ e^{-\frac{u_p}{N \sigma^2} \int_0^{v \eta_m} \left| \tilde{G}^{m,p}_s  \right|^2 \, ds}  \right]  } 
\\
& \quad +\tilde{\Lambda}^p_{v \eta_m}(\tilde{G}^{m}) \frac{u_p}{N \sigma^2} \frac{\Exp^{\gamma^{\emp(V_n)}} \left[  \left| \int_0^{v\eta_m} \left( \left| \tilde{G}^{c,m,p}_s  \right|^2 -  \left| \tilde{G}^{m,p}_s  \right|^2 \right)\, ds  \right|  \right]}{\Exp^{\gamma^{\emp(V_n)}} \left[ e^{-\frac{u_p}{N \sigma^2} \int_0^{v \eta_m} \left| \tilde{G}^{m,p}_s  \right|^2 \, ds}  \right] }.
\end{align*}
Because $u_p = 1$ or $1/2$ and 
\[
0 < D \leq \Exp^{\gamma^{\emp(V_n)}} \left[ e^{-\frac{u_p}{N \sigma^2} \int_0^{v \eta_m} \left| \tilde{G}^{m,p}_s  \right|^2 \, ds}  \right] \leq 1 < \infty
\]
for some constant $D$  independent of $p$, $m$ and $N$ (see the proof of Lemma \ref{lem:alpha1}). So, we have
\begin{multline}\label{eq:int1lem321}
\left| \tilde{\Lambda}^p_{v \eta_m}(\tilde{G}^{c,m}) -  \tilde{\Lambda}^p_{v \eta_m}(\tilde{G}^{m})  \right| \leq \\
\frac{1}{ND\sigma^2 } 
\Bigg(  \left| \int_0^{v\eta_m} \left( \left| \tilde{G}^{c,m,p}_s  \right|^2 -  \left| \tilde{G}^{m,p}_s  \right|^2 \right)\, ds  \right|  + \\
\tilde{\Lambda}^p_{v \eta_m}(\tilde{G}^{m})  \Exp^{\gamma^{\emp(V_n)}} \left[  \left| \int_0^{v\eta_m} \left( \left| \tilde{G}^{c,m,p}_s  \right|^2 -  \left| \tilde{G}^{m,p}_s  \right|^2 \right)\, ds  \right|  \right]
 \Bigg).
\end{multline}
Given two complex numbers $x$ and $y$ with complex conjugates $x^*$ and $y^*$, it is clear that
\[
| \,|x|^2-|y^2|\, | = | (x-y) x^* + y(x^*-y^*) | \leq | x - y | \, ( | x^* | + | y | ) = | x - y | \, ( | x | + | y | ),
\]
and therefore, by Cauchy-Schwarz,
\begin{multline}\label{eq:int2lem321}
\left| \int_0^{v\eta_m} \left( \left| \tilde{G}^{c,m,p}_s  \right|^2 -  \left| \tilde{G}^{m,p}_s  \right|^2 \right)\, ds  \right| \leq
\left( \int_0^{v\eta_m} \left| \tilde{G}^{c,m,p}_s -  \tilde{G}^{m,p}_s   \right|^2 \, ds \right)^{1/2} \times \\
\left( 2 \int_0^{v\eta_m} \left( \left| \tilde{G}^{c,m,p}_s \right|^2 + \left| \tilde{G}^{m,p}_s \right|^2 \right) \, ds \right)^{1/2}.
\end{multline}
Combining \eqref{eq:int1lem321} and \eqref{eq:int2lem321} we obtain
\begin{multline*}
\left| \Exp^{\gamma^{\emp(V_n)}}\left[ \left(\tilde{\Lambda}^p_{v \eta_m}(\tilde{G}^{c,m}) -  \tilde{\Lambda}^p_{v \eta_m}(\tilde{G}^{m})  \right) \tilde{G}^{c,m, -p}_{v\eta_m} \int_0^{v\eta_m} \tilde{G}^{c,m,p}_{r^{(m)}} \, d\tilde{V}^p_r \right]  \right|^2 \leq \\
\frac{4}{N^2 D^2 \sigma^4 } \Bigg( \Exp^{\gamma^{\emp(V_n)}} \Bigg[ \left( \int_0^{v\eta_m} \left| \tilde{G}^{c,m,p}_s -  \tilde{G}^{m,p}_s   \right|^2 \, ds \right)^{1/2} \times
\left( \int_0^{v\eta_m} \left( \left| \tilde{G}^{c,m,p}_s \right|^2 + \left| \tilde{G}^{m,p}_s \right|^2 \right) \, ds \right)^{1/2} \times \\\left| \tilde{G}^{c,m, p}_{v\eta_m} \right| \,\left| \int_0^{v\eta_m} \tilde{G}^{c,m,p}_{r^{(m)}} \, d\tilde{V}^p_r \right| \Bigg] \Bigg)^2 +\\
\frac{4}{N^2 D^2 \sigma^4 } \left(  \Exp^{\gamma^{\emp(V_n)}} \left[  \left( \int_0^{v\eta_m} \left| \tilde{G}^{c,m,p}_s -  \tilde{G}^{m,p}_s   \right|^2 \, ds \right)^{1/2} \times 
\left( \int_0^{v\eta_m} \left( \left| \tilde{G}^{c,m,p}_s \right|^2 + \left| \tilde{G}^{m,p}_s \right|^2 \right) \, ds \right)^{1/2}  \right]  \right)^2 \times \\
\left( \Exp^{\gamma^{\emp(V_n)}} \left[ \tilde{\Lambda}^p_{v \eta_m}(\tilde{G}^{m})  \left| \tilde{G}^{c,m, p}_{v\eta_m} \right| \,\left| \int_0^{v\eta_m} \tilde{G}^{c,m,p}_{r^{(m)}} \, d\tilde{V}^p_r \right|    \right] \right)^2.
\end{multline*}

Three applications of Cauchy-Schwarz dictate
\begin{multline*}
\alpha^{4,1,p}_{v \eta_m} \leq 
\frac{4}{N^2 D^2 \sigma^4 } \Exp^{\gamma^{\emp(V_n)}} \left[  \int_0^{v\eta_m} \left| \tilde{G}^{c,m,p}_s - \tilde{G}^{m,p}_s   \right|^2 \, ds \right] \times \\
\Bigg(    
\Exp^{\gamma^{\emp(V_n)}} \left[ \int_0^{v\eta_m} \left( \left| \tilde{G}^{c,m,p}_s \right|^2 + \left| \tilde{G}^{m,p}_s \right|^2 \right) ds \left| \tilde{G}^{c,m, p}_{v\eta_m} \right|^2 \,\left| \int_0^{v\eta_m} \tilde{G}^{c,m,p}_{r^{(m)}} \, d\tilde{V}^p_r \right|^2 \right] + \\
\Exp^{\gamma^{\emp(V_n)}} \left[ \int_0^{v\eta_m} \left( \left| \tilde{G}^{c,m,p}_s \right|^2 + \left| \tilde{G}^{m,p}_s \right|^2 \right) \, ds  \right]
\times \Exp^{\gamma^{\emp(V_n)}} \left[ \tilde{\Lambda}^p_{v \eta_m}(\tilde{G}^{m})  \left| \tilde{G}^{c,m, p}_{v\eta_m} \right|^2  \right] \times \\
\Exp^{\gamma^{\emp(V_n)}} \left[   \tilde{\Lambda}^p_{v \eta_m}(\tilde{G}^{m}) \left| \int_0^{v\eta_m} \tilde{G}^{c,m,p}_{r^{(m)}} \, d\tilde{V}^p_r \right|^2 \right]
\Bigg)\\
\leq \frac{E}{N^2}\Exp^{\gamma^{\emp(V_n)}} \left[  \int_0^{v\eta_m} \left| \tilde{G}^{c,m,p}_s - \tilde{G}^{m,p}_s   \right|^2 \, ds \right] (A_1 + A_2),
\end{multline*}
with $E:=\dfrac{4}{D^2 \sigma^4 }$ and
\begin{align*}
A_1 & :=\Exp^{\gamma^{\emp(V_n)}} \left[ \int_0^{v\eta_m} \left( \left| \tilde{G}^{c,m,p}_s \right|^2 + \left| \tilde{G}^{m,p}_s \right|^2 \right) \, ds  \right]
\times \Exp^{\gamma^{\emp(V_n)}} \left[ \tilde{\Lambda}^p_{v \eta_m}(\tilde{G}^{m})  \left| \tilde{G}^{c,m, p}_{v\eta_m} \right|^2  \right]  \\
&\quad \quad \times\Exp^{\gamma^{\emp(V_n)}} \left[   \tilde{\Lambda}^p_{v \eta_m}(\tilde{G}^{m}) \left| \int_0^{v\eta_m} \tilde{G}^{c,m,p}_{r^{(m)}} \, d\tilde{V}^p_r \right|^2 \right]\\
A_2 & :=\Exp^{\gamma^{\emp(V_n)}} \left[ \int_0^{v\eta_m} \left( \left| \tilde{G}^{c,m,p}_s \right|^2 + \left| \tilde{G}^{m,p}_s \right|^2 \right) ds \left| \tilde{G}^{c,m, p}_{v\eta_m} \right|^2 \,\left| \int_0^{v\eta_m} \tilde{G}^{c,m,p}_{r^{(m)}} \, d\tilde{V}^p_r \right|^2 \right]\\
& = \int_0^{v\eta_m} \Exp^{\gamma^{\emp(V_n)}} \left[ \left( \left| \tilde{G}^{c,m,p}_s \right|^2 + \left| \tilde{G}^{m,p}_s \right|^2 \right)\left| \tilde{G}^{c,m, p}_{v\eta_m} \right|^2 \,\left| \int_0^{v\eta_m} \tilde{G}^{c,m,p}_{r^{(m)}} \, d\tilde{V}^p_r \right|^2 \right]\,ds.
\end{align*}

{\bf Step 3: Upper bounding $A_1$}\\
Using equations \eqref{eq:Gnmit},  \eqref{eq:Gmit} and 
Corollary~\ref{cor:boundgamma} we have
\[
\Exp^{\gamma^{\emp(V_n)}} \left[ \int_0^{v\eta_m} \left( \left| \tilde{G}^{c,m,p}_s \right|^2 + \left| \tilde{G}^{m,p}_s \right|^2 \right) \, ds  \right]
\leq 2 a b T N.
\]
By Lemma~\ref{Lemma Fourier Bound on R Jn} we have
\[
\Exp^{\gamma^{\emp(V_n)}} \left[ \tilde{\Lambda}^p_{v \eta_m}(\tilde{G}^{m})  \left| \tilde{G}^{c,m, p}_{v\eta_m} \right|^2  \right] \leq C_\J N,
\]
and
\[
\Exp^{\gamma^{\emp(V_n)}} \left[   \tilde{\Lambda}^p_{v \eta_m}(\tilde{G}^{m}) \left| \int_0^{v\eta_m} \tilde{G}^{c,m,p}_{r^{(m)}} \, d\tilde{V}^p_r \right|^2 \right] \leq
C_\J \sum_{k \in I_n} \left|  \int_0^{v\eta_m} f(V^k_{r^{(m)}}) \, d\tilde{V}^p_r  \right|^2,
\]
so that
\begin{equation}\label{eq:eqB}
A_1 \leq 2 a b (C_\J)^2 T N^2 \sum_{k \in I_n} \left|  \int_0^{v\eta_m} f(V^k_{r^{(m)}}) \, d\tilde{V}^p_r  \right|^2.
\end{equation}
{\bf Step 4: Upper bounding $A_2$ by Isserlis' Theorem}\\
Upperbounding the second term, $A_2$, requires the use of Isserlis' Theorem. 
In order to do this, we recall Isserlis' formula for six centered Gaussian random variables $(X_k)_{k=1,\cdots,6}$. For simplicity we write $ \Exp^\gamma$ for $\Exp^{\gamma^{\emp(V_n)}}$.
\begin{equation}
\Exp^\gamma \left[ X_1 X_2 X_3 X_4 X_5 X_6 \right] = \frac{1}{48}\sum_{\sigma \in S^6}
\Exp^\gamma \left[ X_{\sigma(1)} X_{\sigma(2)}\right]\Exp^\gamma \left[ X_{\sigma(3)} X_{\sigma(4)}\right]\Exp^\gamma \left[ X_{\sigma(5)} X_{\sigma(6)}\right],
\end{equation}
where \(S^6\) denotes the set of permutations of \(\{1,2,\cdots,6\}\).
Now if $X_{k+1}=X_k^*$, $k=1,3,5$, this reads
\begin{multline} \label{eq:isserlis}
    \Exp^\gamma \left[ |X_1|^2 |X_3|^2 |X_5|^2 \right] =  \Exp^\gamma \left[ |X_1|^2 \right] \Exp^\gamma \left[ |X_3|^2 \right] \Exp^\gamma \left[ |X_5|^2 \right] + \Exp^\gamma \left[ |X_1|^2 \right]  \left| \Exp^\gamma \left[ X_3 X_5 \right] \right|^2+\\
\Exp^\gamma \left[ |X_1|^2 \right] \left| \Exp^\gamma \left[ X_3 X_5^* \right] \right|^2 + \Exp^\gamma \left[ |X_5|^2 \right] \left| \Exp^\gamma \left[ X_1 X_3 \right] \right|^2 + \Exp^\gamma \left[ X_1 X_3 \right] \Exp^\gamma \left[ X_1^* X_5 \right] \Exp^\gamma \left[ X_3^* X_5^* \right] +\\
 \Exp^\gamma \left[ X_1 X_3 \right] \Exp^\gamma \left[ X_1^* X_5^* \right] \Exp^\gamma \left[ X_3^* X_5 \right] + \Exp^\gamma \left[ | X_5 |^2 \right] \left| \Exp^\gamma \left[ X_1 X_3^* \right] \right|^2  + \Exp^\gamma \left[ X_1 X_3^* \right] \Exp^\gamma \left[ X_1^* X_5 \right] \Exp^\gamma \left[ X_3 X_5^* \right] + \\
 \Exp^\gamma \left[ X_1 X_3^* \right] \Exp^\gamma \left[ X_1^* X_5^* \right] \Exp^\gamma \left[ X_3 X_5 \right] +
      \Exp^\gamma \left[ X_1 X_5 \right] \Exp^\gamma \left[ X_1^* X_3 \right] \Exp^\gamma \left[ X_3^* X_5^* \right] + \Exp^\gamma \left[ X_1 X_5 \right] \Exp^\gamma \left[ X_1^* X_3^* \right] \Exp^\gamma \left[ X_3 X_5^* \right] +\\
     \Exp^\gamma \left[ | X_3 |^2 \right]  \left| \Exp^\gamma \left[ X_1 X_5 \right] \right|^2  +
       \Exp^\gamma \left[ X_1 X_5^* \right] \Exp^\gamma \left[ X_1^* X_3 \right] \Exp^\gamma \left[ X_3^* X_5 \right] + \Exp^\gamma \left[ X_1 X_5^* \right] \Exp^\gamma \left[ X_1^* X_3^* \right] \Exp^\gamma \left[ X_3 X_5 \right] + \\
   \Exp^\gamma \left[ | X_3 |^2 \right]      \left| \Exp^\gamma \left[ X_1 X_5^* \right] \right|^2.
\end{multline}
We let
\begin{align*}
X_1 &=  \tilde{G}^{c,m,p}_s \quad \text{ or } \quad 
X_1 = \tilde{G}^{m,p}_s,\\
X_3 &=  \tilde{G}^{c,m, p}_{v\eta_m},\\
X_5 &= \int_0^{v\eta_m} \tilde{G}^{c,m,p}_{r^{(m)}} \, d\tilde{V}^p_r.
\end{align*}
Note that we have
\begin{align*}
X_2 = X_1^* & = \tilde{G}^{c,m,-p}_s\quad \text{ or } \quad   X_1^* = \tilde{G}^{m,-p}_s,\\
X_4 = X_3^* & =  \tilde{G}^{c,m, -p}_{v\eta_m},\\
X_6 = X_5^* & = \int_0^{v\eta_m} \tilde{G}^{c,m,-p}_{r^{(m)}} \, d\tilde{V}^{p}_r.
\end{align*}
Thanks to these identifications and using Corollary \ref{cor:boundgamma} we have
\begin{align*}
\Exp^\gamma \left[ |X_1|^2 \right] \leq a b N,\ \Exp^\gamma \left[ |X_3|^2 \right] &\leq a b N, \ \Exp^\gamma \left[ |X_5|^2 \right] \leq a b  \sum_{k \in I_n} \left|  \int_0^{v\eta_m} f(V^k_{r^{(m)}}) \, d\tilde{V}^p_r  \right|^2,\\
\max_{i=1,2,3,4, j=5,6}
\left| \Exp^\gamma \left[ X_i X_j \right] \right|& \leq a b \sqrt{N} \left(  \sum_{k \in I_n} \left|  \int_0^{v\eta_m} f(V^k_{r^{(m)}}) \, d\tilde{V}^p_r  \right|^2 \right)^{1/2},\\
\max_{i=1,2, j=3,4}
\left| \Exp^\gamma \left[ X_i X_j \right] \right|& \leq a b N.
\end{align*}
All fifteen terms  in the right hand side of \eqref{eq:isserlis} are upper-bounded by 
\[(a b)^3  N^2 \sum_{k \in I_n} \left|  \int_0^{v\eta_m} f(V^k_{r^{(m)}}) \, d\tilde{V}^p_r  \right|^2,
\]
 so that
\[
A_2 \leq 15 (a b)^3 T N^2 \sum_{k \in I_n} \left|  \int_0^{v\eta_m} f(V^k_{r^{(m)}}) \, d\tilde{V}^p_r  \right|^2.
\]
{\bf Step 5 Express the upper bound on $ \alpha^{4,1,p}_{v \eta_m} $ using the stopping time $\tau(\epsilon,\mathfrak{c})$}\\
Using \eqref{eq:eqB}, and returning to the notation $\Exp^{\gamma^{\emp(V_n)}}$
\[
\alpha^{4,1,p}_{v \eta_m} \leq 
D \Exp^{\gamma^{\emp(V_n)}} \left[  \int_0^{v\eta_m} \left| \tilde{G}^{c,m,p}_s - \tilde{G}^{m,p}_s   \right|^2 \, ds \right]  \times  \sum_{k \in I_n} \left|  \int_0^{v\eta_m} f(V^k_{r^{(m)}}) \, d\tilde{V}^p_r  \right|^2
\]
for some positive constant $D$ independent of $n$ and $m$.
By Corollary \ref{cor:boundgamma} and the Lipschitz continuity of $f$ 
\begin{align*}
\Exp^{\gamma^{\emp(V_n)}} \left[  \int_0^{v\eta_m} \left| \tilde{G}^{c,m,p}_s - \tilde{G}^{m,p}_s   \right|^2 \, ds \right]  &\leq
ab  \int_0^{v\eta_m} \sum_{k \in I_n} \left( f(V^k_s) -   f(V^{m,k}_s) \right)^2 ds \\
& \leq 
 ab  \int_0^{v\eta_m} \sum_{k \in I_n} \left( V^k_s -   V^{m,k}_s \right)^2 ds,
\end{align*}
so that  we have
\[
\frac{15}{N^4\sigma^4} \sum_{p \in I^n} \alpha^{4,1,p}_{v \eta_m} \leq  \frac{D}{N^4} 
\int_0^{v\eta_m} \sum_{k \in I_n} \left( V^k_s -   V^{m,k}_s \right)^2 ds \times 
\sum_{k \in I_n} \sum_{p \in I_n} \left|  \int_0^{v\eta_m} f(V^k_{r^{(m)}}) \, d\tilde{V}^p_r  \right|^2
\]
for some positive constant $D$.
By  Parseval's theorem on the $p$ index 
\[
\frac{15}{N^4\sigma^4} \sum_{p \in I_n} \alpha^{4,1,p}_{v \eta_m} \leq 
 \frac{D}{N^2}
\int_0^{v\eta_m} \frac{1}{N} \sum_{k \in I_n} \left( V^k_s -   V^{m,k}_s \right)^2 ds \times 
\sum_{k \in I_n} \sum_{l \in I_n} \left(  \int_0^{v\eta_m} f(V^k_{r^{(m)}}) \, dV^l_r  \right)^2.
\]
We next use the relation
\[
dV^l_r = \sigma dW^l_r + \sigma \theta^l_r \, dr
\]
to write
\begin{multline*}
\frac{15 }{N^4\sigma^4} \sum_{p \in In} \alpha^{4,1,p}_{v \eta_m} \leq  \frac{D}{N^2} \int_0^{v\eta_m} \frac{1}{N} \sum_{k \in I_n} \left( V^k_s -   V^{m,k}_s \right)^2 ds \times
\sum_{k \in I_n} \sum_{l \in I_n} \left(  \int_0^{v\eta_m} f(V^k_{r^{(m)}}) \, dW^l_r  \right)^2 \\
+ \frac{D}{N^2} \int_0^{v\eta_m} \frac{1}{N} \sum_{k \in I_n} \left( V^k_s -   V^{m,k}_s \right)^2 ds \times
\sum_{k \in I_n} \sum_{l \in I_n} \left(  \int_0^{v\eta_m} f(V^k_{r^{(m)}}) \, \theta^l_r \,dr \right)^2,
\end{multline*}
where we have included the constant $\sigma^2$ into $D$.

Since, if $\tau(\epsilon,\mathfrak{c}) \geq u\eta_m$, by \eqref{eq:deftau} we have
\[
\frac{1}{N} \sum_{k \in I_n} \left( V^k_s -   V^{m,k}_s \right)^2 \leq \epsilon \exp (s \mathfrak{c} )
\]
for all $s \leq u\eta_m$, we conclude that
\[
\lsup{n} \frac{1}{N} \log Q^n \left( \frac{15}{N^4\sigma^4} \sum_{p \in I_n} \alpha^{4,1,p}_{v \eta_m} \geq \frac{\epsilon \mathfrak{c}}{3 T C \sigma^2} \exp \left (v \eta_m\mathfrak{c} \right)  \text{ and } \tau(\epsilon,\mathfrak{c}) \geq u\eta_m\right)
\]
is upperbounded by twice the larger of the two terms
\begin{align}\label{eq:firstterm}
\lsup{n} \frac{1}{N} \log Q^n \left(  \frac{D}{N^2} \int_0^{v\eta_m} e^{s \mathfrak{c}} \, ds \times 
\sum_{k \in I_n} \sum_{l \in I_n} \left(  \int_0^{v\eta_m} f(V^k_{r^{(m)}}) \, dW^l_r  \right)^2 
\geq \frac{ \mathfrak{c}\exp \left (v \eta_m\mathfrak{c} \right)}{6 T C \sigma^2}  \right)\\
\label{eq:secondterm}
\lsup{n} \frac{1}{N} \log Q^n \left(\frac{D}{N^2} \int_0^{v\eta_m} e^{s \mathfrak{c}} \, ds \times 
\sum_{k \in I_n} \sum_{l \in I_n} \left(  \int_0^{v\eta_m} f(V^k_{r^{(m)}}) \, \theta^l_r \, dr \right)^2 
\geq \frac{ \mathfrak{c}\exp \left (v \eta_m\mathfrak{c} \right)}{6 T C \sigma^2}  \right).
\end{align}
{\bf Step 6: conclude by the use of Lemmas \ref{lem:BG} and \ref{Lemma: bound theta}}\\
Since \({\exp (v \eta_m \mathfrak{c}) - 1} \leq {\exp \left(v \eta_m\mathfrak{c} \right) }\), we can upper bound \eqref{eq:firstterm} by
\[
\lsup{n} \frac{1}{N} \log Q^n \left(   \frac{1/\mathfrak{c}}{2N}  
\sum_{k \in I_n} \sum_{l \in I_n} \left(  \int_0^{v\eta_m} f(V^k_{r^{(m)}}) \, dW^l_r  \right)^2 
\geq \frac{N \mathfrak{c}}{12 T C D \sigma^2} \right).
\]
By the exponential Tchebycheff inequality
\begin{multline*}
Q^n \left(  \frac{1/\mathfrak{c}}{2N}  
\sum_{k \in I_n} \sum_{l \in I_n} \left(  \int_0^{v\eta_m} f(V^k_{r^{(m)}}) \, dW^l_r  \right)^2 
\geq \frac{N \mathfrak{c}}{12 T C D \sigma^2} \right) \leq \\
\exp \left( - \frac{N \mathfrak{c}}{12 T C D \sigma^2}  \right)  \Exp^{Q^n} \left[
\exp \left( \frac{1/\mathfrak{c}}{2N}  
\sum_{k \in I_n} \sum_{l \in I_n} \left(  \int_0^{v \eta_m} f(V^k_{r^{(m)}}) \, dW^l_r  \right)^2 \right) \right].
\end{multline*}
In order to apply Lemma~\ref{lem:BG} to the above expectation we require
\[
\frac{1}{\sqrt{\mathfrak{c}}} < \frac{\sqrt{m}}{2 \sqrt{vT}}
\]
for $v=0,\cdots,u$ and this is certainly satisfied if
\[
\frac{1}{\sqrt{\mathfrak{c}}} < \frac{1}{2\sqrt{T}}.
\]
Lemma \ref{lem:BG} then commands that
\[
\Exp^{Q^n} \left[
\exp \left( \frac{1/\mathfrak{c}}{2N}  
\sum_{k \in I_n} \sum_{l \in I_n} \left(  \int_0^{v \eta_m} f(V^k_{r^{(m)}}) \, dW^l_r  \right)^2 \right) \right] \leq
\left(1- 4\frac{vT}{m\mathfrak{c}}\right)^{-N/4},
\]
and hence
\[
\Exp^{Q^n} \left[ \exp \left( \frac{1/\mathfrak{c}}{2N}  
\sum_{k \in I_n} \sum_{l \in I_n} \left(  \int_0^{v \eta_m} f(V^k_{r^{(m)}}) \, dW^l_r  \right)^2 \right) \right] \leq
\left(1- 4\frac{T}{\mathfrak{c}}\right)^{-N/4}.
\]
Therefore we have
\begin{multline*}
\frac{1}{N} \log Q^n \left(  \frac{1/\mathfrak{c}}{2N}  
\sum_{k \in I_n} \sum_{l \in I_n} \left(  \int_0^T f(V^k_{r^{(m)}}) \, dW^l_r  \right)^2 
\geq \frac{N \mathfrak{c}}{12 T C D \sigma^2}\right) \leq \\
-\mathfrak{c} \frac{1}{12 T C D \sigma^2} - \frac{1}{4} \log \left(1- 4\frac{T}{\mathfrak{c}}\right).
\end{multline*}
We conclude that for $\mathfrak{c}$ large enough, for all positive $M$s and for all $v=0,\cdots,u$ \eqref{eq:firstterm} is less than $-M$.

Along similar lines, we can upperbound \eqref{eq:secondterm} by
\[
\lsup{n} \frac{1}{N} \log Q^n \left(   \frac{1}{N^2}  
\sum_{k \in I_n} \sum_{l \in I_n} \left(  \int_0^{v\eta_m} f(V^k_{r^{(m)}}) \theta^l_r \, dr \right)^2 
\geq \frac{ \mathfrak{c}^2}{6 T C D \sigma^2} \right),
\]
and, by Cauchy-Schwarz, by
\[
\lsup{n} \frac{1}{N} \log Q^n \left(     
\left( \frac{1}{N} \sum_{k \in I_n} \int_0^{v\eta_m} \left( f(V^k_{r^{(m)}})\right)^2 \, dr \right) \times \left( \frac{1}{N}\sum_{l \in I_n}   \int_0^{v\eta_m}  \left( \theta^l_r \right)^2\, dr  \right)
\geq \frac{ \mathfrak{c}^2}{6 T C D \sigma^2} \right).
\]
Since $0 \leq f \leq 1$ and $0 \leq v \eta_m \leq T$, this is also upperbounded by
\[
\lsup{n} \frac{1}{N} \log Q^n \left( \frac{1}{N}\sum_{l \in I_n}   \int_0^T  \left( \theta^l_r \right)^2\, dr 
\geq \frac{ \mathfrak{c}^2}{6 T^2 C D \sigma^2} \right) \leq  \lsup{n} \frac{1}{N} \log Q^n \left( \frac{1}{N} \sup_{r \in [0,T]} \sum_{l \in I_n} \left( \theta^l_r \right)^2 \geq \frac{ \mathfrak{c}^2}{6 T^3 C D \sigma^2} \right),
\]
and Lemma \ref{Lemma: bound theta} allows us to conclude.
\end{proof}


\section{Proof of Lemma \ref{lem:limit equations}}\label{app:stochvolterra}
We give the proof of Lemma \ref{lem:limit equations}.
\begin{proof}[{\bf Proof of Lemma \ref{lem:limit equations}}]\ \\
Equation \eqref{eq: limit equations} resembles a Volterra equation of the second kind. As previously, we ignore for the sake of simplicity  the upper time index in $L_\mu$ and $K_\mu$.\\
{\bf Step 1: Construction of the sequence of processes $(\Phi^{i,n}_t)_{i \in \Z, n \in \N_*}$}\\
We proceed as in the case of the deterministic Volterra equations by constructing the following sequence of processes
\begin{align*}
\forall j \in \mathbb{Z},\quad V_t^{j, 0 }& =  \sigma W_t^j \\
V_t^{j, 1} & =  \sigma W_t^j + \sigma^{-1} \int_0^t \left(  \sum_{i \in \Z} \int_0^s L_{\mu}^{ i}(s, u) \,dV_u^{i+j, 0} \right)\,ds \\
& =  \sigma W_t^j +  \int_0^t \left(  \sum_{i \in \Z} \int_0^s L_{\mu}^{ i}(s, u) \,dW_u^{i+j} \right)\,ds,
\end{align*}
where the infinite sum is the \(L^2\) limit of the finite sums.
The existence of this limit is guaranteed by Proposition~\ref{prop:Lkmuregular}. 
We then compute the following difference
\begin{equation}\label{eq:B1}
V_t^{j, 1} - V_t^{j, 0}  =   \int_0^t \left( \sum_{i \in \Z} \int_0^s L_{\mu}^{ i}(s, u) \,dW_u^{i+j} \right)\,ds =: \psi^{j, 1}_t.
\end{equation}
Using \eqref{eq:B1} we write formally
\begin{align}
\nonumber V_t^{j, 2} &= \sigma W_t^j + \sigma^{-1} \int_0^t 
\sum_{i \in \Z} \int_0^s L_{\mu}^{ i}(s, u) \,dV_u^{i+j,1} 
\,ds  \\
& =  V_t^{1, j} +
\sigma^{-1} \int_0^t  \sum_{i \in \Z} \int_0^s L_{\mu}^{ i}(s, u) d \psi_u^{i+j, 1}\,ds. \label{eq:20180622}
\end{align}
Again, the convergence of the infinite sum is obtained by the study of
the sequence of variances of Gaussian processes. 
Applying the Young's convolution theorem \cite[Theorem 4.15]{brezis:10}, thanks to Proposition~\ref{prop:Lkmuregular},
we deduce 
\[
\sup_{0\leq v\leq u\leq s\leq T}\sum_{l\in\Z}\left(\sum_{i \in\Z}L_{\mu}^{ i}(s, u)L_{\mu}^{l-i}(u, v) \right)^2 < \infty.
\]
We deduce easily the existence of the limit in \eqref{eq:20180622}.
We write now
\[
\psi^{j, 2}_t := V_t^{j, 2} - V_t^{j, 1} =  \sigma^{-1} \int_0^t  \sum_{i \in \Z} \int_0^s L_{\mu}^{ i}(s, u) d \psi_u^{i+j, 1}\,ds,
\]
and hence
\[
\frac{d \psi^{j, 2}_t}{d t} = \sigma^{-1} \sum_{i \in \Z} \int_0^t L_{\mu}^{ i}(t, s) \, \frac{d \psi^{i+j, 1}_s}{d s} \, ds.
\]
Iterating this process one finds that
\[
V_t^{j, n} - V_t^{j, n-1} := \psi^{j, n}_t,
\]
where $\psi^{j, n}_t$ is such that
\[
\frac{d \psi^{j, n}_t}{d t} =  \sigma^{-1} \sum_{i \in \Z} \int_0^t L_{\mu}^{ i}(t, s) \, \frac{d \psi^{i+j, n-1}_s}{d s} \, ds,\,n \geq 2.
\]
Define
\[
\Phi^{j, n}_t = \frac{d \psi^{j, n}_t}{d t}, \, n \geq 1.
\]
This sequence of processes satisfies
\begin{align}\label{eq:Phijn}
\Phi^{j, n}_t & =  \sigma^{-1} \sum_{i \in \Z} \int_0^t L_{\mu}^{i}(t, s) \, \Phi^{i+j, n-1}_s \, ds,\, n \geq 2\\
\mbox{ and } \quad 
\label{eq:Bnt}
\sum_{k = 1}^p \psi^{j, k}_t & = V^{j, p}_t - V^{j, 0}_t = V^{j, p}_t - \sigma W^j_t = \sum_{k = 1}^p \int_0^t \Phi^{j, k}_s \, ds.
\end{align}
{\bf Step 2: Analysis of the sequence $(\Phi^{j,k}_t)_{j \in \Z, k \in \N_*}$}\\
We now analyze the sequence $(\Phi^{j, k}_t)_{k \geq 1}$.
First we note that
\begin{align}
\nonumber
\Phi^{j, 2}_t & =  \sigma^{-1} \sum_{i \in \Z} \int_0^t L_{\mu}^{i}(t, s) \, \Phi^{i+j, 1}_s \, ds,\\
\text{ with } \quad 
\label{eq:Phi1}
\Phi^{j, 1}_s &=  \sum_{i \in \Z} \int_0^s L^{ i}_{\mu}(s, u) \, dW_u^{i+j}.
\end{align}
Consider next $\Phi^{j, 3}_t$. We write, using \eqref{eq:Phijn},
\begin{equation}\label{eq:Phi3}
\Phi^{j, 3}_t =  \sigma^{-1} \sum_{i \in \Z} \int_0^t L_{\mu}^{ i}(t, s) \, \Phi^{i+j, 2}_s \, ds = \sigma^{-2} \sum_{i, l \in \Z} \int_0^t L_{\mu}^{i}(t, s) \left( \int_0^s L_{\mu}^{ l}(s, u) \Phi^{l+i+j, 1}_u \, du \right) \, ds.
\end{equation}
Letting $\ell=l+i$ we have
\[
\Phi^{j, 3}_t = \sigma^{-2} \sum_{i, \ell \in \Z} \int_0^t L_{\mu}^{i}(t, s) \left( \int_0^s L_{\mu}^{ \ell-i}(s, u) \Phi^{\ell+j, 1}_u \, du \right) \, ds 
\]
and note that this can be rewritten as
\[
\Phi^{j, 3}_t = \sigma^{-2} \sum_{i, \ell \in \Z} \int_0^t  \left( \int_s^t L_{\mu}^{i}(t, u) L_{\mu}^{ \ell-i}(u, s) \, du \right)  \Phi^{\ell+j, 1}_s \, ds,
\]
by exchanging the order of integration. It follows for \(k \geq 2\) that
\begin{equation}\label{eq:Phint}
\Phi^{j, k}_t = \sigma^{-(k-1)} \sum_{\ell \in \Z} \int_0^t L_{{\mu}, k-1}^{\ell}(t, s) \, \Phi^{\ell+j, 1}_s \, ds,
\end{equation}
with
\begin{equation}\label{eq:Lmun}
L_{{\mu}, p+1}^{i}(t, s) = \sum_{l \in \Z} \int_s^t L_{\mu}^{l}(t, u) L_{{\mu}, p}^{ i-l}(u, s) \, du \quad p \geq 1
\end{equation}
and
\begin{equation}\label{eq:Lmu1}
L_{{\mu}, 1}^i = L_{\mu}^i.
\end{equation}
{\bf Step 3: Formal definition of the solution}\\
It follows from \eqref{eq:Bnt} and \eqref{eq:Phint} that
\[
V^{j, p}_t = \sigma W_t^j + \int_0^t \Phi_s^{j,1} \, ds+ \sigma^{-1} \int_0^t \left( \int_0^s \left(\sum_{i \in \Z}  \sum_{k = 1}^{p-1}  \sigma^{-(k-1)} L_{{\mu}, k}^{ i}(s, u) \right) \, \Phi^{i+j, 1}_u \, du \right) \, ds.
\]
If the series $\sum_{k = 1}^p \sigma^{-(k-1)} L_{{\mu}, k}^{ i}(s, u)$ is convergent for all $i \in \Z$, we can formally define a solution by
\begin{equation}\label{eq:easysol}
V_t^j = \sigma W_t^j +  \int_0^t \Phi_s^{j,1} \, ds + \sigma^{-1} \sum_{i \in \Z} \int_0^t \left( \int_0^s M_{\mu}^i(s, u) \Phi^{i+j, 1}_u \, du \right) \, ds,
\end{equation}
where
\begin{equation}\label{eq:Hi}
M_{\mu}^i(s, u) = \lim_{p \to \infty} \sum_{k = 1}^p \sigma^{-(k-1)} L_{{\mu}, k}^{ i}(s, u),
\end{equation}
is called the resolvent kernel.

This reads, because of \eqref{eq:Phi1},
\begin{multline}\label{eq:20180330_1}
V_t^j = \sigma W_t^j + 
\sum_{i \in \Z}  \int_0^t \left( \int_0^s L^{ i}_{\mu}(s, u) \, dW_u^{i+j} \right) \, ds+\\
\sigma^{-1} \sum_{i \in \Z}\int_0^t \left( \int_0^s M_{\mu}^i(s, u) \left(  \sum_{l \in \Z} \int_0^u L^{l}_{\mu}(u, v) \, dW_v^{i+l+j} \right) \, du \right) \, ds.
\end{multline}
Letting $\ell=l+i$ we have
\begin{multline*}
V_t^j = \sigma W_t^j + \sum_{i \in \Z}  \int_0^t \left( \int_0^s L^{ i}_{\mu}(s, u) \, dW_u^{i+j} \right) \, ds+\\
 \sigma^{-1} \sum_{i, \ell \in \Z}\int_0^t \left( \int_0^s M_{\mu}^i(s, u) \left(  \int_0^u L^{\ell-i}_{\mu}(u, v) \, dW_v^{\ell+j} \right) \, du \right) \, ds.
\end{multline*}
{\bf Step 4: Proof of the convergence of \eqref{eq:Hi}}\\
We prove the convergence of the right hand side of \eqref{eq:Hi}.
Note that \eqref{eq:Lmun} is a convolution with respect to the spatial index:
\[
L_{{\mu}, p+1}^{i}(t, s) = \int_s^t \left( L_{\mu}^{\cdot}(t, u) \star L_{{\mu}, p}^{\cdot}(u, s) \right)^i \, du.
\]
Applying Young's convolution theorem \cite[Theorem 4.15]{brezis:10}, thanks to Proposition \ref{prop:Lkmuregular},  and Cauchy-Schwarz we conclude that
\begin{multline} \label{eq:young}
\sum_{l \in \Z} \left| L_{{\mu}, p+1}^{l}(t, s) \right| \leq 
\int_s^t \sum_{l \in \Z} \left| L_{{\mu}}^{l}(t, u) \right|
\times \sum_{l \in \Z} \left| L_{{\mu}, p}^{ l}(u, s) \right| \, du \leq \\
\left( \int_s^t \left( \sum_{l \in \Z} \left| L_{{\mu}}^{ l}(t, u) \right| \right)^2 \, du \right)^{1/2} \times 
\left( \int_s^t \left( \sum_{l \in \Z} \left| L_{{\mu}, p}^{ l}(u, s) \right| \right)^2 \, du \right)^{1/2}.
\end{multline}
Applying this for $p=1$ we obtain, according to \eqref{eq:Lmu1}
\begin{multline}\label{eq:Lmu2}
\sum_{l \in \Z} \left| L_{{\mu}, 2}^{ l}(t, s) \right| \leq \left( \int_s^t \left( \sum_{l \in \Z} \left| L_{{\mu}}^{ l}(t, u) \right| \right)^2 \, du \right)^{1/2} 
\left( \int_s^t \left( \sum_{l \in \Z} \left| L_{{\mu}}^{ l}(u, s) \right| \right)^2 \, du \right)^{1/2} \\
\leq 
\left( \int_0^T \left( \sum_{l \in \Z} \left| L_{{\mu}}^{l}(t, u) \right| \right)^2 \, du \right)^{1/2} 
\left( \int_0^T \left( \sum_{l \in \Z} \left| L_{{\mu}}^{l}(u, s) \right| \right)^2 \, du \right)^{1/2}
=: A(t) B(s).
\end{multline}
Both $A(t)$ and $B(s)$ are finite by Proposition~\ref{prop:Lkmuregular}.
Applying \eqref{eq:young} for $p=2$ we obtain, using \eqref{eq:Lmu2}
\begin{align}
\left(\sum_{l \in \Z} \left| L_{{\mu}, 3}^{l}(t, s) \right|\right)^2 &\leq 
\int_0^T \left( \sum_{l \in \Z} \left| L_{{\mu}}^{l}(t, u) \right| \right)^2 \, du  \times 
\int_s^t \left( \sum_{l \in \Z} \left| L_{{\mu}, 2}^{l}(u, s) \right| \right)^2 \, du \nonumber \\
& \leq
A^2(t) B^2(s) \int_s^t A^2(u) \, du.\label{eq:Lmu3}
\end{align}
Applying \eqref{eq:young} for $p=3$ we obtain, using \eqref{eq:Lmu3}
\begin{align}\nonumber
\left(\sum_{l \in \Z} \left| L_{{\mu}, 4}^{l}(t, s) \right|\right)^2 &\leq 
 \int_0^T \left( \sum_{l \in \Z} \left| L_{{\mu}}^{ l}(t, u) \right| \right)^2 \, du  \times 
 \int_s^t \left( \sum_{l \in \Z} \left| L_{{\mu}, 3}^{ l}(u, s) \right| \right)^2 \, du \\
 & \leq 
A^2(t) B^2(s) \int_s^t A^2(u) \int_s^u A^2(v) \, dv \, du.
\label{eq:Lmu4}
\end{align}
In general we can write
\begin{equation}\label{eq:Lkplus2}
\left(\sum_{l \in \Z} \left| L_{{\mu}, k+2}^{ l}(t, s) \right|\right)^2  \leq  A^2(t) B^2(s) F_k(t,s),\,k=1,\,2,\,3, \cdots
\end{equation}
where
\begin{align}
F_1(t,s) & =  \int_s^t A^2(u) \, du \label{eq:F1} \\
F_2(t,s) & =  \int_s^t A^2(u) F_1(u,s)\, du \nonumber\\
 & \vdots \nonumber \\
 F_k(t,s) & = \int_s^t A^2(u) F_{k-1}(u,s) \, du \label{eq:Fk}.
\end{align}
We claim that
\begin{equation}\label{eq:FkF1}
F_k(t,s) = \frac{1}{k !} \left(F_1(t,s) \right)^k.
\end{equation}
This is true for $k=1$. By induction, assume it holds for $k-1$, then by \eqref{eq:Fk} we have
\begin{multline*}
F_k(t,s) = \int_s^t A^2(u) F_{k-1}(u,s) \, du= \frac{1}{(k-1) !} \int_s^t A^2(u) \left( F_1(u,s) \right)^{k-1} \, du = \\
\frac{1}{(k-1) !} \int_s^t \left( F_1(u,s) \right)^{k-1} \frac{\partial F_1(u,s)}{\partial u} \, du = 
\frac{1}{k !} \left[ \left(F_1(u,s) \right)^k  \right]_{u=s}^{u=t} = \frac{1}{k !} \left(F_1(t,s) \right)^k.
\end{multline*}
Next, by \eqref{eq:F1} we have
\[
0 \leq F_1(t,s) \leq \int_0^T A^2(u) \,du = \int_0^T \left( \int_0^T  \sum_{l \in \Z} \left| L_{{\mu}}^{l}(u, v) \right| \right)^2 \, dv \ \,du \leq C^2
\]
for some constant $C > 0$ by Proposition~\ref{prop:Lkmuregular}. By \eqref{eq:Lkplus2} and \eqref{eq:FkF1} we conclude that
\begin{equation}\label{eq:sumMi}
\sum_i \sigma^{-(k + 1)}\left| L_{{\mu}, k+2}^{ i}(t, s) \right| \leq \sigma^{-1} \frac{(\sigma^{-1} C )^{k}}{\sqrt{k !}} A(t) B(s),
\end{equation}
which implies
\begin{equation}\label{eq:Mi}
\sigma^{-(k + 1)}\left| L_{{\mu}, k+2}^{ i}(t, s) \right| \leq \sigma^{-1} \frac{(\sigma^{-1} C )^{k}}{\sqrt{k !}} A(t) B(s)
\end{equation}
for all $i \in \Z$.
and, since the series $z^k/\sqrt{k!}$ is absolutely convergent for all complex $z$, \eqref{eq:Mi} shows that the right hand side of \eqref{eq:Hi} is absolutely and uniformly convergent so that $M^i_{\mu}(t,s)$ is well-defined for all $i \in \Z$, continuous and uniformly bounded w.r.t. to $i$, and \eqref{eq:sumMi} shows that the series $M^i_{\mu}(t,s)$ is absolutely convergent, so that we have obtained \eqref{eq:Msol}.

\noindent
{\bf Step 5: Existence and uniqueness of the solution}\\
We then prove that  \eqref{eq:Msol} is a solution to \eqref{eq: limit equations} and that it is unique.
Indeed, \eqref{eq:Msol} implies
\begin{multline}\label{eq:dMsol}
dV_u^{i+j} = \sigma dW_u^{i+j} + \sum_{k \in \Z}   \left( \int_0^u L^{k}_{\mu}(u, v) \, dW_v^{k+i+j} \right) \, du+\\
 \sigma^{-1} \sum_{k, \ell \in \Z} \left( \int_0^u M_{\mu}^k(u, v) \left(  \int_0^v L^{\ell-k}_{\mu}(v, w) \, dW_w^{\ell+i+j} \right) \, dv \right) \, du,
\end{multline}
and \eqref{eq: limit equations} can be rewritten
\begin{equation}\label{eq: limit equations1}
		V^j_t = \sigma W^j_t + \sigma^{-1} \sum_{i\in \Z} \int_0^t \left( \int_0^s L_{\mu}^{ i}(s,u)dV^{i+j}_u \right) ds .
\end{equation}
Replacing the value of $dV_u^{i+j}$ given by \eqref{eq:dMsol} in the right hand side of \eqref{eq: limit equations1} we obtain
\[
V^j_t = \sigma W^j_t + \sigma^{-1}(A+B+C)
\]
with
\begin{equation}\label{eq:A}
A=\sigma \sum_{i \in \Z} \int_0^t \left( \int_0^s  L_{\mu}^{ i}(s,u) dW^{i+j}_u \right)\,ds,
\end{equation}
and, according to the definition \eqref{eq:Phi1} of $\Phi^{j,1}$,
\begin{align}
B & =\sum_{i,k \in \Z} \int_0^t \left( \int_0^s L_{\mu}^{ i}(s,u) \left( \int_0^u L_\mu^{k}(u,v) dW^{k+i+j}_v \right)\,du  \right)\,ds \nonumber \\
& = \sum_{i \in \Z} \int_0^t \left( \int_0^s L_{\mu}^{ i}(s,u) \Phi_u^{i+j,1} \right)\,ds.\label{eq:B}
\end{align}
Next we find that, using again \eqref{eq:Phi1},
\begin{align*}
C & = \sigma^{-1} \sum_{i,k,l \in \Z} \int_0^t \left( \int_0^s L_{\mu}^{ i}(s,u) \left( \int_0^u M^k(u,v) \left( \int_0^v L_\mu^{l-k}(v,w) \, dW_w^{l+i+j}  \right) \, dv \right)\, du  \right) \, ds \\
& = \sigma^{-1} \sum_{i,k \in \Z} \int_0^t \left( \int_0^s L_{\mu}^{ i}(s,u) \left( \int_0^u M^k(u,v)  \Phi_v^{k+i+j,1} \,dv \right)\,du \right) \,ds.
\end{align*}
Exchanging the order of integration and applying $k \to k+i$ yields
\begin{align*}
C & =  \sigma^{-1} \sum_{i,k \in \Z} \int_0^t \left( \int_0^s \left( \int_v^s L_{\mu}^{ i}(s,u) M^k(u,v)\,du \right) \Phi_v^{k+i+j,1} \,dv \right)\,ds \\
& = \sigma^{-1} \sum_{i,k \in \Z} \int_0^t \left( \int_0^s \left( \int_v^s L_{\mu}^{ i}(s,u) M^{k-i}(u,v)\,du \right) \Phi_v^{k+j,1} \,dv \right)\,ds.
\end{align*}
Using the definition \eqref{eq:Hi} of $M^k$ and rearranging terms
\[
C = \sigma^{-1} \sum_{k \in \Z}  \sum_{l=1}^\infty \int_0^t \left( \int_0^s \sigma^{-(l-1)} \left( \sum_{i \in \Z} \int_v^s L_{\mu}^{ i}(s,u) L_{\mu,l}^{k-i}(u,v) \,du \right) \Phi_v^{k+j,1} \,dv \right)\,ds.
\]
Because \eqref{eq:Lmun} this reads
\[
C = \sigma^{-1} \sum_{k \in \Z}   \int_0^t \left( \int_0^s \sum_{l=1}^\infty \left( \sigma^{-(l-1)} L_{\mu,l+1}^{k}(s,v) \right) \Phi_v^{k+j,1} \,dv \right)\,ds,
\]
and since, because of \eqref{eq:Hi},
\[
\sum_{l=1}^\infty \left( \sigma^{-(l-1)} L_{\mu,l+1}^{k}(s,v) \right) = \sigma \left(  M^k(s,v) - L^{k}(s,v) \right)
\]
we end up with
\begin{equation}\label{eq:C}
C = \sum_{k \in \Z} \int_0^t \left( \int_0^s M^k(s,v) \Phi_v^{k+j,1} \,dv \right)\,ds -  \sum_{k \in \Z} \int_0^t \left( \int_0^s L^{k}(s,v) \Phi_v^{k+j,1} \,dv \right)\,ds.
\end{equation}
Combining equations \eqref{eq:A}, \eqref{eq:B} and \eqref{eq:C} we find
\[
\sigma^{-1} ( A+B+C) = \sum_{i \in \Z} \int_0^t \left( \int_0^s  L_{\mu}^{ i}(s,u) dW^{i+j}_u \right)\,ds + \sigma^{-1} \sum_{k \in \Z} \int_0^t \left( \int_0^s M^k(s,v) \Phi_v^{k+j,1} \,dv \right)\,ds,
\]
and therefore that $\sigma W^j_t+\sigma^{-1} (A+B+C)$ is equal to the right hand side of \eqref{eq:easysol}.
We have proved that \eqref{eq:Msol} is a solution to \eqref{eq: limit equations}.

Uniqueness is obtained by noting that if two solutions $V_{1,t}$ and $V_{2,t}$ exist, there difference $V_t=V_{1,t}-V_{2,t}$ must satisfy the deterministic homogeneous Volterra equation of the second type
\[
V^j_t = \sigma^{-1} \sum_{i \in \Z} \int_0^t \int_0^s L_\mu^i (s,u) \, dV^{i+j}_u\,ds,
\]
for which it is easily proved that the only solution is the null solution.
\end{proof}
\section{Proof of Lemma \ref{lem:alphaj}} \label{app:alphajs}
Lemma \ref{lem:alphaj} follows from the following four Lemmas.
\begin{lemma}\label{lem:alphajone}
For all $\varepsilon > 0$, there exists $m_0(\varepsilon)$ in $\N$ such that for all $m \geq m_0$
\[
\Exp \left[ \sup_{s \in [0,t]} \left| \alpha_s^{j,1} \right|  \right] \leq C \varepsilon 
\]
for some positive constant $C$ independent of $j$.
\end{lemma}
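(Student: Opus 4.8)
The plan is to single out the deterministic kernel increment $\phi^{i}_{s,r}:=L^{i}_{\mu_*}(s,r)-L^{i}_{\mu_*}(s^{(m)},r^{(m)})$, so that $\alpha^{j,1}_s=\sigma^{-2}\sum_{i\in I_{q_m}}\int_0^s\phi^{i}_{s,r}\,dV^{i+j}_r$, to show first that $\phi$ is uniformly small for large $m$, and then that this smallness propagates through the stochastic and drift parts of the integral, using $dV^{i+j}_r=\sigma\,dW^{i+j}_r+\sigma\theta^{i+j}_r\,dr$ from the limit equations.

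The crucial and comparatively easy input is that $\rho(m):=\sup_{0\le r\le s\le T}\sum_{i\in\Z}|\phi^{i}_{s,r}|\to 0$ as $m\to\infty$. Writing $\ell_i:=\sup_{0\le s,u\le T}|L^{i}_{\mu_*}(s,u)|$, Proposition~\ref{prop:Lkmuregular} gives $\ell_i=\smallO{1/|i|^{3}}$ uniformly in $\mu_*$, so given $\varepsilon>0$ one picks $K$ with $\sum_{|i|>K}\ell_i<\varepsilon$ and then the tail $\sum_{|i|>K}|\phi^{i}_{s,r}|\le 2\varepsilon$ holds for all $s,r,m$ since $|\phi^{i}_{s,r}|\le 2\ell_i$; for $|i|\le K$ each $L^{i}_{\mu_*}$ is uniformly continuous on the compact set $[0,T]^{2}$ and $|s-s^{(m)}|\vee|r-r^{(m)}|\le T/m$, so $\sum_{|i|\le K}|\phi^{i}_{s,r}|<\varepsilon$ once $m\ge m_0(\varepsilon)$, uniformly in $s,r$. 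It is essential throughout to handle the spatial sum by the decay of $\ell_i$ rather than by Cauchy--Schwarz in $i$, since $|I_{q_m}|=Q_m\to\infty$.

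The drift part $\sigma^{-1}\sum_i\int_0^s\phi^{i}_{s,r}\theta^{i+j}_r\,dr$ is routine. By Cauchy--Schwarz in $r$ term by term, $|\int_0^s\phi^{i}_{s,r}\theta^{i+j}_r\,dr|\le(\int_0^t\sup_{s\le t}(\phi^{i}_{s,r})^{2}\,dr)^{1/2}(\int_0^t(\theta^{i+j}_r)^{2}\,dr)^{1/2}$; taking expectations and using that $\Exp[\int_0^T(\theta^{\ell}_r)^{2}\,dr]=:\kappa<\infty$ is finite and independent of $\ell$ (spatial stationarity of $(\theta^{\ell})_{\ell\in\Z}$ together with the closed-form representation of Lemma~\ref{lem:limit equations}, or Lemma~\ref{Lemma: bound theta}), this part of $\Exp[\sup_{s\le t}|\alpha^{j,1}_s|]$ is at most $\sigma^{-1}\kappa^{1/2}\sum_i(\int_0^t\sup_{s\le t}(\phi^{i}_{s,r})^{2}\,dr)^{1/2}$. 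Bounding $\sup_{s\le t}(\phi^{i}_{s,r})^{2}\le 4\ell_i^{2}$ for $|i|>K$ and by the (arbitrarily small, for $m$ large) squared modulus of uniform continuity for $|i|\le K$, the split-at-$K$ argument of the previous paragraph forces this sum below $C\varepsilon$.

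The diffusion part $\sigma^{-1}\sum_i\int_0^s\phi^{i}_{s,r}\,dW^{i+j}_r$ is the delicate step and the main obstacle, because the integrand $L^{i}_{\mu_*}(s,r)$ depends on the very variable $s$ over which the supremum is taken, so $s\mapsto\int_0^s\phi^{i}_{s,r}\,dW^{i+j}_r$ is not a martingale. The plan is to reduce, for each $i$, to genuine martingales: localise on the partition cells $[u\eta_m,(u+1)\eta_m)$, on which $s^{(m)}$ and $r^{(m)}$ are constant; peel off the diagonal contribution $\int_0^s L^{i}_{\mu_*}(r,r)\,dW^{i+j}_r$, which is a martingale controlled by Doob's $L^{2}$ inequality and the It\^o isometry; and estimate the remaining parameter-dependent pieces using time-regularity of $L^{i}_{\mu_*}$ beyond mere continuity — a H\"older bound inherited from the correlation kernels $K^{i}_{\mu_*}$ of Appendix~\ref{app:covariances} and ultimately from the $C^{1}$-plus-Brownian structure of the limit paths $Z$ given by the limit equations — together with Doob applied cell by cell. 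Summing the resulting per-$i$ bounds over $i\in I_{q_m}\subset\Z$ by the same split-at-$K$ device bounds the diffusion contribution to $\Exp[\sup_{s\le t}|\alpha^{j,1}_s|]$ by $C\varepsilon$; adding the two contributions gives the claim with $C$ independent of $j$, all estimates being uniform in $j$. The hardest point is exactly this last one: keeping the control of the supremum quantitative and uniform in the spatial index while the integrand carries the running time.
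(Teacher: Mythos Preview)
Your decomposition differs from the paper's in a substantive way. You split $dV^{i+j}_r=\sigma\,dW^{i+j}_r+\sigma\theta^{i+j}_r\,dr$ and then need a moment bound on $\theta$; the paper instead inserts the explicit resolvent representation \eqref{eq:Msol} from Lemma~\ref{lem:limit equations}, so that $\alpha^{j,1}$ splits into a pure Wiener-integral term $\alpha^{j,1,1}$ and a Lebesgue-integral term $\alpha^{j,1,2}$ involving the resolvent kernel $M_{\mu_*}$ (see \eqref{eq:alphaj1W}). The second piece is then controlled by the summability of $M_{\mu_*}^k$ and $L_{\mu_*}^k$ (Proposition~\ref{prop:Lkmuregular}) together with a dominating-series device (Lemma~\ref{lem:upperboundseries}), rather than by any estimate on $\theta$. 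Your split-at-$K$ argument for $\rho(m)\to 0$ plays the role of that dominating-series lemma and is perfectly fine; your drift estimate via Cauchy--Schwarz and stationarity of $\theta$ is also sound once $\Exp\int_0^T(\theta^0_r)^2\,dr<\infty$ is extracted from the resolvent formula.

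Where your write-up remains a sketch is exactly where you say it does: the diffusion part. You are right that $s\mapsto\sum_i\int_0^s\phi^{i}_{s,r}\,dW^{i+j}_r$ is not a martingale because the integrand carries the running endpoint, so a bare BDG/Doob argument does not apply. The paper, for its part, simply declares $S^j_t:=\sum_i\int_0^t\phi^{i}_{t,s}\,dW^{i+j}_s$ to be a continuous martingale and applies BDG to reach $(\sum_i\int_0^t(\phi^{i}_{t,s})^2\,ds)^{1/2}$, which it then bounds uniformly via Parseval and uniform continuity of $\tilde K_{\mu_*}(\varphi)(\cdot,\cdot)$. So the paper does not work around the parameter dependence at all; it treats the supremum as if BDG applied directly. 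Your proposed cure (cell-by-cell localisation, peeling the diagonal $\int_0^s L^{i}_{\mu_*}(r,r)\,dW^{i+j}_r$, and a H\"older/Kolmogorov-type control of the remainder) is the right idea, but as written it is a plan, not a proof: you have not exhibited the needed time-regularity of $(t,s)\mapsto L^{i}_{\mu_*}(t,s)$ with constants summable in $i$, nor carried out the chaining/Kolmogorov step uniformly in $i$. If you want a complete argument here, you should either (i) derive a quantitative modulus, e.g.\ $|L^{i}_{\mu_*}(t,s)-L^{i}_{\mu_*}(t',s)|\le c_i\,|t-t'|^{1/2}$ with $\sum_i c_i<\infty$, from the representation via $\tilde K_{\mu_*}$ and the $\smallO{1/|i|^3}$ decay in Proposition~\ref{prop:Lkmuregular}, and then run Kolmogorov's continuity/chaining on the Gaussian family $t\mapsto S^j_t$; or (ii) follow the paper in passing first to the resolvent decomposition so that the ``drift'' piece is a genuine Lebesgue integral and only the bare $\int_0^t\phi^{i}_{t,s}\,dW^{i+j}_s$ remains, and then address the same supremum issue there.
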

\begin{lemma}\label{lem:alphajtwo}
For all $\varepsilon > 0$, there exists $m_0(\varepsilon)$ in $\N$ such that for all $m \geq m_0$
\[
\Exp \left[ \sup_{s \in [0,t]} \left| \alpha_s^{j,2} \right|  \right] \leq C \varepsilon 
\]
for some positive constant $C$ independent of $j$.
\end{lemma}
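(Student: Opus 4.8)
The plan is to exploit that $\alpha^{j,2}_t$ only carries the contribution of the last, incomplete, time step $[t^{(m)},t]$, on which the time‑discretised kernel is frozen. First I would note that for any $t\in[0,T]$ and any $s\in[t^{(m)},t]$ one has $s^{(m)}=t^{(m)}$, so that $L_{\mu_*}^{i}(t^{(m)},s^{(m)})=L_{\mu_*}^{i}(t^{(m)},t^{(m)})$ is constant in $s$ there; substituting $dV^{i+j}_s=\sigma\,dW^{i+j}_s+\sigma\theta^{i+j}_s\,ds$ (from \eqref{eq: limit equations}) into the definition of $\alpha^{j,2}_t$ gives
\[
\alpha^{j,2}_t=\sigma^{-1}\sum_{i\in I_{q_m}}L_{\mu_*}^{i}(t^{(m)},t^{(m)})\Bigl[\bigl(W^{i+j}_t-W^{i+j}_{t^{(m)}}\bigr)+\int_{t^{(m)}}^{t}\theta^{i+j}_s\,ds\Bigr].
\]
Taking absolute values, bounding $\sup_t$ of the finite sum over $i$ by $\sum_i(\sup_u|L_{\mu_*}^{i}(u,u)|)\sup_t(\,\cdot\,)_i$, moving the expectation inside that sum, and using that the laws of $W^{i+j}$ and $\theta^{i+j}$ do not depend on $i+j$ (the $W^k$ are i.i.d.\ and $(\theta^k)_{k\in\Z}$ is spatially stationary), I obtain
\begin{multline*}
\Exp\Bigl[\sup_{t\in[0,T]}\bigl|\alpha^{j,2}_t\bigr|\Bigr]\leq \\
\sigma^{-1}\Bigl(\sum_{i\in\Z}\sup_{u\in[0,T]}\bigl|L_{\mu_*}^{i}(u,u)\bigr|\Bigr)\Bigl(\Exp\bigl[\sup_{t}|W^{0}_t-W^{0}_{t^{(m)}}|\bigr]+\Exp\Bigl[\sup_{t}\Bigl|\int_{t^{(m)}}^{t}\theta^{0}_s\,ds\Bigr|\Bigr]\Bigr),
\end{multline*}
and the prefactor is finite and independent of $m,n,j$ since $|L^i_{\mu_*}(s,u)|=\smallO{1/|i|^3}$ uniformly in $s,u$ by Proposition~\ref{prop:Lkmuregular}.

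It then remains to see that both expectations in the last factor go to $0$ as $m\to\infty$, uniformly in $n,j$. For the Brownian oscillation I would write $\sup_{t\in[0,T]}|W^0_t-W^0_{t^{(m)}}|=\max_{0\leq p<m}\sup_{t\in[p\eta_m,(p+1)\eta_m]}|W^0_t-W^0_{p\eta_m}|$; each inner supremum has the law of $\sqrt{\eta_m}\,\sup_{[0,1]}|B|$, a variable with Gaussian tails, so the maximum of $m$ independent copies has expectation of order $\sqrt{\eta_m\log m}=O\bigl(\sqrt{T(\log m)/m}\bigr)\to0$ (equivalently, this is the $L^1$ modulus of continuity of $W^0$ at scale $\eta_m$). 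For the drift term, Cauchy--Schwarz on the interval $[t^{(m)},t]\subset[t^{(m)},t^{(m)}+\eta_m]$ gives, uniformly in $t$, $\bigl|\int_{t^{(m)}}^{t}\theta^{0}_s\,ds\bigr|\leq\sqrt{\eta_m}\,\bigl(\int_0^T(\theta^0_s)^2\,ds\bigr)^{1/2}$, hence the corresponding expectation is at most $\sqrt{\eta_m}\,\bigl(\int_0^T\Exp[(\theta^0_s)^2]\,ds\bigr)^{1/2}=O(\sqrt{1/m})$.

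The only substantive point is the uniform bound $\sup_{s\in[0,T]}\Exp[(\theta^0_s)^2]<\infty$. I would get it from the closed form of the limiting process in Lemma~\ref{lem:limit equations}: there $\theta^0_t=\sigma^{-1}\Phi^{0,1}_t+\sigma^{-2}\sum_i\int_0^t M^i_{\mu_*}(t,u)\,\Phi^{i,1}_u\,du$, where $\Phi^{k,1}_u=\sum_i\int_0^u L^i_{\mu_*}(u,v)\,dW^{i+k}_v$ is centred Gaussian with variance $\sum_i\int_0^u\bigl(L^i_{\mu_*}(u,v)\bigr)^2\,dv\leq TD$ by Proposition~\ref{prop:Lkmuregular}, and $\sup_{t,u}\sum_i|M^i_{\mu_*}(t,u)|<\infty$ by Lemma~\ref{lem:limit equations}; a triangle inequality and Cauchy--Schwarz then bound $\Exp[(\theta^0_t)^2]$ by a constant independent of $t$ (alternatively, one may pass to the limit in the uniform second‑moment control that Lemma~\ref{Lemma: bound theta} gives for the finite‑size $\theta^j$). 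Finally, given $\varepsilon>0$ I would choose $m_0$ so large that for $m\geq m_0$ each of the two expectations above is $\leq\varepsilon$, which yields $\Exp[\sup_{s\in[0,t]}|\alpha^{j,2}_s|]\leq C\varepsilon$ with $C$ independent of $j$. The main obstacle is thus not any single hard estimate but the bookkeeping — isolating the frozen kernel on the last time step, tracking the stationarity that makes everything $j$‑uniform, and combining the $O(\sqrt{\eta_m\log m})$ oscillation bound with the uniform second moment of the limiting drift.
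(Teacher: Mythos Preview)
Your argument is correct and somewhat different from the paper's. The paper substitutes the full resolvent representation of $V^{i+j}$ from Lemma~\ref{lem:limit equations} directly, writing $\alpha^{j,2}_t$ as a sum $\sigma\,\alpha^{j,2,1}_t+\sigma^{-1}\alpha^{j,2,2}_t$ where $\alpha^{j,2,1}_t=\sum_{i\in I_{q_m}}\int_{t^{(m)}}^t L_{\mu_*}^{i}(t^{(m)},s^{(m)})\,dW^{i+j}_s$ is handled by Burkholder--Davis--Gundy (yielding a bound of order $\sqrt{\eta_m}$ from the short integration range) and $\alpha^{j,2,2}_t$ is the triple sum involving the resolvent kernel $M^k_{\mu_*}$, controlled by the uniform summability of $M^k_{\mu_*}$ and $L^k_{\mu_*}$ together with BDG again. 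You instead use the more compact decomposition $dV^{i+j}=\sigma\,dW^{i+j}+\sigma\theta^{i+j}\,ds$, freeze the kernel on the last subinterval, and reduce everything to (i) the $L^1$ modulus of continuity of a single Brownian motion and (ii) a uniform second-moment bound on $\theta^0$, which you then extract from the resolvent formula. Your route is arguably cleaner on this term: it treats the supremum over $t$ more carefully (the paper's BDG step glosses over the fact that the lower limit $t^{(m)}$ jumps with $t$, so the process is only a martingale on each subinterval, which is exactly what your $\sqrt{\eta_m\log m}$ estimate accounts for), and it avoids re-expanding the full resolvent series. The paper's route has the advantage of being uniform across all four $\alpha^{j,k}$: once the substitution \eqref{eq:Msol} is made, every term is a sum of iterated Wiener integrals and the same BDG\,/\,absolute-summability machinery applies verbatim.
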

\begin{lemma}\label{lem:alphajthree}
For all $\varepsilon > 0$, there exists $m_0(\varepsilon)$ in $\N$ such that for all $m \geq m_0$
\[
\Exp \left[ \sup_{s \in [0,t]} \left| \alpha_s^{j,3} \right|  \right] \leq C \varepsilon 
\]
for some positive constant $C$ independent of $j$.
\end{lemma}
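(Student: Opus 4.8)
The plan is to isolate the single place where the empirical measure enters $\alpha^{j,3}$, namely through the covariance operator $L_{\emp(V^m_n)}$, control it by the Lipschitz estimate for $\mu\mapsto L_\mu$ (Proposition~\ref{prop:Amununif}), and then use that $\emp(V^m_n)$ converges to $\mu_*$ (Lemma~\ref{lem:limit law of Qmn}).

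Write $\Delta L^i := L^{i}_{\mu_*}(t^{(m)},s^{(m)}) - L^{i}_{\emp(V^m_n)}(t^{(m)},s^{(m)})$, so that $\alpha^{j,3}_t = \sigma^{-2}\sum_{i\in I_{q_m}}\int_0^{t^{(m)}}\Delta L^i\,dV^{i+j}_s$ with $(V^{i+j},\theta^{i+j})$ the solution of the limit system \eqref{eq: limit equations}. Substituting $dV^{i+j}_s = \sigma\,dW^{i+j}_s + \sigma\theta^{i+j}_s\,ds$ gives
\[
\alpha^{j,3}_t = \underbrace{\sigma^{-1}\sum_{i\in I_{q_m}}\int_0^{t^{(m)}}\Delta L^i\,dW^{i+j}_s}_{=:\,\alpha^{j,3,\mathrm{W}}_t} + \underbrace{\sigma^{-1}\sum_{i\in I_{q_m}}\int_0^{t^{(m)}}\Delta L^i\,\theta^{i+j}_s\,ds}_{=:\,\alpha^{j,3,\mathrm{drift}}_t}.
\]
By Proposition~\ref{prop:Amununif}, $|\Delta L^i|\le \smallO{1/|i|^3}\,C_t\,D_t(\mu_*,\emp(V^m_n))$ uniformly in the discretized times, hence $\sum_{i\in I_{q_m}}|\Delta L^i|\le C\,D_t(\mu_*,\emp(V^m_n))$. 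For the drift part, Cauchy--Schwarz in time (an honest $ds$--integral, so no discretization loss) and then in the spatial index, together with the uniform bound $\Exp\big[\int_0^T(\theta^{i+j}_s)^2\,ds\big]\le C$ furnished by the closed form of Lemma~\ref{lem:limit equations}, followed by one more Cauchy--Schwarz in $\Exp$, gives $\Exp\big[\sup_{s\le t}|\alpha^{j,3,\mathrm{drift}}_s|\big]\le C'\big(\Exp[D_t(\mu_*,\emp(V^m_n))^2]\big)^{1/2}$, with $C'$ independent of $j$.

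For the Wiener part one has to be more careful, because $\int_0^{t^{(m)}}\Delta L^i\,dW^{i+j}_s$ is a sum over $O(m)$ time steps and a crude Cauchy--Schwarz bound would lose a factor of order $\sqrt m$. I would instead use summation by parts in the discretized integral, turning it into the boundary term $\Delta L^i(t^{(m)},t^{(m)})\,W^{i+j}_{t^{(m)}}$ plus a term controlled by the discrete variation in $s$ of $s\mapsto \Delta L^i(t^{(m)},s^{(m)})$ times $\sup_{s\le t}|W^{i+j}_s|$; the boundary term carries the factor $D_t(\mu_*,\emp(V^m_n))$ directly, and the variation term does too provided one uses, besides Proposition~\ref{prop:Amununif}, the time--regularity of $L_{\mu_*}$ and $L_{\emp(V^m_n)}$ established in Appendix~\ref{app:covariances} (already exploited for $\alpha^{j,1}$, Lemma~\ref{lem:alphajone}). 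Since $\Exp[\sup_{s\le t}|W^{i+j}_s|^2]$ is finite and $\sum_i\smallO{1/|i|^3}<\infty$, this again yields $\Exp\big[\sup_{s\le t}|\alpha^{j,3,\mathrm{W}}_s|\big]\le C'\big(\Exp[D_t(\mu_*,\emp(V^m_n))^2]\big)^{1/2}$. Adding the two contributions, $\Exp\big[\sup_{s\in[0,t]}|\alpha^{j,3}_s|\big]\le C''\big(\Exp[D_t(\mu_*,\emp(V^m_n))^2]\big)^{1/2}$ with $C''$ independent of $j$.

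It then remains to observe that the right--hand side tends to $0$: by Lemma~\ref{lem:limit law of Qmn}, the law of $\emp(V^m_n)$ converges weakly to $\delta_{\mu_*}$ as $m,n\to\infty$; since $\nu\mapsto D_t(\nu,\mu_*)^2$ is continuous for the weak topology and bounded by $(2b)^2$ (because $0\le f\le 1$), this forces $\Exp[D_t(\mu_*,\emp(V^m_n))^2]\to 0$. Hence, given $\varepsilon>0$, for $m$ (and $n$, which enters through $q_m=n\,g(m)$) large enough the bound is $\le C\varepsilon$. I expect the delicate point to be precisely the Wiener part: making the summation--by--parts step work requires either enough time--regularity of the kernels $L_\mu$ so that their discrete variation is $O(1)$ rather than $O(m)$, or, failing that, a quantitative rate for $\Exp[D_t(\mu_*,\emp(V^m_n))^2]\to 0$ — available from the superexponential estimates of Section~\ref{subsec:main lemma} together with the choice of $q_m,m$ as functions of $n$ — to absorb the polynomial--in--$m$ loss.
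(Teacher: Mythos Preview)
Your overall plan matches the paper's: decompose into a Wiener piece and a drift/resolvent piece (the paper writes the latter via the explicit formula~\eqref{eq:Msol}, see~\eqref{eq:alphaj3W}), factor out $D_t(\mu_*,\emp(V^m_n))$ through Proposition~\ref{prop:Amununif}, and finish with Lemma~\ref{lem:limit law of Qmn}. The drift estimate is fine.

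For the Wiener piece you take an unnecessary and ultimately problematic detour. The paper (sketch of Lemma~\ref{lem:alphajthreeone}) simply treats $S^{j,m}_t=\sum_{i\in I_{q_m}}\int_0^{t}\Delta L^i(t^{(m)},s^{(m)})\,dW^{i+j}_s$ as a martingale and applies Burkholder--Davis--Gundy directly:
\[
\Exp\Big[\sup_{s\le t}|S^{j,m}_s|\Big]\le C_1\Big(\sum_{i}\int_0^t\Exp\big[(\Delta L^i)^2\big]\,du\Big)^{1/2}\le C\,\Exp\big[D_t(\mu_*,\emp(V^m_n))^2\big]^{1/2},
\]
the summable factor $\smallO{1/|i|^3}$ from Proposition~\ref{prop:Amununif} handling the $i$-sum. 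The $du$-integral contributes $\eta_m$ per grid step and exactly cancels the $m$ steps, so there is no $\sqrt m$ loss and no need for summation by parts or quantitative rates.

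Your Abel-summation route, by contrast, has a real gap. After summation by parts the remainder is controlled by $\sum_w\big|\Delta L^i(t^{(m)},(w{+}1)\eta_m)-\Delta L^i(t^{(m)},w\eta_m)\big|\cdot\sup_s|W^{i+j}_s|$. For this to carry the factor $D_t(\mu_*,\emp(V^m_n))$ you would need a Lipschitz-in-$\mu$ bound on the \emph{time increments} of $L_\mu^i$, i.e.\ essentially $|\partial_s(L_{\mu_*}^i-L_{\emp(V^m_n)}^i)|\le c_i\,D_t(\mu_*,\emp(V^m_n))$. Proposition~\ref{prop:Amununif} gives this only for the values of $L_\mu^i$, not for their variation, and Appendix~\ref{app:covariances} supplies only continuity (not Lipschitz) in $(s,u)$; bounding the two variations separately destroys the small factor $D_t$. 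So either drop the summation-by-parts idea in favour of BDG/It\^o isometry, or be prepared to prove a substantially stronger regularity estimate than what the paper provides.
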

\begin{lemma}\label{lem:alphajfour}
For all $\varepsilon > 0$, there exists $m_0(\varepsilon)$ in $\N$ such that for all $m \geq m_0$
\[
\Exp \left[ \sup_{s \in [0,t]} \left| \alpha_s^{j,4} \right|  \right] \leq C \varepsilon 
\]
for some positive constant $C$ independent of $j$.
\end{lemma}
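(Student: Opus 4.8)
The plan is to prove Lemma~\ref{lem:alphajfour} by identifying $\alpha^{j,4}$ with the \emph{spatial truncation error} committed when the infinite convolution kernel $(L_{\mu_*}^i)_{i\in\Z}$ is replaced by its restriction to $I_{q_m}$, and then exploiting the uniform decay $L_{\mu_*}^i(s,u)=\smallO{1/|i|^3}$ established in Proposition~\ref{prop:Lkmuregular}. Recall that
\[
\alpha^{j,4}_t=\sigma^{-2}\sum_{i\in\Z\setminus I_{q_m}}\int_0^t L_{\mu_*}^i(t,s)\,dV^{i+j}_s,
\]
where $(V^k)_{k\in\Z}$ is the spatially stationary solution of the limit system~\eqref{eq: limit equations}. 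First I would set $\rho(q_m):=\sum_{i\in\Z\setminus I_{q_m}}\sup_{s,u\in[0,t]}|L_{\mu_*}^i(s,u)|$; by Proposition~\ref{prop:Lkmuregular} the sequence $(\sup_{s,u}|L_{\mu_*}^i(s,u)|)_i$ is $\smallO{1/|i|^3}$, hence summable, so $\rho(q_m)\to0$ as $q_m\to\infty$, i.e. as $m\to\infty$. Given $\varepsilon>0$ one then picks $m_0$ with $\rho(q_m)\le\varepsilon$ for all $m\ge m_0$, and the lemma will follow from an estimate of the form $\Exp[\sup_{s\le t}|\alpha^{j,4}_s|]\le C\rho(q_m)$ with $C$ independent of $j$.

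Next I would substitute $dV^{i+j}_s=\sigma\,dW^{i+j}_s+\sigma\theta^{i+j}_s\,ds$ to split $\alpha^{j,4}=\alpha^{j,4,W}+\alpha^{j,4,\theta}$, with $\alpha^{j,4,\theta}_t=\sigma^{-1}\sum_{i\in\Z\setminus I_{q_m}}\int_0^t L_{\mu_*}^i(t,s)\theta^{i+j}_s\,ds$. The drift part is elementary: bounding $|L_{\mu_*}^i(t,s)|$ by its supremum and moving the absolute value inside the time integral gives $\sup_{s\le t}|\alpha^{j,4,\theta}_s|\le\sigma^{-1}\sum_{i\in\Z\setminus I_{q_m}}\sup_{s,u}|L_{\mu_*}^i(s,u)|\int_0^t|\theta^{i+j}_r|\,dr$; taking expectations, applying Cauchy--Schwarz in $r$, and using that by the closed form~\eqref{eq:Msol} of Lemma~\ref{lem:limit equations} (together with the bounds of Proposition~\ref{prop:Lkmuregular} and the translation invariance of $(W^k)_k$) the quantity $\Theta:=\sup_{k,\,r\le t}\Exp[|\theta^k_r|^2]$ is finite, one obtains $\Exp[\sup_{s\le t}|\alpha^{j,4,\theta}_s|]\le\sigma^{-1}t\sqrt{\Theta}\,\rho(q_m)$. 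Note that only pointwise, not pathwise-supremum, second moments of $\theta$ enter here.

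The substantive step, and the one I expect to be the main obstacle, is the Brownian part $\alpha^{j,4,W}_t=\sigma^{-1}\sum_{i\in\Z\setminus I_{q_m}}\int_0^t L_{\mu_*}^i(t,s)\,dW^{i+j}_s$: for each fixed $t$ this is a centered Gaussian variable with variance $\sum_{i\notin I_{q_m}}\int_0^t|L_{\mu_*}^i(t,s)|^2\,ds\le t\,\rho(q_m)^2$, but because the kernel depends on the upper limit $t$ it is a Volterra-type stochastic convolution rather than a martingale, so Doob's inequality does not apply directly to $\Exp[\sup_{s\le t}|\alpha^{j,4,W}_s|]$. I would handle this by regarding $\alpha^{j,4,W}$ as the stochastic convolution driven by the whole Brownian family with operator kernel the truncated operator obtained from $\bar L_{\mu_*}$ by keeping only the coordinates $i\notin I_{q_m}$, whose norm is $O(\rho(q_m))$ by Proposition~\ref{prop:Lkmuregular}, and using the joint continuity of the kernels in $(s,u)$ (also Proposition~\ref{prop:Lkmuregular}) to obtain a continuous modification; a Doob/Burkholder--Davis--Gundy-type maximal inequality for such convolutions then gives $\Exp[\sup_{s\le t}|\alpha^{j,4,W}_s|]\le C\rho(q_m)$ with $C$ depending only on $t$ and $\sigma$ (alternatively, one approximates each $L_{\mu_*}^i$ uniformly on $[0,t]^2$ by kernels piecewise constant in the first variable and applies Doob on each sub-interval). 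Combining the two parts yields $\Exp[\sup_{s\le t}|\alpha^{j,4}_s|]\le C'\rho(q_m)\le C'\varepsilon$ for $m\ge m_0$, with $C'$ independent of $j$ by the spatial stationarity of $(W^k)_k$ and of $(V^k)_k$, which proves the lemma.
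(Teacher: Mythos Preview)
Your proposal is correct and follows the same route the paper indicates (the paper leaves this proof to the reader, saying it is analogous to Lemmas~\ref{lem:alphajthreeone}--\ref{lem:alphajthreetwo}): split $dV^{i+j}$ into its Brownian increment and its drift via Lemma~\ref{lem:limit equations}, and control each piece by the spatial tail $\rho(q_m)=\sum_{i\notin I_{q_m}}\sup_{s,u}|L_{\mu_*}^i(s,u)|$, which tends to zero by Proposition~\ref{prop:Lkmuregular}. The only cosmetic difference is that the paper's pattern (cf.~\eqref{eq:alphaj1W}, \eqref{eq:alphaj3W}) would expand the drift explicitly through the resolvent kernel $M_{\mu_*}$, giving an $\alpha^{j,4,2}$ written entirely in terms of Brownian integrals, whereas you keep the drift packaged as $\theta^{i+j}_s$ and invoke $\sup_{k,r}\Exp[(\theta^k_r)^2]<\infty$; the two are equivalent since that moment bound follows immediately from~\eqref{eq:Msol}. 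Your remark that the Brownian piece $\int_0^t L_{\mu_*}^i(t,s)\,dW^{i+j}_s$ is a Volterra-type convolution rather than a genuine martingale (because the integrand depends on the upper limit) is a valid point the paper glosses over in the analogous proofs (e.g.\ Lemma~\ref{lem:alphajoneone}); either of your suggested remedies---a maximal inequality for stochastic convolutions, or a piecewise-constant-in-$t$ approximation followed by Doob on each subinterval---closes that gap.
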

Lemma \ref{lem:limit equations} allows us to rewrite the $\alpha^{j, k}_t$s, $k=1,2,3,4$ as follows.
\begin{multline}\label{eq:alphaj1W}
\alpha^{j, 1}_t = \sigma \underbrace{\sum_{i \in I_{q_m}} \int_0^t ( L_{\mu}^{ i}(t,s) - L_{\mu}^{ i}(t^{(m)},s^{(m)}) ) \, dW^{i+j}_s  \, }_{\alpha_t^{j,1,1}} +\\
\sigma^{-1}  \underbrace{\sum_{i, k, L \in I_{q_m}}  \int_0^t ( L_{\mu}^{i}(t,s) - L_{\mu}^{ i}(t^{(m)},s^{(m)}) ) \left( \int_0^s M_{\mu}^k(s, u) \left(  \int_0^u L^{L-k}_{\mu}(u, v) \, dW_v^{L+i+j} \right) \, du \right) \, ds }_{\alpha_t^{j,1,2}}.
\end{multline}
Lemma~\ref{lem:alphajone} then follows from the following two Lemmas.
\begin{lemma}\label{lem:alphajoneone}
For all $\varepsilon > 0$, there exists $m_0(\varepsilon)$ in $\N$ such that for all $m \geq m_0$
\[
\Exp \left[ \sup_{s \in [0,t]} \left| \alpha_s^{j,1,1} \right|  \right] \leq C \varepsilon 
\]
for some positive constant $C$ independent of $j$.
\end{lemma}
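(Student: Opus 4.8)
The plan is to reduce the statement, via independence and It\^o's isometry, to a deterministic bound on the time-oscillation of the kernel $L_{\mu_*}$, and then to upgrade it to a bound on the supremum over $s$ by Gaussian-process arguments. For fixed $s$, $\alpha^{j,1,1}_s=\sum_{i\in I_{q_m}}\int_0^s\big(L_{\mu_*}^i(s,r)-L_{\mu_*}^i(s^{(m)},r^{(m)})\big)\,dW^{i+j}_r$ is a centered Gaussian variable, and since for fixed $j$ the $(W^{i+j})_{i\in I_{q_m}}$ are independent Brownian motions, It\^o's isometry gives
\[
\Exp\big[|\alpha^{j,1,1}_s|^2\big]=\sum_{i\in I_{q_m}}\int_0^s\big|L_{\mu_*}^i(s,r)-L_{\mu_*}^i(s^{(m)},r^{(m)})\big|^2\,dr\le T\,\omega_m ,
\]
with $\omega_m:=\sup\big\{\sum_{i\in\Z}|L_{\mu_*}^i(a,b)-L_{\mu_*}^i(a',b')|^2:|a-a'|\le T/m,\ |b-b'|\le T/m\big\}$. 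By Proposition~\ref{prop:Lkmuregular} the sequence $\big(\sup_{a,b}|L_{\mu_*}^i(a,b)|\big)_i$ is $\smallO{1/|i|^3}$, hence square-summable, so the series in $\omega_m$ is dominated; combined with the uniform continuity of each $(a,b)\mapsto L_{\mu_*}^i(a,b)$ on $[0,T]^2$ this yields $\omega_m\to0$, and in fact the quantitative rate $\omega_m\le C/m$ once one knows that $L_{\mu_*}$ is H\"older-$1/2$ in each of its time arguments with H\"older constants $(c_i)$ satisfying $\sum_i(c_i)^2<\infty$. This last regularity is a short preliminary: a uniform-in-$(m,n)$ bound $\Exp^{Q^{m,n}}[(v^j_b-v^j_{b'})^2]\le C|b-b'|$ (the drift of $V^{m,j}$ being Lipschitz in time, with moments controlled via Lemma~\ref{Lemma: bound eta}), passed to the limit through Lemma~\ref{lem:limit law of Qmn}, gives $\Exp^{\mu_*}[(v^0_b-v^0_{b'})^2]\le C|b-b'|$; inserting this into \eqref{eq:Kmudef} ($f$ Lipschitz, $0\le f\le1$, \eqref{eq:akbl}) makes $K_{\mu_*}^k$ H\"older-$1/2$ in time with constants $O(a_k)$, and the resolvent identity \eqref{eq:Lmubarn1} transfers the regularity to $L_{\mu_*}$.

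Next I would promote this to the supremum. With $\eta_m=T/m$ and the grid $\{p\eta_m\}_{p=0}^m$, write
\[
\sup_{s\le t}|\alpha^{j,1,1}_s|\le\max_{0\le p\le m}\big|\alpha^{j,1,1}_{p\eta_m}\big|+\max_{0\le p<m}\ \sup_{s\in[p\eta_m,(p+1)\eta_m)}\big|\alpha^{j,1,1}_s-\alpha^{j,1,1}_{p\eta_m}\big|.
\]
The first term is a maximum of $m+1$ centered Gaussians of variance $\le T\omega_m\le CT/m$, hence has expectation $\le C\sqrt{\omega_m\log m}\le C\sqrt{(\log m)/m}\to0$. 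For the second term, on each subinterval the increment decomposes into a continuous $L^2$-martingale in $s$ (the kernel with first argument frozen to the current grid point $s^{(m)}$, controlled by Doob's inequality) plus a residual carrying the genuine $s$-dependence of the kernel's first slot; by the H\"older-$1/2$ regularity of $L_{\mu_*}$ both pieces are Gaussian processes that are $O(\sqrt{\eta_m})$ in $L^2$ and, by a Dudley entropy bound, have supremum of expectation $\le C\sqrt{\eta_m\log(1/\eta_m)}$. Crucially, the maximum over the $m$ subintervals costs only a factor $\sqrt{\log m}$, not $\sqrt m$, because one writes $X_p:=\sup_s|\cdots|$ as $\Exp[X_p]$ plus a Gaussian fluctuation of variance proxy $\le CT/m$ and invokes Gaussian concentration. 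Summing the two contributions and absorbing constants gives $\Exp\big[\sup_{s\le t}|\alpha^{j,1,1}_s|\big]\le C\varepsilon$ for $m\ge m_0(\varepsilon)$; all constants are independent of $j$ because the kernels do not depend on $j$ and $(W^{i+j})_i$ is, for each $j$, just a family of independent standard Brownian motions.

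The hard part is this sup-upgrade: $\alpha^{j,1,1}$ is not a martingale in $s$ (the kernel $L_{\mu_*}^i(s,\cdot)$ carries the running time $s$ in its first slot), so Doob's inequality does not apply to it directly, and one must combine the martingale/non-martingale splitting with a quantitative modulus of continuity of $L_{\mu_*}$ in its time variables — a modulus not recorded in Proposition~\ref{prop:Lkmuregular}, which only gives continuity in $(s,u)$ and $\smallO{1/|i|^3}$ decay in the spatial index, so that establishing it is itself a necessary (if routine) step. The other delicate point is the bookkeeping around the $O(m)$ Gaussian maxima: one must be careful to use Gaussian concentration (losing only $\sqrt{\log m}$) rather than a crude $\ell^2$ bound over the $m$ subintervals (which would lose $\sqrt m$ and destroy the estimate), and to keep every constant uniform in $j$.
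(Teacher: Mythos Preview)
Your approach diverges from the paper's, and for a reason you explicitly flag. The paper asserts that
\[
S^j_t:=\sum_{i\in I_{q_m}}\int_0^t\big(L_{\mu_*}^i(t,s)-L_{\mu_*}^i(t^{(m)},s^{(m)})\big)\,dW^{i+j}_s
\]
is a continuous martingale and applies Burkholder--Davis--Gundy directly to bound $\Exp[\sup_s|S^j_s|]$ by the square root of $\sum_i\int_0^t(L_{\mu_*}^i(t,s)-L_{\mu_*}^i(t^{(m)},s^{(m)}))^2\,ds$; it then controls this via Parseval, the resolvent identity for $\tilde L_{\mu_*}$ in terms of $\tilde K_{\mu_*}$, and a qualitative uniform-continuity argument for $(t,s)\mapsto\int_{\T^\Z}f(v^0_t)f(v^k_s)\,d\mu_*$. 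You, by contrast, observe that $S^j$ is \emph{not} a martingale in $t$ (the kernel carries the running time in its first slot), and replace the one-line BDG step by a grid decomposition: It\^o-isometry variance bounds at fixed $s$, a Gaussian maximal inequality over the $m{+}1$ grid points (costing only $\sqrt{\log m}$), and on each subinterval a split into a true martingale piece (first slot frozen; Doob) plus a residual controlled by H\"older-$1/2$ time regularity of $L_{\mu_*}$, with a Dudley entropy bound and Gaussian concentration to tame the max over subintervals.

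What each buys: the paper's route, if one grants the martingale claim, is dramatically shorter and needs only qualitative continuity of the kernel in $(t,s)$; yours is technically more careful precisely where the paper is cavalier, but the price is that you must supply a \emph{quantitative} time modulus for $L_{\mu_*}$ (your H\"older-$1/2$ claim), which is not in Proposition~\ref{prop:Lkmuregular} and requires the extra detour you sketch through $K_{\mu_*}$, the approximating laws $Q^{m,n}$, and Lemma~\ref{lem:limit law of Qmn}. Your concern about the martingale property is legitimate; your workaround is sound in outline, with the H\"older regularity of $L_{\mu_*}$ being the one step that should be written out in full rather than sketched.
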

\begin{lemma}\label{lem:alphajonetwo}
For all $\varepsilon > 0$, there exists $m_0(\varepsilon)$ in $\N$ such that for all $m \geq m_0$
\[
\Exp \left[ \sup_{s \in [0,t]} \left| \alpha_s^{j,1,2} \right|  \right] \leq C \varepsilon 
\]
for some positive constant $C$ independent of $j$.
\end{lemma}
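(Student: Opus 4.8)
The plan is to exhibit $\alpha^{j,1,2}_s$, as defined in \eqref{eq:alphaj1W}, as a time integral against the \emph{small} weight $L_\mu^i(s,r)-L_\mu^i(s^{(m)},r^{(m)})$ of a Gaussian field whose second moment is bounded uniformly in \emph{all} indices, and to let that weight supply the factor $\varepsilon$. First I would read off the structure: with $\Psi^{p}_{u;k}:=\sum_{L\in I_{q_m}}\int_0^u L_\mu^{L-k}(u,v)\,dW_v^{L+p}$ and $\widetilde G^{\,p}_r:=\sum_{k\in I_{q_m}}\int_0^r M_\mu^k(r,u)\,\Psi^{p}_{u;k}\,du$, the inner bracket of \eqref{eq:alphaj1W} for index $i$ is exactly $\widetilde G^{\,i+j}_r$, so that
\[
\alpha^{j,1,2}_s=\sum_{i\in I_{q_m}}\int_0^s\big(L_\mu^i(s,r)-L_\mu^i(s^{(m)},r^{(m)})\big)\,\widetilde G^{\,i+j}_r\,dr .
\]

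Next I would establish two ingredients. \emph{(a) Uniform moment bound.} Expanding $\Exp[(\widetilde G^{\,p}_r)^2]$ as a double sum over $k,l$ and using that the Brownian motions $W^{L+p}$ and $W^{L'+p}$ coincide only for $L=L'$, the It\^o isometry gives $\Exp[\Psi^{p}_{u;k}\Psi^{p}_{u';l}]=\sum_{L}\int_0^{\min(u,u')}L_\mu^{L-k}(u,v)L_\mu^{L-l}(u',v)\,dv$, whose modulus is at most $TD$ by Cauchy--Schwarz together with $\sum_L (L_\mu^{L}(\cdot,\cdot))^2\le D$ from Proposition~\ref{prop:Lkmuregular}. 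Since $\sup_{s,u}\sum_{k}|M_\mu^k(s,u)|\le C_M<\infty$ by Lemma~\ref{lem:limit equations}, it follows that $\sup_{p,r,m}\Exp[(\widetilde G^{\,p}_r)^2]\le T^3 D\,C_M^2=:C_G^2$, hence $\Exp|\widetilde G^{\,p}_r|\le C_G$ for all $p,r$ and all $m$. \emph{(b) Smallness of the weight.} I would show that $\eta_m:=\sup_{r\in[0,T]}\sum_{i}\sup_{s\in[0,T]}\big|L_\mu^i(s,r)-L_\mu^i(s^{(m)},r^{(m)})\big|$ tends to $0$ as $m\to\infty$: here $|s-s^{(m)}|\le T/m$ and $|r-r^{(m)}|\le T/m$, so splitting the $i$-sum at $|i|=N$, the tail is bounded by $2\sum_{|i|>N}\sup_{s,u}|L_\mu^i(s,u)|$, which is arbitrarily small since each $\sup_{s,u}|L_\mu^i(s,u)|$ is $\smallO{1/|i|^3}$ by Proposition~\ref{prop:Lkmuregular} and hence summable, while for $|i|\le N$ the uniform continuity of the finitely many continuous functions $L_\mu^i$ on the compact set $[0,T]^2$ makes each term tend to $0$.

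Finally I would combine the two: bounding the integrand of the display above pointwise, replacing $L_\mu^i(s,r)-L_\mu^i(s^{(m)},r^{(m)})$ by its supremum over $s$ (after which the majorant no longer depends on $s$), and then taking expectations with Tonelli,
\[
\Exp\Big[\sup_{s\in[0,t]}\big|\alpha^{j,1,2}_s\big|\Big]\le\int_0^t\sum_{i\in I_{q_m}}\Big(\sup_{s\in[0,T]}\big|L_\mu^i(s,r)-L_\mu^i(s^{(m)},r^{(m)})\big|\Big)\Exp\big|\widetilde G^{\,i+j}_r\big|\,dr\le T\,C_G\,\eta_m .
\]
Given $\varepsilon>0$, choosing $m_0$ so that $\eta_m\le\varepsilon$ for all $m\ge m_0$ yields the claim with $C=TC_G$, which is manifestly independent of $j$. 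The delicate point is ingredient~(a): the processes $\Psi^{i+j}_{u;k}$ are not independent across $k$ (nor are the shifted copies across $i$), so the variance cannot be diagonalised; one must carry the Brownian-index bookkeeping $L+p=L'+p$ explicitly and close the estimate using only the $\ell^2$-summability of $(L_\mu^{L}(\cdot,\cdot))_L$, uniform in the time arguments and in $\mu$, from Proposition~\ref{prop:Lkmuregular}. The truncation of all sums to $I_{q_m}$ only tightens every bound, so it requires no extra argument.
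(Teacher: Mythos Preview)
Your proof is correct and follows essentially the same strategy as the paper's: isolate the small weight $L_\mu^i(s,r)-L_\mu^i(s^{(m)},r^{(m)})$, bound the remaining Gaussian object uniformly, and sum. The organisational differences are minor but worth noting. For ingredient~(a), the paper keeps the innermost stochastic integral separate, applies Burkholder--Davis--Gundy to $\Exp\big[\sup_u\big|\int_0^u\sum_L L_\mu^{L-k}(u,v)\,dW_v^{L+i+j}\big|\big]$, and then multiplies by $\sum_k|M_\mu^k|\le E$ on the outside; you instead package everything into $\widetilde G$ and bound its second moment directly via the It\^o isometry and Cauchy--Schwarz, which is a bit cleaner since it avoids BDG entirely. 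For ingredient~(b), the paper invokes a general continuity lemma (Lemma~\ref{lem:upperboundseries}) proved by contradiction; your explicit tail-splitting at $|i|=N$ combined with uniform continuity of the finitely many remaining $L_\mu^i$ on $[0,T]^2$ is the same idea made constructive, and relies on the same $\smallO{1/|i|^3}$ decay from Proposition~\ref{prop:Lkmuregular}. Either route yields the conclusion with a constant independent of~$j$.
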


\begin{proof}[Proof of Lemma \ref{lem:alphajoneone}]
The proof is based upon recognizing that
\[
S^{j}_t := \sum_{i \in I_{q_m}} \int_0^t ( L_{\mu_*}^{i}(t,s) - L_{\mu_*}^{i}(t^{(m)},s^{(m)}) ) \, dW^{i+j}_s
\]
is a  continuous martingale with quadratic variation
\[
\left \langle S^{j}_\cdot \right\rangle_t=\sum_{i \in I_{q_m}} \int_0^t  ( L_{\mu_*}^{i}(t,s) - L_{\mu_*}^{i}(t^{(m)},s^{(m)}) )^2\, ds,
\]
because of the independence of the Brownian motions. 

So that we have
\[
\sup_{s \in [0,t]} \left| \alpha_s^{j,1,1} \right| 
=  \sup_{s \in [0,t]} \left| S^j_s \right|.
\]
By Burkholder-Davis-Gundy's inequality we have
\[
\Exp \left[ \sup_{s \in [0,t]} \left| S^j_s \right|  \right] \leq C_1 \Exp \left[  \left \langle S^{j}_\cdot \right\rangle_t^{1/2}  \right] \leq \left( \sum_{i \in I_{q_m}} \int_0^t  ( L_{\mu_*}^{ i}(t,s) - L_{\mu_*}^{i}(t^{(m)},s^{(m)}) )^2\, ds \right)^{1/2}.
\]
This is upperbounded by
\[
\left( \int_0^t \sum_{i \in \Z} ( L_{\mu_*}^{i}(t,s) - L_{\mu_*}^{i}(t^{(m)},s^{(m)}) )^2\, ds \right)^{1/2},
\]
which, by Parseval's Theorem is equal to
\[
\frac{1}{\sqrt{2 \pi}} \left( \int_0^t \int_{-\pi}^\pi \left|  \bar{\tilde{L}}_{\mu_*}(\varphi)(t,s)-  \bar{\tilde{L}}_{\mu_*}(\varphi)(t^{(m)},s^{(m)})\right|^2 \, d\varphi \, ds \right)^{1/2}.
\]
The relation
\[
 \bar{\tilde{L}}_{\mu_*} = \sigma^2 \left({\rm Id}- \left( {\rm Id} + \sigma^{-2} \bar{\tilde{K}}_{\mu_*} \right)^{-1} \right)
\]
dictates that
\[
\left|  \tilde{L}_{\mu_*}(\varphi)(t,s)-  \tilde{L}_{\mu_*}(\varphi)(t^{(m)},s^{(m)})\right|^2 =  
\sigma^4 \left|  \left( {\rm Id} + \sigma^{-2} \tilde{K}_{\mu_*}(\varphi) \right)^{-1}(t,s) -   \left( {\rm Id} + \sigma^{-2} \tilde{K}_{\mu_*}(\varphi) \right)^{-1}(t^{(m)},s^{(m)})\right|^2.
\]
By the Lipschitz continuity of the application $A \to ({\rm Id}+ A)^{-1}$, for $A$ a positive operator, we obtain that
\[
\left|  \tilde{L}_{\mu_*}(\varphi)(t,s)-  \tilde{L}_{\mu_*}(\varphi)(t^{(m)},s^{(m)}) \right|^2 \leq
C \left| \tilde{K}_{\mu_*}(\varphi)(t,s) - \tilde{K}_{\mu_*}(\varphi)(t^{(m)},s^{(m)}) \right|^2
\]
for some positive constant $C$.
Next we write
\[
 \tilde{K}_{\mu_*}(\varphi)(t,s) = \sum_{k \in \Z} \tilde{R}_\J(\varphi,k) \int_{\T^\Z} f(v^0_t) f(v^k_s)\, d\mu_*(v),
\]
from which it follows that
\begin{multline*}
\left|  \tilde{K}_{\mu_*}(\varphi)(t,s)  - \tilde{K}_{\mu_*}(\varphi)(t^{(m)},s^{(m)})\right| \leq 
\sum_{k \in \Z} \left| \tilde{R}_\J(\varphi,k) \right| \left| \int_{\T^\Z} \left( f(v^0_t)f(v^k_s) - f(v^0_{t^{(m)}})f(v^k_{s^{(m)}})  \right) \, d\mu_*(v) \right| = \\
\sum_{k \in \Z} \left| \tilde{R}_\J(\varphi,k) \right| \left| \int_{\T^\Z} \left( \left( f(v^0_t)-f(v^0_{t^{(m)}} ) \right) f(v^k_s)+ \left( f(v^k_s)-f(v^k_{s^{(m)}} ) \right) f(v^0_{t^{(m)}}) \right)  \, d\mu_*(v) \right|.
\end{multline*}
Because $0 \leq f \leq 1$ 
\[
\left|  \tilde{K}_{\mu_*}(\varphi)(t,s)  - \tilde{K}_{\mu_*}(\varphi)(t^{(m)},s^{(m)})\right| \leq 
\sum_{k \in \Z} \left| \tilde{R}_\J(\varphi,k) \right| 
\int_{\T^\Z} |f(v^0_t)-f(v^0_{t^{(m)}} )| + |f(v^k_s)-f(v^k_{s^{(m)}} )| \, d\mu_*(v).
\]
	By stationarity, we have
\begin{align*}
&\hspace{-50pt}\int_{\T^\Z} |f(v^0_t)-f(v^0_{t^{(m)}} )| + |f(v^k_s)-f(v^k_{s^{(m)}} )| \, d\mu_*(v)\\
&=\int_{\T^\Z} |f(v^0_t)-f(v^0_{t^{(m)}} )| + |f(v^0_s)-f(v^0_{s^{(m)}} )| \, d\mu_*(v)\\
&\leq 2 \int_{\T^\Z}  \sup_{0\leq t_1,t_2 \leq T, |t_2 - t_1|\leq \eta_m}|v^0_{t_2} - v^0_{t_1}|d\mu_*(v)\\
&\leq \epsilon
\end{align*}
for \(m\) large enough.
Thus, we have
\[
\left|  \tilde{K}_{\mu_*}(\varphi)(t,s)  - \tilde{K}_{\mu_*}(\varphi)(t^{(m)},s^{(m)})\right| \leq C \varepsilon 
\]
for some positive constant $C$, since $\sum_{k \in \Z} \left| \tilde{R}_\J(\varphi,k) \right| \leq D$ for some positive constant $D$ independent of $\varphi$, and therefore, as announced,
\[
\Exp \left[ \sup_{s \in [0,t]} \left|  \alpha_s^{j,1,1}  \right| \right] \leq C \varepsilon,
\]
for some positive constant $C$.
\end{proof}
\begin{proof}[Proof of Lemma \ref{lem:alphajonetwo}]
We have
\begin{multline*}
\Exp \Bigg[ \Bigg| \sum_{i, k, L \in I_{q_m}}  \int_0^t ( L_{\mu_*}^{i}(t,s) - L_{\mu_*}^{i}(t^{(m)},s^{(m)}) )\\
 \left( \int_0^s M_{\mu_*}^k(s, u) \left(  \int_0^u L^{L-k}_{\mu_*}(u, v) \, dW_v^{L+i+j} \right) \, du \right) \, ds   \Bigg| \Bigg] \leq \\
 \Exp \Bigg[  \int_0^t  \int_0^s \sum_{i \in I_{q_m}} \left|  L_{\mu_*}^{i}(t,s) - L_{\mu_*}^{ i}(t^{(m)},s^{(m)}) \right| \\
  \sum_{k \in I_{q_m}} \left| M_{\mu_*}^k(s, u) \right| \left|   \int_0^u \sum_{L \in I_{q_m}} L^{L-k}_{\mu_*}(u, v) \, dW_v^{L+i+j} \right| \, du  \, ds   \Bigg] \leq \\
 \int_0^t  \int_0^s \sum_{i \in I_{q_m}} \left|  L_{\mu_*}^{i}(t,s) - L_{\mu_*}^{ i}(t^{(m)},s^{(m)}) \right|  \sum_{k \in I_{q_m}} \left| M_{\mu_*}^k(s, u) \right| \\
 \Exp \left[ \sup_{u \in [0,t]} \left|   \int_0^u \sum_{L \in I_{q_m}} L^{L-k}_{\mu_*}(u, v) \, dW_v^{L+i+j} \right| \right]\, du  \, ds.
\end{multline*}
Because $\int_0^u \sum_{L \in I_{q_m}} L^{L-k}_{\mu_*}(u, v) \, dW_v^{L+i+j}$ is a continuous martingale, the B\"urkholder-Davis-Gundy inequality, Parseval's Theorem, and Proposition~\ref{prop:Lkmuregular} dictate
\begin{multline*}
\Exp \left[  \sup_{u \in [0,t]} \left|   \int_0^u \sum_{L \in I_{q_m}} L^{L-k}_{\mu_*}(u, v) \, dW_v^{L+i+j} \right| \right] \leq C_1 \left( \int_0^t \sum_{L \in I_{q_m}} \left( L^{L-k}_{\mu_*}(t, v) \right)^2 dv \right)^{1/2} \leq \\
 C_1 \left( \int_0^t \sum_{L \in \Z} \left( L^{L}_{\mu_*}(t, v) \right)^2 dv \right)^{1/2} =  \frac{C_1}{\sqrt{2\pi}}\left( \int_0^t  \int_{-\pi}^\pi \left| \tilde{L}(\varphi)( t, v) \right|^2 \, d\varphi \, dv \right)^{1/2} \leq D
\end{multline*}
for some positive constant $D$.
Next we have
\[
\sum_{k \in I_{q_m}} \left| M_{\mu_*}^k(u, v) \right| \leq \sum_{k \in \Z} \left| M_{\mu_*}^k(u, v) \right| \leq E
\]
for some positive constant $E$, so that
\[
\Exp \left[ \sup_{s \in [0,t]} \left| \alpha_s^{j,1,2} \right|  \right] \leq D E T^2 \sup_{s, u \in [0,t]} \sum_{i \in I_{q_m}} \left|  L_{\mu_*}^{i}(s,u) - L_{\mu_*}^{i}(s^{(m)},u^{(m)}) \right|.
\]
Because of Lemma \ref{lem:upperboundseries} below there exists a positive convergent series $A=(a_i)_{i \in \Z}$ such that for all $\varepsilon > 0$ there exists $m_0(\varepsilon)$ such that for all $m \geq m_0$
\[
\left|  L_{\mu_*}^{i}(s,u) - L_{\mu_*}^{ i}(s^{(m)},u^{(m)}) \right| \leq \varepsilon a_i
\]
for all $s, u \in [0,t]$. This proves the Lemma.
\end{proof}
\begin{lemma}\label{lem:upperboundseries}
Let $\bar{O}$ be an operator on $L^2(\Z,[0,T])$ defined by the continuous kernels $O^i(t,s)$, $i \in \Z$. There exists a positive convergent series $A=(a_i)_{i \in \Z}$ such that for all $\varepsilon > 0$ there exists $m_0(\varepsilon)$ such that for all $i \in \Z$ and for all $m \geq m_0$
\[
\left|  O^i(s,u) - O^i(s^{(m)},u^{(m)}) \right| \leq \varepsilon a_i
\]
for all $s, u \in [0,t]$.
\end{lemma}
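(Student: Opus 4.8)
The plan is to build the series $A=(a_i)_{i\in\Z}$ explicitly out of the uniform size of the kernels, and then run a two-regime argument: indices for which $O^i$ is already tiny are dispatched by a crude bound valid for every $m$, while the finitely many remaining indices are handled by the ordinary uniform continuity of each individual $O^i$. The single nontrivial issue is getting a bound that is \emph{uniform in $i$}, since a naive appeal to uniform continuity of $O^i$ produces a modulus depending on $i$, useless over an infinite index set.

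First I would set $b_i:=\sup_{s,u\in[0,T]}|O^i(s,u)|$, which is finite because each $O^i$ is continuous on the compact square $[0,T]^2$. In the situation where the Lemma is invoked one has $\bar O=\bar L_{\mu_*}$, so by Proposition~\ref{prop:Lkmuregular} the sequence $(b_i)_{i\in\Z}$ is $\smallO{1/|i|^3}$, hence summable; this summability, together with continuity of the kernels, is the only structural input used, and if the Lemma is to be stated for a fully generic operator it should be added to its hypotheses. I would then \emph{define}
\[
a_i:=\sqrt{b_i}+\frac{1}{1+i^2},
\]
the additive term merely ensuring $a_i>0$; since $b_i=\smallO{1/|i|^3}$ we have $\sqrt{b_i}=\smallO{1/|i|^{3/2}}$, so $\sum_{i\in\Z}a_i<\infty$.

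Next, fix $\varepsilon>0$ and split $\Z$ into $G_\varepsilon:=\{i:\ 2\sqrt{b_i}\le\varepsilon\}$ and its complement $F_\varepsilon:=\{i:\ 2\sqrt{b_i}>\varepsilon\}$; since $b_i\to0$, $F_\varepsilon$ is finite. Recalling that $|s-s^{(m)}|<T/m$ and $|u-u^{(m)}|<T/m$, for $i\in G_\varepsilon$ and \emph{any} $m$ the trivial bound
\[
\bigl|O^i(s,u)-O^i(s^{(m)},u^{(m)})\bigr|\le 2b_i=2\sqrt{b_i}\,\sqrt{b_i}\le\varepsilon\sqrt{b_i}\le\varepsilon a_i
\]
holds for all $s,u\in[0,T]$. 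For each of the finitely many $i\in F_\varepsilon$, uniform continuity of $O^i$ on $[0,T]^2$ provides $\delta_i>0$ such that $O^i$ has oscillation at most $\varepsilon^2/2$ over pairs of points at sup-distance $\le\delta_i$; choosing $m_i$ with $T/m_i\le\delta_i$ and using $\sqrt{b_i}>\varepsilon/2$ on $F_\varepsilon$, we get for all $m\ge m_i$ and all $s,u\in[0,T]$
\[
\bigl|O^i(s,u)-O^i(s^{(m)},u^{(m)})\bigr|\le \tfrac{\varepsilon^2}{2}\le\varepsilon\sqrt{b_i}\le\varepsilon a_i.
\]
Setting $m_0(\varepsilon):=\max_{i\in F_\varepsilon}m_i$ (finite) then gives the assertion for all $i\in\Z$, all $m\ge m_0$, and all $s,u\in[0,T]$.

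The expected main obstacle — the uniformity in $i$ — is precisely what the choice $a_i\asymp\sqrt{b_i}$ resolves: it absorbs the large-$|i|$ tail into the decay $b_i=\smallO{1/|i|^3}$ (so those indices cost nothing, for every $m$), reducing the problem to a finite index set on which a plain modulus-of-continuity argument applies. I would finally double-check that no property beyond continuity of the kernels and summability of $(\sup_{s,u}|O^i(s,u)|)_i$ is used anywhere in the argument, which is the case here.
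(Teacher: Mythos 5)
Your proof is correct and takes a genuinely different route from the one in the paper. The paper argues by contradiction, asserting that the negation of the Lemma would produce a \emph{fixed} triple $(i_0,s_0,u_0)$ for which $\varepsilon a_{i_0}<|O^{i_0}(s_0,u_0)-O^{i_0}(s_0^{(m)},u_0^{(m)})|$ holds for \emph{all} $m$, and then kills this by continuity of $O^{i_0}$; this is terse, and in particular the negation written there inverts the quantifiers over $i$, $m$, $s$, $u$ (which in the true negation may all vary with $m_0$), so some argument is still needed to reduce to a single fixed $(i_0,s_0,u_0)$. Your proof sidesteps this entirely by being direct and constructive: you exhibit the series $a_i$ explicitly from $b_i:=\sup_{s,u}|O^i(s,u)|$, handle the large-$|i|$ tail uniformly in $m$ by the crude bound $2b_i\le\varepsilon\sqrt{b_i}$, and apply uniform continuity only on the finite set $F_\varepsilon$ where it is benign. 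This makes transparent the single structural input the paper leaves implicit, namely the decay $b_i=\smallO{1/|i|^3}$ (or at least summability of $\sqrt{b_i}$), which is indeed needed for the conclusion to make sense: as you observe, without such a hypothesis the statement is false for a generic operator (take, say, $O^i$ independent of $i$ and non-constant), since $a_i\to0$ forces the oscillation of $O^i$ to vanish as $|i|\to\infty$. In the only place the Lemma is actually invoked, $\bar O=\bar L_{\mu_*}$, Proposition~\ref{prop:Lkmuregular} supplies exactly this decay, so your approach both repairs the statement and gives a cleaner proof.
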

\begin{proof} 
We proceed by contradiction. Assume that for all positive convergent series $A=(a_i)_{i \in \Z}$ there exists $i_0 \in \Z$, $s_0, u_0 \in [0,t]$ and $\varepsilon > 0$ such that for all $m \in \N^*$
\[
\varepsilon a_{i_0} < \left|  O^{i_0}(s_0,u_0) - O^{i_0}(s_0^{(m)},u_0^{(m)}) \right|.
\]
 Choosing $m$ large enough and by the continuity of $O^{i_0}(s.u)$ w.r.t. $(s,u)$ we obtain a contradiction.
\end{proof}
We proceed with the term $\alpha^{j, 2}_t $:
\begin{multline}\label{eq:alphaj2W}
\alpha^{j, 2}_t = \sigma \underbrace{\sum_{i \in I_{q_m}}  \int^t_{t^{(m)}}  L_{\mu_*}^{ i}(t^{(m)},s^{(m)}) \, dW^{i+j}_s  }_{\alpha^{j, 2,1}_t } +\\
\sigma^{-1} \underbrace{\sum_{i, k, L \in I_{q_m}}  \int^t_{t^{(m)}}  L_{\mu_*}^{ i}(t^{(m)},s^{(m)})  
\left( \int_0^s M_{\mu_*}^k(s, u) \left(  \int_0^v L^{L-k}_{\mu_*}(u, v) \, dW_v^{L+i+j} \right) \, du \right) \, ds }_{\alpha^{j, 2,2}_t }.
\end{multline}
Lemma \ref{lem:alphajtwo} then follows from the following two Lemmas.
\begin{lemma}\label{lem:alphajtwoone}
For all $\varepsilon > 0$, there exists $m_0(\varepsilon)$ in $\N$ such that for all $m \geq m_0$
\[
\Exp \left[ \sup_{s \in [0,t]} \left| \alpha_s^{j,2,1} \right|  \right] \leq C \varepsilon 
\]
for some positive constant $C$ independent of $j$.
\end{lemma}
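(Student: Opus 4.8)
The plan is to exploit the fact that the integrand in
\[
\alpha^{j,2,1}_t = \sum_{i\in I_{q_m}} \int_{t^{(m)}}^t L_{\mu_*}^{i}(t^{(m)},s^{(m)})\,dW^{i+j}_s
\]
is piecewise constant in time: for $t\in[p\eta_m,(p+1)\eta_m)$ one has $t^{(m)}=p\eta_m$, and for every $s$ in the range of integration ($s\in[p\eta_m,t]$) also $s^{(m)}=p\eta_m$, so that
\[
\alpha^{j,2,1}_t = \sum_{i\in I_{q_m}} L_{\mu_*}^{i}(p\eta_m,p\eta_m)\bigl(W^{i+j}_t-W^{i+j}_{p\eta_m}\bigr)=:M^{(p)}_t,\qquad t\in[p\eta_m,(p+1)\eta_m),
\]
which vanishes at $t=p\eta_m$. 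Consequently $\sup_{s\in[0,T]}|\alpha^{j,2,1}_s| = \max_{0\le p<m}\,\sup_{t\in[p\eta_m,(p+1)\eta_m)}|M^{(p)}_t|$. First I would record that, by the independence of the Brownian motions $(W^i)$ and Proposition~\ref{prop:Lkmuregular}, each $(M^{(p)}_t)_{t\in[p\eta_m,(p+1)\eta_m]}$ is a continuous centered Gaussian martingale whose bracket satisfies $\langle M^{(p)}\rangle_t=(t-p\eta_m)\sum_{i\in I_{q_m}}\bigl(L_{\mu_*}^{i}(p\eta_m,p\eta_m)\bigr)^2\le \eta_m D$, with $D$ the constant of Proposition~\ref{prop:Lkmuregular}, independent of $p$, $q_m$ and $n$.

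Next I would bound the maximum over the $m$ cells by passing to a high moment \emph{before} applying a union bound: a first-moment union bound would cost a factor $m$ against only $\sqrt{\eta_m}=\sqrt{T/m}$ and hence fail to tend to zero. Fixing an integer $q\ge 2$, Doob's $L^{2q}$ maximal inequality together with the Gaussian moment bound $\Exp[|M^{(p)}_{(p+1)\eta_m}|^{2q}]\le (2q-1)!!\,(\eta_m D)^q$ gives
\[
\Exp\Bigl[\sup_{t\in[p\eta_m,(p+1)\eta_m]}|M^{(p)}_t|^{2q}\Bigr]\le c_q\,(\eta_m D)^q
\]
for a constant $c_q$ depending only on $q$. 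Then, with $X_p:=\sup_{t\in[p\eta_m,(p+1)\eta_m)}|M^{(p)}_t|$ and $\Exp[\max_p X_p]\le\bigl(\sum_{p}\Exp[X_p^{2q}]\bigr)^{1/2q}\le m^{1/2q}\bigl(\max_p\Exp[X_p^{2q}]\bigr)^{1/2q}$,
\[
\Exp\Bigl[\sup_{s\in[0,T]}|\alpha^{j,2,1}_s|\Bigr]\le c_q^{1/2q}\sqrt{D}\,\sqrt{T}\; m^{\frac1{2q}-\frac12}.
\]
The right-hand side does not depend on $j$, and for $q\ge 2$ the exponent $\tfrac1{2q}-\tfrac12$ is $\le-\tfrac14<0$, so the bound tends to $0$ as $m\to\infty$; choosing $m_0(\varepsilon)$ so that $c_2^{1/4}\sqrt{DT}\,m^{-1/4}\le\varepsilon$ for all $m\ge m_0$ then yields the assertion with $C=1$.

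The only point requiring genuine care is the cell structure of $t^{(m)}$ and $s^{(m)}$ which collapses the stochastic integral to a Brownian increment on each cell, and the necessity of handling the maximum over the $m$ cells through the $L^{2q}$ estimate rather than a first-moment union bound — this is where the smallness gained from $\eta_m\to0$ could be lost. I do not expect any deeper obstacle, since the Gaussian martingale structure, the uniform bound $\sum_{i}\bigl(L^{i}_{\mu_*}(s,u)\bigr)^2\le D$, and the uniformity in $j$ (from the i.i.d. structure of $(W^i)$) are all furnished directly by Proposition~\ref{prop:Lkmuregular}.
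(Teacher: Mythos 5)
Your proof is correct, and it takes a genuinely different (and more careful) route than the paper's. The paper treats
\[
S^j_t=\sum_{i\in I_{q_m}}\int_{t^{(m)}}^t L_{\mu_*}^{i}(t^{(m)},s^{(m)})\,dW^{i+j}_s
\]
as if it were a single continuous martingale on $[0,t]$, applies BDG once, and obtains a bound proportional to $\big(\int_{t^{(m)}}^t\cdots\big)^{1/2}\lesssim\sqrt{\eta_m}$. But, as you correctly observe, this process is \emph{not} a martingale on $[0,t]$: it resets to zero at every cell boundary $p\eta_m$, so it is a chain of $m$ independent small martingales and the BDG bound as written only captures the last cell. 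The paper's bound $\sqrt{\eta_m}$ therefore omits the cost of the maximum over the $m$ cells (which contributes an extra $\sqrt{\log m}$-type factor), even though the final conclusion — convergence to zero — is still valid. You handle this point head-on: you collapse the integrand to a constant on each cell, apply Doob's $L^{2q}$ maximal inequality and Gaussian moment bounds on each cell, and then take the max over the $m$ cells via a power-mean/union-bound argument, producing the rate $m^{1/(2q)-1/2}\to0$ for any $q\ge2$. Your rate is slightly weaker than the optimal $\sqrt{\log m / m}$ but entirely sufficient, and your argument is tighter in the step where the paper is sloppy. Both proofs rely on the same essential ingredient — the uniform bound $\sum_i\big(L_{\mu_*}^i(s,u)\big)^2\le D$ from Proposition~\ref{prop:Lkmuregular} — and both yield a $j$-independent bound by the shift-invariance of the Brownian family; only the treatment of $\sup_{s\in[0,t]}$ differs, and yours is the one that actually proves the stated inequality.
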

\begin{lemma}\label{lem:alphajtwotwo}
For all $\varepsilon > 0$, there exists $m_0(\varepsilon)$ in $\N$ such that for all $m \geq m_0$
\[
\Exp \left[ \sup_{s \in [0,t]} \left| \alpha_s^{j,2,2} \right|  \right] \leq C \varepsilon 
\]
for some positive constant $C$ independent of $j$.
\end{lemma}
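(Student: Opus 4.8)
\textbf{Proof plan for Lemma~\ref{lem:alphajtwotwo}.}
The plan is to estimate $\alpha^{j,2,2}_s$ crudely by pushing absolute values through all the sums and integrals and then exploiting the single feature that distinguishes this term from $\alpha^{j,1,2}$: the outermost integral runs over $[s^{(m)},s]$, an interval of length at most $\eta_m = T/m$, while everything else in the integrand is bounded uniformly in $j$, $m$, $n$. So no uniform‑continuity argument (Lemma~\ref{lem:upperboundseries}) is needed here; the smallness is purely from the shrinking interval. Concretely, writing the outer variable as $r$,
\[
\big|\alpha^{j,2,2}_s\big| \leq \int_{s^{(m)}}^{s} \Big(\sum_{i\in I_{q_m}} \big|L^{i}_{\mu_*}(s^{(m)},r^{(m)})\big|\Big)\int_0^{r}\Big(\sum_{k\in I_{q_m}} \big|M^{k}_{\mu_*}(r,u)\big|\Big)\Big| \int_0^u \sum_{L\in I_{q_m}} L^{L-k}_{\mu_*}(u,v)\, dW_v^{L+i+j}\Big| \, du\, dr,
\]
and I would take $\sup_{s\in[0,t]}$ (note $s^{(m)}$ moves with $s$, but $s-s^{(m)}\le\eta_m$ in all cases) and then $\Exp$, using Fubini--Tonelli to bring the expectation past the positive integrand and bounding the innermost martingale by its supremum over $[0,t]$.

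The three uniform bounds I would assemble are exactly the ones used in the proof of Lemma~\ref{lem:alphajonetwo}. First, for fixed $i,k,j$ the indices $L+i+j$, $L\in I_{q_m}$, are distinct, so $u\mapsto \int_0^u\sum_L L^{L-k}_{\mu_*}(u,v)\,dW_v^{L+i+j}$ is a continuous martingale; by the Burkholder--Davis--Gundy inequality, Parseval's theorem, and Proposition~\ref{prop:Lkmuregular},
\[
\Exp\Big[\sup_{u\in[0,t]}\Big|\int_0^u \sum_{L\in I_{q_m}} L^{L-k}_{\mu_*}(u,v)\,dW_v^{L+i+j}\Big|\Big] \leq C_1\Big(\int_0^t \sum_{L\in\Z}\big(L^{L}_{\mu_*}(t,v)\big)^2\,dv\Big)^{1/2} \leq D
\]
for a constant $D$ independent of $i,k,j,m,n$. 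Second, $\sum_{k}\big|M^{k}_{\mu_*}(r,u)\big|\le E$ for all $r,u\in[0,t]$ by the bound $\sup_{s,u}\sum_k|M^k_{\mu}(s,u)|<\infty$ from Lemma~\ref{lem:limit equations}. Third, $\sum_{i}\big|L^{i}_{\mu_*}(s^{(m)},r^{(m)})\big|\le C$ by Proposition~\ref{prop:Lkmuregular}. Combining these,
\[
\Exp\Big[\sup_{s\in[0,t]}\big|\alpha^{j,2,2}_s\big|\Big] \leq C\cdot E\cdot D\cdot T\cdot \eta_m = \frac{C E D T^2}{m}.
\]

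Since the right-hand side tends to $0$ as $m\to\infty$ with a rate independent of $j$, given $\varepsilon>0$ one takes $m_0(\varepsilon)$ so large that $C E D T^2/m \le C'\varepsilon$ for all $m\ge m_0$, which is the claim. The only points requiring care — and the closest thing to an obstacle here — are the measurability/Fubini--Tonelli bookkeeping needed to pull $\Exp$ inside the nested integrals and the handling of the $s$-dependence of $s^{(m)}$ in the outer supremum; both are routine, and the substantive estimates are all quoted from Proposition~\ref{prop:Lkmuregular}, Lemma~\ref{lem:limit equations}, and Burkholder--Davis--Gundy.
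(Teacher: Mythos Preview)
Your proposal is correct and matches the approach the paper intends. The paper does not give a proof of this lemma; it merely says in a remark that it is ``very similar'' to the proof of Lemma~\ref{lem:alphajtwoone} and is left to the reader. In fact your argument is really an adaptation of the paper's proof of Lemma~\ref{lem:alphajonetwo} (the $\alpha^{j,1,2}$ term), which is the natural template since $\alpha^{j,2,2}$, unlike $\alpha^{j,2,1}$, has a $ds$-integral outside and is not itself a martingale. Your observation that the smallness comes purely from the length $\eta_m$ of the outer integration interval, so that Lemma~\ref{lem:upperboundseries} is not needed, is exactly the right simplification.
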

\begin{proof}[Proof of Lemma \ref{lem:alphajtwoone}]
The proof is very similar to that of Lemma \ref{lem:alphajoneone}. As in this Lemma it is based upon recognizing that
\[
S^{j}_t := \sum_{i \in I_{q_m}} \int_{t^{(m)}}^t  L_{\mu_*}^{ i}(t,s)  \, dW^{i+j}_s
\]
is a  continuous martingale with quadratic variation
\[
\left \langle S^{j}_\cdot \right\rangle_t=\sum_{i \in I_{q_m}} \int_{t^{(m)}}^t  ( L_{\mu_*}^{ i}(t,s) )^2\, ds,
\]
because of the independence of the Brownian motions. 

We have
\[
\Exp \left[ \sup_{s \in [0,t]} \left| \alpha_s^{j,2,1} \right|  \right] = \sigma \Exp \left[ \sup_{s \in [0,t]} \left| S^j_s \right| \right],
\]
and, by Burkholder-Davis-Gundy's inequality
\begin{multline*}
\Exp \left[ \sup_{s \in [0,t]} \left| \alpha_s^{j,2,1} \right|  \right] \leq C_1 \sigma \left( \sum_{i \in I_{q_m}} \int_{t^{(m)}}^t  ( L_{\mu_*}^{i}(t,u) )^2\, du \right)^{1/2} \leq \\
C_1 \sigma \left( \int_{t^{(m)}}^t \sum_{i \in \Z} ( L_{\mu_*}^{i}(t,u) )^2\, du \right)^{1/2}= \frac{C_1 \sigma}{\sqrt{2\pi}}\left( \int_{t^{(m)}}^t  \int_{-\pi}^\pi \left| \tilde{L}_{\mu_*}(\varphi)( t, u) \right|^2 \, d\varphi \, du \right)^{1/2}.
\end{multline*}
The fact that $ \int_{-\pi}^\pi \left| \tilde{L}_{\mu_*}(\varphi)( t, w) \right|^2\,d\varphi \leq C$ for some positive constant $C$ uniformly in $t$, $w$, follows from Proposition~\ref{prop:Lkmuregular} and ends the proof.
\end{proof}
\begin{remark}
The proof of Lemma \ref{lem:alphajtwotwo} is very similar and left to the reader.
\end{remark}
Next we write
\begin{multline}\label{eq:alphaj3W}
\alpha^{j, 3}_t = \sigma \underbrace{\sum_{i \in I_{q_m}} \int_0^t \left( L_{\mu_*}^{ i}(t^{(m)},s^{(m)})  - L_{\emp(V^m_n)}^{q_m, i}(t^{(m)}, s^{(m)}) \right) \, dW^{i+j}_s }_{\alpha^{j,3,1}_t} +\\
\sigma^{-1} \sum_{i, k, L \in I_{q_m}}  \int_0^t \left( L_{\mu_*}^{i}(t^{(m)},s^{(m)})  - L_{\emp(V^m_n)}^{q_m,  i}(t^{(m)}, s^{(m)}) \right) \\
\left( \int_0^s M_{\mu_*}^k(s, u) \left(  \int_0^u L^{ L-k}_{\mu_*}(u, v) \, dW_v^{L+i+j} \right) \, du \right) \, ds,
\end{multline}
and define
\begin{multline*}
\alpha^{j,3,2}_t:=\sum_{i, k, L \in I_{q_m}}  \int_0^t \left( L_{\mu_*}^{ i}(t^{(m)},s^{(m)})  - L_{\emp(V^m_n)}^{q_m,  i}(t^{(m)}, s^{(m)}) \right) \\
\left( \int_0^s M_{\mu_*}^k(s, u) \left(  \int_0^u L^{L-k}_{\mu_*}(u, v) \, dW_v^{L+i+j} \right) \, du \right) \, ds.
\end{multline*}
Lemma~\ref{lem:alphajthree} then follows from the next two Lemmas.
\begin{lemma}\label{lem:alphajthreeone}
For all $\varepsilon > 0$, 
\[
\Exp \left[ \sup_{s \in [0,t]} \left| \alpha_s^{j,3,1} \right|  \right] \leq C \varepsilon 
\]
for some positive constant $C$ independent of $j$, for all $m,n$ large enough.
\end{lemma}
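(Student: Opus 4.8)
The plan is to imitate the argument of Lemma~\ref{lem:alphajoneone}, the one genuinely new feature being that the integrand of $\alpha^{j,3,1}$ is random --- it involves $L_{\emp(V^m_n)}$ --- rather than deterministic. First I would centre it: write, for $i\in I_{q_m}$ and for $s,u$ on the grid $\eta_m\Z$,
\[
L_{\mu_*}^{i}(s,u)-L_{\emp(V^m_n)}^{q_m,i}(s,u)=\Big(L_{\mu_*}^{i}(s,u)-\Exp^{Q^{m,n}}\!\big[L_{\emp(V^m_n)}^{q_m,i}(s,u)\big]\Big)+\Big(\Exp^{Q^{m,n}}\!\big[L_{\emp(V^m_n)}^{q_m,i}(s,u)\big]-L_{\emp(V^m_n)}^{q_m,i}(s,u)\Big),
\]
which splits $\alpha^{j,3,1}_t=\beta^j_t+\delta^j_t$ accordingly (recall that for $i\in I_{q_m}$ one has $L_{\emp(V^m_n)}^{q_m,i}=L_{\emp(V^m_n)}^{i}$: the superscript $q_m$ only truncates the index set, and the tail $i\notin I_{q_m}$ has already been absorbed in $\alpha^{j,4}$).

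For $\beta^j$ the integrand is deterministic, so $\beta^j_\cdot$ is a continuous martingale (the $W^{i+j}$ being independent) and, exactly as in the proof of Lemma~\ref{lem:alphajoneone}, Burkholder--Davis--Gundy together with Parseval give $\Exp[\sup_{s\le t}|\beta^j_s|^2]\le C\int_0^t\sum_{i\in I_{q_m}}\big(L_{\mu_*}^{i}(s^{(m)},r^{(m)})-\Exp^{Q^{m,n}}[L_{\emp(V^m_n)}^{i}(s^{(m)},r^{(m)})]\big)^2\,dr$. I would bound the integrand by $|L_{\mu_*}^{i}(t,r)-L_{\mu_*}^{i}(t^{(m)},r^{(m)})|+|L_{\mu_*}^{i}(t^{(m)},r^{(m)})-\Exp^{Q^{m,n}}[L_{\emp(V^m_n)}^{i}(t^{(m)},r^{(m)})]|$: the first term is controlled as in Lemma~\ref{lem:alphajoneone} (Lipschitz continuity of $A\mapsto({\rm Id}+A)^{-1}$ and the modulus of continuity of $\tilde K_{\mu_*}$), the second tends to $0$ by Lemma~\ref{lem:limit law of Qmn}, and in both cases the uniform $\smallO{1/|i|^3}$ decay from Proposition~\ref{prop:Lkmuregular} makes the sum over $i\in I_{q_m}$ finite and lets one pass from the finitely many grid points to a uniform estimate. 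Hence $\Exp[\sup_s|\beta^j_s|^2]\le C\varepsilon^2$ for $m,n$ large, uniformly in $j$.

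For $\delta^j$ there is no martingale structure: the centred kernel $\Exp^{Q^{m,n}}[L_{\emp(V^m_n)}^{i}(t^{(m)},s^{(m)})]-L_{\emp(V^m_n)}^{i}(t^{(m)},s^{(m)})$ is $\mathcal{F}_{t^{(m)}}$-measurable (it depends on $\emp(V^m_n)$ over $[0,t^{(m)}]$, hence on the whole driving noise up to $t^{(m)}$), not previsible in the integration variable, so one cannot invoke Burkholder--Davis--Gundy. I would instead use Proposition~\ref{prop:Amununif}, which gives $|\Exp^{Q^{m,n}}[L_{\emp(V^m_n)}^{i}(s,u)]-L_{\emp(V^m_n)}^{i}(s,u)|\le\smallO{1/|i|^3}\,C_t\big(D_t(\mu_*,\emp(V^m_n))+\Exp^{Q^{m,n}}[D_t(\mu_*,\emp(V^m_n))]\big)$ uniformly in $(s,u)$, together with the fact that $D_t(\mu_*,\emp(V^m_n))\to0$ in probability (from Lemma~\ref{lem:limit law of Qmn}, or equivalently since $\Pi^{m,n}\to\delta_{\mu_*}$ weakly to a \emph{deterministic} limit, the $D_T$-topology being reached via the exponential-tightness argument of Lemma~\ref{Lem: exponentially tight}) and, being a consequence of the unique zero of the rate function, that $Q^{m,n}(D_t(\mu_*,\emp(V^m_n))>\eta)$ is exponentially small in $N$. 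Writing $\delta^j_s$ as the finite sum $\sum_{i\in I_{q_m}}\sum_v c^{i}_v\big(W^{i+j}_{(v+1)\eta_m\wedge s}-W^{i+j}_{v\eta_m\wedge s}\big)$, splitting the probability space into $\{D_t\le\eta\}$ and its exponentially rare complement, and using on the first event the pointwise bound $|c^i_v|\le\smallO{1/|i|^3}C_t(\eta+\Exp^{Q^{m,n}}[D_t])$ with $\Exp^{Q^{m,n}}[\sum_v|W^{i+j}_{(v+1)\eta_m}-W^{i+j}_{v\eta_m}|^2]\le T$ and Cauchy--Schwarz in $v$ and in the probability, one gets $\Exp[\sup_s|\delta^j_s|]\le C\eta_m^{-1/2}\big(\eta+\Exp^{Q^{m,n}}[D_t]\big)+C\sqrt{m}\,Q^{m,n}(D_t>\eta)^{1/2}$. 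One then concludes by choosing $\eta=\eta(m)\to0$ with $\eta_m^{-1/2}\eta(m)\to0$ and $N\eta(m)^2\to\infty$ along the coupling $m=m(n)$ of Remark~\ref{rem:choice of m}, the residual $\eta_m^{-1/2}\Exp^{Q^{m,n}}[D_t]$ vanishing because the bias $\Exp^{Q^{m,n}}[D_t(\mu_*,\emp(V^m_n))]$ is of weak-error type in $\eta_m$ and $q_m$.

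The main obstacle is precisely the term $\delta^j$: since $L_{\emp(V^m_n)}$ is coupled to the driving Brownian motions there is no martingale cancellation, the elementary bound carries a factor $\eta_m^{-1/2}\to\infty$, and one must offset it by the exponential concentration of $\Pi^{m,n}$ around $\mu_*$ and by a quantitative, weak-error control of $\Exp^{Q^{m,n}}[D_t(\mu_*,\emp(V^m_n))]$, carried out with the time discretization and the coupling $m=m(n)$ kept fully explicit; everything else is a routine adaptation of Lemma~\ref{lem:alphajoneone}.
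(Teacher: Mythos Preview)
Your approach genuinely differs from the paper's. The paper gives only a ``sketch of a proof'': it asserts that $S^{j,m}_\cdot:=\alpha^{j,3,1}_\cdot$ is a martingale, applies Burkholder--Davis--Gundy to obtain
\[
\Exp\Big[\sup_{s\le t}|S^{j,m}_s|\Big]\le C_1\Big(\int_0^t\sum_{i\in I_{q_m}}\Exp\big[(L^i_{\mu_*}-L^{q_m,i}_{\emp(V^m_n)})^2\big]\,du\Big)^{1/2},
\]
bounds the integrand pointwise via Proposition~\ref{prop:Amununif} by $\smallO{1/|i|^3}\,C_t\,D_t(\mu_*,\emp(V^m_n))$, and concludes from $\Exp[D_t(\mu_*,\emp(V^m_n))]\to0$ (Lemma~\ref{lem:limit law of Qmn}). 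This is a three-line argument with no diverging prefactor to offset. You are right that the martingale claim is delicate --- the integrand at integration time $s$ depends on $\emp(V^m_n)$ over $[0,t^{(m)}]$ and on the first argument $t^{(m)}$ itself, neither of which is $\mathcal F_s$-adapted --- and the paper's sketch does not comment on this.

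Your centring $\alpha^{j,3,1}=\beta^j+\delta^j$ is a natural way to expose the issue. For $\beta^j$ (deterministic integrand) your BDG argument runs essentially as the paper's does. For $\delta^j$ your strategy --- pointwise bound from Proposition~\ref{prop:Amununif}, split on $\{D_t\le\eta\}$, kill the complement by the exponential smallness coming from the LDP, tune $\eta=\eta(m,n)$ --- is sound in outline, and you are candid that an $\eta_m^{-1/2}$ penalty appears and must be beaten by the rate at which $\Exp^{Q^{m,n}}[D_t(\mu_*,\emp(V^m_n))]$ vanishes. That last quantitative control is not supplied anywhere in the paper (Lemma~\ref{lem:limit law of Qmn} gives only convergence, no rate), so your argument as written still has an unfilled step. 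In short: the paper buys brevity at the price of an unjustified martingale claim; you buy rigour at the price of a considerably more elaborate argument that still requires one quantitative ingredient not available from the paper.
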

Similarly we have
\begin{lemma}\label{lem:alphajthreetwo}
For all $\varepsilon > 0$, 
\[
\Exp \left[ \sup_{s \in [0,t]} \left| \alpha_s^{j,3,2} \right|  \right] \leq C \varepsilon 
\]
for some positive constant $C$ independent of $j$, for all $m,n$ large enough.
\end{lemma}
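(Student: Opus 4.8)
The plan is to follow the scheme of the proof of Lemma~\ref{lem:alphajonetwo}, the essential new difficulty being that the coefficient $L^{i}_{\mu_*}(\cdot,\cdot)-L_{\emp(V^m_n)}^{q_m,i}(\cdot,\cdot)$ occurring in $\alpha^{j,3,2}$ is now \emph{random} and correlated with the stochastic integrals that appear inside $\alpha^{j,3,2}$. I would resolve this correlation by a Cauchy-Schwarz splitting of the expectation, exploiting two facts: the Wasserstein distance $D_t$ is bounded (by $b$, since $0\le f\le1$ forces $d_t\le\sum_i b_i=b$), so uniform integrability of $D_t(\mu_*,\emp(V^m_n))^2$ is automatic; and $\emp(V^m_n)\to\mu_*$ in probability by Lemma~\ref{lem:limit law of Qmn}, whence $\Exp[D_t(\mu_*,\emp(V^m_n))^2]\to0$ as $m,n\to\infty$. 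This is the step I expect to be the main obstacle; the rest is a repackaging of the estimates already used for $\alpha^{j,1,2}$.

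\textbf{Step 1 (isolating the random coefficient).} Pulling absolute values through the sums over $i$ and $k$ and the two outer integrals in the definition of $\alpha^{j,3,2}$, keeping the $L$-sum inside to form the stochastic integral $\Xi^{i,k}_u:=\sum_{L\in I_{q_m}}\int_0^u L^{L-k}_{\mu_*}(u,v)\,dW^{L+i+j}_v$, and recalling that for $i\in I_{q_m}$ the kernel $L_{\emp(V^m_n)}^{q_m,i}$ agrees with $L^{i}_{\emp(V^m_n)}$ (up to the finite-size/wrap-around correction discussed at the end), one obtains the pathwise bound
\[
\sup_{s\in[0,t]}\bigl|\alpha^{j,3,2}_s\bigr|\;\le\;\sum_{i\in I_{q_m}}\overline{\Delta L}^{\,i}\,\Psi^{i},\qquad \overline{\Delta L}^{\,i}:=\sup_{s,r\in[0,t]}\bigl|L^{i}_{\mu_*}(s^{(m)},r^{(m)})-L_{\emp(V^m_n)}^{q_m,i}(s^{(m)},r^{(m)})\bigr|,
\]
where $\Psi^{i}:=\int_0^t\int_0^{r}\sum_{k\in I_{q_m}}|M^{k}_{\mu_*}(r,u)|\,|\Xi^{i,k}_u|\,du\,dr$. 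By Cauchy-Schwarz,
\[
\Exp\Bigl[\sup_{s\in[0,t]}\bigl|\alpha^{j,3,2}_s\bigr|\Bigr]\;\le\;\sum_{i\in I_{q_m}}\bigl(\Exp[(\overline{\Delta L}^{\,i})^2]\bigr)^{1/2}\bigl(\Exp[(\Psi^i)^2]\bigr)^{1/2}.
\]

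\textbf{Step 2 (uniform bound on the stochastic factor).} I would bound $\Exp[(\Psi^i)^2]$ by a constant depending only on $t$ and on the structural constants of $L_{\mu_*}$ and $M_{\mu_*}$, uniformly in $i,j,m,n$. Applying Jensen on the double integral, then the weighted inequality $(\sum_k|M^{k}_{\mu_*}||\Xi^{i,k}|)^2\le(\sum_k|M^{k}_{\mu_*}|)(\sum_k|M^{k}_{\mu_*}||\Xi^{i,k}|^2)$ with $\sup_{r,u\in[0,t]}\sum_{k}|M^{k}_{\mu_*}(r,u)|\le E<\infty$ from Lemma~\ref{lem:limit equations}, and finally It\^o's isometry together with the independence of the Brownian motions $(W^{L+i+j})_{L}$, one gets $\Exp[|\Xi^{i,k}_u|^2]=\sum_{L\in I_{q_m}}\int_0^u(L^{L-k}_{\mu_*}(u,v))^2\,dv\le\int_0^t\sum_{\ell\in\Z}(L^{\ell}_{\mu_*}(u,v))^2\,dv\le Dt$ by Proposition~\ref{prop:Lkmuregular}; hence $\Exp[(\Psi^i)^2]\le c\,D E^2 t^{5}$ for a numerical constant $c$.

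\textbf{Step 3 (the random coefficient is small, and conclusion).} Proposition~\ref{prop:Amununif} gives, pathwise, $\overline{\Delta L}^{\,i}\le\smallO{1/|i|^3}\,C_t\,D_t(\mu_*,\emp(V^m_n))$ for $i\in I_{q_m}$, hence $\Exp[(\overline{\Delta L}^{\,i})^2]\le\smallO{1/|i|^6}\,C_t^{2}\,\Exp[D_t(\mu_*,\emp(V^m_n))^2]$. As noted above, $\Exp[D_t(\mu_*,\emp(V^m_n))^2]\to0$ as $m,n\to\infty$ by Lemma~\ref{lem:limit law of Qmn} and the boundedness of $D_t$. Plugging Steps 2 and 3 into the inequality of Step 1 and summing the convergent series $\sum_{i}\smallO{1/|i|^3}$,
\[
\Exp\Bigl[\sup_{s\in[0,t]}\bigl|\alpha^{j,3,2}_s\bigr|\Bigr]\;\le\;C'\bigl(\Exp[D_t(\mu_*,\emp(V^m_n))^2]\bigr)^{1/2}\;\le\;C\varepsilon
\]
for $m,n$ large enough, with $C$ independent of $j$, which is the claim. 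The only point besides the correlation issue requiring care is the identification of $L_{\emp(V^m_n)}^{q_m,i}$ with $L^{i}_{\emp(V^m_n)}$ on $I_{q_m}$ used in Step 1: the discrepancy is a finite-size/wrap-around term vanishing as $n\to\infty$, and I would absorb it into the same limiting argument.
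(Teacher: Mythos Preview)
Your proposal is correct and matches the route the paper indicates (it merely says the proof is ``very similar'' to that of Lemma~\ref{lem:alphajthreeone} and leaves it to the reader, so the intended argument is precisely the hybrid of the proofs of Lemmas~\ref{lem:alphajonetwo} and~\ref{lem:alphajthreeone} that you carry out). Your Cauchy--Schwarz splitting to decouple the random coefficient $\overline{\Delta L}^{\,i}$ from the stochastic factor $\Psi^i$, together with the observation that boundedness of $D_t$ upgrades the $L^1$ convergence of Lemma~\ref{lem:limit law of Qmn} to $L^2$, is exactly the right way to handle the correlation issue; the paper's sketch of Lemma~\ref{lem:alphajthreeone} glosses over the analogous point. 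Your closing caveat about the discrepancy between $L_{\emp(V^m_n)}^{q_m,i}$ and $L^{i}_{\emp(V^m_n)}$ on $I_{q_m}$ is also a legitimate wrinkle that the paper itself suppresses when invoking Proposition~\ref{prop:Amununif} in the proof of Lemma~\ref{lem:alphajthreeone}.
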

\begin{proof}[Sketch of a proof of Lemma \ref{lem:alphajthreeone}]\ \\
We note that $S^{j,m}_t:=\sum_{i \in I_{q_m}} \int_0^t \left( L_{\mu_*}^{i}(t^{(m)},s^{(m)})  - L_{\emp(V^m_n)}^{q_m,  i}(t^{(m)}, s^{(m)}) \right) \, dW^{i+j}_s $ is a martingale. Hence, by the B\"urkholder-Davis-Gundy inequality,
\[
\Exp \left[ \sup_{s \in [0,t]} | S^{j,m}_s | \right] \leq C_1 \Exp \left[ \langle  S^{j,m}_\cdot  \rangle_t^{1/2} \right] \leq C_1 \Exp \left[ \langle  S^{j,m}_\cdot  \rangle_t \right]^{1/2}.
\]
By the independence of the Brownian motions
\[
\langle  S^{j,m}_\cdot \rangle_t = \sum_{i \in I_{q_m}} \int_0^t \left( L_{\mu_*}^{i}(t^{(m)},u^{(m)})  - L_{\emp(V^m_n)}^{q_m,  i}(t^{(m)}, u^{(m)}) \right)^2 \, du,
\]
and therefore, by Cauchy-Schwarz
\[
\Exp \left[ \sup_{s \in [0,t]} | S^{j,m}_s | \right] \leq C_1 \left(  \int_0^t \sum_{i \in I_{q_m}} \Exp \left[ \left( L_{\mu_*}^{ i}(t^{(m)},u^{(m)})  - L_{\emp(V^m_n)}^{q_m, i}(t^{(m)}, u^{(m)}) \right)^2  \right] \, du \right)^{1/2}.
\]
By Proposition \ref{prop:Amununif} $\left| L_{\mu_*}^{i}(t^{(m)},u^{(m)})  - L_{\emp(V^m_n)}^{q_m, i}(t^{(m)}, u^{(m)}) \right| \leq  D_t(\mu_*, \emp(V^m_n)) \, \smallO{1/|i|^3}$, where $D_t$ is the Wasserstein distance between the two measures $\mu_*$ and $\emp(V^m_n)$,  we conclude that
\[
\Exp \left[ \sup_{s \in [0,t]} | S^{j,n,m}_s | \right] \leq C_1 T^{1/2} \Exp \left[ D_t(\mu_*, \emp(V^m_n)) \right] \sum_{i \in I_{q_m}} \smallO{1/|i|^3} \leq C \Exp \left[ D_t(\mu_*, \emp(V^m_n)) \right]
\]
for a constant $C > 0$. This concludes the proof of the Lemma since Lemma \ref{lem:limit law of Qmn} implies that $\lim_{m,n \to \infty} \Exp \left[ D_t(\mu_*, \emp(V^m_n)) \right] = 0$.
\end{proof}
The proof of Lemma~\ref{lem:alphajthreetwo} is very similar and left to the reader.
So is the proof of Lemma~\ref{lem:alphajfour}.

\end{document}